
\documentclass[12pt,british]{article}
\usepackage{babel}
\usepackage{ucs} 
\usepackage[utf8x]{inputenc} 
\usepackage[T1]{fontenc} 

%

\usepackage{amsmath,amsthm,amssymb,textcase}
\usepackage{enumerate}

\usepackage[full]{textcomp}


\usepackage[noBBpl]{mathpazo}

%



\usepackage[
bookmarks=true,
breaklinks=true,
bookmarksnumbered = true,
colorlinks= true,
urlcolor= green,
anchorcolor = yellow,
citecolor=blue,
]{hyperref}                   

\usepackage{tikz}
\usepackage{tikz-cd}
\usetikzlibrary{arrows,decorations.markings}
\tikzset{commutative diagrams/.cd,arrow style=tikz,diagrams={>=stealth'}}
\tikzset{middlearrow/.style={decoration={markings,mark=at position #1 with {\arrow[very thick]{stealth' reversed}}},postaction={decorate}}}
\tikzset{middlearrowrev/.style={decoration={markings,mark=at position #1 with {\arrow[very thick]{stealth'}}},postaction={decorate}}}

\setlength{\textwidth}{16.1cm}
\setlength{\textheight}{24cm}
\setlength{\oddsidemargin}{0cm}
\setlength{\topmargin}{-2cm}

\makeatletter
\renewcommand{\theenumi}{\alph{enumi}}

\renewcommand{\p@enumii}{}

\def\@enum@{\list{\csname label\@enumctr\endcsname}%
          {\usecounter{\@enumctr}\def\makelabel##1{
\normalfont\ignorespaces\emph{{##1}~}}
\setlength{\labelsep}{3pt}
\setlength{\parsep}{0pt}
\setlength{\itemsep}{0pt}
\setlength{\leftmargin}{0pt}
\setlength{\labelwidth}{0pt}
\setlength{\listparindent}{\parindent}
\setlength{\itemsep}{0pt}
\setlength{\itemindent}{0pt}
\topsep=3pt plus 1pt minus 1 pt}}

\makeatother

\renewcommand{\epsilon}{\ensuremath{\varepsilon}}
\renewcommand{\phi}{\ensuremath{\varphi}}

\renewcommand{\to}{\ensuremath{\longrightarrow}}
\renewcommand{\mapsto}{\ensuremath{\longmapsto}}

\newcommand{\textsu}[1]{\textsuperscript{#1}}
\newcommand{\R}{\ensuremath{\mathbb R}}
\newcommand{\N}{\ensuremath{\mathbb N}}
\newcommand{\Z}{\ensuremath{\mathbb Z}}
\newcommand{\dt}{\ensuremath{\mathbb D}^{2}}
\newcommand{\St}[1][2]{\ensuremath{\mathbb S}^{#1}}
\newcommand{\FF}{\ensuremath{\mathbb F}}
\newcommand{\F}[1][n]{\ensuremath{\FF_{{#1}}}}
\newcommand{\rp}{\ensuremath{\mathbb{R}P^2}}
\newcommand{\ft}[1][n]{\ensuremath{\Delta_{#1}^{2}}}

\newcommand{\dd}{\ensuremath{\mathbb D}}

\makeatletter
\def\@map#1#2[#3]{\mbox{$#1 \colon\thinspace #2 \to #3$}}
\def\map#1#2{\@ifnextchar [{\@map{#1}{#2}}{\@map{#1}{#2}[#2]}}
\makeatother

\newcommand{\altmap}[4][\lhra]{\mbox{$#2 \colon\thinspace #3 #1 #4$}}

\DeclareRobustCommand*{\up}[1]{\textsu{#1}}
\newcommand{\id}{\ensuremath{\operatorname{\text{Id}}}}

\newcommand{\br}[1]{\ensuremath{\widehat{\tau}_{n_{0}}}}
\newcommand{\dl}[1]{\ensuremath{\xi_{n_{0}}}}
\renewcommand{\ker}[1]{\ensuremath{\operatorname{\text{Ker}}\left({#1}\right)}}
\newcommand{\im}[1]{\ensuremath{\operatorname{\text{Im}}\left({#1}\right)}}

\newcommand{\brrp}{\ensuremath{\zeta_{2}}}

\newcommand{\brak}[1]{\ensuremath{\left\{ #1 \right\}}}
\newcommand{\ang}[1]{\ensuremath{\left\langle #1\right\rangle}}

\newcommand{\setangl}[2]{\ensuremath{\ang{\left. #1 \,\right\rvert \, #2}}}
\newcommand{\lhra}{\lhook\joinrel\longrightarrow}
\newcommand{\orbconf}[1][n]{\ensuremath{F_{#1}^{\,\ang{\tau}}(C)}}

\newcommand{\setl}[2]{\ensuremath{\brak{\left. #1 \,\right\rvert \, #2}}}

\newtheoremstyle{theoremm}{}{}{\itshape}{}{\scshape}{.}{ }{}

\theoremstyle{theoremm}
\newtheorem{thm}{Theorem}
\newtheorem{lem}[thm]{Lemma}
\newtheorem{prop}[thm]{Proposition}
\newtheorem{cor}[thm]{Corollary}

\newtheoremstyle{remarkk}{}{}{}{}{\scshape}{.}{ }{}

\theoremstyle{remarkk}

\newtheorem{rem}[thm]{Remark}
\newtheorem{rems}[thm]{Remarks}

\newcommand{\req}[1]{equation~(\protect\ref{eq:#1})}
\newcommand{\reqref}[1]{(\protect\ref{eq:#1})}
\newcommand{\reth}[1]{Theorem~\protect\ref{th:#1}}
\newcommand{\relem}[1]{Lemma~\protect\ref{lem:#1}}
\newcommand{\reco}[1]{Corollary~\protect\ref{cor:#1}}
\newcommand{\repr}[1]{Proposition~\protect\ref{prop:#1}}
\newcommand{\rerem}[1]{Remark~\protect\ref{rem:#1}}
\newcommand{\rerems}[1]{Remarks~\protect\ref{rems:#1}}

\newcommand{\resec}[1]{Section~\protect\ref{sec:#1}}

\allowdisplaybreaks

\begin{document}


\title{The homotopy fibre of the inclusion
$F_n(M) \lhra \prod_{1}^{n}\, M$ for $M$ either $\St$ or
$\rp$ and orbit configuration spaces}  

\author{DACIBERG~LIMA~GON\c{C}ALVES\\
Departamento de Matem\'atica - IME- Universidade de S\~ao Paulo\\
Rua do Mat\~ao, 1010, CEP~05508-090 - S\~ao Paulo - SP - Brazil.\\
e-mail:~\url{dlgoncal@ime.usp.br}\vspace*{4mm}\\
JOHN~GUASCHI\\
Normandie Univ., UNICAEN, CNRS,\\
Laboratoire de Math\'ematiques Nicolas Oresme UMR CNRS~\textup{6139},\\
14000 Caen, France.\\
e-mail:~\url{john.guaschi@unicaen.fr}}

\date{27\textsu{th} September 2017}

\begingroup
\renewcommand{\thefootnote}{}
\footnotetext{2010 AMS Subject Classification: 20F36 (primary); 55P15 Classification of homotopy type; 55Q40 Homotopy groups of spheres; 55R80 Discriminantal varieties, configuration spaces; 55R05 Fiber spaces}
\endgroup 

\maketitle

\begin{abstract}
\noindent \emph{Let $n\geq 1$, and let $\map{\iota_{n}}{F_{n}(M)}[\prod_{1}^{n}\, M]$ be the natural inclusion of the $n$\textsu{th} configuration space of $M$ in the $n$-fold Cartesian product of $M$ with itself. In this paper, we study the map $\iota_{n}$, its homotopy fibre $I_{n}$, and the induced homomorphisms $(\iota_{n})_{\#k}$ on the $k$\textsu{th} homotopy groups of $F_{n}(M)$ and $\prod_{1}^{n}\, M$ for $k\geq 1$ in the cases where $M$ is the $2$-sphere $\St$ or the real projective plane $\rp$. If $k\geq 2$, we show that the homomorphism $(\iota_{n})_{\#k}$ is injective and diagonal, with the exception of the case $n=k=2$ and $M=\St$, where it is anti-diagonal. We then show that $I_{n}$ has the homotopy type of $K(R_{n-1},1) \times \Omega(\prod_{1}^{n-1} \St)$, where $R_{n-1}$ is the $(n-1)$\textsu{th} Artin pure braid group if $M=\St$, and is the fundamental group $G_{n-1}$ of the $(n-1)$\textsu{th} orbit configuration space of the open cylinder $\St\setminus \brak{\widetilde{z}_{0}, -\widetilde{z}_{0}}$ with respect to the action of the antipodal map of $\St$ if $M=\rp$, where $\widetilde{z}_{0}\in \St$. This enables us to describe the long exact sequence in homotopy of the homotopy fibration $I_{n} \to  F_n(M) \stackrel{\iota_{n}}{\to}  \prod_{1}^{n}\, M$ in geometric terms, and notably the boundary homomorphism $\pi_{k+1}(\prod_{1}^{n}\, M)\to \pi_{k}(I_{n})$. 
From this, if $M=\rp$ and $n\geq 2$, we show that $\ker{(\iota_{n})_{\#1}}$ is isomorphic to the quotient of $G_{n-1}$ by its centre, as well as to an iterated semi-direct product of free groups with the subgroup of order $2$ generated by the centre of $P_{n}(\rp)$ that is reminiscent of the combing operation for the Artin pure braid groups, as well as decompositions obtained in~\cite{GGgold}.}
\end{abstract}

\section{Introduction}\label{sec:introduction}

Let $M$ be a connected surface, perhaps with boundary, and either compact, or with a finite number of points removed from the interior of the surface, and let $\prod_1^n\, M=M\times \cdots \times M$ denote the $n$-fold Cartesian product of $M$ with itself. 
The \emph{$n$\up{th} configuration space} of $M$ is defined by:
\begin{equation*}
F_n(M)=\setl{(x_{1},\ldots,x_{n})\in  \prod_{1}^{n}\, M}{\text{$x_{i}\neq x_{j}$ for all $1\leq i,j\leq n$, $i\neq j$}}.
\end{equation*}
It is well known that the fundamental group $\pi_1(F_n(M))$ of $F_{n}(M)$ is isomorphic to the \emph{pure braid group} $P_n(M)$ of $M$ on $n$ strings~\cite{FaN,FoN}, and 
if $M$ is the $2$-disc then $P_{n}(M)$ is the Artin pure braid group $P_{n}$.
Let $\map{\iota_{n}}{F_n(M)}[\prod_1^n\, M]$ denote the inclusion map, and for $k\geq 0$, let $\map{(\iota_{n})_{\#k}}{\pi_k(F_n(M))}[\pi_k\bigl( \prod_1^n\, M\bigr)]$ be the induced homomorphism of the corresponding homotopy groups. If no confusion is possible, we shall often just write $\iota$ in place of $\iota_{n}$, and $\iota_{\#}$ or $(\iota_{n})_{\#}$ if $k=1$. The homomorphism $\map{\iota_{\#}}{P_n(M)}[\pi_1\bigl( \prod_1^n\, M\bigr)]$ was studied by Birman in 1969~\cite{Bi}, and if $M$ is a compact surface without boundary different from the $2$-sphere $\St$ and the real projective plane $\rp$, Goldberg showed that $\ker{\iota_{\#}}$ is equal to the normal closure in $P_{n}(M)$ of the image of the homomorphism $\map{j_{\#}}{P_{n}}[P_{n}(M)]$ induced by the inclusion $\map{j}{\dt}[M]$ of a topological disc $\dt$ in $M$~\cite{Gol}. In~\cite{GGgold}, we extended this result to $\St$ and $\rp$, and in the case of $\rp$, we proved that $\iota_{\#}$ coincides with Abelianisation, so that $\ker{\iota_{\#}}$ is equal to the commutator subgroup $\Gamma_{2}(P_{n}(\rp))$ of $P_{n}(\rp)$.

The aim of this paper is to study further the maps $\iota_{n}$ and the induced homomorphisms $(\iota_{n})_{\#k}$ in more detail in the cases where $M=\St$ or $\rp$, and to determine the homotopy type of the homotopy fibre of $\iota_{n}$. Following~\cite[pages~91 and~108]{A}, recall that if $\map{f}{(X,x_{0})}[(Y,y_{0})]$ is a map of (pointed) topological spaces and $I$ denotes the unit interval then the \emph{mapping path} of $f$, defined by:
\begin{equation}\label{eq:defmappath}
E_{f}=\setl{(x,\lambda)\in X\times Y^I}{f(x)=\lambda(0)}
\end{equation}
has the same homotopy type as $X$, the map $\map{r_{f}}{E_{f}}[X]$ given by $r_{f}(x,\lambda)=x$ being a homotopy equivalence, and the map $\map{p_{f}}{E_{f}}[Y]$ defined by $p_{f}(x,\lambda)=\lambda(1)$ is a fibration whose fibre is the \emph{homotopy fibre} $I_{f}$ of $f$ defined by:
\begin{equation}\label{eq:defhomfibre}
I_{f}=\setl{(x,\lambda)\in X\times Y^I}{\text{$f(x)=\lambda(0)$ and $\lambda(1)=y_{0}$}}.
\end{equation}
We will refer to the sequence of maps $I_{f} \to X \stackrel{f}{\to} Y$ as the \emph{homotopy fibration} of $f$, where the map $I_{f} \to X$ is the composition of the inclusion $I_{f}\to E_{f}$ by $r_{f}$. More details may be found in~\cite{A,Hat,Wh}. In what follows, we denote the homotopy fibre of $\iota_n$ by $I_{n}$. We shall determine the homotopy type of $I_{n}$ in the cases where $M$ is either $\St$ or $\rp$. This leads to a better understanding of the long exact sequence in homotopy of the fibration $p_{\iota_n}$, as well as an alternative interpretation of $P_n(M)$ 
in terms of exact sequences. Additional motivation for our study comes from the fact that the higher homotopy groups of $F_{n}(M)$ are known to be  isomorphic to those of $\St$ or $\St[3]$ (see \reth{homotopytypefns}), so such exact sequences involve these homotopy groups.

In the rest of this section, let $M$ be $\St$ or $\rp$. This paper is organised as follows. In \resec{mapiota}, we show in \relem{iotainject} that for all $n,k\geq 2$, the homomorphism $(\iota_{n})_{\#k}$ is injective, and in Propositions~\ref{prop:diagS2} and~\ref{prop:diagRP2}, we prove that this homomorphism is diagonal, with the exception of the case $M=\St$ and $n=k=2$, where it is anti-diagonal. The aim of \resec{sec3} is to prove \reth{prop5} stated below that describes the homotopy type of $I_{n}$. In \resec{generalities}, we define the framework and much of the notation that will be used in the rest of the paper. In the case of $\rp$, this description makes use of certain generalisations of configuration spaces, which we now define. Let $\widetilde{z}_{0}\in \St$, let $C=\St \setminus\brak{\widetilde{z}_{0},-\widetilde{z}_{0}}$ denote the open cylinder, and let $\map{\tau}{\St}$ denote the antipodal map defined by $\tau(x)=-x$ for all $x\in \St$ as well as its restriction to $C$. If $n\geq 1$, the \emph{$n$\up{th} orbit configuration space} of $C$ with respect to the group $\ang{\tau}$, which is a subspace of $F_{n}(C)$, is defined by:
\begin{equation*}
\orbconf=\setl{(x_{1},\ldots,x_{n})\in  \prod_{1}^{n}\, M}{\text{$x_{i}\notin \brak{x_{j}, \tau(x_{j})}$ for all $1\leq i,j\leq n$, $i\neq j$}}.
\end{equation*}
Orbit configuration spaces were defined and studied in~\cite{CX}, but some examples already appeared in~\cite{Fa,FvB}. Let $G_{n}=\pi_{1}\bigl(\orbconf\bigr)$ denote the fundamental group of $\orbconf$. In \relem{equifn}, we show that $\orbconf$ is a space of type $K(G_{n},1)$, and that $G_{n}$ may be written as an iterated semi-direct product of free groups similar to that of the Artin combing operation of $P_{n}$. We then have the following description of the homotopy fibre of $I_{\iota_{n}}$. 

\begin{thm}\label{th:prop5}
Let $n\geq 2$ and let $M=\St$ or $\rp$. Then the homotopy fibre $I_{n}$ of the map $\map{\iota_n}{F_n(M)}[\prod_1^n\, M]$ has the homotopy type of:
\begin{enumerate}[(a)]
\item\label{it:prop5a}
$F_{n-1}(\dt) \times \Omega\bigl(\prod_1^{n-1}\,\St\bigr)$ if $M=\St$, or equivalently of $K(P_{n-1}, 1) \times  \Omega\bigl(\prod_1^{n-1} \,\St\bigr)$, where $\Omega\bigl(\prod_1^{n-1} \,\St\bigr)$ denotes the loop space of $\prod_1^{n-1} \,\St$.

\item\label{it:prop5b} 
$\orbconf[n-1] \times \Omega\bigl(\prod_1^{n-1}\, \St\bigr)$ if $M=\rp$, or equivalently of $K(G_{n-1}, 1) \times \Omega\bigl(\prod_1^{n-1}\, \St\bigr)$.
\end{enumerate}
\end{thm}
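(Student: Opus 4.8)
The plan is to exploit the well-known fibration-theoretic description of configuration spaces of surfaces, namely that for $M=\St$ or $\rp$, the projection $\map{p}{F_n(M)}[M]$ onto the last coordinate is a locally trivial fibration with fibre $F_{n-1}(M\setminus\brak{x_0})$, and that $M\setminus\brak{x_0}$ is an open disc if $M=\St$ and an open M\"obius band if $M=\rp$. In the first case $F_{n-1}(M\setminus\brak{x_0})\simeq F_{n-1}(\dt)$ directly; in the second, one uses the double cover $\St\setminus\brak{\widetilde{z}_{0},-\widetilde{z}_{0}}\to \rp\setminus\brak{x_0}$, under which $F_{n-1}(\rp\setminus\brak{x_0})$ is covered by $\orbconf[n-1]$, and the fact (from \relem{equifn}) that $\orbconf[n-1]$ is aspherical with fundamental group $G_{n-1}$. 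First I would set up, for the mapping-path fibration $\map{p_{\iota_n}}{E_{\iota_n}}[\prod_1^n M]$ of the statement, the commutative diagram relating the homotopy fibration $I_n\to F_n(M)\stackrel{\iota_n}{\to}\prod_1^n M$ to the product fibration $\prod_1^n M\to\prod_1^n M$ obtained by composing $\iota_n$ with the final-coordinate projection, and then compare fibres.

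The key steps, in order. \emph{Step 1:} Factor $\iota_n$ through the final-coordinate projections: the square
\begin{equation*}
\begin{tikzcd}
F_n(M)\arrow[r,"\iota_n"]\arrow[d,"p"'] & \prod_1^n M\arrow[d,"q"]\\
M\arrow[r,equal] & M
\end{tikzcd}
\end{equation*}
commutes, where $q$ is projection onto the last factor. Taking homotopy fibres vertically over $x_0\in M$ yields a map of fibrations, and comparing homotopy fibres of the horizontal maps gives a fibration (up to homotopy) $I_n\to \operatorname{hofib}(p)\to \operatorname{hofib}(q)$, i.e.\ $I_n$ fibres over $\operatorname{hofib}(q)=\Omega M\times \prod_1^{n-1}M$ with fibre the homotopy fibre of the inclusion $F_{n-1}(M\setminus\brak{x_0})\lhra \prod_1^{n-1}M$. \emph{Step 2:} Identify that latter homotopy fibre. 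Since $M\setminus\brak{x_0}$ is a $K(\pi,1)$ with $\pi$ free (of rank $1$ if $M=\rp$, via the M\"obius band; rank $0$ if $M=\St$), and $F_{n-1}(M\setminus\brak{x_0})$ is aspherical, the inclusion into $\prod_1^{n-1}M$ has homotopy fibre whose relevant part splits off $\Omega$ of the target. \emph{Step 3:} Use \reth{homotopytypefns} and the computations of $(\iota_{n})_{\#k}$ from Section~\ref{sec:mapiota} (Lemmas/Propositions on injectivity and diagonality) to show that the connecting maps in the long exact homotopy sequence of $p_{\iota_n}$ degenerate in the right way, forcing the fibration $I_n\to\cdots$ to split as a product; concretely one produces a section or shows the relevant $k$-invariants vanish, so that $I_n\simeq K(\pi_1(I_n),1)\times \Omega(\prod_1^{n-1}\St)$, and then identifies $\pi_1(I_n)$ as $P_{n-1}$ (case $\St$) or $G_{n-1}$ (case $\rp$) from the long exact sequence together with the known structure of $P_n(M)$.

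The main obstacle I anticipate is \emph{Step 3}: showing that the homotopy fibre genuinely splits as a \emph{product} rather than merely sitting in a (possibly nontrivial) fibration $\orbconf[n-1]\to I_n\to\Omega(\prod_1^{n-1}\St)$ — equivalently, ruling out twisting. This is where the explicit diagonal/anti-diagonal description of $(\iota_n)_{\#k}$ for $k\ge 2$ earns its keep: because $\iota_n$ admits, after passing to the relevant coordinate, a one-sided homotopy inverse on the sphere factors (the diagonal $M\to\prod_1^n M$ composed with a coordinate projection is the identity), the loop-space factor $\Omega(\prod_1^{n-1}\St)$ is retracted off $I_n$ compatibly, giving the splitting. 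The case $M=\St$, $n=k=2$ requires separate care precisely because the homomorphism there is anti-diagonal rather than diagonal, but one checks that anti-diagonality still yields the needed retraction after composing with the self-map $(x,y)\mapsto(y,x)$ (or a sign map on $\pi_2(\St)\cong\Z$), so the conclusion is unaffected. Finally, the asphericity statements — that $F_{n-1}(\dt)=K(P_{n-1},1)$ (classical) and $\orbconf[n-1]=K(G_{n-1},1)$ (\relem{equifn}) — give the two equivalent forms of the answer in \reqref{}-free language.
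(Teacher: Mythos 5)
Your opening move --- reduce to the restricted inclusion $\altmap{\iota_n'}{F_{n-1}(M\setminus\brak{w_n})}{\prod_1^{n-1}M}$ via the last-coordinate fibration --- matches the paper's, which uses \repr{homofibre} and \reco{alphanprime} to conclude $I_n\simeq I_n'$. But there is a slip in your Step~1: $\operatorname{hofib}(q)\simeq\prod_1^{n-1}M$, not $\Omega M\times\prod_1^{n-1}M$, since $q$ is already a fibration with that fibre; and it is $\operatorname{hofib}(p)=F_{n-1}(M\setminus\brak{w_n})$ that fibres over $\operatorname{hofib}(q)$ with fibre $I_n'\simeq I_n$, not the other way round.

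The genuine gap is in Steps~2--3. The paper's decisive idea --- absent from your proposal --- is that the restricted inclusion is \emph{null homotopic}, so its homotopy fibre is automatically a product $X\times\Omega Y$ by \rerems{homfibre}(\ref{it:homfibreb}); there is no twisted fibration to untwist. For $M=\St$ this is immediate: each $\widetilde{p}_i\circ\iota_n'$ factors through the contractible disc $\St\setminus\brak{w_n}$. Your proposed splitting mechanism --- retracting $\Omega\bigl(\prod_1^{n-1}\St\bigr)$ off $I_n$ via the diagonal $M\to\prod_1^nM$ --- does not work as stated, because the diagonal lands in the fat diagonal and therefore does \emph{not} factor through $F_n(M)$; it provides no section or one-sided inverse of $\iota_n$ at the space level. (The diagonality results of \resec{mapiota} are not used in the proof of \reth{prop5}; your separate handling of $n=k=2$ is a red herring there.) For $M=\rp$ you further omit the essential step: here $\iota_n'$ is \emph{not} null homotopic (the M\"obius band is not contractible, and $\prod_1^{n-1}\rp$ has nontrivial $\pi_1$), so the paper first compares $I_n'$ with the homotopy fibre $I_n''$ of $\altmap{\iota_n''}{\orbconf[n-1]}{\prod_1^{n-1}\St}$ using the $2^{n-1}$-fold covering $\map{p}{\prod_1^{n-1}\St}[\prod_1^{n-1}\rp]$, the five lemma, and Whitehead's theorem, and then observes that $\iota_n''$ \emph{is} null homotopic because $C$ has the homotopy type of a circle while $\St$ is simply connected. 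Your mention of the double cover gestures at this, but you do not explain the change of target from $\prod_1^{n-1}\rp$ to $\prod_1^{n-1}\St$, which is the substance of case~(\ref{it:prop5b}).
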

\reth{prop5} will be proved in \resec{fibreiotaS2}. The basic idea of the proof is to `replace' $\iota_{n}$ by a map that is null homotopic and whose homotopy fibre has the same homotopy type as that of $\iota_{n}$. The fact that this map is null homotopic implies that its homotopy fibre may be written as a product that is the homotopy fibre $I_{c}$ of a constant map (see \rerems{homfibre}(\ref{it:homfibreb})). Another important tool is the relation between the homotopy fibres of fibre spaces and certain subspaces (see the Appendix, and \relem{crabb1} in particular). Taking the long exact sequence in homotopy of the homotopy fibration of $\iota_{n}$ and using the results of \resec{mapiota}, we obtain the following exact sequences, where for $k\geq 1$, $\map{\partial_{n,k}}{\pi_{k}(\prod_{1}^{n}\, M)}[\pi_{k-1}(I_{n})]$ denotes the boundary homomorphism on the level of $\pi_{k}$ associated to the homotopy fibration $I_{n}\to F_{n}(M) \stackrel{\iota_{n}}{\to} \prod_{1}^{n}\, M$.

\begin{cor}\label{cor:lesfibs2} 
Let $n\geq 2$, and let $M=\St$ or $\rp$.
\begin{enumerate}[(a)]
\item\label{it:lesfibs2a} Suppose that $k\geq 3$ (resp.\ $M=\St$ and $n=k=2$). Then we have the following split short exact sequence of Abelian groups:
\begin{equation}\label{eq:splitrp2}
1 \to  \pi_k(F_{n}(M)) \stackrel{(\iota_{n})_{\#k}}{\to} \pi_k\Bigl(\prod_1^{n}\, M\Bigr) \stackrel{\partial_{n,k}}{\to}  \pi_{k-1}\Bigl(\Omega\Bigl(\prod_1^{n-1} \, \St\Bigr)\Bigr)\to 1,
\end{equation}
where the homomorphism $(\iota_{n})_{\#k}$ is diagonal (resp.\ anti-diagonal). Up to isomorphism, this short exact sequence may also be written as:
\begin{equation*}
1 \to  \pi_k(M) \to  \prod_1^{n}\, \pi_k(M) \to  \prod_1^{n-1}\, \pi_k(M) \to 1.
\end{equation*}

\item\label{it:lesfibs2b} Suppose that $k=2$, and that $n\geq 3$ if $M=\St$. Then we have the following exact sequence:
\begin{equation}\label{eq:sespi1rp2}
1 \to  \pi_2\Bigl(\prod_1^{n}\, M\Bigr)  \stackrel{\partial_{n,2}}{\to} R_{n-1}\times\pi_1\Bigl(\Omega\Bigl(\,\prod_1^{n-1} \, \St\Bigr)\Bigr) \to P_n(M)\stackrel{(\iota_{n})_{\#1}}\to \pi_1\Bigl(\prod_1^{n}\, M\Bigr) \to 1,
\end{equation}
where $R_{n-1}=P_{n-1}$ if $M=\St$ and $R_{n-1}=G_{n-1}$ if $M=\rp$, which up to isomorphism, may also be written as:
\begin{gather*}
1 \to  \Z^{n} \to P_{n-1} \oplus  \Z^{n-1} \to P_{n}(\St) \to 1 \quad\text{if $M=\St$}\\
1 \to \Z^n \to G_{n-1}\times \Z^{n-1} \to P_n(\rp) \stackrel{(\iota_{n})_{\#1}}{\to} \Z_2^n \to 1 \quad\text{if $M=\rp$.}
\end{gather*}
In the case $M=\St$, the short exact sequence does not split.
\end{enumerate}
\end{cor}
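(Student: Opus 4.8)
The plan is to derive both parts directly from the long exact sequence in homotopy of the homotopy fibration $I_{n}\to F_{n}(M)\stackrel{\iota_{n}}{\to}\prod_{1}^{n}\,M$, feeding in the homotopy type of $I_{n}$ from \reth{prop5} and the algebraic behaviour of $(\iota_{n})_{\#k}$ established in \resec{mapiota}. First I would record the homotopy groups of $I_{n}$: since $I_{n}$ has the homotopy type of $K(R_{n-1},1)\times\Omega\bigl(\prod_{1}^{n-1}\St\bigr)$, with $R_{n-1}=P_{n-1}$ if $M=\St$ and $R_{n-1}=G_{n-1}$ if $M=\rp$, the space $I_{n}$ is path-connected, $\pi_{1}(I_{n})\cong R_{n-1}\times\pi_{2}\bigl(\prod_{1}^{n-1}\St\bigr)\cong R_{n-1}\times\Z^{n-1}$, and $\pi_{j}(I_{n})\cong\pi_{j}\bigl(\Omega(\prod_{1}^{n-1}\St)\bigr)\cong\pi_{j+1}\bigl(\prod_{1}^{n-1}\St\bigr)$ for $j\geq2$, the $K(R_{n-1},1)$ factor contributing nothing in degrees $\geq2$. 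I would also recall from \reth{homotopytypefns} that $\pi_{2}(F_{n}(M))=0$ precisely when the relevant homotopy type is that of $\St[3]$, which is the case when $M=\rp$ and $n\geq2$, or $M=\St$ and $n\geq3$, whereas $F_{2}(\St)\simeq\St$, so that $\pi_{2}(F_{2}(\St))\cong\Z$ and $P_{2}(\St)$ is trivial; and that for $k\geq3$ one has $\pi_{k}(F_{n}(M))\cong\pi_{k}(\St)\cong\pi_{k}(M)$ (the Hopf fibration giving $\pi_{k}(\St[3])\cong\pi_{k}(\St)$ for $k\geq3$, and the double cover $\St\to\rp$ giving $\pi_{k}(\rp)\cong\pi_{k}(\St)$ for $k\geq2$).

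For part~(\ref{it:lesfibs2a}), suppose first that $k\geq3$, so $k\geq2$ and $k-1\geq2$. By \relem{iotainject}, both $(\iota_{n})_{\#k}$ and $(\iota_{n})_{\#(k-1)}$ are injective, so exactness at $\pi_{k}(F_{n}(M))$ and at $\pi_{k-1}(F_{n}(M))$ forces the maps $\pi_{k}(I_{n})\to\pi_{k}(F_{n}(M))$ and $\pi_{k-1}(I_{n})\to\pi_{k-1}(F_{n}(M))$ to vanish; hence $\partial_{n,k}$ is surjective and $\ker\partial_{n,k}=\im(\iota_{n})_{\#k}$, which is exactly the short exact sequence~\reqref{splitrp2} once $\pi_{k-1}(I_{n})$ is identified with $\pi_{k-1}\bigl(\Omega(\prod_{1}^{n-1}\St)\bigr)$. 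In the remaining case $M=\St$, $n=k=2$, the map $\pi_{1}(I_{2})\to\pi_{1}(F_{2}(\St))$ has trivial target $P_{2}(\St)$, so $\partial_{2,2}$ is again surjective, and $\pi_{1}(I_{2})\cong\pi_{1}\bigl(\Omega(\St)\bigr)$ because $R_{1}=P_{1}$ is trivial, so the same short exact sequence results. In every case $(\iota_{n})_{\#k}$ is diagonal (resp.\ anti-diagonal when $M=\St$, $n=k=2$) by Propositions~\ref{prop:diagS2} and~\ref{prop:diagRP2}, and the first-coordinate projection $\prod_{1}^{n}\pi_{k}(M)\to\pi_{k}(M)$ is a retraction for both the diagonal and the anti-diagonal embeddings; this simultaneously shows that the sequence of Abelian groups~\reqref{splitrp2} splits and, via $\pi_{k}(F_{n}(M))\cong\pi_{k}(M)$, $\pi_{k}(\prod_{1}^{n}M)\cong\prod_{1}^{n}\pi_{k}(M)$ and $\pi_{k-1}(I_{n})\cong\prod_{1}^{n-1}\pi_{k}(\St)\cong\prod_{1}^{n-1}\pi_{k}(M)$, identifies it with $1\to\pi_{k}(M)\to\prod_{1}^{n}\pi_{k}(M)\to\prod_{1}^{n-1}\pi_{k}(M)\to1$.

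For part~(\ref{it:lesfibs2b}), the relevant portion of the long exact sequence is
\begin{equation*}
\pi_{2}(F_{n}(M))\stackrel{(\iota_{n})_{\#2}}{\to}\pi_{2}\Bigl(\prod_{1}^{n} M\Bigr)\stackrel{\partial_{n,2}}{\to}\pi_{1}(I_{n})\to P_{n}(M)\stackrel{(\iota_{n})_{\#1}}{\to}\pi_{1}\Bigl(\prod_{1}^{n} M\Bigr)\to\pi_{0}(I_{n}).
\end{equation*}
Since $I_{n}$ is path-connected, $\pi_{0}(I_{n})$ is trivial, so $(\iota_{n})_{\#1}$ is surjective; and since $\pi_{2}(F_{n}(M))=0$ under the stated hypotheses (this is where $n\geq3$ is needed for $M=\St$), $\partial_{n,2}$ is injective. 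Substituting $\pi_{2}(\prod_{1}^{n}M)\cong\Z^{n}$ and $\pi_{1}(I_{n})\cong R_{n-1}\times\Z^{n-1}$ gives the exact sequence~\reqref{sespi1rp2}, and feeding in $\pi_{1}(\prod_{1}^{n}\St)=1$ and $\pi_{1}(\prod_{1}^{n}\rp)\cong\Z_{2}^{n}$ yields its two explicit forms. Finally, if $M=\St$ and the sequence $1\to\Z^{n}\to P_{n-1}\times\Z^{n-1}\to P_{n}(\St)\to1$ split, then $P_{n}(\St)$ would be isomorphic to a subgroup of $P_{n-1}\times\Z^{n-1}$, which is torsion-free; but for $n\geq3$ the group $P_{n}(\St)$ has non-trivial $2$-torsion (for instance the class of the full twist $\Delta_{n}^{2}$, which generates the centre of $P_{n}(\St)$ and has order two), a contradiction, so the sequence does not split. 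The only genuinely non-formal inputs are \reth{prop5} itself and the facts recalled from \reth{homotopytypefns} — notably the vanishing $\pi_{2}(F_{n}(M))=0$, which is responsible for the extra hypothesis $n\geq3$ when $M=\St$ — together with the torsion obstruction for the non-splitting claim; the remainder is a routine chase of the long exact sequence plus the splitting of a diagonal.
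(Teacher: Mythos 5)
Your argument follows the paper's proof essentially step for step: extract the relevant segment of the long exact sequence of the fibration, feed in the injectivity of $(\iota_{n})_{\#k}$ from \relem{iotainject} and the identification of $\pi_{\ast}(I_{n})$ from \reth{prop5}, use the diagonal/anti-diagonal description from Propositions~\ref{prop:diagS2} and~\ref{prop:diagRP2}, and obstruct the splitting for $M=\St$ by the torsion of the full twist against the torsion-freeness of $P_{n-1}\times\Z^{n-1}$. The only cosmetic difference is that for the splitting in part~(\ref{it:lesfibs2a}) you observe directly that a coordinate projection retracts the (anti-)diagonal, whereas the paper routes this through \relem{diagonal}; the two are equivalent for Abelian groups.
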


The homomorphisms that appear in the exact sequences of \reco{lesfibs2} can be made explicit. In order to understand better the homotopy fibration associated to $\iota_{n}$ 
and these exact sequences, it is helpful to study the properties of $\orbconf$ and $G_{n}$, as well as the boundary homomorphism $\partial_{n,k}$, the case $k=2$ being the most complicated due to the appearance of $G_{n-1}$ in $\pi_{1}(I_{n})$. In \resec{presorb}, we analyse $G_{n}$. In \repr{presgn}, we give a presentation, from which we deduce in \repr{gncentre} that the centre of $G_{n}$ is infinite cyclic, generated by an element $\Theta_{n}$ similar in nature to the full-twist braid of $P_{n}(M)$.

If $M=\St$ (resp.\ $\rp$), let $n_{0}=3$ (resp.\ $n_{0}=2$). 
In \resec{boundhomo}, we determine completely the boundary homomorphism $\partial_{n,2}$, that we will denote simply by $\partial_{n}$. 
One of the principal difficulties here is to describe concrete homotopy equivalences between $I_{n}$ and the product spaces $I_{c}$ that appear in the statement of \reth{prop5} and which are homotopy fibres of constant maps. We first introduce geometric representatives of generators of $\pi_{1}(\Omega(M))$ in \resec{geomrepsS2}, and we use them both to describe certain elements of $\pi_{1}(I_{n_{0}})$ in \relem{boundaryS2}, and also to fix a basis $\mathcal{B}=(\widetilde{\lambda}_{\widetilde{x}_{0}}, \widetilde{\lambda}_{\widetilde{x}_{1}}, \ldots, \widetilde{\lambda}_{\widetilde{x}_{n-3}}, \widetilde{\lambda}_{\widetilde{z}_{0}}, \widetilde{\lambda}_{-\widetilde{z}_{0}})$ (resp.\ $\mathcal{B}= (\lambda_{x_{0}}, \lambda_{x_{1}}, \ldots, \lambda_{x_{n-2}}, \lambda_{z_{0}})$) of $\pi_2(\prod_1^n\, M)$ in \req{basispi2}. Here, $\widetilde{x}_{0}, \widetilde{x}_{1}, \ldots, \widetilde{x}_{n-2}, \widetilde{z}_{0}$ are certain basepoints of $\St$ that are defined in \resec{generalities}, and $x_{0}, x_{1}, \ldots, x_{n-2}, z_{0}$ are their projections in $\rp$ under the universal covering map from $\St$ to $\rp$. In \req{defdeltav}, for each element $\widetilde{\lambda}_{y}$ (resp.\ $\lambda_{y}$) of $\mathcal{B}$, we define the element $\widetilde{\delta}_{y}$ (resp.\ $\delta_{y}$) to be its image in $\pi_{1}(I_{n})$ under $\partial_{n}$. In \resec{notation}, in \relem{fundhomoequiv}, we construct explicit homotopy equivalences between the intermediate homotopy fibres that were introduced in \resec{generalities} whose composition yields a homotopy equivalence between $I_{c}$ and $I_{n}$. Here we also require \reco{alphanprime} that is proved in the Appendix. In \repr{deftaunhat}, we introduce an element $\widehat{\tau}_{n}$ of $\pi_{1}(I_{n})$ that under the projection from $\pi_{1}(I_{n})$ to $P_{n}(M)$, is sent to the full-twist braid $\ft$ of $P_{n}(M)$, and we relate $\widehat{\tau}_{n}^{2}$ to the elements of the set $\partial_{n}(\mathcal{B})$. In \resec{n3S2}, we describe $\partial_{n}$ in the case $n=n_{0}$, the main results being \reth{brrelnS2} and \reco{sesnn0}. The proof of \reth{brrelnS2} is geometric in nature, and makes use of \relem{deformwS2} that is also used later on in the paper. The analysis of $\partial_{n}$ when $n>n_{0}$ is carried out in \resec{general}. The basic idea is to consider the possible projections of $I_{n}$ onto $I_{n_{0}}$. However, need to take care with the basepoints here, and we bring in to play various homeomorphisms ensure that they coincide with those of a standard copy of $I_{n_{0}}$ as studied in \resec{n3S2}. The main result of \resec{general} is the following.

\begin{thm}\label{th:tauhatsquareS2} 
In $\pi_{1}(I_{n})$, we have:
\begin{equation*}
\widehat{\tau}_{n}^{2}=\begin{cases}
\partial_{n}(\widetilde{\lambda}_{\widetilde{x}_0}+\cdots+\widetilde{\lambda}_{\widetilde{x}_{n-3}} + \widetilde{\lambda}_{\widetilde{z}_{0}} - \widetilde{\lambda}_{-\widetilde{z}_{0}}) & \text{if $M=\St$}\\
\partial_{n}(\lambda_{x_0}+\cdots+\lambda_{x_{n-2}} + \lambda_{z_{0}}) & \text{if $M=\rp$.}
\end{cases}
\end{equation*}
\end{thm}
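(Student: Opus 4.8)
The plan is to reduce the general statement for $n>n_{0}$ to the already-established case $n=n_{0}$ (Theorem~\ref{th:brrelnS2}) by means of a carefully chosen projection $\pi_1(I_n)\to \pi_1(I_{n_0})$ and a compatibility between the elements $\widehat{\tau}_{n}$, $\partial_{n}$ and this projection. First I would set up the projection map: forgetting the last $n-n_{0}$ coordinates of a configuration in $F_n(M)$ gives a map $F_n(M)\to F_{n_0}(M)$, which is covered by the obvious projection $\prod_1^n M\to \prod_1^{n_0} M$, and hence induces a map on mapping paths and on homotopy fibres $\rho\colon I_n\to I_{n_0}$. As signalled in the excerpt (\resec{general}), one has to take care that basepoints match a standard copy of $I_{n_0}$, so I would first conjugate by the homeomorphisms introduced there so that $\rho$ genuinely lands in the standard $I_{n_0}$ studied in \resec{n3S2}. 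The key naturality statement I would establish (or quote from \resec{general}) is that $\rho_{\#}\circ\partial_{n}=\partial_{n_0}\circ q_{\#}$, where $q\colon \prod_1^n M\to \prod_1^{n_0}M$ is the projection onto the relevant coordinates; this is simply functoriality of the long exact sequence of a homotopy fibration under the morphism of fibrations given by the projections.

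Next I would track the two pieces of the claimed identity under $\rho_{\#}$. On the right-hand side, $q_{\#}$ sends the basis element $\widetilde{\lambda}_{\widetilde z_0}$ (resp.\ $-\widetilde{\lambda}_{-\widetilde z_0}$, resp.\ each $\widetilde{\lambda}_{\widetilde x_i}$ that survives) to the corresponding generator of $\pi_2(\prod_1^{n_0}M)$ and kills the generators indexed by the forgotten coordinates, so $\rho_{\#}$ applied to $\partial_n$ of the big sum yields exactly $\partial_{n_0}$ of the analogous sum for $n_0$. By Theorem~\ref{th:brrelnS2} that equals $\widehat{\tau}_{n_0}^{2}$ in $\pi_1(I_{n_0})$. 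On the left-hand side, I must show $\rho_{\#}(\widehat{\tau}_n)=\widehat{\tau}_{n_0}$: the element $\widehat{\tau}_n\in\pi_1(I_n)$ projects to the full twist $\Delta_n^2$ of $P_n(M)$, and under the strand-forgetting map $P_n(M)\to P_{n_0}(M)$ the full twist $\Delta_n^2$ maps to $\Delta_{n_0}^2$; using the explicit description of $\widehat{\tau}_n$ from \repr{deftaunhat} together with the fact (from \relem{fundhomoequiv}/\reco{alphanprime}) that the chosen homotopy equivalences are compatible with the projections, one gets $\rho_{\#}(\widehat{\tau}_n)=\widehat{\tau}_{n_0}$, hence $\rho_{\#}(\widehat{\tau}_n^2)=\widehat{\tau}_{n_0}^2$.

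At this point the two sides of the asserted identity have the same image under $\rho_{\#}$, so their difference lies in $\ker(\rho_{\#})$. To finish I would identify this kernel and check that the difference is trivial in it. The kernel of $\rho_{\#}\colon\pi_1(I_n)\to\pi_1(I_{n_0})$ can be read off from the product description of $I_n$ in \reth{prop5}: it is built from the kernel of the forgetting map $R_{n-1}\to R_{n_0-1}$ (an iterated free-group factor, by \relem{equifn}, resp.\ the combing of $P_{n-1}$) together with the extra loop-space factors $\pi_1(\Omega(\prod_{n_0}^{n-1}\St))$. Both sides of the identity, being respectively a product of squares of Dehn-twist-type elements and a $\partial_n$-image of a sum of the $\widetilde\lambda_y$, have controlled components in these factors, and a direct comparison --- using \relem{deformwS2} to compute the relevant loops, exactly as in the $n=n_0$ case --- shows the difference is trivial. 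I expect this last step, pinning down $\ker(\rho_{\#})$ and verifying vanishing of the difference there (equivalently, arguing by induction on $n$, peeling off one strand at a time and using the $n=n_0$ base case), to be the main obstacle, since it requires keeping precise track of basepoints and of how the explicit homotopy equivalences of \relem{fundhomoequiv} interact with the projections $I_n\to I_{n-1}\to\cdots\to I_{n_0}$.
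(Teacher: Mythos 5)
Your reduction-to-$n_{0}$ strategy is close in spirit to the paper's, but there is a genuine gap in the single-projection scheme. The paper does \emph{not} use one projection $\rho\colon I_n\to I_{n_0}$ together with an analysis of $\ker(\rho_{\#})$; it instead exploits the a~priori form $\widehat{\tau}_{n}^{2}=\partial_{n}\bigl(\sum_{i}m_{i}\widetilde{\lambda}_{i}\bigr)$ with \emph{unique} integers $m_{i}$ (\repr{deftaunhat}, \req{commdeltanS2}) and then runs a whole family of projections $\nu_{j}$ indexed by $j\in\brak{0,\ldots,n-n_{0}}$, where $\nu_{j}$ retains the $(j+1)$\textsu{st}, $(n-1)$\textsu{th} and $n$\textsu{th} coordinates (resp.\ the $(j+1)$\textsu{st} and $n$\textsu{th} for $\rp$). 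Each $\nu_{j}$ pins down the coefficient $m_{j}$ (together with $m_{n-2}$, $m_{n-1}$) by reduction to \reth{brrelnS2}; since this is done for \emph{every} $j$, all the coefficients are determined without ever having to identify or argue inside a kernel.

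Your proposal has two concrete problems. First, the projection you describe --- forgetting the last $n-n_{0}$ coordinates --- discards the $(n-1)$\textsu{th} and $n$\textsu{th} coordinates, which at the basepoint $W_{n}=(\widetilde{x}_0,\ldots,\widetilde{x}_{n-3},\widetilde{z}_0,-\widetilde{z}_0)$ carry $\widetilde{z}_{0}$ and $-\widetilde{z}_{0}$. Those are exactly the coordinates needed to land in the standard copy of $I_{n_{0}}$ with basepoint $(\widetilde{x}_0,\widetilde{z}_0,-\widetilde{z}_0)$, so $q_{\#}$ would annihilate $\widetilde{\lambda}_{\widetilde{z}_0}$ and $\widetilde{\lambda}_{-\widetilde{z}_0}$, not map them to the corresponding generators downstairs as you assert. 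Second, and more fundamentally, a \emph{single} projection only determines the coefficients of the coordinates it retains: the induced map on $\pi_{2}$ is the obvious $\Z^{n}\to\Z^{n_{0}}$, whose kernel has rank $n-n_{0}$, and the undetermined coefficients live exactly there. You recognise this (``the difference lies in $\ker(\rho_{\#})$'') and propose to check vanishing by ``a direct comparison\ldots exactly as in the $n=n_{0}$ case,'' but no such comparison is given, and there is no reason it should be easy: controlling $\ker(\rho_{\#})$ inside $\pi_{1}(I_{n})$ (non-abelian for $n\geq n_{0}+1$) through the chain of explicit homotopy equivalences of \relem{fundhomoequiv} is precisely the delicate bookkeeping the paper avoids by varying~$j$. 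The inductive variant you suggest (peel off one strand) has the same issue: after forgetting $\widetilde{x}_{n-3}$, the coefficient $m_{n-3}$ is undetermined, and induction does not supply it.

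The fix that brings your proof in line with the paper is to replace the one projection by the family $\nu_{j}$, keeping for each $j$ the single $\widetilde{x}_{j}$-coordinate together with the $\widetilde{z}_{0}$, $-\widetilde{z}_{0}$ coordinates, and to carry out the basepoint adjustments via $\widetilde{K}_{j}$ and $\widetilde{f}_{j}$ and \relem{deformwS2} so that the image of $\widehat{\tau}_n$ is recognised as $\widehat{\tau}_{n_{0}}$; then \reth{brrelnS2} together with the uniqueness in \repr{deftaunhat} yields $m_{j}=1$ for $j\leq n-3$, $m_{n-2}=1$, $m_{n-1}=-1$ (resp.\ $m_{j}=1$ for all $j$ in the $\rp$ case) with no kernel argument required.
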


From this, we obtain \reco{sesnngen} that describes $\partial_{n}$ completely, and that generalises \reco{sesnn0}. 
This enables us to reprove several isomorphisms involving $P_{n}(\St)$, and to build upon the description of $\Gamma_{2}(P_{n}(\rp))$ given in~\cite{GGgold}.

\begin{prop}\label{prop:sesnngen}
Let $M=\St$ or $\rp$, and let $n\geq n_{0}$.
\begin{enumerate}[(a)]
\item\label{it:sesnngena} If $M=\St$ then there are isomorphisms:
\begin{equation}\label{eq:isoPnS2}
\ker{(\iota_{n})_{\#1}}\!=P_{n}(\St) \cong P_{n-1}/\langle \Delta_{n-1}^{4} \rangle \cong \F[n-2]\rtimes (\F[n-3] \rtimes (\cdots\rtimes(\F[3]\rtimes \F[2])\cdots))\times \Z_{2}.
\end{equation}
\item If $M=\rp$ then there are isomorphisms:
\begin{equation}\label{eq:isoKnGn}
\!\ker{(\iota_{n})_{\#1}}\!=\!\Gamma_{2}(P_{n}(\rp))\!\cong\! G_{n-1}/\langle \Theta_{n-1}^{2} \rangle \!\cong\! \F[2n-3]\rtimes (\F[2n-5] \rtimes (\cdots\rtimes(\F[5]\rtimes \F[3])\cdots))\times \Z_{2}.
\end{equation}
\end{enumerate}
In each case, the $\Z_{2}$-factor corresponds to the subgroup $\ang{\ft}$ of $P_{n}(M)$. 
\end{prop}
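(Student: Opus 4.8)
The plan is to derive the three displayed isomorphisms in each of the two cases as essentially formal consequences of the material already assembled in the paper, principally \reth{tauhatsquareS2}, the presentation of $G_{n}$ from \repr{presgn}, the centre computation \repr{gncentre}, and the short exact sequences (the forthcoming \reco{sesnngen}) describing $\partial_{n}$. The first equality in each of \reqref{isoPnS2} and \reqref{isoKnGn} is not new: for $M=\St$ it is the classical fact that $\iota_{\#}$ on $P_{n}(\St)=\pi_{1}(F_{n}(\St))$ is trivial because $\pi_{1}(\prod_{1}^{n}\St)$ is trivial, so $\ker{(\iota_{n})_{\#1}}=P_{n}(\St)$; for $M=\rp$ it is the result of~\cite{GGgold} recalled in the introduction, namely that $(\iota_{n})_{\#1}$ is Abelianisation, whence $\ker{(\iota_{n})_{\#1}}=\Gamma_{2}(P_{n}(\rp))$. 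So the real content is the two further isomorphisms in each line.

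For the middle isomorphism, I would start from \reco{lesfibs2}(\ref{it:lesfibs2b}), which exhibits $\ker{(\iota_{n})_{\#1}}$ as the cokernel of $\partial_{n,2}\colon \pi_2(\prod_1^n M)\to R_{n-1}\times\pi_1(\Omega(\prod_1^{n-1}\St))$ with $R_{n-1}=P_{n-1}$ or $G_{n-1}$. By \reth{tauhatsquareS2}, the single element $\widehat{\tau}_{n}^{2}$ equals $\partial_{n}$ applied to the sum of all the basis elements of $\mathcal{B}$, and under the projection to $R_{n-1}$ this element maps to $\Delta_{n-1}^{4}$ (for $\St$) or $\Theta_{n-1}^{2}$ (for $\rp$), the square of the generator of the centre — here one invokes \repr{deftaunhat} relating $\widehat{\tau}_{n}$ to the full twist $\ft$ together with the known expression of $\ft$ in terms of $\Delta_{n-1}$ or $\Theta_{n-1}$ on the fibre $R_{n-1}$. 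Combining this with the explicit description of the image of $\partial_{n}$ on the remaining basis elements (these land in the free-abelian factor $\pi_1(\Omega(\prod_1^{n-1}\St))\cong\Z^{n-1}$, and one reads off from \reco{sesnngen} that the quotient by that part of the image simply kills the $\Z^{n-1}$ factor), one sees that passing to the cokernel amounts to (i) discarding the $\Z^{n-1}$ summand and (ii) killing $\Delta_{n-1}^{4}$, respectively $\Theta_{n-1}^{2}$, inside $R_{n-1}$. This gives $\ker{(\iota_{n})_{\#1}}\cong P_{n-1}/\langle\Delta_{n-1}^{4}\rangle$ and $\cong G_{n-1}/\langle\Theta_{n-1}^{2}\rangle$.

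For the final isomorphism, I would combine the combed decomposition of $R_{n-1}$ as an iterated semi-direct product of free groups — for $P_{n-1}$ this is the classical Artin combing $P_{n-1}\cong \F[n-2]\rtimes(\cdots\rtimes\F[2])$, and for $G_{n-1}$ it is the analogous decomposition established in \relem{equifn} with free factors of ranks $2n-3,2n-5,\ldots,3$ — with the centre computation. The point is that $\Delta_{n-1}^{2}$ (resp.\ $\Theta_{n-1}$) generates the centre and, by \repr{gncentre} and the corresponding classical statement, meets the top free factor trivially; moreover $\Delta_{n-1}^{4}$ (resp.\ $\Theta_{n-1}^{2}$) is central of infinite order, so quotienting by it leaves the iterated semi-direct product of free factors untouched and collapses the central $\Z$ to a $\Z_{2}$, producing the stated $\bigl(\F\rtimes(\cdots)\bigr)\times\Z_{2}$; the splitting of the $\Z_{2}$ factor off the semi-direct product is exactly the classical phenomenon for $P_{n}(\St)$, and for $\rp$ it is the decomposition of~\cite{GGgold}. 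Finally, to identify the $\Z_{2}$ with $\ang{\ft}$, I would trace $\ft$ through the chain of isomorphisms: by \repr{deftaunhat}, $\ft$ is the image of $\widehat{\tau}_{n}$, and $\widehat{\tau}_{n}^{2}\in\partial_{n}(\pi_2)$ by \reth{tauhatsquareS2}, so $\ft$ has order $2$ in the cokernel and generates precisely the residual central $\Z_{2}$.

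The main obstacle I anticipate is bookkeeping rather than conceptual: one must be careful that the generator of the centre of $R_{n-1}$ really is $\Delta_{n-1}^{2}$ (resp.\ $\Theta_{n-1}$) and that it is its \emph{square} $\Delta_{n-1}^{4}$ (resp.\ $\Theta_{n-1}^{2}$) that appears in $\partial_{n}(\mathcal{B})$ — the factor of two originating from the relation between $\widehat{\tau}_{n}$ and $\widehat{\tau}_{n}^{2}$ and from the known formula $\ft=\Delta_{n-1}^{?}$ on the fibre — and that the cokernel computation correctly separates the $\Z^{n-1}$ part of the image of $\partial_{n}$ from the part lying in $R_{n-1}$; this requires the explicit form of $\partial_{n}$ on the whole basis $\mathcal{B}$ from \reco{sesnngen}, not merely the value of $\widehat{\tau}_{n}^{2}$.
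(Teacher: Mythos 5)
The conceptual skeleton of your proposal matches the paper's: compute $\ker{(\iota_{n})_{\#1}}$ as the cokernel of $\partial_{n}$, identify the image of $\partial_{n}$ via \reco{sesnngen}, and observe that the $\Z^{n-1}$ part of the image is killed wholesale while the surviving relation in $R_{n-1}$ is the square of the central generator. For the middle isomorphism you are close, but your formulation of the key identification is muddled: you write that $\widehat{\tau}_{n}$ is related to the full twist $\ft$ and then speak of ``the known expression of $\ft$ in terms of $\Delta_{n-1}$ or $\Theta_{n-1}$ on the fibre $R_{n-1}$.'' But $\ft$ lives in $P_{n}(M)$, whereas $\Delta_{n-1}^{2}$ and $\Theta_{n-1}$ live in $P_{n-1}$ and $G_{n-1}$ respectively, and there is no canonical expression of one in terms of the other. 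What the paper actually does is identify the element $[\tau_{n}]\in\pi_{1}(I_{c})$ --- viewed \emph{inside} the factor $\pi_{1}(F_{n-1}^{G}(\mathbb{M}))\cong R_{n-1}$ --- with $\Delta_{n-1}^{2}$ (resp.\ $\Theta_{n-1}^{-1}$) by a direct geometric argument (\req{deftaun}, Figure~\ref{fig:fulltwist}, and a flattening of the cylinder), independently of the relation $(g_{n}\circ j_{n})_{\#}(\widehat{\tau}_{n})=\ft$ from \repr{deftaunhat}. Without making this identification explicitly, the passage from $\langle[\tau_{n}]^{2}\rangle$ to $\langle\Delta_{n-1}^{4}\rangle$ or $\langle\Theta_{n-1}^{2}\rangle$ is unjustified.

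The more serious gap is in your final step, and it is a citation error with real content. You invoke \relem{equifn} for the decomposition of $G_{n-1}$ as an iterated semi-direct product $\F[2n-3]\rtimes(\cdots\rtimes(\F[3]\rtimes\Z))$, and then claim that quotienting by $\Theta_{n-1}^{2}$ ``collapses the central $\Z$ to a $\Z_{2}$.'' But the $\Z$ at the bottom of the \relem{equifn} decomposition (coming from the section $s_{n}$) is \emph{not} the centre $\langle\Theta_{n-1}\rangle$ --- it is $\langle\rho_{1,0}\rangle$, which does not coincide with $\langle\Theta_{n-1}\rangle=\langle\rho_{1,0}\cdots\rho_{n-1,0}\rangle$. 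The decomposition you actually need is \req{gndecomp} from \rerem{secondsec}, arising from the alternative section $s_{n}'$, which exhibits $\langle\Theta_{n-1}\rangle$ as a \emph{direct} $\Z$-factor: $G_{n-1}\cong\F[2n-3]\rtimes(\cdots\rtimes(\F[5]\rtimes\F[3]))\times\langle\Theta_{n-1}\rangle$. Only with this decomposition does quotienting by $\Theta_{n-1}^{2}$ produce the stated $\bigl(\F[2n-3]\rtimes(\cdots\rtimes(\F[5]\rtimes\F[3]))\bigr)\times\Z_{2}$. With the wrong decomposition in hand, the quotient need not split this way. Note also that the paper's actual proof for $\rp$ is considerably more laborious: it constructs an explicit isomorphism $\overline{\upsilon_{n}}\colon G_{n-1}/\langle\Theta_{n-1}^{2}\rangle\to K_{n}$ and then threads the Fadell--Neuwirth splitting through the commutative diagrams~\reqref{commdiagKn}--\reqref{commdiagGnquot} before finally invoking $(s_{n}')_{\#}$; if your citation is repaired, your more direct argument is a genuine shortcut and should work, but as written it does not.
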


The result of part~(\ref{it:sesnngena}) is in agreement with~\cite[equation~(2)]{GGgold}. 


Finally, in an Appendix, we prove \repr{homofibre} that relates the homotopy fibres of fibre spaces and certain subspaces, and that implies that one of the maps that appears in the between $I_{c}$ and $I_{n}$ is indeed a homotopy equivalence. This result seems to be well known to the experts, but we were not able to find a proof in the literature. 

Some of the results and constructions of this paper have since been generalised in~\cite{GGG}. More precisely, if $X$ is a topological manifold without boundary of dimension at least three, under certain conditions, the homotopy type of the homotopy fibre of the inclusion map $\map{\iota_{n}}{F_n(X)}[\prod_1^n\, X]$ was determined, and was used to study the cases where either the universal covering of $X$ is contractible,
or $X$ is an orbit space $\St[k]/G$ of a tame, free action of a Lie group $G$ on the $k$-sphere $\St[k]$. A complete description of the long exact sequence in homotopy of the homotopy fibration of $\iota_{n}$, similar to that of \reco{lesfibs2}, was given in the case $X=\St[k]/G$, where the group $G$ is finite and $k$ is odd. The authors have also written a survey that summarises the current situation, and that includes some questions and open problems about graph and (orbit) configuration spaces~\cite{GGsurvey}.


\subsubsection*{Acknowledgements}

The authors are grateful to Michael~Crabb for his help with the results of the Appendix, and especially for proposing proofs of \relem{crabb1} and \repr{homofibre}. This work on this paper started in 2013 initially as part of the paper~\cite{GGgold}. The authors were partially supported by the FAPESP Projeto Temático Topologia Algébrica, Geométrica 2012/24454-8 (Brazil), by the Réseau Franco-Brésilien en Mathématiques, by the CNRS/FAPESP project n\textsu{o}~226555 (France) and n\textsu{o}~2014/50131-7 (Brazil), and the CNRS/FAPESP PRC project n\textsu{o}~275209 (France) and n\textsu{o}~2016/50354-1 (Brazil).



\section{Properties of $\iota_{n}$ and $(\iota_{n})_{\#k}$}\label{sec:mapiota}

In this section, we determine some properties of $\iota_{n}$ and of $(\iota_{n})_{\#k}$, where $k\in \N$. If $X$ and $Y$ are topological spaces, and $\map{f,g}{X}[Y]$ are maps between $X$ and $Y$, we write $f\simeq g$ if $f$ and $g$ are homotopic, and we denote the homotopy class of $f$ by $[f]$.

We first state the following description of the homotopy type of the universal covering of the configuration spaces of $\St$ and $\rp$.

\begin{thm}[\cite{BCP,FZ}, {\cite[Proposition~10(b)]{GG3}}]\label{th:homotopytypefns}  
Let $M=\St$ or $\rp$, and let $n\in \N$. Then the universal covering $\widetilde{F_{n}(M)}$ of $F_{n}(M)$ has the homotopy type of $\St$ if $M=\St$ and $n\leq 2$ or if $M=\rp$ and $n=1$, and has the homotopy type of the $3$-sphere $\St[3]$ otherwise.
\end{thm}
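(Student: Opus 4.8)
The plan is to derive the homotopy type of the universal covering $\widetilde{F_{n}(M)}$ from the classical Fadell--Neuwirth fibrations together with the known low-dimensional cases, rather than computing anything from scratch. First I would recall that for $M=\St$ and $n\geq 3$, the Fadell--Neuwirth projection $\map{p}{F_{n}(\St)}[F_{3}(\St)]$ forgetting the last $n-3$ points is a fibration whose fibre is $F_{n-3}(\St\setminus\{3 \text{ points}\})\simeq F_{n-3}(\R^{2}\setminus\{2 \text{ points}\})$, which is a $K(\pi,1)$ (an iterated fibration of punctured planes, hence aspherical). Since $F_{3}(\St)$ is homeomorphic to $\operatorname{PSL}(2,\C)\simeq \St[3]$ (or, more elementarily, $F_{3}(\St)\cong \St[3]\times F_{2}(\R^{2})\simeq \St[3]$ up to homotopy), the long exact sequence in homotopy of this fibration gives $\pi_{k}(F_{n}(\St))\cong \pi_{k}(F_{3}(\St))\cong \pi_{k}(\St[3])$ for all $k\geq 2$, and hence the universal covering $\widetilde{F_{n}(\St)}$, which is simply connected with the homotopy groups of $\St[3]$, has the homotopy type of $\St[3]$ by the Whitehead/Hurewicz argument once one knows it is a CW complex of the right homotopy type --- in fact one gets a map $\widetilde{F_{n}(\St)}\to \St[3]$ inducing isomorphisms on all homotopy groups. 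The cases $n\leq 2$ are immediate: $F_{1}(\St)=\St$ is simply connected, and $F_{2}(\St)\simeq \St$ since $F_{2}(\St)$ deformation retracts onto the diagonal's complement, which is homotopy equivalent to $\St$ (or: $F_{2}(\St)\cong T\St$, the tangent bundle, which retracts to $\St$).

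For $M=\rp$, I would use the universal covering $\map{q}{\St}[\rp]$, which is a double cover, to relate $F_{n}(\rp)$ to the orbit configuration space of $\St$ with respect to the antipodal action. Lifting configurations, one has a covering map from $F_{n}^{\ang{\tau}}(\St)$ (the $n$th orbit configuration space of $\St$ under $\ang{\tau}$) onto $F_{n}(\rp)$ with deck group $(\Z_{2})^{n}$, so $F_{n}(\rp)$ and $F_{n}^{\ang{\tau}}(\St)$ have the same universal covering. Then the Fadell--Neuwirth-type fibration for orbit configuration spaces (as in~\cite{CX}) $\map{F_{n}^{\ang{\tau}}(\St)}[F_{1}^{\ang{\tau}}(\St)=\St]$ forgetting all but the first point has fibre $F_{n-1}^{\ang{\tau}}(\St\setminus\{\widetilde{z}_{0},-\widetilde{z}_{0}\})=\orbconf[n-1]$ with $C$ the open cylinder, and by \relem{equifn} this fibre is aspherical. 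The long exact sequence then gives $\pi_{k}(F_{n}(\rp))\cong \pi_{k}(F_{n}^{\ang{\tau}}(\St))\cong \pi_{k}(\St)$ for $k\geq 2$ when $n\geq 2$; since $\pi_{2}(\St)=0$ and $\pi_{3}(\St)=\Z$ and the higher ones agree with those of $\St[3]$... wait --- this is wrong, $\pi_{k}(\St)\ne \pi_{k}(\St[3])$ in general. So instead, for $n\geq 2$ one should iterate: the base of the relevant fibration should itself be $F_{2}(\rp)$ or a space with the homotopy type of $\St[3]$. The correct statement is that $F_{2}(\rp)$ has universal covering $\simeq\St[3]$ (indeed $F_{2}(\rp)$ is related to $\operatorname{SO}(3)$-type spaces), and the Fadell--Neuwirth fibration $\map{F_{n}(\rp)}[F_{2}(\rp)]$ for $n\geq 2$ has aspherical fibre $F_{n-2}(\rp\setminus\{2\text{ points}\})$, giving $\pi_{k}(F_{n}(\rp))\cong\pi_{k}(F_{2}(\rp))\cong\pi_{k}(\St[3])$ for $k\geq 2$, whence $\widetilde{F_{n}(\rp)}\simeq\St[3]$. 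For $n=1$, $F_{1}(\rp)=\rp$ has universal covering $\St$.

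The main obstacle will be establishing the two base cases $F_{3}(\St)\simeq\St[3]$ (up to homotopy, or at least $\widetilde{F_{3}(\St)}\simeq\St[3]$) and $\widetilde{F_{2}(\rp)}\simeq\St[3]$, since everything else is a formal consequence of Fadell--Neuwirth fibrations with aspherical fibres. For $F_{3}(\St)$, I would invoke the classical homeomorphism $F_{3}(\St)\cong \operatorname{PGL}(2,\C)\times F_{3}(\C)$... more carefully: the group $\operatorname{PSL}(2,\C)$ acts freely and transitively on ordered triples of distinct points of $\St=\C P^{1}$, giving $F_{3}(\St)\cong\operatorname{PSL}(2,\C)$, which deformation retracts onto its maximal compact $\operatorname{SO}(3)\simeq\rp[3]$, whose universal cover is $\St[3]$. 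Hence $\widetilde{F_{3}(\St)}\simeq\widetilde{\rp[3]}=\St[3]$. For $\rp$, one similarly analyses $F_{2}(\rp)$ or cites~\cite{GG3, BCP, FZ} directly as the statement permits. Finally I would assemble the Whitehead-theorem argument: in each relevant case one exhibits (via the fibrations, using a section or a comparison map into $\St[3]$) a weak homotopy equivalence from the simply connected space $\widetilde{F_{n}(M)}$ to $\St[3]$, and since both are CW complexes (configuration spaces of manifolds are manifolds, hence have CW structures), Whitehead's theorem upgrades this to a homotopy equivalence, completing the proof.
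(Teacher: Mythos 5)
The paper does not actually prove this theorem; it is stated as a citation to \cite{BCP,FZ} and \cite[Proposition~10(b)]{GG3} and used throughout as a known fact. Your reconstruction follows the standard route of those references: reduce via Fadell--Neuwirth fibrations with aspherical fibre to the base cases $F_{3}(\St)$ and $F_{2}(\rp)$, identify those up to homotopy with $\operatorname{SO}(3)$-type spaces, and finish with Whitehead's theorem applied to a lift of the projection to universal covers. The outline is sound.

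Several of your asides are nonetheless wrong and should not survive into a finished argument. The claim $F_{2}(\St)\cong T\St$ is false; what you actually need (and also state correctly) is that the first-coordinate projection $F_{2}(\St)\to\St$ is a fibration with contractible fibre $\St\setminus\{x\}\cong\R^{2}$, hence a homotopy equivalence. The early assertion ``$\operatorname{PSL}(2,\C)\simeq\St[3]$'' is false: $\operatorname{PSL}(2,\C)$ retracts onto $\operatorname{SO}(3)\cong\mathbb{R}P^{3}$, and only its \emph{universal cover} is $\St[3]$, as you then say correctly in the later paragraph. The alleged decomposition ``$F_{3}(\St)\cong\St[3]\times F_{2}(\R^{2})$'' has no basis and would not even give $\St[3]$ up to homotopy, since $F_{2}(\R^{2})\simeq\St[1]$ is not contractible. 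For the base case $\widetilde{F_{2}(\rp)}\simeq\St[3]$ you hand-wave; a direct argument is that the orbit configuration space $F_{2}^{\ang{\tau}}(\St)=\set{(x,y)\in\St\times\St}{y\neq\pm x}$ deformation retracts (Gram--Schmidt on the component of $y$ orthogonal to $x$) onto the Stiefel manifold $V_{2}(\R^{3})\cong\operatorname{SO}(3)\cong\mathbb{R}P^{3}$, and since $F_{2}(\rp)$ is a finite quotient of $F_{2}^{\ang{\tau}}(\St)$, the two share a universal covering, namely $\St[3]$. Incidentally, your first attempt via $F_{n}^{\ang{\tau}}(\St)\to\St$ was less doomed than you feared, because $\pi_{k}(\St)\cong\pi_{k}(\St[3])$ for all $k\geq 3$ by the Hopf fibration; only $\pi_{2}$ was the genuine issue, and you would still need \rerem{pi2triv} (or its equivalent) to dispose of it. The pivot to the fibration over $F_{2}(\rp)$ is cleaner.
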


\begin{rem}\label{rem:pi2triv}
If $M=\St$ (resp. $M=\rp$), then by~\cite[Corollary, page~244]{FvB} (resp.~\cite[Corollary, page~82]{vB}), $\pi_{2}(F_{n}(M))$ is trivial for all $n\geq 3$ (resp.\ $n\geq 2$).
\end{rem}

Let $M=\St$ or $\rp$ and let $n\geq 2$. For $i\in \brak{1,\ldots,n}$, let $\map{p_{i}}{F_{n}(M)}[M]$ and $\map{\widetilde{p}_{i}}{\prod_{1}^n\, M}[M]$ denote the respective projections onto the $i$\up{th}coordinate. If $1\leq i<j\leq n$, let $\map{\alpha_{i,j}}{F_{n}(M)}[F_{2}(M)]$ and $\map{\widetilde{\alpha}_{i,j}}{\prod_{1}^n\, M}[\prod_{1}^2\, M]$ denote the respective projections onto the $i$\up{th}and $j$\up{th}coordinates. Observe that the maps $p_{i}$, $\widetilde{p}_{i}$, $\alpha_{i,j}$  and $\widetilde{\alpha}_{i,j}$ are fibrations, and that:
\begin{equation}\label{eq:compproj}
\text{$p_{i}=\widetilde{p}_{i} \circ \iota_{n}$ and $\iota_{2}\circ \alpha_{i,j}=\widetilde{\alpha}_{i,j}\circ \iota_{n}$.}
\end{equation}
Let $1\leq i<j\leq n$. As maps from $F_{n}(M)$ to $M$, we have $\widetilde{p}_{i}\circ \iota_{n}=p_{1}\circ \alpha_{i,j}$ and $\widetilde{p}_{j}\circ \iota_{n}=\widetilde{p}_{2} \circ \iota_{2} \circ \alpha_{i,j}$, and using~\reqref{compproj}, we obtain the following commutative diagram:
\begin{equation}\label{eq:bigdiag}
\begin{tikzcd}[cramped, ampersand replacement=\&]
F_{n}(M) \arrow[swap, bend right=60]{ddd}{p_{j}}\arrow{rr}{\alpha_{i,j}} \arrow{dd}{\iota_{n}} \arrow{rd}{p_{i}} \& \& F_{2}(M)\arrow[swap]{ld}{p_{1}}\arrow[swap]{dd}{\iota_{2}} \arrow[bend left=60]{ddd}{p_{2}}\\
\& M \&\\
\prod_{1}^{n}\,M \arrow{ru}{\widetilde{p}_{i}} \arrow{rr}{\widetilde{\alpha}_{i,j}} \arrow{d}{\widetilde{p}_{j}} \& \& \prod_{1}^{2}\,M\arrow[swap]{lu}{\widetilde{p}_{1}} \arrow[swap]{d}{\widetilde{p}_{2}}\\
M \arrow[equal]{rr}  \& \&M.
\end{tikzcd}
\end{equation}
In all of what follows, let $\map{\tau}{\St}$ denote the antipodal map defined by $\tau(x)=-x$ for all $x\in \St$, and let $\map{\pi}{\St}[\rp]$ denote the universal covering.

\begin{lem}\label{lem:iotainject}
Let $M=\St$ or $\rp$, and let $n\geq 2$.
\begin{enumerate}[(a)]
\item\label{it:iotainjecta} Let $1\leq i\leq n$. If either $k\geq 3$, or $k=n=2$ and $M=\St$, the homomorphism $\map{(p_{i})_{\#k}}{\pi_{k}(F_{n}(M))}[\pi_{k}(M)]$ is an isomorphism.
\item\label{it:iotainjectb} Let $k\geq 2$. Then the homomorphism $\map{(\iota_{n})_{\#k}}{\pi_{k}(F_{n}(M))}[\pi_{k}(\prod_{1}^n\, M)]$ is injective.
\end{enumerate} 
\end{lem}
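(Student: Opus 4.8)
The statement has two parts. For part~(\ref{it:iotainjecta}), the plan is to exploit the well-known fibration $\map{p_{n}}{F_{n}(M)}[M]$ whose fibre is $F_{n-1}(M\setminus\{*\})$, or more conveniently the fibration obtained by forgetting all but the $i$\up{th} coordinate, $\map{p_{i}}{F_{n}(M)}[M]$, with fibre $F_{n-1}(M\setminus\{x_{0}\})$. Since $M\setminus\{x_{0}\}$ is an open surface, it is aspherical, hence so is $F_{n-1}(M\setminus\{x_{0}\})$, and therefore $\pi_{k}$ of the fibre vanishes for all $k\geq 2$. The long exact sequence in homotopy of this fibration then forces $\map{(p_{i})_{\#k}}{\pi_{k}(F_{n}(M))}[\pi_{k}(M)]$ to be an isomorphism for all $k\geq 3$, and also for $k=2$ provided $\pi_{1}$ of the fibre injects appropriately into $\pi_{1}(F_{n}(M))$ — which it does, being part of a split-type short exact sequence of braid groups — so that the tail of the sequence $\pi_{2}(\text{fibre})=0\to\pi_{2}(F_{n}(M))\to\pi_{2}(M)\to\pi_{1}(\text{fibre})$ combined with the injectivity $\pi_{1}(\text{fibre})\hookrightarrow\pi_{1}(F_{n}(M))$ gives that the connecting map $\pi_{2}(M)\to\pi_{1}(\text{fibre})$ is zero. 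For $k=2$ one must be careful: when $M=\St$ and $n\geq 3$, or when $M=\rp$ and $n\geq 2$, $\pi_{2}(F_{n}(M))$ itself is trivial by \rerem{pi2triv}, so the claim is only asserted in the remaining case $k=n=2$, $M=\St$, where $F_{2}(\St)\simeq\St$ (by \reth{homotopytypefns}, since $\widetilde{F_{2}(\St)}=F_{2}(\St)$ as $F_{2}(\St)$ is simply connected) and $p_{1}$ induces an iso on $\pi_{2}$ because the fibre $F_{1}(\St\setminus\{x_{0}\})=\R^{2}$ is contractible.

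For part~(\ref{it:iotainjectb}), the idea is to use the maps $\widetilde{p}_{i}$ and the factorisation $p_{i}=\widetilde{p}_{i}\circ\iota_{n}$ from~\reqref{compproj}. Applying $\pi_{k}$ gives $(p_{i})_{\#k}=(\widetilde{p}_{i})_{\#k}\circ(\iota_{n})_{\#k}$. Assembling these over all $i\in\{1,\ldots,n\}$, the projection $\prod_{1}^{n}M\to M$ on each factor shows that $\pi_{k}(\prod_{1}^{n}M)\cong\prod_{1}^{n}\pi_{k}(M)$ and the composite $\prod_{i}(p_{i})_{\#k}$ is precisely $\left(\prod_{i}(\widetilde{p}_{i})_{\#k}\right)\circ(\iota_{n})_{\#k}$, i.e. the product map $\pi_{k}(F_{n}(M))\to\prod_{1}^{n}\pi_{k}(M)$ realised as $x\mapsto((p_{1})_{\#k}(x),\ldots,(p_{n})_{\#k}(x))$. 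By part~(\ref{it:iotainjecta}) each $(p_{i})_{\#k}$ is an isomorphism for $k\geq 3$ (and for $k=n=2$, $M=\St$), so in particular the product map is injective in those cases; since it factors through $(\iota_{n})_{\#k}$, the latter is injective. In the cases where $\pi_{k}(F_{n}(M))=0$ (namely $k=2$ outside the case $n=2$, $M=\St$, by \rerem{pi2triv}), injectivity of $(\iota_{n})_{\#k}$ is automatic. Hence $(\iota_{n})_{\#k}$ is injective for all $k\geq 2$.

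The main obstacle I anticipate is the borderline case $k=2$: one cannot blindly invoke part~(\ref{it:iotainjecta}) because the isomorphism statement there genuinely fails for $(p_{i})_{\#2}$ when $M=\St$ and $n\geq 3$ (where $\pi_{2}(F_{n}(\St))=0$ but $\pi_{2}(\St)=\Z$) or when $M=\rp$ and $n\geq 2$. The clean resolution is to observe that in exactly those cases $\pi_{2}(F_{n}(M))$ vanishes by \rerem{pi2triv}, so injectivity of $(\iota_{n})_{\#2}$ is trivially true, and the only case of part~(\ref{it:iotainjectb}) requiring the argument via $(p_{i})_{\#k}$ is $k=n=2$, $M=\St$, which is covered by part~(\ref{it:iotainjecta}). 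So the proof splits cleanly: for $k\geq 3$ use the product-of-projections argument, and for $k=2$ dispatch each surface/$n$ case separately using either \rerem{pi2triv} or the $n=2$, $M=\St$ instance of part~(\ref{it:iotainjecta}). A secondary point of care is verifying that $\pi_{1}$ of the fibre $F_{n-1}(M\setminus\{x_{0}\})$ injects into $\pi_{1}(F_{n}(M))$, which is standard for surface braid groups but should be cited, so that the $k=2$ portion of part~(\ref{it:iotainjecta}) goes through.
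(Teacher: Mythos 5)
Your overall plan and final conclusions coincide with the paper's: for part~(a) the Fadell--Neuwirth fibration $p_{i}\colon F_{n}(M)\to M$ with aspherical fibre $F_{n-1}(M\setminus\{x_{i}\})$ gives the isomorphism for $k\geq 3$ directly, and the case $k=n=2$, $M=\St$ is handled by noting that the fibre is contractible; for part~(b) the factorisation $p_{i}=\widetilde{p}_{i}\circ\iota_{n}$ of~\reqref{compproj} gives injectivity of $(\iota_{n})_{\#k}$ wherever part~(a) applies, and for $k=2$ outside that case injectivity is automatic because $\pi_{2}(F_{n}(M))$ vanishes by \rerem{pi2triv}. This is exactly the paper's decomposition.

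There is, however, a genuine error buried in your first paragraph that you should excise. You assert that $\pi_{1}$ of the fibre $F_{n-1}(M\setminus\{x_{i}\})$ injects into $\pi_{1}(F_{n}(M))$, ``being part of a split-type short exact sequence of braid groups'', and use this to deduce that $(p_{i})_{\#2}$ is an isomorphism. That injectivity is false precisely for the two surfaces at hand: for $M=\St$ with $n\geq 3$ (and likewise $M=\rp$ with $n\geq 2$) the fibration $p_{i}\colon F_{n}(M)\to M$ admits no section (for $p_{1}\colon F_{3}(\St)\to\St$ a section would produce a nowhere-zero tangent vector field on $\St$), and the long exact sequence reads $0=\pi_{2}(F_{n}(M))\to\pi_{2}(M)\to\pi_{1}(\text{fibre})\to\pi_{1}(F_{n}(M))$, so the boundary map $\pi_{2}(M)\to\pi_{1}(\text{fibre})$ is \emph{injective}, and $\pi_{1}(\text{fibre})\to\pi_{1}(F_{n}(M))$ has kernel $\Z$. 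That is exactly why $(p_{i})_{\#2}$ fails to be an isomorphism in those cases, which you correctly acknowledge in your ``main obstacle'' paragraph. You then repair the argument by restricting the $k=2$ assertion to $n=2$, $M=\St$, where the fibre is contractible and $\pi_{1}$-injectivity is vacuous, so the end result is right --- but the ``secondary point of care'' at the end re-asserts the erroneous injectivity as ``standard for surface braid groups''. For $\St$ and $\rp$ it is precisely the non-standard, exceptional case. Delete those two passages and the proof matches the paper's.
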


\begin{rem}
Let $n$ and $k$ be as in \relem{iotainject}(\ref{it:iotainjecta}). \reth{homotopytypefns} implies that $\pi_{k}(F_{n}(M)) \cong \pi_{k}(M)$, and the lemma provides an explicit isomorphism.
\end{rem}

\begin{proof}[Proof of \relem{iotainject}]
Let $n,k\geq 2$. 
\begin{enumerate}[(a)]
\item Let $1\leq i\leq n$. Then $\map{p_{i}}{F_{n}(M)}[M]$ is a Fadell-Neuwirth fibration whose fibre, which is an Eilenberg-Mac~Lane space of type $K(\pi,1)$, may be identified with $F_{n-1}(M\setminus \brak{x_{i}})$. The result follows by taking the long exact sequence in homotopy of this fibration (note that the fibre is contractible if $M=\St$ and $n=2$).

\item If $k\geq 3$, or $k=n=2$ and $M=\St$ then the statement is a consequence of part~(\ref{it:iotainjecta}) and~\reqref{compproj}. So assume that $k=2$ and that $n\neq 2$ if $M=\St$. The result is a consequence of \rerem{pi2triv}.\qedhere
\end{enumerate}
%
%
\end{proof}

As we shall now see, the homomorphism $\map{(\iota_{n})_{\#k}}{\pi_{k}(F_{n}(M))}[\pi_{k}(\prod_{1}^{n}\,M)]$ is diagonal for most values of $n$ and $k$, with the exception of one case when it is anti-diagonal. 

\begin{prop}\label{prop:diagS2}
Let $n,k\geq 2$.
\begin{enumerate}[(a)]
\item\label{it:diaga} If $n=k=2$ then the homomorphism $\map{(\iota_{2})_{\#2}}{\pi_{2}(F_{2}(\St))}[\pi_{2}(\prod_{i=1}^{2}\,\St)]$ is the anti-diagonal homomorphism that sends a generator of $\pi_{2}(F_{2}(\St))$ to $(1,-1)$ or to $(-1,1)$ depending on the choice of orientation of $\St$.

\item\label{it:diagb} If $n\geq 2$ and $k\geq 3$ then $\map{(\iota_{n})_{\#k}}{\pi_{k}(F_{n}(\St))}[\pi_{k}(\prod_{1}^{n}\,\St)]$ is a diagonal homomorphism.

\end{enumerate}
\end{prop}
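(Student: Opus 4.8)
The plan is to analyse the induced homomorphism $(\iota_n)_{\#k}$ coordinate by coordinate, using the projections $\widetilde{p}_i$ and the relations in~\reqref{compproj}, and to reduce everything to the single nontrivial case $n=k=2$ which must then be computed by hand. First recall that $\pi_k(\prod_1^n\St)\cong \prod_1^n\pi_k(\St)$, the isomorphism being given by the maps $(\widetilde{p}_i)_{\#k}$, so to understand $(\iota_n)_{\#k}$ it suffices to understand each composite $(\widetilde{p}_i)_{\#k}\circ(\iota_n)_{\#k}=(p_i)_{\#k}$, by~\reqref{compproj}. By \relem{iotainject}(\ref{it:iotainjecta}), for $k\geq 3$ (and for $k=n=2$ when $M=\St$) each $(p_i)_{\#k}$ is an \emph{isomorphism} $\pi_k(F_n(\St))\xrightarrow{\ \cong\ }\pi_k(\St)$. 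Hence $(\iota_n)_{\#k}$, read in these coordinates, sends a generator of $\pi_k(F_n(\St))\cong\Z$ to a tuple $(\varepsilon_1,\dots,\varepsilon_n)$ with each $\varepsilon_i=\pm 1$; so the homomorphism is automatically diagonal or anti-diagonal in each coordinate, and the only question is whether the signs agree across coordinates.

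For part~(\ref{it:diagb}), where $k\geq 3$, the key observation is that for $n\geq 3$ we have $\pi_2(F_n(\St))=0$ by \rerem{pi2triv}, but more to the point, for $k\geq 3$ I want to compare the $i$\up{th} and $j$\up{th} signs via the diagram~\reqref{bigdiag}. Applying $\pi_k$ to that diagram, the map $(\alpha_{i,j})_{\#k}\colon\pi_k(F_n(\St))\to\pi_k(F_2(\St))$ fits between the two projections $(p_i)_{\#k}$ and $(p_1\circ\alpha_{i,j})_{\#k}$; the commutativity forces the sign extracted in coordinate $i$ and the sign extracted in coordinate $j$ to be related by the behaviour of $(\iota_2)_{\#k}\colon\pi_k(F_2(\St))\to\pi_k(\prod_1^2\St)$, which for $k\geq 3$ is again governed by isomorphisms $(p_1)_{\#k}$, $(p_2)_{\#k}$ on $F_2(\St)$. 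So it reduces to showing that $(\iota_2)_{\#k}$ is diagonal for $k\geq 3$. For this I would use the fact that $F_2(\St)$ deformation retracts onto (a copy of) $\St$ — explicitly, $F_2(\St)\simeq \St$ via $(x,y)\mapsto x$ with the second point slaved antipodally, say $(x,y)\mapsto(x,-x)$ as a section — and under such a model both $p_1$ and $p_2$ become, up to homotopy, $\mathrm{Id}$ and $\tau$ respectively. Since $\tau$ is orientation-preserving on $\St$ (the antipodal map on $\St[2]$ has degree $+1$), $\tau_{\#k}=\mathrm{Id}$ on $\pi_k(\St)$ for all $k$, so $(p_1)_{\#k}$ and $(p_2)_{\#k}$ induce the \emph{same} iso on a chosen generator: the signs agree, and $(\iota_2)_{\#k}$, hence $(\iota_n)_{\#k}$, is diagonal.

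For part~(\ref{it:diaga}), the case $n=k=2$, the point is precisely that $F_2(\St)\simeq\St$ but now $\pi_2(F_2(\St))\cong\pi_2(\St)\cong\Z$ and the two projections behave differently on $\pi_2$. Here $p_1\simeq\mathrm{Id}_{\St}$ gives $(p_1)_{\#2}$ an isomorphism sending a chosen generator $\gamma$ to $1$, while $p_2$, under the retraction $F_2(\St)\to\St$, $(x,y)\mapsto x$ with section $x\mapsto(x,-x)$, is homotopic to $\tau$; thus $(p_2)_{\#2}(\gamma)=\deg(\tau)\cdot 1$. The subtlety — and what makes this case genuinely different from $k\geq 3$ — is the identification of the generator of $\pi_2(F_2(\St))$ itself. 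Concretely one should realise $\gamma$ as the class of the map $\St\to F_2(\St)$, $z\mapsto(z,-z)$ (the diagonal-ish sphere); then $p_1\circ\gamma=\mathrm{Id}$ contributes $+1$ in the first coordinate, while $p_2\circ\gamma=\tau$, which on $\St[2]$ has degree $+1$... so one must be careful: the genuinely relevant map is $\alpha_{1,2}=\mathrm{Id}_{F_2(\St)}$ and the nontriviality comes from how $F_2(\St)$ sits inside $\St\times\St$. I would instead use that $\St\times\St$ has $\pi_2\cong\Z^2$ generated by the two axes $[\St\times *]$ and $[*\times\St]$, and compute $[\iota_2\circ\gamma]=[z\mapsto(z,-z)]$ as an element of this $\Z^2$: projecting to the first factor gives degree $\deg(\mathrm{Id})=1$, projecting to the second gives $\deg(z\mapsto -z)=+1$ on $\St[2]$. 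Wait — this would give $(1,1)$, so the \emph{real} input is that the relevant generator of $\pi_2(F_2(\St))$ is not $[z\mapsto(z,-z)]$ at all but is detected differently; the correct statement, which the authors assert, is $(1,-1)$, so the computation that actually must be carried out is a careful degree count of the clutching/attaching data of $\widetilde{F_2(\St)}\simeq\St[2]$ inside $\St[2]\times\St[2]$, presumably via the Fadell--Neuwirth fibration $F_2(\St)\to\St$ with fibre $\St\setminus\{x_1\}\simeq\R^2$ and the resulting identification of $\pi_2$. \emph{The main obstacle} is exactly this last sign computation in part~(\ref{it:diaga}): pinning down, with an explicit and correctly-oriented generator of $\pi_2(F_2(\St))\cong\Z$, that its image under $(\iota_2)_{\#2}$ is $(1,-1)$ rather than $(1,1)$ — this requires tracking orientations through the fibration $F_2(\St)\to\St$ and cannot be finessed by the formal diagram-chase that settles the $k\geq 3$ case. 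Everything else (injectivity, the coordinatewise reduction, the diagonal conclusion for $k\geq 3$) is formal given \relem{iotainject} and the degree-$+1$ fact for the antipodal map on $\St[2]$.
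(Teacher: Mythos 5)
Your proof founders on a single but fatal misremembered fact: the antipodal map $\tau$ on $\St=\St[2]$ has degree $-1$, not $+1$ (the degree of the antipodal map on $\St[n]$ is $(-1)^{n+1}$, so it is orientation-reversing on every even-dimensional sphere). This error propagates through the whole argument and in fact produces an internal contradiction that you notice — "Wait — this would give $(1,1)$" — but misdiagnose. Once the degree is corrected, part~(\ref{it:diaga}) is the \emph{easy} part, not the hard one: since $p_1$ is a homotopy equivalence with section $s_1(x)=(x,-x)$, a generator of $\pi_2(F_2(\St))$ is represented by $z\mapsto(z,-z)$, and its image under $\iota_2$ projects to $\mathrm{Id}$ in the first factor and to $\tau$ in the second, giving $(1,\deg\tau)=(1,-1)$ directly. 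This is exactly the paper's argument: $(p_2)_{\#2}=\tau_{\#2}\circ(p_1)_{\#2}=-(p_1)_{\#2}$.

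The serious gap is in part~(\ref{it:diagb}). With $\deg\tau=-1$, your claim that $\tau_{\#k}=\mathrm{Id}$ on $\pi_k(\St)$ for all $k\geq 3$ does \emph{not} follow from a degree computation — degree controls $\pi_2$, where $\tau$ acts by $-1$, but says nothing a priori about higher homotopy. The fact that $\tau$ nonetheless acts trivially on $\pi_k(\St)$ for $k\geq 3$ is a genuinely nontrivial input, and the paper supplies it via Hopf's theorem $\tau\circ H\simeq H$, where $H\colon\St[3]\to\St$ is the Hopf map: any $\beta\colon\St[k]\to\St$ with $k\geq 3$ factors up to homotopy through $H$ (by the long exact sequence of the Hopf fibration $\St[1]\to\St[3]\to\St$), so $\tau\circ\beta\simeq\tau\circ H\circ\beta_1\simeq H\circ\beta_1=\beta$. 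Without this, your proof has no mechanism to reconcile "anti-diagonal at $k=2$" with "diagonal at $k\geq 3$". The coordinatewise reduction via~\reqref{compproj} and the diagram~\reqref{bigdiag}, and the passage from $n=2$ to general $n$, are fine and match the paper's structure; but the heart of part~(\ref{it:diagb}) is precisely the Hopf-map input, and it cannot be finessed by the (incorrect) degree claim.
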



\begin{proof}
Let $n=k=2$ (resp.\ $n\geq 2$ and $k\geq 3$). In part~(\ref{it:diaga}) (resp.~(\ref{it:diagb})), we shall show that $\map{(\iota_{n})_{\#k}}{\pi_{k}(F_{n}(\St))}[\pi_{k}(\prod_{1}^{n}\,\St)]$ is an anti-diagonal (resp.\ diagonal) homomorphism. To do so, it suffices to prove that $(\widetilde{p}_{1})_{\#2}\circ (\iota_{n})_{\#2}=-(\widetilde{p}_{2})_{\#2}\circ (\iota_{n})_{\#2}$ (resp.\ $(\widetilde{p}_{i})_{\#k}\circ (\iota_{n})_{\#k}=(\widetilde{p}_{j})_{\#k}\circ (\iota_{n})_{\#k}$ for all $1\leq i<j\leq n$), which is equivalent to showing that $(p_{1})_{\#2}=-(p_{2})_{\#2}$ (resp.\ $(p_{i})_{\#k}=(p_{j})_{\#k}$).

\begin{enumerate}[(a)]
\item Suppose that $n=k=2$. Thus $\pi_{2}(F_{2}(\St))\cong \pi_{2}(\St)\cong \Z$, and for $i=1,2$, $p_{i}$ is a homotopy equivalence, with homotopy inverse $\map{s_{i}}{\St}[F_{2}(\St)]$, where $s_{1}(x)=(x,-x)$ and $s_{2}(x)=(-x,x)$~\cite[page~859]{GG12}. Since $p_{1}\circ s_{1}=\id_{\St}$ and $p_{2}\circ s_{1}=\tau$, we have $(s_{1})_{\#2}=((p_{1})_{\#2})^{-1}$, $(p_{2})_{\#2}\circ ((p_{1})_{\#2})^{-1}=\tau_{\#2}$, and $(p_{2})_{\#2} =\tau_{\#2}\circ (p_{1})_{\#2}=-(p_{1})_{\#2}$ since $\tau$ is of degree $-1$. Now $\pi_{2}(F_{2}(\St))\cong \pi_{2}(\St) \cong \Z$, and once we have fixed an orientation of $\St$ and a generator of $\pi_{2}(F_{2}(\St))$, $(\iota_{2})_{\#2}$ sends this generator to $(1,-1)$ or to $(-1,1)$ using~\reqref{compproj} and \relem{iotainject}(\ref{it:iotainjecta}).

\item Let $k\geq 3$. To compare the homomorphisms $\map{(p_{i})_{\#k}}{\pi_{k}(F_{n}(\St))}[\pi_{k}(\St)]$, where $i\in \brak{1,\ldots,n}$, we analyse the elements $(p_{i})_{\#k}([\alpha])$ of $\pi_{k}(\St)$ for maps $\map{\alpha}{\St[k]}[F_{n}(\St)]$. We split the proof into two cases.

\begin{enumerate}[(i)]
\item First suppose that $n=2$. Let $\beta=p_{1}\circ \alpha$, and let $s_{1}$ be as in the proof of~(\ref{it:diaga}). Then $s_{1}\circ \beta = s_{1}\circ p_{1}\circ \alpha \simeq \alpha$. Since $\beta$ is a map from $\St[k]$ to $\St$, it follows from the long exact sequence in homotopy of the Hopf fibration $\St[1]\to \St[3]\stackrel{H}{\to} \St$ that $\beta$ factors through the Hopf map $\map{H}{\St[3]}[\St]$, and so there exists $\map{\beta_{1}}{\St[k]}[{\St[3]}]$ such that $\beta=H\circ \beta_{1}$. We thus have the following diagram: 
\begin{equation*}
\begin{tikzcd}[cramped]
\St[k] \arrow{r}{\alpha} \arrow{rd}{\beta} \arrow{d}{\beta_{1}} & F_{2}(\St) \arrow[xshift=0.5ex]{d}{p_{1}}\\
\St[3] \arrow{r}{H} & \St \arrow[xshift=-0.5ex]{u}{s_{1}}
\end{tikzcd}
\end{equation*}
that is commutative, with the exception of the paths involving $s_{1}$ that are commutative up to homotopy. Using the homotopy equivalences of part~(\ref{it:diaga}), for $i=1,2$, we have:
\begin{equation*}
p_{i}\circ \alpha\simeq p_{i}\circ s_{1}\circ \beta = p_{i} \circ s_{1}\circ H\circ \beta_{1}\simeq \tau^{i-1} \circ H\circ \beta_{1}.
\end{equation*}
But by a result of Hopf (see~\cite[page~269, Lemma~8.2.5]{A} or~\cite[Satz~II and~IIb']{Ho}), $\tau\circ H\simeq H$, and so $p_{1}\circ \alpha = \beta \simeq p_{2}\circ \alpha$. Thus $(p_{1}\circ \alpha)_{\#k} = (p_{2}\circ \alpha)_{\#k}$ for all maps $\map{\alpha}{\St[k]}[F_{2}(\St)]$, hence $(p_{1})_{\#k} = (p_{2})_{\#k}$, from which we conclude that $(\iota_{2})_{\#k}$ is a diagonal homomorphism.

\item\label{it:diagbii} Now suppose that $n\geq 3$. To prove that $(\iota_{n})_{\#k}$ is diagonal, it suffices to show that the homotopy class $[p_{i} \circ \alpha]$ in $\pi_{k}(\St)$ of the map $\map{p_{i} \circ \alpha}{\St[k]}[\St]$ does not depend on $i$, or equivalently that $[p_{j} \circ \alpha]=[p_{1} \circ \alpha]$ in $\pi_{k}(\St)$ for all $2\leq j \leq n$.
As we saw in the previous paragraph, $\map{(\iota_{2})_{\#k}}{\pi_{k}(F_{2}(\St))}[\pi_{k}(\prod_{i=1}^{2}\,\St)]$ is a diagonal homomorphism, in particular, $(\widetilde{p}_{1}\circ \iota_{2})_{\#k}=(\widetilde{p}_{2}\circ \iota_{2})_{\#k}$. Using this and taking $i=1$ and $2\leq j\leq n$ in the commutative diagram~\reqref{bigdiag}, for any map $\map{\alpha}{\St[k]}[F_{n}(\St)]$, we have:
\begin{equation*}
[p_{1} \circ \alpha] = [\widetilde{p}_{1}\circ \iota_{2}\circ \alpha_{1,j} \circ \alpha]=[\widetilde{p}_{2}\circ \iota_{2}\circ \alpha_{1,j} \circ \alpha] =[p_{j} \circ \alpha],
\end{equation*}
and thus $(p_{1})_{\#k}=(p_{j})_{\#k}$ as required.
%
\qedhere
\end{enumerate}
\end{enumerate}
\end{proof}

The following proposition is the analogue of \repr{diagS2} for $\rp$.

\begin{prop}\label{prop:diagRP2}
Let $n,k\geq 2$. Then $\map{(\iota_{n})_{\#k}}{\pi_{k}(F_{n}(\rp))}[\pi_{k}(\prod_{1}^{n}\,\rp)]$ is a diagonal homomorphism.
\end{prop}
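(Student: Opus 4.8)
The plan is to mimic the structure of the proof of \repr{diagS2}, reducing everything to a statement about the coordinate projections $\map{p_{i}}{F_{n}(\rp)}[\rp]$. As in that proof, since $\prod_{1}^{n}\,\rp$ is a product, it suffices to show that $(\widetilde{p}_{i})_{\#k}\circ (\iota_{n})_{\#k}=(\widetilde{p}_{j})_{\#k}\circ (\iota_{n})_{\#k}$ for all $1\leq i<j\leq n$, which by~\reqref{compproj} is equivalent to $(p_{i})_{\#k}=(p_{j})_{\#k}$. So the whole proposition comes down to showing that for every map $\map{\alpha}{\St[k]}[F_{n}(\rp)]$ the homotopy class $[p_{i}\circ\alpha]\in\pi_{k}(\rp)$ is independent of $i$. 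Note $\pi_{k}(\rp)\cong\pi_{k}(\St)$ for $k\geq 2$ since $\map{\pi}{\St}[\rp]$ is a covering, so we are again comparing elements of $\pi_{k}(\St)$.

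First I would handle the case $n=2$. Here $\alpha$ lifts: since $\St[k]$ is simply connected ($k\geq 2$) and $\map{\pi\times\pi}{F_{2}(\St)}[F_{2}(\rp)]$ is a covering map (it is the restriction of the covering $\St\times\St\to\rp\times\rp$, and $F_{2}(\rp)$ is covered by the preimage, which is $F_{2}(\St)$ together with the ``antidiagonal-type'' points, but in any case we get a covering of $F_2(\rp)$ by a subspace of $\St\times\St$), the map $\alpha$ admits a lift $\map{\widetilde{\alpha}}{\St[k]}[\widetilde{F_{2}(\rp)}]$, and composing with the covering of $F_2(\rp)$ by the relevant subspace $Y\subseteq\St\times\St$ we get $\map{\beta}{\St[k]}[Y]$ with $p_{i}\circ\alpha=\pi\circ q_{i}\circ\beta$, where $q_{i}$ is the $i$\up{th} coordinate projection $Y\to\St$. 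Now on $Y$, the two projections $q_{1},q_{2}$ differ (on the antidiagonal-type sheet) by the antipodal map, so $q_{2}\circ\beta\simeq\tau\circ q_{1}\circ\beta$ or $q_{2}\circ\beta\simeq q_{1}\circ\beta$ according to the sheet; in either case, after applying $\pi$ and using $\pi\circ\tau=\pi$, we get $p_{1}\circ\alpha\simeq p_{2}\circ\alpha$. (Alternatively, and perhaps more cleanly, one can invoke \repr{diagS2} together with the commuting square relating $F_{2}(\St)\to F_{2}(\rp)$ and the coordinate projections, once the relevant subtlety about which double cover of $F_2(\rp)$ to use is sorted out — but for $k\geq 3$ one can also just note $\widetilde{F_2(\rp)}\simeq\St[3]$ by \reth{homotopytypefns} and run the Hopf-map argument as in \repr{diagS2}(\ref{it:diagb})(i).)

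For the case $n\geq 3$, I would argue exactly as in \repr{diagS2}(\ref{it:diagbii}): having established that $(\iota_{2})_{\#k}$ is diagonal for $\rp$, so that $(\widetilde{p}_{1}\circ\iota_{2})_{\#k}=(\widetilde{p}_{2}\circ\iota_{2})_{\#k}$, one applies the commutative diagram~\reqref{bigdiag} (which holds for $M=\rp$) with $i=1$ and $2\leq j\leq n$ to any map $\map{\alpha}{\St[k]}[F_{n}(\rp)]$, obtaining
\begin{equation*}
[p_{1}\circ\alpha]=[\widetilde{p}_{1}\circ\iota_{2}\circ\alpha_{1,j}\circ\alpha]=[\widetilde{p}_{2}\circ\iota_{2}\circ\alpha_{1,j}\circ\alpha]=[p_{j}\circ\alpha],
\end{equation*}
hence $(p_{1})_{\#k}=(p_{j})_{\#k}$ and $(\iota_{n})_{\#k}$ is diagonal.

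The main obstacle is the $n=2$ base case, specifically keeping straight exactly which connected double (or other finite) cover of $F_{2}(\rp)$ sits inside $\St\times\St$ and how the two coordinate projections of $F_2(\rp)$ pull back to it — the point being that unlike the $\St$ case there is no sign issue ($\tau$ has degree $-1$ on $\St$ but $\pi$ kills $\tau$, so the anti-diagonal phenomenon of \repr{diagS2}(\ref{it:diaga}) disappears), so I expect the answer to be uniformly diagonal. Everything else is a formal consequence of \reqref{compproj}, \reqref{bigdiag}, and the $n=2$ case, exactly as for $\St$.
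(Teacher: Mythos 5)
Your reduction to showing $(p_{i})_{\#k}=(p_{j})_{\#k}$ and your step for $n\geq 3$ (given the $n=2$ case and diagram \reqref{bigdiag}) both match the paper exactly. But the base case $n=2$ is where all the work lies, and your argument there has a genuine gap.

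The covering-space picture is mis-stated: the preimage of $F_{2}(\rp)$ under $\pi\times\pi$ in $\St\times\St$ is $\set{(x,y)}{x\neq\pm y}$, a connected \emph{subspace} of $F_{2}(\St)$ (the antidiagonal is removed, not adjoined). There are no ``sheets,'' so the assertion that $q_{2}\circ\beta$ is homotopic either to $q_{1}\circ\beta$ or to $\tau\circ q_{1}\circ\beta$ according to some sheet is not an argument — it restates the desired conclusion. Ruling out a sign discrepancy between the two coordinate projections is precisely the non-trivial content of the proposition; it cannot be asserted. Your parenthetical fallback (``run the Hopf-map argument as in \repr{diagS2}(\ref{it:diagb})(i)'') does not transfer either: that argument rests on the section $\map{s_{1}}{\St}[F_{2}(\St)]$, $s_{1}(x)=(x,-x)$, together with $\tau\circ H\simeq H$, and no analogous section $\rp\to F_{2}(\rp)$ can exist — it would yield a fixed-point-free self-map of $\rp$, impossible since $\chi(\rp)=1$ forces every self-map of $\rp$ to have a fixed point. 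The structural feature that drives the $\St$ proof is simply absent here.

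The paper closes the gap differently. For $n=2$, $k=3$ it works inside $F_{3}(\rp)$: each $\map{(p_{l})_{\#3}}{\pi_{3}(F_{3}(\rp))}[\pi_{3}(\rp)]$ is an isomorphism $\Z\to\Z$, hence sends a generator $\gamma$ to $\pm\rho$, and by the pigeonhole principle two of the three signs agree; pushing through the Fadell--Neuwirth fibration $\alpha_{i,j}$ and diagram \reqref{bigdiag} transfers this agreement down to $F_{2}(\rp)$ and yields $(p_{1})_{\#3}=(p_{2})_{\#3}$. For $k\geq 4$ it then lifts $p_{i}\circ h\circ f$ to $\map{\widehat{p}_{i}}{\St[3]}[\St]$, uses the $k=3$ case to conclude both $\widehat{p}_{i}$ induce the \emph{same} isomorphism on $\pi_{3}$, hence are both homotopic to the Hopf map $H$, and is done. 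The pigeonhole step at $k=3$ is the idea your proposal lacks. (Also, the case $k=2$ should be dispatched explicitly — it is trivial by \rerem{pi2triv} since $\pi_{2}(F_{n}(\rp))=1$ — rather than absorbed without comment.)
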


\begin{rem}
Let $M=\St$ (resp.\ $M=\rp$). In the cases not covered by \relem{iotainject}(\ref{it:iotainjecta}), namely $k=2$ and $n\geq 3$ (resp.\ $n\geq 2$), $\map{(\iota_{n})_{\#2}}{\pi_{2}(F_{n}(M))}[\pi_{2}(\prod_{1}^{n}\,M)]$ is trivially diagonal by \rerem{pi2triv}, but the homomorphism $(p_{i})_{\#k}$ of \relem{iotainject}(\ref{it:iotainjecta}) is not an isomorphism.
\end{rem}

\begin{proof}[Proof of \repr{diagRP2}]
Let $k,n\geq 2$. We consider four cases.
\begin{enumerate}[(a)]
\item If $k=2$ then $\pi_{2}(F_{n}(\rp))=1$ by \rerem{pi2triv}, and hence $(\iota_{n})_{\#2}$ is the trivial (diagonal) homomorphism.

\item\label{it:n2k3} Let $n=2$ and $k=3$. By~\reqref{compproj}, it suffices to show that $(p_{1})_{\#3}=(p_{2})_{\#3}$. With \reth{homotopytypefns} in mind, let $\gamma$ (resp.\ $\rho$) denote a generator of the infinite cyclic group $\pi_{3}(F_{3}(\rp))$ (resp.\ $\pi_{3}(\rp)$). By \relem{iotainject}(\ref{it:iotainjecta}), $\map{(p_{l})_{\#3}}{\pi_{3}(F_{3}(\rp))}[\pi_{3}(\rp)]$ is an isomorphism for all $1\leq l\leq 3$, so there exists $\epsilon_{l}\in \brak{1,-1}$ such that $(p_{l})_{\#3}(\gamma)=\epsilon_{l} \rho$. Hence there exist $1\leq i<j\leq 3$ and $\epsilon\in \brak{1,-1}$ such that $(p_{i})_{\#3}(\gamma)=(p_{j})_{\#3}(\gamma)=\epsilon \rho$. Consider~\reqref{bigdiag}, where we take $n=3$. The map $\alpha_{i,j}$ is a Fadell-Neuwirth fibration, whose fibre may be identified with $F_{1}(\rp \setminus \brak{x_{i},x_{j}})$, and as in the proof of \relem{iotainject}(\ref{it:iotainjecta}), we see that $\map{(\alpha_{i,j})_{\#3}}{\pi_{k}(F_{3}(\rp))}[\pi_{k}(F_{2}(\rp))]$ is an isomorphism, thus $\gamma'=(\alpha_{i,j})_{\#3}(\gamma)$ is a generator of the infinite cyclic group $\pi_{3}(F_{2}(\rp))$.
Taking the commutative diagram~\reqref{bigdiag} on the level on $\pi_{3}$, we see that:
\begin{align*}
\epsilon \rho &=(p_{i})_{\#3}(\gamma)=(p_{1}\circ \alpha_{i,j})_{\#3}(\gamma)= (p_{1})_{\#3}(\gamma') \;\text{and}\\
\epsilon \rho &=(p_{j})_{\#3}(\gamma)=(p_{2}\circ \alpha_{i,j})_{\#3}(\gamma)= (p_{2})_{\#3}(\gamma'),
\end{align*}
from which it follows that $(p_{1})_{\#3}=(p_{2})_{\#3}$ as required.

\item\label{it:n2k4} Suppose that $n=2$ and $k\geq 4$. Let $\map{h}{\widetilde{F_{2}(\rp)}}[F_{2}(\rp)]$ be the universal covering map, and with \reth{homotopytypefns} in mind, let $\map{f}{\St[3]}[\widetilde{F_{2}(\rp)}]$ and $\map{g}{\widetilde{F_{2}(\rp)}}[{\St[3]}]$ be homotopy equivalences. If $i\in \brak{1,2}$ then $\map{p_{i}\circ h\circ f}{\St[3]}[\rp]$ lifts to a map $\map{\widehat{p}_{i}}{\St[3]}[\St]$, and we have the following commutative diagram:
\begin{equation}\label{eq:ntwokthree}
\begin{tikzcd}[cramped]
\St[3] \arrow{r}{\widehat{p}_{i}}  \arrow{d}{h\circ f} & \St \arrow{d}{\pi}\\
F_{2}(\rp) \arrow{r}{p_{i}} &  \rp.
\end{tikzcd}
\end{equation}
From \relem{iotainject}(\ref{it:iotainjecta}) and the proof of case~(\ref{it:n2k3}) above, $\map{(p_{i})_{\#k}}{\pi_{k}(F_{2}(\rp))}[\pi_{k}(\rp)]$ is an isomorphism and  $(p_{i})_{\#3}(\gamma')=\epsilon \rho$. Considering the commutative diagram~\reqref{ntwokthree} on the level of $\pi_{3}$, $(h\circ f)_{\#3}$ and $\pi_{\#3}$ are also isomorphisms, and it follows that $(\widehat{p}_{i})_{\#3}=(\pi_{\#3})^{-1} \circ (p_{i})_{\#3} \circ (h\circ f)_{\#3}$ is an isomorphism that is independent of $i$. Hence $\widehat{p}_{i}$ is homotopic to the Hopf map $H$, and on the level of $\pi_{k}$, we have $(p_{i})_{\#k}=\pi_{\#k} \circ H_{\#k} \circ ((h\circ f)_{\#k})^{-1}$ for all $k\geq 3$. We conclude that $(p_{1})_{\#k}=(p_{2})_{\#k}$ as required.

\item Finally, if $n,k\geq 3$, it suffices to replace $\St$ by $\rp$ in case~(\ref{it:diagb})(\ref{it:diagbii}) of the proof of \repr{diagS2}, and to make use of the fact proved in cases~(\ref{it:n2k3}) and~(\ref{it:n2k4}), that for $n=2$ and all $k\geq 3$, $(\iota_{2})_{\#k}$ is a diagonal homomorphism.\qedhere
\end{enumerate}
\end{proof}

\begin{rem}
To prove the result of \repr{diagRP2} in the case $n=2$ and $k=3$, one may make use of~\cite[Proposition~2.13]{DG} or~\cite[Theorem~1.17]{K} to show that the pair of maps $\map{(\pi\circ H,\pi\circ H)}{\St[3]}[\rp\times \rp]$ may be deformed to a pair of coincidence-free maps, and so up to homotopy, factors through $F_{2}(\rp)$.
\end{rem}

\section{The homotopy type of the homotopy fibre of $\iota_{n}$ for $M=\St,\rp$} \label{sec:sec3}

The aim of this section is to prove \reth{prop5} concerning the homotopy type of the homotopy fibre of the map $\map{\iota_{n}}{F_{n}(M)}[\prod_1^n \, M]$, where $M=\St$ or $\rp$. In \resec{generalities}, we define much of the basic notation that will be used in the rest of the paper, and we advise the reader to refer to this section frequently during the rest of the manuscript. 
In \resec{fibreiotaS2}, we shall prove \reth{prop5} and \reco{lesfibs2}.

\subsection{Generalities and notation}\label{sec:generalities}


Let $X$ be a topological space, and let $x_{0}\in X$ be a basepoint, and let $I=[0,1]$. If $l$ is a path in $X$ parametrised by $t\in I$ then let $l^{-1}$ denote its inverse, in other words $l^{-1}(t)=l(1-t)$ for all $t\in I$, and if $x\in X$ then let $c_{x}$ denote the constant path at $x$. If $l,l'$ are paths in $X$ such that $l(1)=l'(0)$, then $l\ast l'$ will denote their concatenation. Let $\Omega X$ denote the loop space of $(X,x_{0})$, and if $\omega\in \Omega X$ then let $[\omega]$ denote the associated element of $\pi_{1}(X,x_{0})$. If $Y$ is a topological space and $\map{f}{X}[Y]$ is a map, then we shall always take the basepoint of $E_{f}$ and $I_{f}$ defined in equations~\reqref{defmappath} and~\reqref{defhomfibre} to be $(x_{0}, c_{f(x_{0})})$.


We define the following notation that will be used throughout the rest of the manuscript. Let $\St$ be the unit sphere in $\R^{3}$, let $\widetilde{z}_{0}=(0,0,1)$, let $\dd$ be the open disc $\St\setminus \brak{-\widetilde{z}_{0}}$, let $C$ be the open cylinder $\St\setminus \brak{\widetilde{z}_{0},-\widetilde{z}_{0}}$, and let $\map{\tau}{\St}$ be the antipodal map. We equip $\St$ with spherical coordinates $(\theta,\phi)$, where $\theta\in [0,2\pi)$ is the longitude whose prime meridian passes through $\widetilde{x}_{0}$ and $\phi\in [-\frac{\pi}{2},\frac{\pi}{2}]$ is the latitude. Let $n_{0}=3$ (resp.\ $n_{0}=2$) if $M=\St$ (resp.\ $\rp$), let $n\geq n_{0}$, and for $j\in \brak{0,1,\ldots,n-n_{0}}$, let $\widetilde{x}_{j}=\bigl(0, \frac{\pi}{2}\bigl(\frac{j}{j+1}\bigr)\bigr)$ and $x_{j}=\pi(\widetilde{x}_{j})$, where $\map{\pi}{\St}[\rp]$ denotes the universal covering map. Let $M=\St$ (resp.\ $M=\rp$), and let $\mathbb{M}=\dd$ (resp.\ $\mathbb{M}=C$). We take the basepoint $W_{n}=(w_{1},\ldots,w_{n})$ of both $F_{n}(M)$ and $\prod_{1}^{n}\, M$ to be $(\widetilde{x}_{0},\widetilde{x}_{1}, \ldots, \widetilde{x}_{n-3}, \widetilde{z}_{0},-\widetilde{z}_{0})$ (resp.\ $(x_{0},x_{1},\ldots, x_{n-2},z_{0})$), where $z_{0}=\pi(\widetilde{z}_{0})$, and the basepoint of both $F_{n-1}(M\setminus\brak{w_n})$ and $\prod_{i=1}^{n-1}\, M$ to be $W_{n}'= (w_{1},\ldots,w_{n-1})$. Let $G=\brak{\id_{\dd}}$ (resp.\ $G=\ang{\tau\left\lvert_{C}\right.}$). Then $F_{n-1}^{G}(\mathbb{M})$ is the usual configuration space $F_{n-1}(\dd)$ of $\dd$ (resp.\ the orbit configuration space $F_{n-1}^{\ang{\tau}}(C)$ of $C$) that we equip with the basepoint $W_{n}''=(w_{1}'',\ldots,w_{n-1}'')$, where $W_{n}''=(\widetilde{x}_{0},\widetilde{x}_{1}, \ldots, \widetilde{x}_{n-3}, \widetilde{z}_{0})$ (resp.\ $W_{n}''=(\widetilde{x}_{0},\widetilde{x}_{1}, \ldots, \widetilde{x}_{n-2})$).  
So as not to overload the notation, we shall omit these basepoints in much of what follows, and we shall identify $\pi_{1}\left(\prod_{1}^{n} \, \Omega M, (c_{w_{1}}, \ldots, c_{w_n})\right)$ with $\pi_{2}\left(\prod_{1}^{n} \, M, W_{n})\right)$ in the usual manner. 

Let $E_{n}$ and $I_{n}$ (resp.\ $E_{n}'$ and $I_{n}'$) denote the mapping path and homotopy fibre respectively of the inclusion $\altmap{\iota_{n}}{F_{n}(M)}{\prod_{1}^{n}\, M}$ (resp.\ $\altmap{\iota_{n}'}{F_{n-1}(M\setminus\brak{w_n})}{\prod_{i=1}^{n-1}\, M}$)
that are defined using~\reqref{defmappath} and~\reqref{defhomfibre} by:
\begin{align}
E_{n} &=\setl{(x,\lambda)\in F_{n}(M)\times \prod_{1}^{n}\, M^{I}}{\text{$\lambda(0)=x$}}\notag\\
I_{n} &= \setl{(x, \lambda)\in E_{n}}{\lambda(1)= W_{n}}\notag\\
E_{n}' &= \setl{(y, \mu)\in F_{n-1}(M\setminus\brak{w_n})\times \prod_{i=1}^{n-1}\, M^{I}}{\text{$\mu(0)=y$}}\notag\\
I_{n}' &= \setl{(y, \mu)\in E_{n}'}{\mu(1)=W_{n}'}.\label{eq:defI1}
\end{align}
The basepoints are chosen in accordance with the convention of the first paragraph. Let $\map{j_{n}}{I_{n}}[E_{n}]$ and $\map{j_{n}'}{I_{n}'}[E_{n}']$ denote the associated inclusions. By~\cite[Proposition~3.5.8(2) and Remark~3.5.9(1)]{A}, the maps $\map{f_{n}}{F_{n}(M)}[E_{n}]$ and $\map{g_{n}}{E_{n}}[F_{n}(M)]$ (resp.\ $\map{f_{n}'}{F_{n-1}(M\setminus\brak{w_n})}[E_{n}']$ and $\map{g_{n}'}{E_{n}'}[F_{n-1}(M\setminus\brak{w_n})]$) defined by $f_{n}(x)=(x,c_{\iota_{n}(x)})$ and $g_{n}(x,\lambda)=x$ (resp.\ $f_{n}'(y)=(y,c_{\iota_{n}'(y)})$ and $g_{n}'(y,\mu)=y$) are mutual homotopy equivalences. 
Let $\map{\alpha_{n}}{E_{n}'}[E_{n}]$ be defined by $\alpha_{n}(y, \mu)= (y,w_{n}, \mu,c_{w_{n}})$, let $\map{\widehat{\alpha}_{n}}{\prod_{i=1}^{n-1}\, M}[\prod_{1}^{n}\, M]$ be defined by $\widehat{\alpha}_{n}(y)= (y,w_{n})$, and let
\begin{equation*}
 \map{\widehat{\alpha}_{n}\left\lvert_{F_{n-1}(M \setminus\brak{w_{n}})}\right.}{F_{n-1}(M \setminus\brak{w_{n}})}[F_{n}(M)]
\end{equation*}
denote its restriction to the corresponding configuration spaces. We define the map $\map{\widehat{\iota}_{n}}{E_{n}}[\prod_{1}^{n}\, M]$ (resp.\ $\map{\widehat{\iota'_{n}}}{E_{n}'}[\prod_{i=1}^{n-1}\, M]$) by $\widehat{\iota}_{n}(x,\lambda)=\lambda(1)$ (resp.\ $\widehat{\iota_{n}'}(y,\mu)=\mu(1)$). Note that $\widehat{\iota}_{n}\circ f_{n}= \iota_{n}$, and that $\alpha_{n}$ restricts to a map from $I_{n}'$ to $I_{n}$ that we denote by $\alpha_{n}'$. 
Applying~\cite[pages~91~and~108]{A},
we obtain the following commutative diagram:
\begin{equation}\label{eq:commdiagfib}
\begin{tikzcd}[cramped]
I_{n}' \arrow[r, hook, "j_{n}'"] \arrow[d, "\alpha_{n}'"]
& E_{n}' \arrow{r}{\widehat{\iota_{n}'}} \arrow[d, "\alpha_{n}"]
& \prod_{i=1}^{n-1}\, M \arrow[d, "\widehat{\alpha}_{n}"]\\
I_{n} \arrow[r, hook, "j_{n}"] 
& E_{n} \arrow{r}{\widehat{\iota}_{n}} 
& \prod_{1}^{n}\, M,
\end{tikzcd}
\end{equation}
where the rows are fibrations.  
Let $\map{d_{n}'}{\prod_{i=1}^{n-1}\, \Omega M}[I_{n}']$ (resp.\ $\map{d_{n}}{\prod_{1}^{n}\, \Omega M}[I_{n}]$) denote the boundary map corresponding to the upper (resp.\ lower) fibration of~\reqref{commdiagfib}, and for all $k\geq 1$, let:
\begin{equation*}
\text{$\map{\partial_{n,k}^{\,\prime}=(d_{n}')_{\#k}}{\pi_{k}\Bigl(\prod_{i=1}^{n-1}\, M\Bigr)}[\pi_{k-1}(I_{n}')]$ (resp.\ $\map{\partial_{n,k}=(d_{n})_{\#k}}{\pi_{k}\Bigl(\prod_{1}^{n}\, M\Bigr)}[\pi_{k-1}(I_{n})]$)}
\end{equation*}
denote the induced boundary homomorphism on the level of $\pi_{k}$. We recall the construction of these maps in the following lemma that will be used in \resec{boundhomo}. 

\begin{lem}\label{lem:boundary}
With the above notation, we have $d_{n}'(\omega)=(w_{1},\ldots, w_{n-1}, \omega)$ (resp.\ $d_{n}(\omega)=(w_{1},\ldots, w_{n}, \omega)$) for all $\omega\in \prod_{i=1}^{n-1}\, \Omega M$ (resp.\ $\omega\in \prod_{1}^{n}\, \Omega M$). 
\end{lem}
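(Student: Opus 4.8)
The plan is to trace through the standard construction of the connecting map of a fibration, applied to the two (Hurewicz) fibrations in the rows of~\reqref{commdiagfib}, and to read off its effect in these explicit path-space models. Recall that for a fibration $\map{p}{E}[B]$ with fibre $F=p^{-1}(b_{0})$ over the basepoint, the boundary map $\map{d}{\Omega B}[F]$ on the level of based loops is obtained as follows: given a loop $\omega\in \Omega B$, view it as a map of the pair $(I,\partial I)\to (B,b_{0})$, lift the homotopy $\omega$ starting from the constant path at the basepoint $\widetilde{e}_{0}\in F\subseteq E$ using the homotopy lifting property, and take the endpoint of the lift, which lands in $F$ because $\omega$ ends at $b_{0}$. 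Up to homotopy this endpoint is independent of the choices, and it represents $d(\omega)$.

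First I would make the lifting completely explicit for the lower fibration $\map{\widehat{\iota}_{n}}{E_{n}}[\prod_{1}^{n}\, M]$. A point of $E_{n}$ is a pair $(x,\lambda)$ with $x\in F_{n}(M)$ and $\lambda$ a path in $\prod_{1}^{n}\, M$ starting at $\iota_{n}(x)=x$; the fibre over $W_{n}$ is exactly $I_{n}$, and the basepoint is $(W_{n}, c_{W_{n}})$. Given $\omega\in \prod_{1}^{n}\,\Omega M$, regarded as a loop at $W_{n}$ in $\prod_{1}^{n}\, M$, I would lift the homotopy $H\colon I\times I\to \prod_{1}^{n}\, M$, $H(s,t)=\omega(st)$ (so $H(s,0)=W_{n}$ for all $s$, i.e.\ the homotopy from $c_{W_{n}}$ to $\omega$), starting from the constant section $s\mapsto (W_{n}, c_{W_{n}})$ of $E_{n}$. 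The natural lift is $\widetilde{H}(s,t)=\bigl(W_{n},\ \omega(s\cdot\,)\bigr|_{[0,t]}\ast c_{W_{n}}\bigr)$, or more simply the reparametrised path $t\mapsto \omega(\min(st,\,?))$; the point is that the first coordinate can be kept constant at $W_{n}$ since $W_{n}\in F_{n}(M)$ and the path $\lambda$ is free to move in $\prod_{1}^{n}\, M$. Evaluating at $s=1$ gives the endpoint $\bigl(W_{n},\ \omega\bigr)=(w_{1},\ldots,w_{n},\omega)\in I_{n}$, which is precisely the asserted formula $d_{n}(\omega)=(w_{1},\ldots,w_{n},\omega)$; one must also check this lies in $I_{n}$, i.e.\ that $\omega$ ends at $W_{n}$, which holds because $\omega$ is a loop. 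The identical argument with $n$ replaced by $n-1$, $F_{n}(M)$ by $F_{n-1}(M\setminus\{w_n\})$, and $W_n$ by $W_n'$ gives $d_{n}'(\omega)=(w_{1},\ldots,w_{n-1},\omega)$.

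I expect the only real subtlety — and hence the main thing to be careful about — is the choice of parametrisation conventions and the verification that the stated map is genuinely a lift of the homotopy that defines the connecting homomorphism (as opposed to differing from it by an inversion or a reparametrisation, which would change the sign or introduce an irrelevant but distracting homotopy). Since the fibration here has total space a mapping-path space, the homotopy lifting property is realised by an explicit path-concatenation formula, so this is really just a matter of writing down the correct concatenation and checking the two boundary conditions $\widetilde{H}(\cdot,0)=(W_n,c_{W_n})$ and $\widetilde{H}(\cdot,1)\in I_n$. Everything else is routine unwinding of the definitions in~\resec{generalities}, together with the compatibility $\widehat{\iota}_{n}\circ f_{n}=\iota_{n}$ already noted there. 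I would therefore present the lift explicitly, check the two endpoint conditions, and invoke the standard description of the connecting map to conclude.
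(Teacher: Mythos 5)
Your proposal is correct and lands on the same formula, but by a genuinely different route from the paper's. The paper's proof is essentially a citation: it recalls Arkowitz's description of the homotopy fibre $I_f$ of a map $f\colon X\to Y$ as the pullback of the path-space fibration $\Omega Y\to EY\to Y$ along $f$, under which $v\colon I_f\to X$, $(x,\lambda)\mapsto x$, is the principal fibration induced by $f$, its fibre is $\{x_0\}\times\Omega Y$, and the boundary map $d\colon\Omega Y\to I_f$ is by definition the inclusion of that fibre --- so $d(\omega)=(x_0,\omega)$ comes for free, and the lemma follows by specialising $X$, $Y$ and $f$. You instead re-derive the same formula by applying the path-lifting property to the explicit mapping-path fibration $\widehat{\iota}_n\colon E_n\to\prod_1^n M$ and observing that the lift of $\omega$ starting at $(W_n,c_{W_n})$ can keep its first coordinate constant at $W_n$. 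Both are valid; the paper's version is a one-line citation to a standard characterisation, while yours verifies the formula from first principles and explains \emph{why} the Arkowitz description is the right one. One small point to tidy: in the standard construction, $d_n(\omega)$ is the endpoint $\widetilde{\omega}(1)$ of a lift $\widetilde{\omega}\colon I\to E_n$ of the single loop $\omega$ with $\widetilde{\omega}(0)$ the basepoint; your phrasing via the square $H(s,t)=\omega(st)$ and ``evaluating at $s=1$'' is a little muddled (a lift of $H$ is a map $I\times I\to E_n$, so evaluating at $s=1$ still leaves a $t$-dependence). The cleaner statement is that the canonical transport lift $\widetilde{\omega}(s)=\bigl(W_n,\, r\mapsto\omega(rs)\bigr)$ is a genuine lift of $\omega$, is continuous, starts at the basepoint, and ends at $\widetilde{\omega}(1)=(W_n,\omega)$, which lies in $I_n$ because $\omega(1)=W_n$.
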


\begin{proof}
The statement of the lemma is a consequence of the general construction of the boundary map of a fibration, which we now recall. Let $\map{f}{(X,x_{0})}[(Y,y_{0})]$ be a map between based topological spaces, let $I_{f}$ denote the homotopy fibre of $f$, let $EY= \setl{\lambda \in Y^{I}}{\lambda(1)=y_{0}}$ be the path space of $Y$, and let $\Omega Y\to EY\stackrel{P_{0}}{\to} Y$ be the path space fibration~\cite[page~85]{A}, where $P_{0}(\lambda)=\lambda(0)$. By~\cite[Proposition~3.3.12 and Definition~3.3.13]{A}, we have the following pullback square:
\begin{equation*}
\begin{tikzcd}[cramped]
I_{f} \arrow{r}{u} \arrow{d}{v} & EY\arrow{d}{P_{0}}\\
X \arrow{r}{f} & Y,
\end{tikzcd}
\end{equation*}
where $u(x,\lambda)=\lambda$ and $v(x,\lambda)=x$ for all $(x,\lambda)\in I_{f}$, $v$ is the principal fibration induced by $f$, and $u$ induces a homeomorphism between the fibre $F_{v}$ of $v$ and $\Omega Y$ whose inverse $\map{u'}{\Omega Y}[F_{v}]$ is defined by $u'(\omega)=(x_{0},\omega)$ for all $\omega\in \Omega Y$. The boundary map $\map{d}{\Omega Y}[I_{f}]$ is then defined by $d= i\circ u'$, where $\map{i}{F_{v}}[I_{f}]$ is inclusion, so $d(\omega)=(x_{0},\omega)$ for all $\omega\in \Omega Y$.
The result then follows by taking $X=F_{n-1}(M\setminus \brak{w_n})$, $Y=\prod_{i=1}^{n-1}\, M$ and $f=\iota_{n}'$ (resp.\ $X=F_{n}(M)$, $Y=\prod_{1}^{n}\, M$ and $f=\iota_{n}$).
\end{proof}

The following remarks will play an important r\^{o}le in the proof of \reth{prop5}.

\begin{rems}\mbox{}\label{rems:homfibre}
\begin{enumerate}[(a)]
\item\label{it:homfibrea} Let $\map{p}{E}[B]$ be a fibration with fibre $F$, let $E'$ be a subspace of $E$, and suppose that the restriction $\map{p'=p\left\lvert_{E'}\right.\!}{E'}[B]$ is also a fibration, with fibre $F'$. Let $\map{\iota}{E'}[E]$ denote inclusion, and let $\map{\iota'=\iota\left\lvert_{F'}\right.\!}{F'}[F]$ denote the restriction of $\iota$ to $F'$. The inclusions of the fibres in the corresponding total spaces induce an explicit homotopy equivalence between the homotopy fibres $I_{\iota}$ and $I_{\iota'}$ of $\iota$ and $\iota'$ respectively. Although this result seems to be folklore and is well known to the experts in the field, we were unable to locate a proof in the literature, and so we give one 
in \repr{homofibre} in the Appendix of this paper. A consequence of this proposition is that the map $\map{\alpha_{n}'}{I_{n}'}[I_{n}]$ is a homotopy equivalence (see \reco{alphanprime}).

\item\label{it:homfibreb} Suppose that $\map{f}{X}[Y]$ is null homotopic, and let $\map{c}{X}[Y]$ denote the constant map that sends the whole of $X$ onto $y_{0}$. By~\cite[Proposition~3.3.17]{A}, $I_{f}$ and $I_{c}$ have the same homotopy type. On the other hand, $I_{c}=\setl{(x,\lambda)\in X\times Y^I}{\lambda(0)=\lambda(1)= y_{0}}= X\times \Omega Y$. Therefore $I_{f}$ has the same homotopy type as $X\times \Omega Y$.
\end{enumerate}
\end{rems}

Let $M=\St$ (resp.\ $M=\rp$),
and let $\map{\iota_{c}}{F_{n-1}^{G}(\mathbb{M})}[\prod_{i=1}^{n-1}\, \St]$ be the constant map that sends every point of $F_{n-1}^{G}(\mathbb{M})$ to $W_{n}''$, which we take to be the basepoint of $\prod_{i=1}^{n-1}\, \St$ at this level. By \rerems{homfibre}(\ref{it:homfibreb}), its homotopy fibre is given by:
\begin{equation}\label{eq:defIc}
I_{c}=F_{n-1}^{G}(\mathbb{M}) \times \Omega\biggl(\prod_{i=1}^{n-1}\, \St\biggr),
\end{equation}
which we equip with the basepoint $W_{c}$ in accordance with the convention given in the first paragraph of this section.
Let $I_{n}''$ be the homotopy fibre of the inclusion map $\map{\iota_{n}''}{F_{n-1}^{G}(\mathbb{M})}[\prod_{i=1}^{n-1}\, \St]$ defined by $\iota_{n}''(u)=u$ for all $u\in F_{n-1}^{G}(\mathbb{M})$, and also equipped with the basepoint $W_{c}$. So:
\begin{equation}\label{eq:defInprime2}
I_{n}''= \setl{(u, \mu)\in F_{n-1}^{G}(\mathbb{M})\times \left(\prod_{i=1}^{n-1}\St\right)^{I}}{\text{$\mu(0)=u$ and $\mu(1)=W_{n}''$}}.
\end{equation}
If $M=\St$, then $\iota_{n}''=\iota_{n}'$, so $I_{n}''=I_{n}'$ by~\reqref{defI1}. Let $\map{h_{n}}{I_{c}}[F_{n-1}^{G}(\mathbb{M})]$ and $\map{h_{n}''}{I_{n}''}[\!F_{n-1}^{G}(\mathbb{M})]$ denote the projections onto the first factor, and let $\map{\widehat{\pi}}{\St}[M]$ be the identity $\id_{\St}$ (resp.\ the universal covering $\map{\pi}{\St}[\rp]$). Then $\widehat{\pi}$ restricts to a map from $\dd$ (resp.\ from $C$) to $M\setminus\brak{w_{n}}$ that we also denote by $\widehat{\pi}$. Let $\map{\alpha_{\pi}}{I_{n}''}[I_{n}']$ be defined by:
\begin{equation}\label{eq:defalphapi}
\alpha_{\pi}(y_{0},\ldots, y_{n-2}, l_{0},\ldots,l_{n-2})=(\widehat{\pi}(y_{0}), \ldots, \widehat{\pi}(y_{n-2}), \widehat{\pi}\circ l_{0},\ldots, \widehat{\pi}\circ l_{n-2}).
\end{equation}
Note that if $M=\St$ then $\alpha_{\pi}$ is the identity. 
Let $\map{p}{\prod_1^{n-1}\,\St}[\prod_1^{n-1}\,M]$ denote the identity (resp.\ the $2^{n-1}$-fold universal covering), and let $\map{p'}{F_{n-1}^{G}(\mathbb{M})}[F_{n-1}(M \setminus\brak{w_{n}})]$ denote the restriction of $p$ to $F_{n-1}^{G}(\mathbb{M})$. If $\map{\alpha_{c}}{I_{c}}[I_{n}'']$ is a pointed map that satisfies $h_{n}''\circ \alpha_{c}(u,\mu)=u$ for all $(u,\mu)\in I_{c}$, using the definitions, one may check that the following diagram of pointed spaces and maps is commutative: 
\begin{equation}\label{eq:bigcommdiagfib}
\begin{tikzcd}[cramped]
I_{c} \arrow{r}{h_{n}} \arrow[d, "\alpha_{c}"] & F_{n-1}^{G}(\mathbb{M}) \arrow[r, "\iota_{c}"] \arrow[d, "\id_{F_{n-1}^{G}(\mathbb{M})}"] & \prod_{i=1}^{n-1}\,\St \arrow{d}{\id_{\prod_{i=1}^{n-1}\,\St}}\\
I_{n}'' \arrow{r}{h_{n}''} \arrow[d, "\alpha_{\pi}"] & F_{n-1}^{G}(\mathbb{M}) \arrow[r, "\iota_{n}''"] \arrow[d, "p'"] & \prod_{i=1}^{n-1}\,\St \arrow[d, "p"]\\
I_{n}' \arrow{r}{g_{n}'\circ j_{n}'} \arrow[d, "\alpha_{n}'"] & F_{n-1}(M \setminus\brak{w_{n}}) \arrow[r, "\iota_{n}'"] \arrow{d}{\widehat{\alpha}_{n}\left\lvert_{F_{n-1}(M \setminus\brak{w_{n}})}\right.} & \prod_{i=1}^{n-1}\,M \arrow{d}{\widehat{\alpha}_{n}}\\
I_{n} \arrow{r}{g_{n}\circ j_{n}} & F_{n}(M) \arrow[r, "\iota_{n}"] & \prod_{1}^{n}\, M,
\end{tikzcd}
\end{equation}
with the possible exception of the upper right-hand square. In the proof of \reth{prop5}, we shall see that this square commutes up to homotopy (see \rerem{In2null}), and in \resec{notation}, we exhibit a map $\alpha_{c}$ that satisfies the above condition (see \req{defalphac}).

To end this section, we state and prove the following lemma that will be used to analyse the upper part of the long exact sequence in homotopy of the homotopy fibration of $\iota_{n}$.

\begin{lem}\label{lem:diagonal}
Let $A$ and $B$ be Abelian groups, and let $n\geq 2$. Assume that $\map{\Theta}{A}[\bigoplus_{i=1}^n B]$ is an injective homomorphism, and suppose that there exists $i\in \brak{1,\ldots,n}$ such that the composition $\map{\pi_{i} \circ \Theta}{A}[B]$ is an isomorphism, where $\map{\pi_{i}}{\bigoplus_{i=1}^n B}[B]$ is projection onto the $i$\up{th} factor. Then the short exact sequence:
\begin{equation}\label{eq:sesab}
\textstyle 1\to A \stackrel{\Theta}{\to} \bigoplus_{i=1}^n B \stackrel{\zeta}{\to} \left(\bigoplus_{i=1}^n B\right)/\im{\Theta}\to 1
\end{equation}
splits, where $\map{\zeta}{\bigoplus_{i=1}^n B}[\left(\bigoplus_{i=1}^n B\right)/\im{\Theta}]$ denotes the canonical projection. 
\end{lem}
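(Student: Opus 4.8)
The plan is to split the short exact sequence~\reqref{sesab} by exhibiting an explicit retraction $\map{r}{\bigoplus_{i=1}^n B}[A]$ of $\Theta$, i.e.\ a homomorphism satisfying $r\circ \Theta = \id_{A}$; the splitting lemma for short exact sequences of Abelian groups then gives the result. First I would use the hypothesis that $\pi_{i}\circ \Theta$ is an isomorphism for some fixed index $i$: set $r = (\pi_{i}\circ \Theta)^{-1}\circ \pi_{i}$, which is a well-defined homomorphism from $\bigoplus_{i=1}^n B$ to $A$ since it is a composite of homomorphisms. Then $r\circ \Theta = (\pi_{i}\circ \Theta)^{-1}\circ \pi_{i}\circ \Theta = (\pi_{i}\circ \Theta)^{-1}\circ(\pi_{i}\circ \Theta) = \id_{A}$, so $r$ is the desired retraction.

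Once a retraction of $\Theta$ is in hand, the sequence~\reqref{sesab} splits: this is the standard splitting lemma for Abelian groups, which gives an internal direct sum decomposition $\bigoplus_{i=1}^n B \cong \im{\Theta} \oplus \ker{r}$, and since $\ker{r}$ maps isomorphically onto $\left(\bigoplus_{i=1}^n B\right)/\im{\Theta}$ under $\zeta$, the quotient map $\zeta$ admits a section. I would either cite this directly or spell it out in one line: the map $\map{s}{\left(\bigoplus_{i=1}^n B\right)/\im{\Theta}}[\bigoplus_{i=1}^n B]$ defined on a coset $x + \im{\Theta}$ by $s(x + \im{\Theta}) = x - \Theta(r(x))$ is well defined (if $x' = x + \Theta(a)$ then $x' - \Theta(r(x')) = x + \Theta(a) - \Theta(r(x) + a) = x - \Theta(r(x))$ using $r\circ\Theta = \id_A$ and additivity), is a homomorphism, and satisfies $\zeta\circ s = \id$.

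There is essentially no obstacle here: the statement is a routine piece of homological algebra, and the only thing being used beyond the splitting lemma is the elementary observation that a homomorphism into a finite direct sum which becomes an isomorphism after composing with one projection is automatically a split monomorphism. The injectivity of $\Theta$ stated in the hypothesis is in fact a consequence of $\pi_{i}\circ\Theta$ being an isomorphism, so it is not strictly needed; I would simply note this or leave it as a convenient restatement. The point of isolating this lemma is its later application, where $A$, $B$ and $\Theta$ will be the homotopy-group data coming from \relem{iotainject}(\ref{it:iotainjecta}) and Propositions~\ref{prop:diagS2} and~\ref{prop:diagRP2} — the diagonal (or anti-diagonal) homomorphism $(\iota_{n})_{\#k}$ together with the fact that each coordinate projection $(p_{i})_{\#k}$ is an isomorphism — but for the proof of the lemma itself nothing more than the retraction argument above is required.
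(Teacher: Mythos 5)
Your proof is correct and follows essentially the same route as the paper: both construct the retraction $r=(\pi_i\circ\Theta)^{-1}\circ\pi_i$ of $\Theta$ and then invoke the equivalence, for Abelian groups, between splitting via a retraction of $\Theta$ and splitting via a section of $\zeta$. Your side remark that the injectivity hypothesis is redundant is also accurate.
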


\begin{proof}
Since the groups that appear in the short exact sequence~\reqref{sesab} are Abelian, the existence of a section for $\zeta$ is equivalent to that of a section for $\Theta$. 
If $\map{g}{B}[A]$ is inverse of the isomorphism $\pi_{i} \circ \Theta$ then the map $\map{h}{\bigoplus_{i=1}^n B}[A]$ given by $h=g\circ \pi_{i}$ is a homomorphism and defines a section for $\Theta$.
\end{proof}


\subsection{Proof of \reth{prop5}}\label{sec:fibreiotaS2}

In this section, we prove \reth{prop5}, from which we shall deduce \reco{lesfibs2}. 
We first prove \relem{equifn}, which will play an important r\^ole in the proof of \reth{prop5}(\ref{it:prop5b}), but is interesting in its own right.

\begin{lem}\label{lem:equifn}
For all $n\in \N$, the space $\orbconf$ is a $K(G_{n}, 1)$, and $G_{n}$ is isomorphic to an iterated semi-direct product of the form $\F[2n-1]\rtimes (\F[2n-3]\rtimes (\cdots \rtimes (\F[3]\rtimes \Z)\cdots ))$, where for $i=1,\ldots, n-1$, $\F[2i+1]$ is a free group of rank $2i+1$.
\end{lem}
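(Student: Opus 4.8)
The plan is to build the iterated semi-direct product decomposition of $G_n$ by exhibiting a tower of Fadell--Neuwirth-type fibrations for the orbit configuration spaces $\orbconf$, and then to identify each fibre and its fundamental group. First I would set up the forgetful maps: for $1\leq k\leq n$, let $\map{q_k}{\orbconf}[\orbconf[k]]$ be the projection onto the first $k$ coordinates. The key geometric fact, following the approach of \cite{CX} (and the classical arguments of \cite{FaN,FoN} adapted to the $\ang{\tau}$-action), is that $q_k$ is a locally trivial fibration whose fibre over a configuration $(x_1,\ldots,x_k)$ is the orbit configuration space of $n-k$ points in $C\setminus\bigcup_{j=1}^{k}\brak{x_j,\tau(x_j)}$ with respect to the restricted $\ang{\tau}$-action. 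In particular, the fibre of $\map{q_{n-1}}{\orbconf}[\orbconf[n-1]]$ is $C$ with $2(n-1)$ points removed, namely the orbits $\brak{x_j,\tau(x_j)}$ for $j=1,\ldots,n-1$, and since $\tau$ acts freely on $C$ these $2(n-1)$ points are genuinely distinct.

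Next I would run the induction. The base case is $n=1$: $\orbconf[1]=C$ is homotopy equivalent to $\St[1]$, so it is a $K(\Z,1)$, which is the "$\rtimes \Z$" at the bottom of the tower. For the inductive step, suppose $\orbconf[n-1]$ is a $K(G_{n-1},1)$ with $G_{n-1}$ of the stated form. The fibre of $q_{n-1}$ is $C$ minus $2(n-1)$ points; an open cylinder with $2(n-1)$ punctures is homotopy equivalent to a wedge of $2(n-1)+1 = 2n-1$ circles (the cylinder itself contributes one, each puncture one more), hence is a $K(\F[2n-1],1)$. The long exact sequence in homotopy of the fibration $F_{n}(\text{punctured }C)\to \orbconf\to \orbconf[n-1]$ then shows $\orbconf$ is aspherical (both fibre and base are), so it is a $K(G_n,1)$, and gives a short exact sequence $1\to \F[2n-1]\to G_n\to G_{n-1}\to 1$. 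To see that this sequence splits, I would use a section of $q_{n-1}$: adding an $n$-th point near the boundary of the cylinder (say, close to the puncture $\widetilde z_0$) in a way that depends continuously on the first $n-1$ points gives a continuous section $\map{s}{\orbconf[n-1]}[\orbconf]$, so the sequence splits and $G_n\cong \F[2n-1]\rtimes G_{n-1}$. Combining with the inductive hypothesis yields $G_n\cong \F[2n-1]\rtimes(\F[2n-3]\rtimes(\cdots\rtimes(\F[3]\rtimes \Z)\cdots))$.

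The main obstacle I expect is verifying carefully that $q_{n-1}$ (and more generally $q_k$) really is a fibration and correctly identifying the fibre, including the bookkeeping that the punctures $\brak{x_j,\tau(x_j)}$ contribute exactly $2(n-1)$ distinct removed points because the $\ang{\tau}$-action on $C$ is free; this is where one must be slightly careful, since on $\St$ itself the two "polar" orbits would coincide under $\tau$, but on the open cylinder $C=\St\setminus\brak{\widetilde z_0,-\widetilde z_0}$ the action is free and fixed-point-free. The existence of the section $s$ also requires a small explicit construction (e.g.\ using a collar neighbourhood of an end of $C$), but this is routine. Once these points are in place, the rank count ($C$ with $m$ punctures $\simeq \bigvee_{m+1}\St[1]$) and the induction are immediate, and the asphericity follows formally from the long exact sequence since a punctured surface with non-trivial fundamental group is a $K(\pi,1)$.
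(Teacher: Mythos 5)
Your proposal follows essentially the same route as the paper: the Fadell--Neuwirth forgetful fibration $q_n\colon\orbconf\to\orbconf[n-1]$ (which the paper indexes by the source rather than the target, a notational difference only), identification of the fibre as $C$ minus the $2(n-1)$ distinct points $\brak{\pm\widetilde{x}_0,\ldots,\pm\widetilde{x}_{n-2}}$ giving $\F[2n-1]$, a continuous section near $\widetilde z_0$, and induction via the long exact sequence to get both asphericity and the split short exact sequence. Your care about freeness of the $\ang{\tau}$-action on $C$ and the rank count are exactly the points the paper relies on, so the argument is sound.
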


\begin{rem}
This description of $G_{n}$ is similar to that of Artin combing for the Artin pure braid groups. The actions will be computed in \resec{presorb}.
\end{rem}

\begin{proof}[Proof of \relem{equifn}] 
If $n=1$ then $\orbconf[1]=C$, and the result is clear. So suppose that $n\geq 2$. The proof is similar to that for the configuration spaces of the disc, and makes use of the Fadell-Neuwirth fibrations. The map $\map{q_{n}}{\orbconf}[{\orbconf[n-1]}]$ defined by forgetting the last coordinate is well defined, and as in~\cite{FaN}, may be seen to be a locally-trivial fibration (see also~\cite[Theorem~2.2]{Fa} and~\cite[Lemma~4]{CX}). The fibre over the basepoint $W_{n}''=(\widetilde{x}_{0},\widetilde{x}_{1}, \ldots, \widetilde{x}_{n-2})$ of $\orbconf[n-1]$ may be identified with the set $C \setminus \brak{\pm \widetilde{x}_{0},\ldots, \pm \widetilde{x}_{n-2}}$. Furthermore, $q_{n}$ admits a section $s_{n}$ given by choosing (for example) a point sufficiently close to $\widetilde{z}_{0}$ in a continuous manner (see also~\cite[Theorem~3.2]{Fa}, as well as the beginning of \resec{presorb} and \rerem{secondsec} for two different possibilities for $s_{n}$). Since the fibre has the homotopy type of a wedge of circles, taking the long exact sequence in homotopy of the fibration $q_{n}$, we see that $(q_{n})_{\#k}\colon\thinspace \pi_{k}\bigl(\orbconf\bigr)\to \pi_{k}\bigl(\orbconf[n-1]\bigr)$ is an isomorphism for all $k\geq 3$ and is injective if $k=2$, and that there exists a split short exact sequence:
\begin{equation}\label{eq:sesequi}
\begin{tikzcd}[cramped]
1 \arrow{r} & \pi_{1}(C \setminus\brak{\pm \widetilde{x}_{0},\ldots, \pm \widetilde{x}_{n-2}}) \arrow[hook]{r} & G_{n}  \arrow[yshift=0.5ex]{r}{(q_n)_{\#1}} & G_{n-1} \arrow[yshift=-0.5ex,dashrightarrow]{l}{(s_n)_{\#1}} \arrow{r} & 1.
\end{tikzcd}
\end{equation}
Induction on $n$ then shows that $\pi_{k}(\orbconf)$ is trivial for all $k\geq 2$. The first part of the statement then follows. By induction on $n$, \req{sesequi}, the isomorphism $G_{1}\cong \Z$ and the fact that $\pi_{1}(C \setminus\brak{\pm \widetilde{x}_{0},\ldots, \pm \widetilde{x}_{n-2}})\cong \F[2n-1]$, we obtain the isomorphism given in the second part of the statement.
\end{proof}

\begin{proof}[Proof of \reth{prop5}]
Let $n\geq 2$. We make use of the notation and basepoints of \resec{generalities}. 
By \reco{alphanprime}, 
$I_{n}$ and $I_{n}'$ have the same homotopy type. 

\begin{enumerate}[(a)]
\item Suppose first that $M=\St$.
We claim that the map $\altmap{\iota_{n}'}{F_{n-1}(\St\setminus\brak{w_n})}{\prod_{i=1}^{n-1}\, \St}$ is null homotopic. To see this, observe that for $1\leq i\leq n-1$, the following diagram is commutative:
\begin{equation*}
\begin{tikzcd}[cramped]
F_{n-1}(\St\setminus\brak{w_n}) \arrow{r}{\iota_{n}'} \arrow{d}{p_{i}} & \prod_1^{n-1}\,\St \arrow{d}{\widetilde{p}_{i}} \\
\St\setminus \brak{w_{n}} \arrow[hook]{r}{\iota_{2}'}  & \St,
\end{tikzcd}
\end{equation*}
where $p_{i}$ and $\widetilde{p}_{i}$ are the projections of \resec{mapiota}.
The contractibility of $\St\setminus \brak{w_{n}}$ implies that the composition $\iota_{2}'\circ p_{i}$ is null homotopic, so the composition $\widetilde{p}_{i}\circ \iota_{n}'$ is null homotopic for all $1\leq i\leq n-1$, and hence $\iota_{n}'$ is null homotopic, which proves the claim. The first part of \reth{prop5}(\ref{it:prop5a}) follows by applying \rerems{homfibre}(\ref{it:homfibreb}) to $\iota_{n}'$.
The equivalence with the second part is clear using the fact that $F_{n-1}(\dd)$ is an Eilenberg-Mac~Lane space of type $K(P_{n-1},1)$~\cite{FaN}.

\item Let $M=\rp$. 
Considering $F_{n-1}(\rp\setminus\brak{w_{n}})$ to be a subspace of $\prod_1^{n-1}\, \rp$ via the inclusion $\iota_{n}'$, one may check that $\orbconf[n-1]=p^{-1}(F_{n-1}(\rp\setminus\brak{w_{n}}))$, and that the map $\map{p'}{\orbconf[n-1]}[F_{n-1}(\rp\setminus \brak{w_n})]$ is also a regular $2^{n-1}$-fold covering map. Now consider the middle right-hand commutative square of~\reqref{bigcommdiagfib}.
We claim that $I_{n}'$ and $I_{n}''$ have the same homotopy type. To see this, first note that since $p$ is the universal covering map and $(\iota_{n}')_{\#1}$ is surjective,
it follows using standard properties of covering spaces that $\map{p'_{\#1}}{\pi_{1}\bigl(\orbconf[n-1] \bigr)}[\ker{(\iota_{n}')_{\#1}}]$ is an isomorphism.
Consider the homotopy fibrations:
\begin{equation}\label{eq:homofibInprime2}
\text{$I_{n}'' \stackrel{h_{n}''}{\to} \orbconf[n-1] \stackrel{\iota_{n}''}{\to} \prod_1^{n-1}\,\St$ and $I_{n}' \xrightarrow{g_{n}' \circ j_{n}'} F_{n-1}(\rp\setminus \brak{w_{n}}) \stackrel{\iota_{n}'}{\to} \prod_1^{n-1}\,\rp$}
\end{equation}
of the second and third rows of~\reqref{bigcommdiagfib},
and for $k\in \N$, let $\map{\partial_{n,k}''}{\pi_{k}(\prod_{i=1}^{n-1}\, \St)}[\pi_{k-1}(I_{n}'')]$ denote the boundary homomorphism corresponding to the first of these. 
Using the commutativity of these two rows, we obtain a map $\map{\xi}{I_{n}''}[I_{n}']$ of homotopy fibres given by $\xi(x,\lambda)=(p'(x), p\circ \lambda)$ for all $(x,\lambda)\in I_{n}''$ that satisfies $p'\circ h_{n}'' = (g_{n}'\circ j_{n}')\circ \xi$, and taking the long exact sequence in homotopy of the homotopy fibrations~\reqref{homofibInprime2}, we obtain the following commutative diagram of exact sequences:
\begin{equation}\label{eq:cdorbconfrp2a}
\begin{tikzcd}[cramped, sep=scriptsize]
\ldots \arrow{r} &[-0.6em] \pi_{k}(I_{n}'') \arrow{r}{(h_{n}'')_{\#k}} \arrow[swap]{d}{\xi_{\#k}} & \pi_{k}(\orbconf[n-1]) \arrow{r}{(\iota_{n}'')_{\#k}} \arrow{d}{p'_{\#k}} &[-0.2em]  \pi_{k}(\prod_1^{n-1}\,\St) \arrow{r}{\partial_{n,k}''} \arrow{d}{p_{\#k}} &[-0.6em] \pi_{k-1}(I_{n}'') \arrow{r} \arrow{d}{\xi_{\#(k-1)}} &[-0.6em] \ldots\\
\ldots \arrow{r} & \pi_{k}(I_{n}') \arrow[outer sep=4pt]{r}{(g_{n}'\circ j_{n}')_{\#k}} & \pi_{k}(F_{n-1}(\rp\setminus \brak{w_{n}})) \arrow[outer sep=2pt]{r}{(\iota_{n}')_{\#k}} & \pi_{k}(\prod_1^{n-1}\,\rp) \arrow{r}{\partial_{n,k}'} & \pi_{k-1}(I_{n}') \arrow{r} & \ldots
\end{tikzcd}
\end{equation}
Now $F_{n-1}(\rp\setminus\brak{w_n})$ is an Eilenberg-Mac~Lane space of type $K(\pi,1)$, and by \relem{equifn}, so is $\orbconf[n-1]$. Since the homomorphism $p_{\#k}$ 
is an isomorphism for all $k\geq 2$, it follows from~\reqref{cdorbconfrp2a} that $\xi_{\#k}$ 
is an isomorphism for all $k\geq 2$. Studying the tail of~\reqref{cdorbconfrp2a}, and using the surjectivity of $(\iota_{n}')_{\#1}$ and the fact that $\orbconf[n-1]$ and $F_{n-1}(\rp\setminus\brak{w_n})$ are path connected, we deduce that $\pi_{0}(I_{n}')$ and $\pi_{0}(I_{n}'')$ consist of a single point, so 
$\xi_{\#0}$ 
is a bijection. 
Finally, from the remaining part of the long exact sequence in homotopy, we have:
\begin{equation*}
\begin{tikzcd}[cramped, sep=scriptsize]
1 \arrow{r} &[-0.5em] \pi_{2}(\prod_1^{n-1}\,\St) \arrow{r}{\partial_{n,2}''} \arrow{d}{p_{\#2}}  & \pi_{2}(I_{n}'') \arrow{r}{(h_{n}'')_{\#1}} \arrow[swap]{d}{\xi_{\#1}} &[0.1em] \pi_{1}(\orbconf[n-1]) \arrow{r} \arrow{d}{p'_{\#1}} & 1  \arrow{d} &[-0.5em] {}\\
1 \arrow{r} & \pi_{2}(\prod_1^{n-1}\,\rp) \arrow{r}{\partial_{n,2}'}  \arrow{r} & \pi_{1}(I_{n}') \arrow[outer sep=4pt]{r}{(g_{n}'\circ j_{n}')_{\#1}} & \pi_{1}(F_{n-1}(\rp\setminus \brak{w_{n}})) \arrow{r}{(\iota_{n}')_{\#1}} & \pi_{1}(\prod_1^{n-1}\,\rp) \arrow{r} & 1,
\end{tikzcd}
\end{equation*}
and since $p'_{\#1}$
is an isomorphism, we obtain the following commutative diagram of short exact sequences:
\begin{equation*}
\begin{tikzcd}[column sep=large, cramped]
1 \arrow{r} &\pi_{2}(\prod_1^{n-1}\,\St) \arrow{r}{\partial_{n,2}''} \arrow{d}[swap]{\cong}{p_{\#2}}  & \pi_{2}(I_{n}'') \arrow{r}{(h_{n}'')_{\#1}} \arrow[swap]{d}{\xi_{\#1}} & \pi_{1}(\orbconf[n-1]) \arrow{r} \arrow{d}[swap]{\cong}{p'_{\#1}} & 1  \\
1 \arrow{r} & \pi_{2}(\prod_1^{n-1}\,\rp) \arrow{r}{\partial_{n,2}'}  \arrow{r} & \pi_{1}(I_{n}') \arrow{r}{(g_{n}'\circ j_{n}')_{\#1}} & \ker{(\iota_{n}')_{\#1}}   \arrow{r}& 1.
\end{tikzcd}
\end{equation*}
The $5$-Lemma implies that $\xi_{\#1}$ 
is an isomorphism. We conclude that $\xi_{\#k}$ 
is an isomorphism for all $k\geq 0$. Now by~\cite[pages~204--5]{A} or~\cite[page~247]{M}, the homotopy fibre $I_{f}$ of a map $\map{f}{X}[Y]$ between two topological spaces $X$ and $Y$ is the standard homotopy pullback of the spaces and maps $\ast \to Y \stackrel{f}{\longleftarrow} X$,
and by~\cite[Lemma~36]{M}, if $X$ and $Y$ each have the homotopy type of a CW-complex, then $I_{f}$ has the homotopy type of a CW-complex. Applying this to the maps $\iota_{n}'$ and $\iota_{n}''$, we conclude that $I_{n}'$ and $I_{n}''$ each have the homotopy type of a CW-complex.  
Whitehead's theorem then implies that they have the same homotopy type, which proves the claim.

It remains to determine the homotopy type of $I_{n}''$. Let $1\leq i\leq n-1$, and consider the following commutative diagram:
\begin{equation*}
\begin{tikzcd}[cramped]
\orbconf[n-1] \arrow{r}{\iota_{n}''} \arrow[swap]{d}{p_{i}\bigl\lvert_{\orbconf[n-1]}\bigr.} & \prod_1^{n-1}\,\St \arrow{d}{\widetilde{p}_{i}} \\
C \arrow[hook]{r}{\iota_{2}''}  & \St,
\end{tikzcd}
\end{equation*}
where $p_{i}$ and $\widetilde{p}_{i}$ are the projections of \resec{mapiota}. The composition $\iota_{2}''\circ p_{i}\bigl\lvert_{\orbconf[n-1]}\bigr.$
is null homotopic because $C$ has the homotopy type of a circle and $\St$ is simply connected, so the composition $\widetilde{p}_{i}\circ \iota_{n}''$ is null homotopic, from which it follows that $\iota_{n}''$ is also null homotopic. So $I_{n}''$ has the homotopy type of $\orbconf[n-1] \times \Omega\bigl(\prod_1^{n-1} \, \St\bigr)$ by \rerems{homfibre}(\ref{it:homfibreb}), which proves the first part of the statement of part~(\ref{it:prop5b}). The second part then follows  from \relem{equifn}.\qedhere
\end{enumerate}
\end{proof}

\begin{rem}\label{rem:In2null}
In the course of the proof of \reth{prop5}, we showed that:
\begin{enumerate}[(a)]
\item $I_{n}'$ and $I_{n}''$ have the same homotopy type.
\item $\iota_{n}''$ is null homotopic (recall that $\iota_{n}''=\iota_{n}'$ if $M=\St$). From this, it follows that:
\begin{enumerate}[(i)]
\item the upper right-hand square of the diagram~\reqref{bigcommdiagfib} commutes up to homotopy, and so the same is true for the whole diagram. 
\item $I_{n}''$ and $I_{c}$ have the same homotopy type by~\cite[Proposition~3.3.17]{A}. 
\end{enumerate}
\end{enumerate}
In \relem{fundhomoequiv}, we shall exhibit explicit homotopy equivalences between $I_{c}$, $I_{n}'$ and $I_{n}''$.
\end{rem}


We now prove \reco{lesfibs2}.


\begin{proof}[Proof of \reco{lesfibs2}]
We make use of the notation of \resec{generalities}. Let $M=\St$ or $\rp$, and let $n\geq 2$ and $k\geq 1$. Taking the long exact sequence in homotopy of the lower fibration of~\reqref{commdiagfib}, and using the fact that $\widehat{\iota}_{n}\circ f_{n}=\iota_{n}$, we obtain the following long exact sequence:
\begin{equation}\label{eq:lesiota}
\ldots \to \pi_{k}(I_{n}) \xrightarrow{(g_{n}\circ j_{n})_{\#k}} \pi_{k}(F_{n}(M)) \xrightarrow{(\iota_{n})_{\#k}} \pi_k\left(\prod_1^n \, M\right) \xrightarrow{\partial_{n,k}} \pi_{k-1}(I_{n}) \to \ldots. 
\end{equation}

\begin{enumerate}[(a)] 
\item Suppose that $k\geq 3$ and $n\geq 2$ (resp.\ $M=\St$ and $n=k=2$). Then $(\iota_{n})_{\#k}$ and $(\iota_{n})_{\#(k-1)}$ are injective by \relem{iotainject}(\ref{it:iotainjectb}) (resp.\ $(\iota_{2})_{\#2}$ is injective by \relem{iotainject}(\ref{it:iotainjectb}) and $P_{2}(\St)$ is trivial), from which we obtain the short exact sequence~\reqref{splitrp2}, where $\pi_{k-1}(I_{n})$ has been replaced by $\pi_{k-1}\bigl(\Omega\bigl(\prod_1^{n-1} \, \St\bigr)\bigl)$ via the homotopy equivalences of \reth{prop5}. We have also used the fact that $F_{n-1}(\dt)$ and $\orbconf[n-1]$ are Eilenberg-Mac~Lane spaces of type $K(\pi,1)$ by~\cite{FaN} and \relem{equifn} (resp.\ $P_{1}(\dt)$ is trivial). \repr{diagS2}(\ref{it:diagb}) and \repr{diagRP2} (resp.\ \repr{diagS2}(\ref{it:diaga})) show that $(\iota_{n})_{\#k}$ is diagonal (resp.\ anti-diagonal). Further, if $i\in \brak{1,\ldots,n}$, by~\reqref{compproj} and \relem{iotainject}(\ref{it:iotainjecta}),  $\map{(\widetilde{p}_{i})_{\#k}\circ (\iota_{n})_{\#k}}{\pi_{k}(F_{n}(M))}[\pi_{k}(M)]$ is an isomorphism, and taking $A=\pi_{k}(F_{n}(M))$, $B=\pi_{k}(M)$, $\Theta=(\iota_{n})_{\#k}$ and $\pi_{i}=(\widetilde{p}_{i})_{\#k}$ in \relem{diagonal}, it follows that the short exact sequence~\reqref{splitrp2} splits. This proves the first part of the statement in each case. The application of \reth{homotopytypefns} and standard facts about homotopy groups yields the second short exact sequence of the statement of part~(\ref{it:lesfibs2a}).
  
\item Let $k=2$ and $n\geq 3$. Since $\pi_2(F_n(M))$ is trivial by \reth{homotopytypefns}, we obtain the exact sequence~\reqref{sespi1rp2} by applying once more \reth{prop5} to \req{lesiota}. If $M=\St$, the sequence becomes short exact, but does not split because the middle group is torsion free and the quotient has torsion. The remaining exact sequences in the statement then follow using standard results about homotopy groups.\qedhere 
\end{enumerate}
\end{proof}

\section{A presentation of  $G_{n}$}\label{sec:presorb}

Let $n\geq 1$. Once more, we make use of the notation of \resec{generalities}.
In this section, we shall exhibit a presentation of the fundamental group $G_{n}$ of the orbit configuration space $\orbconf$. We identify $C$ with the open annulus $(0,1)\times \St[1]$ in the complex plane $\mathbb{C}$, where we identify $\widetilde{z}_{0}$ with the origin in $\mathbb{C}$,  and up to this identification, if $z=re^{i\theta}\in C$ then we take its antipodal point $z'$ to be $(1-r)e^{i(\theta+\pi)}$. Let 
$v_{1},\ldots,v_{n}$ be base points in $C$ that lie on the negative part of the $x$-axis, and let $v_{1}',\ldots,v_{n}'$ be the corresponding antipodal points (see Figure~\ref{fig:gensorb}).
\begin{figure}[t]
\hfill
\begin{tikzpicture}[scale=0.7]
\draw[line width=1.5pt,dotted] (0,0) circle(6.8);
\draw[line width=1.3pt,densely dotted] (0,0) circle(0.15);

\draw[very thick, color=gray, decoration={markings,mark=at position 0.5 with {\arrow{stealth'}}},postaction={decorate}] (-3.1,0)  .. controls (-1,2.7) and (0.7,-0.3)  .. (1.15,-0.3);
\draw[very thick, color=gray] (-3.1,0)  .. controls (-1,4.2) and (2,0.4)  .. (1.15,-0.3);

\draw[very thick, color=gray, decoration={markings,mark=at position 0.5 with {\arrow{stealth'}}},postaction={decorate}] (-3.1,0)  .. controls (-0.5,-1) and (0.2,-0.5)  .. (0.3,0);
\draw[very thick, color=gray] (-3.1,0)  .. controls (-0.5,1) and (0.2,0.5)  .. (0.3,0);

\draw[very thick, color=gray, decoration={markings,mark=at position 0.5 with {\arrow{stealth'}}},postaction={decorate}] (-3.1,0)  .. controls (-1,5) and (2,0)  .. (3.15,-0.3);
\draw[very thick, color=gray] (-3.1,0)  .. controls (-1,6) and (4.5,0)  .. (3.15,-0.3);

\draw[very thick, color=gray, decoration={markings,mark=at position 0.5 with {\arrow{stealth'}}},postaction={decorate}] (-3.1,0)  .. controls (-5,2.5) and (-5.9,0.1)  .. (-5.1,-0.3);
\draw[very thick, color=gray] (-3.1,0)  .. controls (-4,1) and (-4.5,-0.1)  .. (-5.1,-0.3);

\foreach \k in {-6.1,-5.1,...,-1.1, 1.1, 2.1,...,6.1}
{\draw[fill] (\k,0) circle [radius=0.08];};

\node at (-5.65,0) {$\cdots$};
\node at (-3.55,0) {$\cdots$};
\node at (1.7,0) {$\cdots$};
\node at (4.65,0) {$\cdots$};
\node at (-1.55,0) {$\cdots$};

\node at (0,1.2) {$\rho_{j,j}$};
\node at (3,2) {$\rho_{j,2j-2}$};
\node at (-4.9,1.5) {$\rho_{j,i}$};
\node at (0,-0.9) {$\rho_{j,0}$};

\node at (-3, -1.2) {$v_{j}$};
\node at (-5,-1.2) {$v_{i}$};
\node at (-6,-1.2) {$v_{1}$};
\node at (-1,-1.2) {$v_{n}$};
\node at (3.15, -1.2) {$v_{j-1}'$};
\node at (6.15,-1.2) {$v_{n}'$};
\node at (1.15,-1.2) {$v_{1}'$};
\end{tikzpicture}
\hspace*{\fill}
\caption{Generators of $\pi_{1}\bigl(C \setminus\bigl\{v_{1},v_{1}',\ldots, v_{j-1},v_{j-1}'\bigr\}, v_{j}\bigr)$.}
\label{fig:gensorb}
\end{figure}
As we saw in the proof of \relem{equifn}, the map $\map{q_{n+1}}{\orbconf[n+1]}[\orbconf]$ given by forgetting the last point is a locally-trivial fibration that admits a section. The section is not unique: as in the proof of \relem{equifn}, it will be convenient in what follows to define $\map{s_{n+1}}{\orbconf}[{\orbconf[n+1]}]$ by $s_{n+1}(y_{1},\ldots,y_{n})= (y_{1},\ldots,y_{n},y_{n+1})$, where $y_{n+1}$ is a point sufficiently close to $\widetilde{z}_{0}$ 
and lying on the negative $x$-axis (see for example the proof of~\cite[Theorem~6]{GG1}). Then the corresponding antipodal point $y_{n+1}'$ lies on the positive $x$-axis close to the outer boundary of $C$, and the induced homomorphism $\map{(s_{n+1})_{\#}}{G_{n}}[G_{n+1}]$ is a section for $(q_{n+1})_{\#}$. Another useful choice of section will be described in \rerem{secondsec}.
%

If $j\in \N$, the free group $\pi_{1}\bigl(C \setminus\bigl\{v_{1},v_{1}',\ldots, v_{j-1},v_{j-1}'\bigr\}, v_{j}\bigr)$ of rank $2j-1$ admits a basis $\setl{\rho_{j,i}}{0\leq i\leq 2j-2}$, where for $0\leq i\leq 2j-2$, we represent the generators $\rho_{j,i}$ by the loops depicted in Figure~\ref{fig:gensorb}. If $j\leq i\leq 2j-2$ then $\rho_{j,i}$ is represented  by a loop based at $v_{j}$ that wraps round the point $v_{i-j+1}'$. If $j=n$ then via the identification of the fibre of $q_{n+1}$ over $(v_{1},\ldots,v_{n})$ with $C \setminus\bigl\{v_{1},v_{1}',\ldots, v_{n},v_{n}'\bigr\}$, this group is isomorphic to $\ker{(q_{n+1})_{\#}}$. 
However, we have to take into account the fact that we are working in the orbit configuration space. If $(y_{1},\ldots,y_{n+1})\in \orbconf[n+1]$ then for all $1\leq i<j\leq n+1$, $y_{i}$ should not only be different from $y_{j}$, but it must also avoid $y_{j}'$. We thus interpret the fibre as an `honest' subspace of $\orbconf[n+1]$, and for each $0\leq i\leq 2j-2$, we will represent the element $\rho_{j,i}$ by a pair of antipodal loops (see Figures~\ref{fig:rhoj0}--\ref{fig:rhojigreat}).
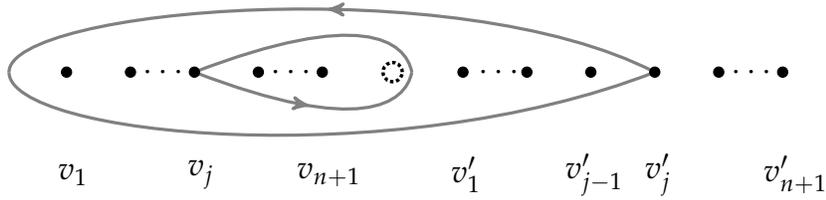
\begin{figure}[!pt]
\hfill
\begin{tikzpicture}[scale=0.85]
\draw[line width=1.3pt,densely dotted] (0,0) circle(0.15);

\draw[very thick, color=gray, decoration={markings,mark=at position 0.5 with {\arrow{stealth'}}},postaction={decorate}] (-3.1,0)  .. controls (-0.5,-1) and (0.2,-0.5)  .. (0.3,0);
\draw[very thick, color=gray] (-3.1,0)  .. controls (-0.5,1) and (0.2,0.5)  .. (0.3,0);

\draw[very thick, color=gray, decoration={markings,mark=at position 0.5 with {\arrow{stealth'}}},postaction={decorate}] (4.1,0)  .. controls (1,1.6) and (-6,1)  .. (-6,0);
\draw[very thick, color=gray] (4.1,0)  .. controls (0,-1.6) and (-6,-1)  .. (-6,0);

\foreach \k in {-5.1,-4.1,...,-1.1, 1.1, 2.1,...,6.1}
{\draw[fill] (\k,0) circle [radius=0.08];};

\node at (-3.55,0) {$\cdots$};
\node at (1.7,0) {$\cdots$};
\node at (5.65,0) {$\cdots$};
\node at (-1.55,0) {$\cdots$};

\node at (-3, -1.6) {$v_{j}$};

\node at (-5,-1.6) {$v_{1}$};
\node at (-1,-1.6) {$v_{n+1}$};
\node at (3.15, -1.6) {$v_{j-1}'$};
\node at (4.15, -1.6) {$v_{j}'$};
\node at (6.3,-1.6) {$v_{n+1}'$};
\node at (1.15,-1.6) {$v_{1}'$};
\end{tikzpicture}
\hspace*{\fill}
\caption{The generator $\rho_{j,0}$.}
\label{fig:rhoj0}
\end{figure}
\begin{figure}[!pt]
\hfill
\begin{tikzpicture}[scale=0.85]
\draw[line width=1.3pt,densely dotted] (0,0) circle(0.15);

\draw[very thick, color=gray, decoration={markings,mark=at position 0.5 with {\arrow{stealth'}}},postaction={decorate}] (-3.1,0)  .. controls (-4,1.5) and (-6,1)  .. (-5.2,-0.3);

\draw[very thick, color=gray] (-3.1,0)  .. controls (-5.2,1.5) and (-4,-1)  .. (-5.2,-0.3);

\draw[very thick, color=gray, decoration={markings,mark=at position 0.5 with {\arrow{stealth'}}},postaction={decorate}] (4.1,0)  .. controls (3,-1.5) and (1.2,-1)  .. (2,0.3);

\draw[very thick, color=gray] (4.1,0)  .. controls (2,-1.5) and (3,1)  .. (2,0.3);

\foreach \k in {-6.1,-5.1,...,-1.1, 1.1, 2.1,...,6.1}
{\draw[fill] (\k,0) circle [radius=0.08];};

\node at (-5.65,0) {$\cdots$};
\node at (-3.55,0) {$\cdots$};
\node at (1.55,0) {$\cdots$};
\node at (5.65,0) {$\cdots$};
\node at (-1.55,0) {$\cdots$};
\node at (3.65,0) {$\cdots$};

\node at (-3, -1.5) {$v_{j}$};
\node at (-5,-1.5) {$v_{i}$};
\node at (-6,-1.5) {$v_{1}$};
\node at (-1,-1.5) {$v_{n+1}$};
\node at (2.15, -1.5) {$v_{i}'$};
\node at (4.15, -1.5) {$v_{j}'$};
\node at (6.4,-1.5) {$v_{n+1}'$};
\node at (1.15,-1.5) {$v_{1}'$};
\end{tikzpicture}
\hspace*{\fill}
\caption{The generator $\rho_{j,i}$, $1\leq i<j$.}
\label{fig:rhoji}
\end{figure}
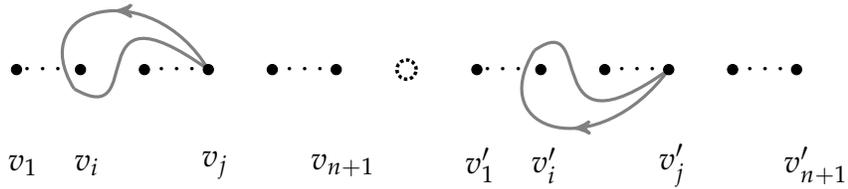
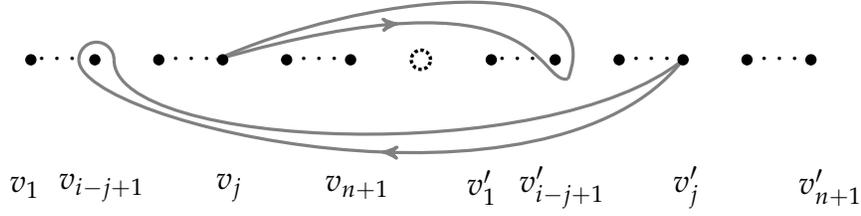
\begin{figure}[!pt]
\hfill
\begin{tikzpicture}[scale=0.85]
\draw[line width=1.3pt,densely dotted] (0,0) circle(0.15);

\draw[very thick, color=gray, decoration={markings,mark=at position 0.5 with {\arrow{stealth'}}},postaction={decorate}] (4.1,0)  .. controls (2,-2.7) and (-5.35,-1)  .. (-5.35,0);

\draw[very thick, color=gray] (-4.8,0)  .. controls (-4.8,-1.4) and (2,-1.7)  .. (4.1,0);

\draw[very thick, color=gray] (2.3,-0.3)  .. controls (3,1.5) and (-1,1)  .. (-3.2,0);

\draw[very thick, color=gray, decoration={markings,mark=at position 0.5 with {\arrow{stealth'}}},postaction={decorate}] 
(-3.2,0) .. controls (2,1.5) and (1.8,-0.4) .. (2.3,-0.3);


\draw[very thick, color=gray] (-4.8,0) arc [start angle=0, end angle=180, radius=0.275];

\foreach \k in {-6.1,-5.1,...,-1.1, 1.1, 2.1,...,6.1}
{\draw[fill] (\k,0) circle [radius=0.08];};

\node at (-5.65,0) {$\cdots$};
\node at (-3.55,0) {$\cdots$};
\node at (1.55,0) {$\cdots$};
\node at (3.65,0) {$\cdots$};
\node at (5.65,0) {$\cdots$};
\node at (-1.55,0) {$\cdots$};

\node at (-3, -2) {$v_{j}$};
\node at (-5,-2) {$v_{i-j+1}$};
\node at (-6.2,-2) {$v_{1}$};
\node at (-1,-2) {$v_{n+1}$};
\node at (2.2, -2) {$v_{i-j+1}'$};
\node at (4.15, -2) {$v_{j}'$};
\node at (6.4,-2) {$v_{n+1}'$};
\node at (0.95,-2) {$v_{1}'$};
\end{tikzpicture}
\hspace*{\fill}
\caption{The generator $\rho_{j,i}$, $j\leq i\leq 2j-2$.}
\label{fig:rhojigreat}
\end{figure}
With our choice of section, if $j\leq n$, the image of $\rho_{j,i}$ ($\rho_{j,i}$ being considered as an element of $G_{j}$) by the homomorphism $(s_{n}\circ \cdots \circ s_{j+1})_{\#}$ is the element $\rho_{j,i}$ of $G_{n}$. Using induction on $n$, it follows from this and the fact that $G_{n}$ is isomorphic to the internal semi-direct product $\ker{(q_{n})_{\#}}\rtimes (s_{n})_{\#}(G_{n-1})$ that the group $G_{n}$ is generated by the set $\bigcup_{j=1}^{n} \setl{\rho_{j,i}}{0\leq i\leq 2j-2}$. We will shortly give a presentation of $G_{n}$, but we first define several elements that will serve to simplify the statement: 
\begin{enumerate}[(a)]
\item for all $1\leq j<k\leq n$, $D_{j,k}=\rho_{k,j}\rho_{k,j+1}\cdots \rho_{k,k-1}$ (see Figure~\ref{fig:Djk}).

\item for all $1\leq j<k\leq n$, $C_{k,j}=\rho_{k,0}^{-1}D_{j+1,k}^{-1} \rho_{k,j} D_{j+1,k}\rho_{k,0}$ ($D_{j+1,k}^{-1} \rho_{k,j} D_{j+1,k}$ is the mirror image of $\rho_{k,j}^{-1}$ with respect to the horizontal axis, see Figure~\ref{fig:Ckj}).

\item for all $1\leq k\leq n$ and $k\leq m < q\leq 2k-2$, $E_{k,m,q}=\rho_{k,m}\rho_{k,m+1}\cdots \rho_{k,q}$. The element $E_{k,m,q}$ is represented by the loop (and its antipode) based at $v_{k}$ that winds successively around the points $v_{m-k+1}', v_{m-k+2}', \ldots, v_{q-k+1}'$ (see Figure~\ref{fig:Ekmq}).
\end{enumerate}
\begin{figure}[!th]
\hfill
\begin{tikzpicture}[scale=0.85]
\draw[line width=1.3pt,densely dotted] (0,0) circle(0.15);

\draw[very thick, color=gray, decoration={markings,mark=at position 0.5 with {\arrow{stealth'}}},postaction={decorate}] (-3.1,0)  .. controls (-3.8,0.8) and (-5.3,0.5)  .. (-5.3,0);
\draw[very thick, color=gray] (-3.1,0)  .. controls (-3.8,-0.5) and (-5.3,-0.6)  .. (-5.3,0);

\draw[very thick, color=gray] (4.1,0)  .. controls (3.2,0.8) and (1.9,0.5)  .. (1.9,0);
\draw[very thick, color=gray, decoration={markings,mark=at position 0.5 with {\arrow{stealth'}}},postaction={decorate}] (4.1,0)  .. controls (3.2,-0.5) and (1.9,-0.6)  .. (1.9,0);

\foreach \k in {-6.1,-5.1,...,-1.1, 1.1, 2.1,...,6.1}
{\draw[fill] (\k,0) circle [radius=0.08];};

\node at (-5.65,0) {$\cdots$};
\node at (-3.55,0) {$\cdots$};
\node at (1.55,0) {$\cdots$};
\node at (5.65,0) {$\cdots$};
\node at (-1.55,0) {$\cdots$};
\node at (3.65,0) {$\cdots$};

\node at (-3, -1) {$v_{k}$};
\node at (-5,-1) {$v_{j}$};
\node at (-6,-1) {$v_{1}$};
\node at (-1,-1) {$v_{n}$};
\node at (2.15, -1) {$v_{j}'$};
\node at (4.15, -1) {$v_{k}'$};
\node at (6.4,-1) {$v_{n}'$};
\node at (1.15,-1) {$v_{1}'$};
\end{tikzpicture}
\hspace*{\fill}
\caption{The element $D_{j,k}$, $1\leq j<k\leq n$.}
\label{fig:Djk}
\end{figure}
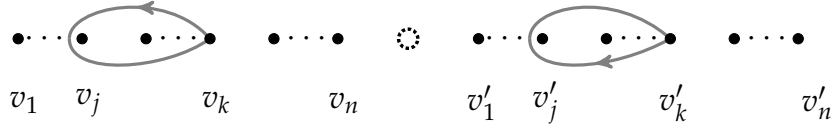
\begin{figure}[!th]
\hfill
\begin{tikzpicture}[scale=0.85]
\draw[line width=1.3pt,densely dotted] (0,0) circle(0.15);

\draw[very thick, color=gray, decoration={markings,mark=at position 0.5 with {\arrow{stealth'}}},postaction={decorate}] (-3.1,0)  .. controls (-3.1,0.5) and (0.6,1)  .. (0.4,0);
\draw[very thick, color=gray] (0.4,0)  .. controls (0.4,-0.7) and (-4.8,-0.5)  .. (-4.8,0);
\draw[very thick, color=gray] (-5.3,0)  .. controls (-5.3,0.4) and (-4.8,0.4)  .. (-4.8,0);
\draw[very thick, color=gray] (0.7,0)  .. controls (0.7,-1) and (-5.3,-0.7)  .. (-5.3,0);
\draw[very thick, color=gray] (-3.1,0)  .. controls (-3.1,1) and (0.7,1)  .. (0.7,0);

\draw[very thick, color=gray, decoration={markings,mark=at position 0.5 with {\arrow{stealth'}}},postaction={decorate}](4,0) .. controls (4,-1.4) and (-6.6,-1.4) .. (-6.6,0);
\draw[very thick, color=gray](4,0) .. controls (4,-1.1) and (-6.4,-1.1) .. (-6.4,0);

\draw[very thick, color=gray](1.9,0) .. controls (1.9,1.35) and (-6.4,1.35) .. (-6.4,0);
\draw[very thick, color=gray](2.3,0) .. controls (2.3,1.5) and (-6.6,1.5) .. (-6.6,0);
\draw[very thick, color=gray](2.3,0) .. controls (2.3,-0.4) and (1.9,-0.4) .. (1.9,0);

\foreach \k in {-6.1,-5.1,...,-1.1, 1.1, 2.1,...,6.1}
{\draw[fill] (\k,0) circle [radius=0.08];};

\node at (-5.65,0) {$\cdots$};
\node at (-3.55,0) {$\cdots$};
\node at (1.55,0) {$\cdots$};
\node at (5.65,0) {$\cdots$};
\node at (-1.55,0) {$\cdots$};
\node at (3.65,0) {$\cdots$};

\node at (-3, -1.5) {$v_{k}$};
\node at (-5,-1.5) {$v_{j}$};
\node at (-6,-1.5) {$v_{1}$};
\node at (-1,-1.5) {$v_{n}$};
\node at (2.15, -1.5) {$v_{j}'$};
\node at (4.15, -1.5) {$v_{k}'$};
\node at (6.4,-1.5) {$v_{n}'$};
\node at (1.15,-1.5) {$v_{1}'$};
\end{tikzpicture}
\hspace*{\fill}
\caption{The element $C_{k,j}$, $1\leq j<k\leq n$.}
\label{fig:Ckj}
\end{figure}
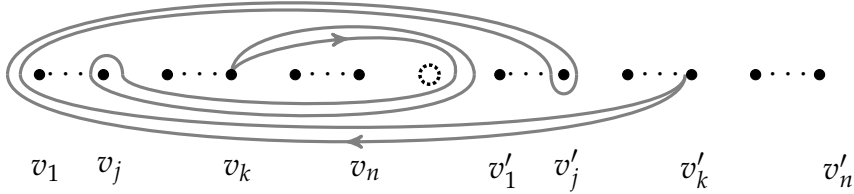
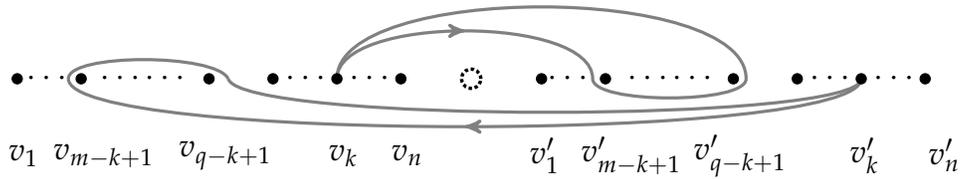
\begin{figure}[!th]
\hfill
\begin{tikzpicture}[scale=0.85]
\draw[line width=1.3pt,densely dotted] (0,0) circle(0.15);

\draw[very thick, color=gray, decoration={markings,mark=at position 0.5 with {\arrow{stealth'}}},postaction={decorate}](6.1,0) .. controls (6.1,-1) and (-6.3,-1) .. (-6.3,0);
\draw[very thick, color=gray](6.1,0) .. controls (6.1,-0.7) and (-3.6,-0.7) .. (-3.8,0);
\draw[very thick, color=gray](-3.8,0) .. controls (-4,0.4) and (-6.3,0.4) .. (-6.3,0);

\draw[very thick, color=gray, decoration={markings,mark=at position 0.5 with {\arrow{stealth'}}},postaction={decorate}](-2.1,0) .. controls (-2.1,1)  and 
(1.9,1) .. (1.9,0);
\draw[very thick, color=gray](4.3,0) .. controls (4.3,1.5) and (-2.1,1.5) .. (-2.1,0);
\draw[very thick, color=gray](1.9,0) .. controls (1.9,-0.4) and (4.3,-0.4) .. (4.3,0);

\foreach \k in {-7.1,-6.1,-4.1,-3.1,-2.1,-1.1, 1.1, 2.1,4.1,5.1,6.1,7.1}
{\draw[fill] (\k,0) circle [radius=0.08];};

\node at (-6.6,0) {$\cdots$};
\node at (-5.45,0) {$\cdots$};
\node at (-4.75,0) {$\cdots$};
\node at (1.55,0) {$\cdots$};
\node at (5.65,0) {$\cdots$};
\node at (6.65,0) {$\cdots$};
\node at (-1.55,0) {$\cdots$};
\node at (-2.55,0) {$\cdots$};
\node at (2.8,0) {$\cdots$};
\node at (3.5,0) {$\cdots$};

\node at (-2, -1.2) {$v_{k}$};
\node at (-5.75,-1.2) {$v_{m-k+1}$};
\node at (-3.85,-1.2) {$v_{q-k+1}$};
\node at (-7,-1.2) {$v_{1}$};
\node at (-1,-1.2) {$v_{n}$};
\node at (2.5, -1.2) {$v_{m-k+1}'$};
\node at (4.2, -1.2) {$v_{q-k+1}'$};
\node at (6.15, -1.2) {$v_{k}'$};
\node at (7.4,-1.2) {$v_{n}'$};
\node at (1.15,-1.2) {$v_{1}'$};
\end{tikzpicture}
\hspace*{\fill}
\caption{The element $E_{k,m,q}$, $1\leq k\leq n$ and $k\leq m < q\leq 2k-2$.}
\label{fig:Ekmq}
\end{figure}

\begin{prop}\label{prop:presgn}
Let $n\in \N$. The following constitutes a presentation of the group $G_{n}$:
\begin{enumerate}
\item[\textbf{generating set:}] $\bigcup_{j=1}^{n} \setl{\rho_{j,i}}{0\leq i\leq 2j-2}$.
\item[\textbf{relations:}] let $1\leq j<k\leq n$. Then: 
\begin{enumerate}[(I)]
\item\label{it:presI}
\begin{equation*}
\rho_{j,0}\rho_{k,l}\rho_{j,0}^{-1}= \begin{cases}
\rho_{k,l} & \text{if $l=0$ or $j<l<k$}\\
C_{k,j}  \rho_{k,l}C_{k,j}^{-1} &\text{if $1\leq l<j$}\\
& \text{or if $k\leq l\leq k+j-2$}\\
C_{k,j}& \text{if $l=j$}\\
C_{k,j}E_{k,k,k+j-2}^{-1} \rho_{k,0}^{-1} D_{1,k}^{-1} \rho_{k,l} D_{1,k} \rho_{k,0} E_{k,k,k+j-2} C_{k,j}^{-1}& \text{if $l=k+j-1$}\\
C_{k,j}\rho_{k,k+j-1} \rho_{k,l} \rho_{k,k+j-1}^{-1} C_{k,j}^{-1}& \text{if $k+j\leq l\leq 2k-2$.}
\end{cases}
\end{equation*}

\item\label{it:presII} for all $1\leq i<j$, $\rho_{j,i}\rho_{k,l}\rho_{j,i}^{-1}= \rho_{k,l}$ if $0\leq l\leq i-1$, $j+1\leq l\leq k+i-2$, $k+i\leq l\leq k+j-2$ or $k+j\leq l\leq 2k-2$, and
\begin{equation*}
\!\rho_{j,i}\rho_{k,l}\rho_{j,i}^{-1}= \begin{cases}
\rho_{k,j}^{-1} \rho_{k,i} \rho_{k,j} & \!\!\text{if $l=i$}\\
[\rho_{k,j}^{-1}, \rho_{k,i}^{-1}] \rho_{k,l} [\rho_{k,i}^{-1}, \rho_{k,j}^{-1}] & \!\!\text{if $i<l<j$}\\
\rho_{k,j}^{-1} \rho_{k,i}^{-1} \rho_{k,j} \rho_{k,i} \rho_{k,j} & \!\!\text{if $l=j$}\\
E_{k,k+i-1,k+j-1} E_{k,k+i,k+j-2}^{-1} \rho_{k,k+i-1} E_{k,k+i,k+j-2} E_{k,k+i-1,k+j-1}^{-1} & \!\!\text{if $l=k+i-1$}\\
E_{k,k+i,k+j-2}^{-1} E_{k,k+i-1,k+j-2} \rho_{k,k+j-1} E_{k,k+i-1,k+j-2}^{-1} E_{k,k+i,k+j-2} & \!\!\text{if $l=k+j-1$}
\end{cases}
\end{equation*}
in the remaining cases.

\item\label{it:presIII} for all $j\leq i\leq 2j-2$, $\rho_{j,i}\rho_{k,l}\rho_{j,i}^{-1}=[\rho_{k,j}^{-1}, \rho_{k,k+i-j}^{-1}] \rho_{k,l} [\rho_{k,k+i-j}^{-1},\rho_{k,j}^{-1}]$ if $1\leq l\leq i-j$, $i-j+2\leq l\leq j-1$, $k+i-j+1\leq l\leq k+j-2$ or $k+j\leq l\leq 2k-2$,
\begin{align*}
\rho_{j,i}\rho_{k,i-j+1}\rho_{j,i}^{-1}=& [\rho_{k,j}^{-1}, \rho_{k,k+i-j}^{-1}] D_{i-j+1,k} \rho_{k,0} E_{k,k,k+j-1} E_{k,k,k+j-2}^{-1}  C_{k,i-j+1}E_{k,k,k+j-2}\cdot\\
& E_{k,k,k+j-1}^{-1} \rho_{k,0}^{-1} D_{i-j+1,k}^{-1} [\rho_{k,k+i-j}^{-1},\rho_{k,j}^{-1}],
\end{align*}
which is the case $l=i-j+1$,
\begin{align*}
\rho_{j,i}\rho_{k,k+j-1}\rho_{j,i}^{-1}=& [\rho_{k,j}^{-1}, \rho_{k,i-j+1}^{-1}] E_{k,k,k+j-2}^{-1} C_{k,i-j+1}E_{k,k,k+j-2} \rho_{k,k+j-1} E_{k,k,k+j-2}^{-1} C_{k,i-j+1}^{-1}\cdot\\
& E_{k,k,k+j-2} [\rho_{k,i-j+1}^{-1},\rho_{k,j}^{-1}],
\end{align*}
which is the case $l=k+j-1$, and
\begin{equation*}
\rho_{j,i}\rho_{k,l}\rho_{j,i}^{-1}= \begin{cases}
\rho_{k,l} & \text{if $l=0$ or $j+1\leq l\leq k+i-j-1$}\\ 
%
%
\rho_{k,j}^{-1} \rho_{k,k+i-j}^{-1} \rho_{k,j} \rho_{k,k+i-j} \rho_{k,j} & \text{if $l=j$}\\
\rho_{k,j}^{-1} \rho_{k,k+i-j} \rho_{k,j} & \text{if $l=k+i-j$,}
\end{cases}
\end{equation*}
in the remaining cases.
%
%
%
%
\end{enumerate} 
\end{enumerate}
\end{prop}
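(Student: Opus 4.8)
The plan is to combine the iterated semi-direct product decomposition of $G_{n}$ from \relem{equifn} with an explicit geometric computation of the monodromy of the Fadell-Neuwirth fibrations $q_{n}$, and to argue by induction on $n$.

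Recall from the proof of \relem{equifn} and the discussion preceding the statement that the map $\map{q_{n}}{\orbconf}[{\orbconf[n-1]}]$ that forgets the last coordinate is a locally-trivial fibration admitting the section $s_{n}$, that its fibre over the basepoint is $C\setminus\brak{v_{1},v_{1}',\ldots,v_{n-1},v_{n-1}'}$, and that the fibre inclusion identifies $N_{n}:=\ker{(q_{n})_{\#}}$ with the free group $\pi_{1}\bigl(C\setminus\brak{v_{1},v_{1}',\ldots,v_{n-1},v_{n-1}'},v_{n}\bigr)$ on the basis $\setl{\rho_{n,i}}{0\leq i\leq 2n-2}$; thus $G_{n}=N_{n}\rtimes (s_{n})_{\#}(G_{n-1})$. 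Since the image of each $\rho_{j,i}\in G_{j}$ under $(s_{n}\circ\cdots\circ s_{j+1})_{\#}$ is again $\rho_{j,i}\in G_{n}$, I would induct on $n$, the statement being vacuous for $n=1$ (where $G_{1}\cong\Z$) and the inductive hypothesis being that the listed generators $\rho_{j,i}$ together with the relations~(\ref{it:presI})--(\ref{it:presIII}) restricted to $1\leq j<k\leq n-1$ present $G_{n-1}$. Now apply the standard presentation of a semi-direct product $N\rtimes H$: from presentations $N=\ang{X_{N}\mid R_{N}}$, $H=\ang{X_{H}\mid R_{H}}$ and the action $\map{\phi}{H}[\operatorname{Aut}(N)]$, one has $N\rtimes H=\ang{X_{N}\sqcup X_{H}\mid R_{N},\,R_{H},\,\brak{hxh^{-1}=\phi(h)(x):h\in X_{H},\,x\in X_{N}}}$, with each $\phi(h)(x)$ written as a word in $X_{N}$. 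Taking $N=N_{n}$ (free, so $R_{N}=\varnothing$), $H=G_{n-1}$ with the inductive presentation, and $X_{H}$ the set of all $\rho_{j,i}$ with $1\leq j\leq n-1$, the proposition reduces to the following: for every $1\leq j<n$ and every generator $\rho_{n,l}$ of $N_{n}$, express the conjugate $\rho_{j,i}\rho_{n,l}\rho_{j,i}^{-1}$ as a word in $\rho_{n,0},\ldots,\rho_{n,2n-2}$ and check that it coincides with the word predicted by~(\ref{it:presI}), (\ref{it:presII}) or~(\ref{it:presIII}) with $k=n$ (the elements $D_{j,n}$, $C_{n,j}$ and $E_{n,m,q}$ occurring there being themselves words in $\rho_{n,0},\ldots,\rho_{n,2n-2}$). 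Equivalently, one must compute the monodromy action of $G_{n-1}$ on $N_{n}$.

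This action is computed geometrically. Conjugation by $\rho_{j,i}$, regarded as an automorphism of $N_{n}=\pi_{1}\bigl(C\setminus\brak{v_{1},v_{1}',\ldots,v_{n-1},v_{n-1}'},v_{n}\bigr)$, is realised by the $\tau$-equivariant ambient isotopy of $C$ that carries the puncture $v_{j}$ once around the loop depicted in Figure~\ref{fig:rhoj0} if $i=0$, in Figure~\ref{fig:rhoji} if $1\leq i<j$, and in Figure~\ref{fig:rhojigreat} if $j\leq i\leq 2j-2$, while, by equivariance, simultaneously carrying the antipodal puncture $v_{j}'$ around the antipodal loop, all the other punctures $v_{m},v_{m}'$ ($m\neq j$) and the basepoint $v_{n}$ remaining fixed throughout. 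One then reads off the image of each generator $\rho_{n,l}$ by tracking its representative loop (and its antipode) through this isotopy. The subcases of~(\ref{it:presI})--(\ref{it:presIII}) are exactly the possible positions, relative to the two arcs swept out by $v_{j}$ and $v_{j}'$, of the feature encircled by $\rho_{n,l}$ --- the inner boundary circle when $l=0$, the point $v_{l}$ when $1\leq l\leq n-1$, and the point $v_{l-n+1}'$ when $n\leq l\leq 2n-2$: when that feature lies off both swept regions the loop is undisturbed, giving $\rho_{j,i}\rho_{n,l}\rho_{j,i}^{-1}=\rho_{n,l}$, and when $v_{j}$ or $v_{j}'$ sweeps across it the loop is conjugated by a word recording the crossing. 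The auxiliary elements $D_{j,n}$, $C_{n,j}$ and $E_{n,m,q}$ introduced before the statement are precisely the words so produced, with the pictorial meanings of Figures~\ref{fig:Djk}, \ref{fig:Ckj} and~\ref{fig:Ekmq}; in particular $C_{n,j}$ records the contribution of the antipodal partner $v_{j}'$ --- the ``mirror image'' of the $\rho_{n,j}$-crossing through the configuration --- which is why it governs case~(\ref{it:presI}), and the nested $E$'s appear when $v_{j}'$ sweeps past a consecutive block of antipodal punctures.

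The main obstacle is this monodromy computation, whose new feature compared with the classical case of the disc or a closed surface is precisely the coupling imposed by $\tau$-equivariance: displacing $v_{j}$ forces $v_{j}'$ to move as well, so each conjugation formula is a composition of two elementary point-pushing contributions, one supported near the track of $v_{j}$ and one near the track of $v_{j}'$, while the loop $\rho_{n,l}$ --- itself a pair of antipodal arcs --- may link both tracks. The subcases $l=j$ and $l=n+j-1$ will be the most delicate, since there the two pushes interact in a linked way and must be composed in the correct order; pinning down the orders and the exponents, and verifying that the resulting map is genuinely an automorphism of $N_{n}$ (for instance, that it fixes the peripheral word $\rho_{n,0}\rho_{n,1}\cdots\rho_{n,2n-2}$ up to conjugacy), is where the bulk of the work lies. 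In practice I would first carry out the computation by hand for $n=2$ and $n=3$ in order to fix all the conventions --- the orientations of the loops, the choice of the section $s_{n}$ as a point near $\widetilde{z}_{0}$ on the negative $x$-axis, and the identification of the fibre with a subspace of $\orbconf$ --- and then obtain the general formulas from the evident self-similarity of the punctured cylinder, the inductive framework guaranteeing that at each stage only the relations with $k=n$ remain to be checked.
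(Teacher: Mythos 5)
Your proposal follows essentially the same route as the paper: induction on $n$, the split short exact sequence $1\to N_{n}\to G_{n}\xrightarrow{(q_{n})_{\#}}G_{n-1}\to 1$ giving a semi-direct product presentation of the iterated extension, and a geometric computation of the conjugation action of the $\rho_{j,i}$ ($j<n$) on the free kernel $N_{n}$ via the $\tau$-equivariant isotopies depicted in Figures~\ref{fig:rhoj0}--\ref{fig:rhojigreat}. The paper likewise defers the case-by-case verification of the conjugation formulas to the reader, so the two arguments match in both structure and level of completeness; your additional remark that the induced map on $N_{n}$ should fix the peripheral word up to conjugacy is a sensible internal check that the paper does not state explicitly.
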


\begin{proof}
The proof is by induction on $n$. If $n=1$ then $G_{1}=\ang{\rho_{1,0}}\cong \Z$, and the given presentation is valid. So suppose that the result holds for some $n\geq 1$. As we saw above, $G_{n+1}$ is generated by $\bigcup_{j=1}^{n+1} \setl{\rho_{j,i}}{0\leq i\leq 2j-2}$, and applying standard techniques to obtain a presentation of the group extension $G_{n+1}$ using the short exact sequence of the form~\reqref{sesequi} whose kernel is a free group~\cite[Proposition~1, page~139]{Jo}, the defining relations are of two types:
\begin{enumerate}[(i)]
\item\label{it:relns1} the images under $(s_{n+1})_{\#}$ of the relators of the presentation of $G_{n}$, rewritten as elements of $\ker{(p_{n+1})_{\#}}$.
\item\label{it:relns2} the elements of the form $yxy^{-1}$, rewritten as elements of $\ker{(p_{n+1})_{\#}}$, where 
\begin{equation*}
\text{$x\in \setl{\rho_{n+1,l}}{0\leq l\leq 2n}$ and $y\in \bigcup_{j=1}^{n} \setl{\rho_{j,i}}{0\leq i\leq 2j-2}$.}
\end{equation*}
\end{enumerate}
Since $(s_{n+1})_{\#}(\rho_{j,i})=\rho_{j,i}$ for all $1\leq j\leq n$ and $0\leq i\leq 2j-2$, the relators of $G_{n}$ are sent to relators of $G_{n+1}$ under $(s_{n+1})_{\#}$, and so the relations of~(\ref{it:relns1}) above are those of $G_{n}$, but considered as relations in $G_{n+1}$. This gives rise to all of the relations~(\ref{it:presI})--(\ref{it:presIII}) for $G_{n+1}$ with $1\leq j<k\leq n$. It remains to analyse the relations emanating from~(\ref{it:relns2}), which correspond to the relations~(\ref{it:presI})--(\ref{it:presIII}) for $G_{n+1}$ with $1\leq j\leq n$ and $k=n+1$. We obtain the following types of relation:
\begin{enumerate}[(I)]
\item conjugation of $\rho_{n+1,l}$ by the generator $\rho_{j,0}$ of Figure~\ref{fig:rhoj0}, for $1\leq j\leq n$ and $0\leq l\leq 2n$.
\item conjugation of $\rho_{n+1,l}$ by the generator $\rho_{j,i}$ of Figure~\ref{fig:rhoji}, for $1\leq i< j\leq n$ and $0\leq l\leq 2n$.
\item conjugation of $\rho_{n+1,l}$ by the generator $\rho_{j,i}$ of Figure~\ref{fig:rhojigreat}, for $1\leq j\leq n$ and $j\leq i\leq 2j-2$ and $0\leq l\leq 2n$.
\end{enumerate}
Using these figures, for all $0\leq l\leq 2n$ and $0\leq i\leq 2j-2$,  the conjugate of $\rho_{n+1,l}$ by $\rho_{j,i}$ may be rewritten as an element of $\ker{(p_{n+1})_{\#}}$. Although $\rho_{n+1,l}$ is also represented by two loops, the intersections between the two pairs of loops are symmetric, and geometrically it suffices to consider (and remove) the intersections between the two representative loops of $\rho_{j,i}$ with one representative loop of $\rho_{n+1,l}$. The verifications are left to the reader.
\end{proof}

If $n\in \N$, let $\Theta_{n}=\rho_{1,0}\cdots \rho_{n,0}$. We now show that the centre $Z(G_{n})$ of $G_{n}$ is infinite cyclic, generated by $\Theta_{n}$.

\begin{prop}\label{prop:gncentre}
Let $n\in \N$. Then $Z(G_{n})=\ang{\Theta_{n}}$.
\end{prop}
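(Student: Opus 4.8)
The plan is to establish the two inclusions $\ang{\Theta_{n}}\subseteq Z(G_{n})$ and $Z(G_{n})\subseteq\ang{\Theta_{n}}$. For the second, which is the main point, I would argue by induction on $n$ using the split short exact sequence~\reqref{sesequi}. The case $n=1$ is immediate, since $G_{1}=\ang{\rho_{1,0}}\cong\Z$ and $\Theta_{1}=\rho_{1,0}$. So let $n\geq 2$, assume $Z(G_{n-1})=\ang{\Theta_{n-1}}$, and grant for the moment that $\Theta_{n}$ is central in $G_{n}$ (proved below). Since $(q_{n})_{\#}$ fixes $\rho_{j,0}$ for $1\leq j\leq n-1$ and sends $\rho_{n,0}\in\ker{(q_{n})_{\#}}$ to the identity, we have $(q_{n})_{\#}(\Theta_{n})=\Theta_{n-1}$. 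Now take $z\in Z(G_{n})$. As $(q_{n})_{\#}$ is surjective, $(q_{n})_{\#}(z)\in Z(G_{n-1})=\ang{\Theta_{n-1}}$, so $(q_{n})_{\#}(z)=\Theta_{n-1}^{m}$ for some $m\in\Z$, and hence $z\Theta_{n}^{-m}\in\ker{(q_{n})_{\#}}$. By \relem{equifn}, $\ker{(q_{n})_{\#}}$ is a free group of rank $2n-1\geq 3$. But $z\Theta_{n}^{-m}$ is central in $G_{n}$, so in particular it centralises $\ker{(q_{n})_{\#}}$ and, lying in that subgroup, must be trivial, since a free group of rank at least $2$ has trivial centre. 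Therefore $z=\Theta_{n}^{m}\in\ang{\Theta_{n}}$, as required.

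It remains to prove that $\Theta_{n}$ is central, for which I would use a geometric argument, again by induction on $n$ (with the base case $n=1$ trivial, $G_{1}$ being abelian). Let $R_{\alpha}$ denote the rotation of $\St$ by angle $\alpha$ about the axis through $\widetilde{z}_{0}$ and $-\widetilde{z}_{0}$. Then $R_{\alpha}$ restricts to a self-homeomorphism of $C$ commuting with $\tau$, since $\tau=-\id$ commutes with every linear map; consequently $R_{\alpha}$ induces a self-homeomorphism of $\orbconf$, and $\alpha\mapsto (R_{\alpha}(v_{1}),\ldots,R_{\alpha}(v_{n}))$ defines a based loop $\Lambda$ in $\orbconf$, where $(v_{1},\ldots,v_{n})$ is the basepoint. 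This loop is the restriction to the basepoint of a continuous $\St[1]$-action on $\orbconf$, so $[\Lambda]$ is central in $G_{n}$ by the standard Eckmann--Hilton-type argument that the image of $\pi_{1}$ of a topological group under an action map lies in the centre of $\pi_{1}$ of the space acted on. Finally, one identifies $[\Lambda]$ with $\Theta_{n}$: under the Fadell--Neuwirth fibration $q_{n}$ of \relem{equifn}, $\Lambda$ projects to the analogous rotation loop in $\orbconf[n-1]$, which by induction represents $\Theta_{n-1}$; composing $\Lambda$ with the reverse of the lift of this loop through the section $s_{n}$ gives a loop in the fibre $C\setminus\brak{v_{1},v_{1}',\ldots,v_{n-1},v_{n-1}'}$ along which the last coordinate winds exactly once around $\widetilde{z}_{0}$, hence equal to $\rho_{n,0}$; using that $\rho_{n,0}$ commutes with $\Theta_{n-1}$ (relation~(\ref{it:presI}) of \repr{presgn} with $l=0$), this gives $[\Lambda]=\Theta_{n-1}\rho_{n,0}=\Theta_{n}$, up to replacing $R_{\alpha}$ by $R_{-\alpha}$ according to the orientation convention. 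Alternatively, one can verify directly, but much more tediously, from the presentation of \repr{presgn} that $\rho_{1,0}\cdots\rho_{n,0}$ commutes with each generator $\rho_{j,i}$.

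The structural induction of the first paragraph is short and robust, and relies only on the fact that a nonabelian free group has trivial centre. The real work is in the identification of $[\Lambda]$ with $\Theta_{n}$ \emph{exactly}, rather than with some proper power of it — a subtlety that genuinely occurs for the analogous full twist in $P_{n}(\rp)$ — and this requires careful bookkeeping of the basepoint, of the chosen section $s_{n}$, and of the generators $\rho_{j,0}$ of the successive fibres in the tower of Fadell--Neuwirth fibrations underlying \relem{equifn}. An alternative, entirely combinatorial proof of the centrality of $\Theta_{n}$ directly from the presentation would avoid these geometric subtleties, at the cost of a lengthy verification.
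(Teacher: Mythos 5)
Your proof of the inclusion $Z(G_{n})\subseteq\ang{\Theta_{n}}$ is essentially identical to the paper's: induct on $n$ via the split exact sequence~\reqref{sesequi}, use that $(q_{n})_{\#}(\Theta_{n})=\Theta_{n-1}$ to push a central element into $\ker{(q_{n})_{\#}}$, and conclude from the fact that a free group of rank $2n-1\geq 3$ has trivial centre. Where you genuinely diverge is in establishing $\Theta_{n}\in Z(G_{n})$. The paper's proof is a lengthy combinatorial verification, directly from the presentation of \repr{presgn}, that $\Theta_{n}$ conjugates each generator $\rho_{k,l}$ to itself, treated case by case over the three ranges of $l$. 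Your proof instead exhibits $\Theta_{n}$ as the image of the generator of $\pi_{1}(\St[1])$ under the orbit map of the $\St[1]$-action on $\orbconf$ by rotation about the $\widetilde{z}_{0}$-axis (which is $\tau$-equivariant, hence descends to the orbit configuration space), and invokes the classical Gottlieb-type fact that such orbit-map images lie in the centre of $\pi_{1}$. This is shorter and more conceptual, and generalises readily; its cost — which you rightly flag — is the obligation to identify $[\Lambda]$ with $\Theta_{n}$ \emph{exactly} rather than with some power or conjugate. Your inductive identification via $q_{n}$ and the section $s_{n}$ is the right mechanism, but note that the combing homotopy (unrotating the first $n-1$ coordinates while the last keeps circling) must be checked to avoid collisions of $R_{\alpha}(v_{n})$ with the $\tau$-orbits of the other coordinates; this works because $v_{n}$ is taken strictly closest to $\widetilde{z}_{0}$, so its circular orbit separates $\widetilde{z}_{0}$ from every $\pm v_{i}$ with $i<n$, whence the combed loop encircles only $\widetilde{z}_{0}$ and represents $\rho_{n,0}$. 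The paper sidesteps all the geometric bookkeeping by never leaving the presentation, at the cost of considerably more word manipulation. Both routes are sound.
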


\begin{proof}
Let $n\in \N$. In what follows, (\ref{it:presI})--(\ref{it:presIII}) refer to the corresponding relations of \repr{presgn}. Let $\Theta=\Theta_{n}$. We start by showing that $\Theta$ commutes with each generator $\rho_{k,l}$ of $G_{n}$, where $1\leq k\leq n$ and $0\leq l\leq 2k-2$, which will imply that $\Theta\in Z(G_{n})$. We consider three cases.
\begin{enumerate}[(a)]
\item Suppose that $l=0$. By~(\ref{it:presI}), the generators of the form $\rho_{m,0}$, $1\leq m\leq n$, commute pairwise, so $\rho_{k,0}$ commutes with $\Theta$ for all $1\leq k\leq n$, which proves the result in this case. 
\item Suppose that $1\leq l<k$. By~(\ref{it:presI}) and~(\ref{it:presII}), $\rho_{k,l}$ and $\rho_{m,0}$ commute if $m<l$ or if $k<m$, so:
\begin{align}
\Theta \rho_{k,l}\Theta^{-1} &= \rho_{1,0}\cdots \rho_{k,0} \rho_{k,l} \rho_{k,0}^{-1} \cdots \rho_{1,0}^{-1}\notag\\
&= \rho_{k,0}\rho_{l,0}\cdots \rho_{k-1,0}\rho_{1,0}\cdots \rho_{l-1,0} \rho_{k,l} \rho_{l-1,0}^{-1}\cdots \rho_{1,0}^{-1} \rho_{k-1,0}^{-1}\cdots \rho_{l,0}^{-1} \rho_{k,0}^{-1}\notag\\
&= \rho_{k,0}\rho_{l,0}\cdots \rho_{k-1,0}\rho_{k,l} \rho_{k-1,0}^{-1}\cdots \rho_{l,0}^{-1} \rho_{k,0}^{-1},\label{eq:thetacomm}
\end{align}
using the fact that the $\rho_{m,0}$, $1\leq m\leq n$, commute pairwise. By~(\ref{it:presI}),
\begin{equation*}
\rho_{j,0} \rho_{k,l} \rho_{j,0}^{-1}=\begin{cases}
C_{k,j} \rho_{k,l} C_{k,j}^{-1} & \text{if $1\leq l<j$}\\
C_{k,l} & \text{if $l=j$,}
\end{cases}
\end{equation*}
and so
\begin{align}
\rho_{l,0}\cdots \rho_{k-1,0}\rho_{k,l} \rho_{k-1,0}^{-1}\cdots \rho_{l,0}^{-1}&= \rho_{l,0}\cdots \rho_{k-2,0} C_{k,k-1} \rho_{k,l} C_{k,k-1}^{-1} \rho_{k-2,0}^{-1}\cdots \rho_{l,0}^{-1}\notag\\
&= C_{k,k-1} \rho_{l,0}\cdots \rho_{k-2,0} \rho_{k,l} \rho_{k-2,0}^{-1}\cdots \rho_{l,0}^{-1} C_{k,k-1}^{-1}\notag\\
&= C_{k,k-1} \cdots C_{k,l+1}\rho_{l,0} \rho_{k,l} \rho_{l,0}^{-1} C_{k,l+1}^{-1} \ldots C_{k,k-1}^{-1}\notag\\
&= C_{k,k-1} \cdots C_{k,l+1}C_{k,l} C_{k,l+1}^{-1} \ldots C_{k,k-1}^{-1},\label{eq:rhoklcomm}
\end{align}
because $C_{k,m}$ commutes with $\rho_{q,0}$ for all $1\leq q<m<k$. Since $\rho_{k,m} \cdots \rho_{k,k-1}=D_{m,k}$, for all $1\leq m<k$, we have:
\begin{equation}\label{eq:conjrhokm}
\rho_{k,0}^{-1} \rho_{k,m} \cdots \rho_{k,k-1} \rho_{k,0}= \rho_{k,0}^{-1} D_{m,k} \rho_{k,0}.
\end{equation}
Let us prove by reverse induction on $m$ that for all $1\leq m<k$,
\begin{equation}\label{eq:prodcij}
C_{k,k-1} \cdots C_{k,m+1}C_{k,m} 
= \rho_{k,0}^{-1} D_{m,k} \rho_{k,0}.
\end{equation}
If $m=k-1$ then $C_{k,k-1}=\rho_{k,0}^{-1} \rho_{k,k-1} \rho_{k,0}=\rho_{k,0}^{-1} D_{k,k-1} \rho_{k,0}$ from the definition of $C_{k,k-1}$ and~\reqref{conjrhokm}, hence~\reqref{prodcij} is valid. So suppose that~\reqref{prodcij} holds for some $2\leq m<k$. Then by induction and the definitions of $C_{k,m-1}$ and $D_{m,k}$, it follows that:
\begin{align*}
C_{k,k-1} \cdots C_{k,m+1}C_{k,m} C_{k,m-1}&=\rho_{k,0}^{-1} D_{m,k} \rho_{k,0} \ldotp \rho_{k,0}^{-1} D_{m,k}^{-1} \rho_{k,m-1} D_{m,k} \rho_{k,0}\\
&= \rho_{k,0}^{-1} \rho_{k,m-1} D_{m,k} \rho_{k}= \rho_{k,0}^{-1} D_{m-1,k} \rho_{k,0},
\end{align*}
which proves~\reqref{prodcij}. Combining equations~\reqref{thetacomm},~\reqref{rhoklcomm},~\reqref{conjrhokm} and~\reqref{prodcij}, we see that $\Theta$ commutes with $\rho_{k,l}$ if $1\leq l<k$. 

\item Suppose that $k\leq l\leq 2k-2$. By~(\ref{it:presIII}), $\rho_{k,l}$ commutes with $\rho_{j,0}$ for all $k+1\leq j\leq n$ and by~(\ref{it:presI}), the $\rho_{m,0}$, $1\leq m\leq n$, commute pairwise, so:
\begin{multline}\label{eq:Thetakl}
\Theta \rho_{k,l} \Theta^{-1} = \rho_{1,0} \cdots \rho_{n,0} \rho_{k,l} \rho_{n,0}^{-1} \cdots \rho_{1,0}^{-1}= \rho_{1,0} \cdots \rho_{k,0} \rho_{k,l} \rho_{k,0}^{-1} \cdots \rho_{1,0}^{-1}\\
= \rho_{1,0} \cdots\rho_{l-k+1,0} \rho_{k,0}\bigl(\rho_{l-k+2,0} \cdots\rho_{k-1,0} \rho_{k,l} \rho_{k-1,0}^{-1} \cdots \rho_{l-k+2,0}^{-1} \bigr) \rho_{k,0}^{-1} \rho_{l-k+1,0}^{-1}\cdots\rho_{1,0}^{-1}.
\end{multline}
By~(\ref{it:presI}), we have: 
\begin{equation}\label{eq:rhom0kl}
\rho_{m,0} \rho_{k,l} \rho_{m,0}^{-1}= C_{k,m} \rho_{k,l} C_{k,m}^{-1}\; \text{for all $l-k+2\leq m\leq k-1$.}
\end{equation}
By definition, $C_{k,m}= \rho_{k,0}^{-1}\rho_{k,k-1}^{-1} \cdots\rho_{k,m+1}^{-1} \rho_{k,m} \rho_{k,m+1}\cdots \rho_{k,k-1}\rho_{k,0}$, so
\begin{equation}\label{eq:commckm}
C_{k,m}\rho_{s,0}=\rho_{s,0}C_{k,m} \quad \text{for all $1\leq s<m$ by~(\ref{it:presI}),}
\end{equation}
and hence:
\begin{align*}
\rho_{l-k+2,0} \cdots\rho_{k-1,0} \rho_{k,l} \rho_{k-1,0}^{-1} \cdots \rho_{l-k+2,0}^{-1}&= C_{k,k-1}\cdots C_{k,l-k+2} \rho_{k,l} C_{k,l-k+2}^{-1}\cdots C_{k,k-1}^{-1}\\
&= \rho_{k,0}^{-1} D_{l-k+2,k} \rho_{k,0} \rho_{k,l} \rho_{k,0}^{-1} D_{l-k+2,k}^{-1}  \rho_{k,0}
\end{align*}
by equations~\reqref{prodcij},~\reqref{rhom0kl} and~\reqref{commckm}. Therefore: 
\begin{align}\label{eq:conjThetakl}
\hspace*{-3mm}\Theta \rho_{k,l} \Theta^{-1} & \!=\! \rho_{1,0} \cdots\rho_{l-k+1,0} D_{l-k+2,k} \rho_{k,0} \rho_{k,l} \rho_{k,0}^{-1} D_{l-k+2,k}^{-1} \rho_{l-k+1,0}^{-1}\cdots\rho_{1,0}^{-1}\notag\\
&\!=\! \rho_{l-k+1,0}\bigl( \rho_{1,0} \cdots\rho_{l-k,0} D_{l-k+2,k} \rho_{k,0} \rho_{k,l} \rho_{k,0}^{-1} D_{l-k+2,k}^{-1} \rho_{l-k,0}^{-1}\cdots\rho_{1,0}^{-1} \bigr) \rho_{l-k+1,0}^{-1}\notag\\
&\!=\! \rho_{l-k+1,0} D_{l-k+2,k} \rho_{k,0} \bigl( \rho_{1,0} \cdots\rho_{l-k,0} \rho_{k,l} \rho_{l-k,0}^{-1}\cdots\rho_{1,0}^{-1} \bigr) \rho_{k,0}^{-1} D_{l-k+2,k}^{-1} \rho_{l-k+1,0}^{-1},
\end{align}
by~\reqref{Thetakl}, since $D_{l-k+2,k}=\rho_{k,l-k+2}\cdots \rho_{k,k-1}$, so it commutes with $\rho_{s,0}$ for all $1\leq s\leq l-k+1$ by~(\ref{it:presI}). 
Now from~\req{prodcij}, we obtain: 
\begin{align}
C_{k,l-k} \cdots C_{k,1}&= (C_{k,k-1} \cdots C_{k,l-k+1})^{-1} \ldotp C_{k,k-1}\cdots C_{k,l-k+1}\ldotp C_{k,l-k} \cdots C_{k,1}\notag\\
&= \rho_{k,0}^{-1} D_{l-k+1,k}^{-1} D_{1,k} \rho_{k,0},\label{eq:Cklkk1}
\end{align}
and by~(\ref{it:presI}), we have: 
\begin{equation}\label{eq:conjkq}
\rho_{m,0} \rho_{k,q} \rho_{m,0}^{-1} =C_{k,m}\rho_{k,k+m-1} \rho_{k,q} \rho_{k,k+m-1}^{-1} C_{k,m}^{-1}\quad \text{if $1\leq m\leq q-k$ and $q\leq 2k-2$.}
\end{equation}
So for all $1\leq m\leq l-k$,
\begin{multline}\label{eq:conjrhom0kmk}
\rho_{m,0}(\rho_{k,m+k} \rho_{k,m+k+1}\cdots \rho_{k,l-1}\rho_{k,l} \rho_{k,l-1}^{-1}\cdots \rho_{k,m+k+1}^{-1}\rho_{k,m+k}^{-1}) \rho_{m,0}=\\ C_{k,m}\rho_{k,m+k-1}\rho_{k,m+k}
\rho_{k,m+k+1}\cdots \rho_{k,l-1}\rho_{k,l} \rho_{k,l-1}^{-1}\cdots \rho_{k,m+k+1}^{-1}\rho_{k,m+k}^{-1} \rho_{k,m+k-1}^{-1} C_{k,m}^{-1}.
\end{multline}
Using equations~\reqref{commckm},~\reqref{Cklkk1} and~\reqref{conjrhom0kmk}, we see that:
\begin{align}\label{eq:prodrhoconj}
\rho_{1,0} \cdots\rho_{l-k,0} \rho_{k,l} \rho_{l-k,0}^{-1}\cdots\rho_{1,0}^{-1}=& \rho_{1,0} \cdots\rho_{l-k-1,0} C_{k,l-k} \rho_{k,l-1} \rho_{k,l} \rho_{k,l-1}^{-1} C_{k,l-k}^{-1} \rho_{l-k-1,0}^{-1}\cdots\rho_{1,0}^{-1}\notag\\
=& C_{k,l-k}\rho_{1,0} \cdots \rho_{l-k-2,0}(\rho_{l-k-1,0} \rho_{k,l-1} \rho_{k,l} \rho_{k,l-1}^{-1} \rho_{l-k-1,0}^{-1})\ldotp\notag\\
&\rho_{l-k-2,0}^{-1}\cdots\rho_{1,0}^{-1}C_{k,l-k}^{-1}\notag\\
=& C_{k,l-k} \cdots C_{k,1} E_{k,k,l-1} \rho_{k,l} E_{k,k,l-1}^{-1} C_{k,1}^{-1} \cdots C_{k,l-k}^{-1}\notag\\
=& \rho_{k,0}^{-1} D_{l-k+1,k}^{-1} D_{1,k} \rho_{k,0} E_{k,k,l-1} \rho_{k,l} E_{k,k,l-1}^{-1} \rho_{k,0}^{-1} D_{1,k}^{-1} \ldotp\notag\\
& D_{l-k+1,k} \rho_{k,0}.
\end{align}
Combining~\reqref{conjThetakl} and~\reqref{prodrhoconj}, and using the relation $D_{l-k+2,k} D_{l-k+1,k}^{-1}=\rho_{k,l-k+1}^{-1}$, we obtain: 
\begin{equation}\label{eq:endtheta}
\Theta \rho_{k,l} \Theta^{-1}= \rho_{l-k+1,0} (\rho_{k,l-k+1}^{-1} D_{1,k} \rho_{k,0} E_{k,k,l-1} \rho_{k,l} E_{k,k,l-1}^{-1} \rho_{k,0}^{-1} D_{1,k}^{-1} \rho_{k,l-k+1}) \rho_{l-k+1,0}^{-1}.
\end{equation}
In~\reqref{endtheta}, it remains to conjugate by $\rho_{l-k+1,0}$. Using relations~(\ref{it:presI}), we have: 
\begin{equation}
\!\!\rho_{l-k+1,0} \rho_{k,m} \rho_{l-k+1,0}^{-1}= \begin{cases}
\rho_{k,m} & \text{if $m=0$ or if $l-k+2 \leq m\leq k-1$}\\
C_{k,l-k+1} \rho_{k,m} C_{k,l-k+1}^{-1} & \text{if $1\leq m\leq l-k$ or if $k\leq m\leq l-1$}\\
C_{k,l-k+1} & \text{if $m=l-k+1$,}
\end{cases}\label{eq:conjrhokm1}
\end{equation}
and
\begin{equation}
\!\!\rho_{l-k+1,0} \rho_{k,m} \rho_{l-k+1,0}^{-1}= C_{k,l-k+1} E_{k,k,l-1}^{-1} \rho_{k,0}^{-1} D_{1,k}^{-1} \rho_{k,l} D_{1,k} \rho_{k,0} E_{k,k,l-1} C_{k,l-k+1}^{-1} \; \text{if $m=l$.}\label{eq:conjrhokm2}
\end{equation}
From the definition of $C_{k,l-k+1}$, we have $\rho_{k,0}  C_{k,l-k+1} \rho_{k,0}^{-1}=D_{l-k+2,k}^{-1} \rho_{k,l-k+1}D_{l-k+2,k}$, and together with~\reqref{endtheta},~\reqref{conjrhokm1} and~\reqref{conjrhokm2}  it  follows that:
\begin{align*}
\Theta \rho_{k,l} \Theta^{-1}=& C_{k,l-k+1}^{-1} \ldotp C_{k,l-k+1} \rho_{k,1}\cdots \rho_{k,l-k} C_{k,l-k+1}^{-1}\ldotp C_{k,l-k+1} \ldotp \rho_{k,l-k+2} \cdots \rho_{k,k-1}\ldotp \rho_{k,0} \ldotp\\
& C_{k,l-k+1} E_{k,k,l-1} C_{k,l-k+1}^{-1} \ldotp C_{k,l-k+1} E_{k,k,l-1}^{-1} \rho_{k,0}^{-1} D_{1,k}^{-1} \rho_{k,l} D_{1,k} \rho_{k,0} E_{k,k,l-1} C_{k,l-k+1}^{-1} \ldotp\\
& C_{k,l-k+1} E_{k,k,l-1}^{-1} C_{k,l-k+1}^{-1}\ldotp \rho_{k,0}^{-1}\ldotp \rho_{k,k-1}^{-1}\cdots \rho_{k,l-k+2}^{-1} \ldotp C_{k,l-k+1}^{-1}\ldotp\\
&  C_{k,l-k+1} \rho_{k,l-k}^{-1} \cdots \rho_{k,1}^{-1} C_{k,l-k+1}^{-1} \ldotp C_{k,l-k+1}\\
=& \rho_{k,1}\cdots \rho_{k,l-k}   D_{l-k+2,k} (\rho_{k,0}  C_{k,l-k+1} \rho_{k,0}^{-1}) D_{1,k}^{-1} \rho_{k,l} D_{1,k} (\rho_{k,0} C_{k,l-k+1}^{-1} \rho_{k,0}^{-1})\ldotp\\
& D_{l-k+2,k}^{-1}  \rho_{k,l-k}^{-1} \cdots \rho_{k,1}^{-1}\\
=& \rho_{k,1}\cdots \rho_{k,l-k} D_{l-k+2,k} (D_{l-k+2,k}^{-1} \rho_{k,l-k+1}D_{l-k+2,k}) D_{1,k}^{-1} \rho_{k,l} D_{1,k}\ldotp\\
& (D_{l-k+2,k}^{-1} \rho_{k,l-k+1}^{-1}D_{l-k+2,k}) D_{l-k+2,k}^{-1}  \rho_{k,l-k}^{-1} \cdots \rho_{k,1}^{-1}\\
=& (\rho_{k,1}\cdots \rho_{k,l-k} \rho_{k,l-k+1}D_{l-k+2,k}) D_{1,k}^{-1} \rho_{k,l} D_{1,k} (D_{l-k+2,k}^{-1} \rho_{k,l-k+1}^{-1} \rho_{k,l-k}^{-1} \cdots \rho_{k,1}^{-1})\\
=&  D_{1,k}\ldotp D_{1,k}^{-1} \rho_{k,l} D_{1,k} \ldotp D_{1,k}^{-1}=\rho_{k,l}.
\end{align*}
We have thus shown that $\Theta$ commutes with each of the generators $\rho_{k,l}$, of $G_{n}$, where $1\leq k\leq n$ and $0\leq l\leq 2k-2$, and so we conclude that $\Theta\in Z(G_{n})$ for all $n\in \N$. 
\end{enumerate}

It remains to prove that $Z(G_{n})= \ang{\Theta_{n}}$. The proof is similar to that for $P_{n}$. If $n=1$ then $G_{1}=\ang{\Theta_{1}}$ from \repr{presgn}, so the result holds. Assume by induction that $Z(G_{n-1})= \ang{\Theta_{n-1}}$ for some $n\geq 2$, let $x\in Z(G_{n})$, and consider the short exact sequence~\reqref{sesequi}. Since $(q_{n})_{\#}$ is surjective, $(q_{n})_{\#}(x)\in Z(G_{n-1})$, and so by induction, there exists $l\in \Z$ such that $(q_{n})_{\#}(x)=\Theta_{n-1}^{l}$. From the definition of $q_{n}$, we have $(q_{n})_{\#}(\rho_{i,0})=\rho_{i,0}$ if $1\leq i\leq n-1$ and $(q_{n})_{\#}(\rho_{n,0})=1$, thus $(q_{n})_{\#}(\Theta_{n})=\Theta_{n-1}$. Since $\Theta_{n}\in Z(G_{n})$, $x \Theta_{n}^{-l}\in \ker{(q_{n})_{\#}}\cap Z(G_{n})$, so $x \Theta_{n}^{-l}\in Z(\ker{(q_{n})_{\#}})$. Thus $x \Theta_{n}^{-l}$ is trivial because $\ker{(q_{n})_{\#}}$ is a free group of rank $2n-1$. It follows that $x \in \ang{\Theta_{n}}$, and this completes the proof of the proposition.
\end{proof}

\begin{rem}\label{rem:secondsec}
As we already mentioned, the section $\map{(s_{n+1})_{\#}}{G_{n}}[G_{n+1}]$ gives rise to a decomposition of $G_{n}$ as an iterated semi-direct product of the form $\F[2n-1]\rtimes (\F[2n-3] \rtimes (\cdots\rtimes(\F[3]\rtimes \Z)\cdots))$. We may choose a different section for $(q_{n+1})_{\#}$ so that $\ang{\Theta_{n}}$ appears as a direct factor of $G_{n}$ as follows. Let $\map{s_{n+1}'}{\orbconf}[{\orbconf[n+1]}]$ be the map defined by $s_{n+1}'(y_{1},\ldots,y_{n})= (y_{1},\ldots,y_{n},y_{n+1})$, where the point $y_{n+1}\in C$ belongs to the segment between $y_{n}$ and $\widetilde{z}_{0}$ and is sufficiently close to $y_{n}$. The induced homomorphism $\map{(s_{n+1}')_{\#}}{G_{n}}[G_{n+1}]$ sends $\rho_{n,0}$ to $\rho_{n,0}\rho_{n+1,0}$, and $\Theta_{n}$ to $\Theta_{n+1}$. As above, $G_{n}$ may be decomposed as an iterated semi-direct product of free groups, but by \repr{gncentre}, it is of the form: 
\begin{equation}\label{eq:gndecomp}
G_{n}\cong \F[2n-1]\rtimes (\F[2n-3] \rtimes (\cdots\rtimes(\F[5]\rtimes \F[3])\cdots))\times \Z,
\end{equation}
where the $\Z$-factor is generated by $\Theta_{n}$. We will make use of this section to prove \repr{sesnngen} at the end of \resec{general}. 
\end{rem}

A decomposition of $G_{n}$ as a direct sum one of whose factors is $\Z$ may also be obtained without reference to a section by using the following lemma.

\begin{lem}\label{lem:directsum}
Let $F,G$ and $H$ be groups, let $\map{\alpha}{F}[G\oplus H]$ be a surjective homomorphism, and let $\widetilde{G}$ be a subgroup of $Z(F)$ such that the restriction of $\alpha$ to $\widetilde{G}$ is an isomorphism onto $G$. Then $F=\widetilde{G} \oplus \alpha^{-1}(H)$.
\end{lem}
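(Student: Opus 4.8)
The plan is to verify directly the standard recognition criterion for an internal direct product, showing that $F$ decomposes as the internal direct sum of the subgroups $\widetilde{G}$ and $\alpha^{-1}(H)$. Throughout I identify $G$ (resp.\ $H$) with the normal subgroup $G\times\brak{1}$ (resp.\ $\brak{1}\times H$) of $G\oplus H$, so that $G\cap H=\brak{1}$ and $G\cdot H=G\oplus H$. Three things then need to be checked: that both $\widetilde{G}$ and $\alpha^{-1}(H)$ are normal in $F$, that $\widetilde{G}\cap\alpha^{-1}(H)$ is trivial, and that $\widetilde{G}\cdot\alpha^{-1}(H)=F$.

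For normality, I would note that $\widetilde{G}$ is normal in $F$ because it is contained in $Z(F)$, while $\alpha^{-1}(H)$ is normal in $F$ as the preimage under the homomorphism $\alpha$ of a normal subgroup of $G\oplus H$. For the intersection, I would take $x\in\widetilde{G}\cap\alpha^{-1}(H)$: then $\alpha(x)$ lies in $\alpha(\widetilde{G})=G$ and also in $H$, so $\alpha(x)=1$, and hence $x=1$ since the restriction of $\alpha$ to $\widetilde{G}$ is injective by hypothesis.

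Finally, for the product, given $f\in F$ I would write $\alpha(f)=(g,h)$ with $g\in G$ and $h\in H$, use the surjectivity of the restriction of $\alpha$ to $\widetilde{G}$ to pick $\widetilde{g}\in\widetilde{G}$ with $\alpha(\widetilde{g})=g$, and observe that $\alpha(\widetilde{g}^{-1}f)=g^{-1}(g,h)=h\in H$, so that $\widetilde{g}^{-1}f\in\alpha^{-1}(H)$ and $f=\widetilde{g}\cdot(\widetilde{g}^{-1}f)\in\widetilde{G}\cdot\alpha^{-1}(H)$. Assembling the three points gives $F=\widetilde{G}\oplus\alpha^{-1}(H)$. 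I do not expect any genuine obstacle here; the only subtlety worth flagging is that the hypothesis $\widetilde{G}\subseteq Z(F)$ is used merely to guarantee that $\widetilde{G}$ is normal in $F$ — its further content, that elements of $\widetilde{G}$ commute with those of $\alpha^{-1}(H)$, is in any case automatic once the direct-sum decomposition is in place.
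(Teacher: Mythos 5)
Your proof is correct and follows essentially the same route as the paper: both establish that every $f\in F$ factors as $\widetilde{g}\,\widetilde{h}$ with $\widetilde{g}\in\widetilde{G}$, $\widetilde{h}\in\alpha^{-1}(H)$ by pulling back the first coordinate of $\alpha(f)$, and both obtain normality of $\widetilde{G}$ from $\widetilde{G}\subseteq Z(F)$ and normality of $\alpha^{-1}(H)$ as the preimage of a normal subgroup. The only cosmetic difference is that you verify triviality of the intersection directly, whereas the paper phrases it as uniqueness of the factorisation; these are interchangeable.
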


\begin{proof}
First of all, we claim that $F$ is generated by $\widetilde{G} \cup \alpha^{-1}(H)$. To see this, let $x\in F$. Then there exist $g\in G$ and $h\in H$ such that $\alpha(x)=gh$. Thus there exists a (unique) $\widetilde{g}\in \widetilde{G}$ such that $\alpha(\widetilde{g})=g$. Thus $\alpha(\widetilde{g}^{-1}x)=h$, and hence $\widetilde{g}^{-1}x\in \alpha^{-1}(H)$. So there exists $\widetilde{h}\in \alpha^{-1}(H)$ such that $x=\widetilde{g}\,\widetilde{h}$, which proves the result. This decomposition is unique, since if there exist $\widetilde{g}_{1}\in \widetilde{G}$ and $\widetilde{h}_{1}\in \alpha^{-1}(H)$ such that $x=\widetilde{g}_{1}\ldotp\widetilde{h}_{1}$ then $\alpha(x)=\alpha(\widetilde{g})\,\alpha(\widetilde{h})=\alpha(\widetilde{g}_{1})\,\alpha(\widetilde{h}_{1})$, and since $\alpha(\widetilde{g}),\alpha(\widetilde{g}_{1}) \in G$ and $\alpha(\widetilde{h}),\alpha(\widetilde{h}_{1}) \in H$, it follows that $\alpha(\widetilde{g})=\alpha(\widetilde{g}_{1})$, and so $\widetilde{g}=\widetilde{g}_{1}$ by the bijectivity of $\alpha\left\lvert_{\widetilde{G}}\right.$, from which we deduce also that $\widetilde{h}=\widetilde{h}_{1}$. In particular, we have $\widetilde{G}\cap \alpha^{-1}(H)=\brak{e}$. Now $\widetilde{G}$ is normal in $F$ since $\widetilde{G}$ is a subgroup of $Z(F)$, and $\alpha^{-1}(H)$ is normal in $F$ because $H$ is normal in $G\oplus H$ and $\alpha$ is surjective. We conclude that $F=\widetilde{G} \oplus \alpha^{-1}(H)$.
\end{proof}

\begin{prop}
For all $n\in \N$, there exists a subgroup $H_{n}$ of $G_{n}$ such that $G_{n}=H_{n} \oplus \ang{\Theta_{n}}$.
\end{prop}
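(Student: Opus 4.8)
The plan is to apply \relem{directsum} with $F=G_n$ and $\widetilde{G}=\ang{\Theta_n}$, which by \repr{gncentre} equals $Z(G_n)$ and is therefore a central (hence normal) subgroup. The whole point is to produce a homomorphism from $G_n$ onto an infinite cyclic group whose restriction to $\ang{\Theta_n}$ is an isomorphism, and then to package this data so that \relem{directsum} applies. For the homomorphism, I would use the Fadell--Neuwirth projections. Recall from the proof of \relem{equifn} that for each $2\leq m\leq n$ the map $q_m\colon\thinspace \orbconf[m]\to\orbconf[m-1]$ that forgets the last coordinate is a fibration admitting a section $s_m$, so $(q_m)_{\#}$ is surjective; moreover, by the construction of the $\rho_{j,i}$ given just before \repr{presgn} (together with $q_m\circ s_m=\id$), the homomorphism $(q_m)_{\#}$ fixes each generator $\rho_{j,i}$ with $j\leq m-1$ and sends each $\rho_{m,i}$ to the identity. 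Hence the composition $\map{\varepsilon=(q_2\circ q_3\circ\cdots\circ q_n)_{\#}}{G_n}[G_1]$ is a surjective homomorphism with $\varepsilon(\rho_{1,0})=\rho_{1,0}$ and $\varepsilon(\rho_{k,i})=1$ for all $2\leq k\leq n$ and $0\leq i\leq 2k-2$. Since $G_1=\ang{\rho_{1,0}}\cong\Z$ and $\Theta_n=\rho_{1,0}\rho_{2,0}\cdots\rho_{n,0}$, we get $\varepsilon(\Theta_n)=\rho_{1,0}$, so $\varepsilon$ restricts to an isomorphism of $\ang{\Theta_n}$ onto $G_1$.

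Next I would set up the map required by \relem{directsum}. Let $\map{q}{G_n}[G_n/\ang{\Theta_n}]$ be the canonical projection (well defined since $\ang{\Theta_n}$ is central), and define $\map{\alpha}{G_n}[G_1\oplus(G_n/\ang{\Theta_n})]$ by $\alpha(g)=(\varepsilon(g),q(g))$. Then $\alpha$ is surjective: given $(y,q(g))$, surjectivity of $\varepsilon\left\lvert_{\ang{\Theta_n}}\right.$ yields $k\in\Z$ with $\varepsilon(\Theta_n^{k})=\varepsilon(g)^{-1}y$, and $h=g\Theta_n^{k}$ satisfies $\varepsilon(h)=y$ and $q(h)=q(g)$. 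Also $\alpha(\Theta_n^{j})=(\rho_{1,0}^{\,j},1)$ for all $j$, so $\alpha$ restricts to an isomorphism of $\ang{\Theta_n}$ onto the first summand $G_1$. Applying \relem{directsum} with $G=G_1$, $H=G_n/\ang{\Theta_n}$ and $\widetilde{G}=\ang{\Theta_n}$, we conclude that $G_n=\ang{\Theta_n}\oplus\alpha^{-1}(H)$, and since $\alpha^{-1}(H)=\ker{\varepsilon}$, setting $H_n=\ker{\varepsilon}$ gives the statement. (For $n=1$ one has $\varepsilon=\id_{G_1}$ and $H_1=\brak{e}$.)

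The main obstacle is not conceptual but bookkeeping: one must justify that $(q_m)_{\#}$ fixes the generators $\rho_{j,i}$ with $j\leq m-1$ and annihilates the top generators $\rho_{m,i}$. This follows because $\rho_{j,i}$, viewed in $G_m$, is by definition the image of $\rho_{j,i}\in G_j$ under $(s_m\circ\cdots\circ s_{j+1})_{\#}$, and $q_m\circ s_m=\id$, while $\ker{(q_m)_{\#}}$ is exactly the free group on $\rho_{m,0},\ldots,\rho_{m,2m-2}$ coming from the fibre. Alternatively, one can bypass the fibrations entirely: inspection of the presentation in \repr{presgn} shows that in every defining relation $\rho_{j,i}\rho_{k,l}\rho_{j,i}^{-1}=W$ one has $k\geq 2$, the generator $\rho_{k,l}$ being conjugated has first index $\geq 2$, and the word $W$ involves only $\rho_{k,0},\ldots,\rho_{k,2k-2}$ (via the auxiliary words $C_{k,\ast}$, $D_{\ast,k}$, $E_{k,\ast,\ast}$); thus $\rho_{1,0}$ occurs in no relation with nonzero total exponent, so the assignment $\rho_{1,0}\mapsto 1$, $\rho_{k,i}\mapsto 0$ for $k\geq 2$ extends to a well-defined homomorphism $G_n\to\Z$ that can play the role of $\varepsilon$ above. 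Either way, the essential content is simply the existence of a retraction of $G_n$ onto $\ang{\Theta_n}$, after which \relem{directsum} does the rest.
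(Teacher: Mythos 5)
Your proof is correct, but it takes a mildly different route from the paper's. The paper argues by induction on $n$: with base case $H_1=\brak{e}$, it assumes $G_{n-1}=H_{n-1}\oplus\ang{\Theta_{n-1}}$ and applies \relem{directsum} once at each step, taking $\alpha=(q_n)_{\#}\colon\thinspace G_n\to G_{n-1}=H_{n-1}\oplus\ang{\Theta_{n-1}}$, using $(q_n)_{\#}(\Theta_n)=\Theta_{n-1}$ to see that $(q_n)_{\#}$ carries $\ang{\Theta_n}$ isomorphically onto $\ang{\Theta_{n-1}}$, and setting $H_n=((q_n)_{\#})^{-1}(H_{n-1})$. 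You instead collapse the induction into a single application of the lemma by composing the Fadell--Neuwirth projections down to $G_1$: the resulting homomorphism $\varepsilon=(q_2\circ\cdots\circ q_n)_{\#}$ restricts to an isomorphism of $\ang{\Theta_n}$ onto $G_1$, and you take $H_n=\ker{\varepsilon}$. If one unrolls the paper's induction, one finds $H_n=((q_n)_{\#})^{-1}\circ\cdots\circ((q_2)_{\#})^{-1}(\brak{e})=\ker{\varepsilon}$, so the two constructions actually produce the same subgroup; your version has the advantage of naming $H_n$ explicitly in one go. A small stylistic point: the detour through the quotient map $q\colon\thinspace G_n\to G_n/\ang{\Theta_n}$ and the pair $\alpha=(\varepsilon,q)$ is valid but more than is strictly needed to invoke \relem{directsum} — since composing $\varepsilon$ with the isomorphism $G_1\cong\ang{\Theta_n}$ already gives a retraction of $G_n$ onto the central subgroup $\ang{\Theta_n}$, the splitting $G_n=\ker{\varepsilon}\oplus\ang{\Theta_n}$ is immediate.
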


\begin{proof}
If $n=1$ then $G_{1}=\ang{\rho_{1,0}}=\ang{\Theta_{1}}$, and it suffices to take $H_{1}$ to be the trivial group. So suppose that the result holds for $n-1$, where $n\geq 2$, so there exists a subgroup $H_{n-1}$ of $G_{n-1}$ for which $G_{n-1}=H_{n-1}\oplus \ang{\Theta_{n-1}}$. The homomorphism $\map{(q_{n})_{\#}}{G_{n}}[G_{n-1}]$ of \req{sesequi} is surjective, and as we saw at the end of the proof of \repr{gncentre}, $(q_{n})_{\#}(\Theta_{n})=\Theta_{n-1}$, so $\map{(q_{n})_{\#}\left\lvert_{\ang{\Theta_{n}}}\right.}{\ang{\Theta_{n}}}[\ang{\Theta_{n-1}}]$ is an isomorphism. Further, $\ang{\Theta_{n}}=Z(G_{n})$ by \repr{gncentre}, and the result follows from \relem{directsum} by taking $\widetilde{G}=\ang{\Theta_{n}}$ and $H_{n}=((q_{n})_{\#})^{-1}(H_{n-1})$.
\end{proof}

\section{The boundary homomorphism for $\St$ and $\rp$}\label{sec:boundhomo}

In all of this section, $M$ will be $\St$ or $\rp$. Set $n_{0}=3$ (resp.\ $n_{0}=2$) if $M=\St$ (resp.\ $M=\rp$), and let $n\geq n_{0}$. By \reth{homotopytypefns}, $\pi_{2}(F_{n}(M))$ is trivial.
Considering the tail of the exact sequence~\reqref{lesiota} and using the homotopy equivalence $\map{g_{n}}{E_{n}}[F_{n}(M)]$, we obtain the following exact sequence: 
\begin{equation}\label{eq:lespartialn}
1 \to \pi_2\left(\prod_1^n\, M\right) \stackrel{\partial_{n}}{\to} \pi_{1}(I_{n}) \xrightarrow{(g_{n}\circ j_{n})_{\#}} P_{n}(M) \xrightarrow{(\iota_{n})_{\#}} \pi_1\left(\prod_1^n\, M\right) \to 1,
\end{equation}
where $\partial_{n}=\partial_{n,2}$ is the associated boundary homomorphism. If $M=\St$ then this sequence is short exact. The aim of this section is to describe completely $\partial_{n}$. 
In \resec{geomrepsS2}, we first describe explicit generators of $\pi_{1}(\Omega (\St))$ and $\pi_{1}(\Omega (\rp))$ that will be used in the rest of the paper. In \resec{notation}, we exhibit homotopy equivalences between $I_{c}$ and $I_{n}''$, and $I_{n}''$ and $I_{n}'$ in \relem{fundhomoequiv}. In \resec{n3S2}, we analyse the case $n=n_{0}$, the main results being \reth{brrelnS2}, which describes the relation between certain elements of $\pi_{1}(I_{n_{0}})$, and \reco{sesnn0}, which makes explicit the boundary homomorphism $\partial_{n_{0}}$. In \resec{general}, we then use the results of \resec{n3S2} to study the case $n>n_{0}$, and to prove \reth{tauhatsquareS2}. \reco{sesnngen} describes the boundary homomorphism $\partial_{n}$, and generalises \reco{sesnn0}. We shall make use of the notation defined in \resec{generalities}. Recall that $\map{\pi}{\St}[\rp]$ denotes the universal covering, and that $\map{\tau}{\St}$ is the antipodal map.

\subsection{Geometric representatives of generators of $\pi_{1}(\Omega (M))$}\label{sec:geomrepsS2}

We follow the notation of \resec{generalities}. For $i\in\brak{1,2,3}$, let $\map{p_{i}}{\R^{3}}[\R]$ denote projection onto the $i$\textsu{th} coordinate. Taking $n=n_{0}$, the basepoint $W_{n_{0}}$ is $(\widetilde{x}_{0}, \widetilde{z}_{0},-\widetilde{z}_{0})$ (resp.\ $(x_{0},z_{0})$) 
if $M=\St$ (resp.\ $M=\rp$). If $u\in \brak{\widetilde{x}_{0},\pm\widetilde{z}_{0}}$, we shall identify $\pi_{1}(\Omega(\St),c_{u})$ with $\pi_{2}(\St,u)$, and $\pi_{1}(\Omega(\rp),c_{\pi(u)})$ with $\pi_{2}(\rp,\pi(u))$ in the usual way. In this section, we define explicit geometric representatives of generators of $\pi_{1}(\Omega(\St),u)$ and of $\pi_{1}(\Omega(\rp),c_{\pi(u)})$, and we prove two lemmas that shall be used in what follows.

Let $\Pi_0$ be the tangent plane to $\St$ at $\widetilde{x}_{0}$, and let $D$ be the straight line of equation $\begin{cases}
x=1\\
y=0
\end{cases}$ contained in $\Pi_0$. For $t\in I$, let $\Pi_t$ be the plane given by rotating $\Pi_0$ about $D$ through an angle $t\pi$, and let $\widetilde{\omega}_{\widetilde{x}_{0},t}=\St\cap \Pi_t$. We choose the orientation and parametrisation so that:
\begin{enumerate}\renewcommand\theenumi{X\alph{enumi}}
\item\label{it:xa} for all $s\in (0,1)$, $p_{2}(\widetilde{\omega}_{\widetilde{x}_{0},t}(s))<0$ if $t\in \left(0,\frac{1}{2}\right)$, and $p_{2}(\widetilde{\omega}_{\widetilde{x}_{0},t}(s))>0$ if $t\in \left(\frac{1}{2},1\right)$.
\item\label{it:xb} for all $t\in (0,1)$, $p_{3}(\widetilde{\omega}_{\widetilde{x}_{0},t}(s))<0$ if $s\in \left(0,\frac{1}{2}\right)$ and $p_{3}(\widetilde{\omega}_{\widetilde{x}_{0},t}(s))>0$ if $s\in \left(\frac{1}{2},1\right)$.
\item\label{it:xc}  for all $t,s\in I$, $p_{i}(\widetilde{\omega}_{\widetilde{x}_{0},t}(s))=p_{i}(\widetilde{\omega}_{\widetilde{x}_{0},t}(1-s))$ for $i=1,2$, and $p_{3}(\widetilde{\omega}_{\widetilde{x}_{0},t}(s))=-p_{3}(\widetilde{\omega}_{\widetilde{x}_{0},t}(1-s))$.

\item\label{it:xd} for all $t,s\in I$, $p_{2}(\widetilde{\omega}_{\widetilde{x}_{0},t}(s))=-p_{2}(\widetilde{\omega}_{\widetilde{x}_{0},1-t}(s))$, and $p_{i}(\widetilde{\omega}_{\widetilde{x}_{0},t}(s))=p_{i}(\widetilde{\omega}_{\widetilde{x}_{0},1-t}(s))$ for $i=1,3$.

\item\label{it:xe} for all $t\in I$, $p_{1}(\widetilde{\omega}_{\widetilde{x}_{0},t}(s))>0$ if $s\in \left[0,\frac{1}{4}\right)$, and $p_{1}(\widetilde{\omega}_{\widetilde{x}_{0},\frac{1}{2}}(s))<0$ if $s\in \left(\frac{1}{4},\frac{1}{2}\right]$.
\end{enumerate}
For each $t\in I$, $\widetilde{\omega}_{\widetilde{x}_{0},t}$ is a loop based at $\widetilde{x}_{0}$, $\widetilde{\omega}_{\widetilde{x}_{0},0}=\widetilde{\omega}_{\widetilde{x}_{0},1}=c_{\widetilde{x}_{0}}$, and for each $\widetilde{w}\in \St\setminus \brak{\widetilde{x}_{0}}$, there exist unique values $t,s\in (0,1)$ for which $\widetilde{\omega}_{\widetilde{x}_{0},t}(s)=\widetilde{w}$. Observe also that:
\begin{enumerate}[\textbullet]
\item from condition~(\ref{it:xb}), $p_{3}(\widetilde{\omega}_{\widetilde{x}_{0},t}(\frac{1}{2}))=0$ for all $t\in I$, so the loop $(\widetilde{\omega}_{\widetilde{x}_{0},t}(\frac{1}{2}))_{t\in I}$ is the great circle lying in the $xy$-plane. 
\item by conditions~(\ref{it:xc}) and~(\ref{it:xe}), for all $t\in I$, $p_{1}(\widetilde{\omega}_{\widetilde{x}_{0},t}(s))>0$ if $s\in \left[0,\frac{1}{4}\right) \cup \left(\frac{3}{4}, 1\right]$, and $p_{1}(\widetilde{\omega}_{\widetilde{x}_{0},\frac{1}{2}}(s))<0$ if $s\in \left(\frac{1}{4},\frac{3}{4}\right)$.
\end{enumerate}

Consider the rotation $R$ about the straight line of equation $x=z$ and $y=0$ by an angle $\pi$. The matrix of $R$ in the standard basis of $\R^{3}$ is $\left(\begin{smallmatrix}
0 & 0 & 1\\
0 & -1 & 0\\
1 & 0 & 0
\end{smallmatrix}\right)$. For all $t\in I$, let $\widetilde{\omega}_{\widetilde{z}_{0},t}= R(\widetilde{\omega}_{\widetilde{x}_{0},t})$. It follows from the above conditions on $\widetilde{\omega}_{\widetilde{x}_{0},t}$ that:
\begin{enumerate}\renewcommand\theenumi{Z\alph{enumi}}
\item\label{it:za} for all $s\in (0,1)$, $p_{2}(\widetilde{\omega}_{\widetilde{z}_{0},t}(s))>0$ if $t\in \left(0,\frac{1}{2}\right)$, and $p_{2}(\widetilde{\omega}_{\widetilde{z}_{0},t}(s))<0$ if $t\in \left(\frac{1}{2},1\right)$.
\item\label{it:zb} for all $t\in (0,1)$, $p_{1}(\widetilde{\omega}_{\widetilde{z}_{0},t}(s))<0$ if $s\in \left(0,\frac{1}{2}\right)$ and $p_{1}(\widetilde{\omega}_{\widetilde{z}_{0},t}(s))>0$ if $s\in \left(\frac{1}{2},1\right)$.
\item\label{it:zc} for all $t,s\in I$, $p_{i}(\widetilde{\omega}_{\widetilde{z}_{0},t}(s))= p_{i}(\widetilde{\omega}_{\widetilde{z}_{0},t}(1-s))$ for $i=2,3$, and $p_{1}(\widetilde{\omega}_{\widetilde{z}_{0},t}(s))=-p_{1}(\widetilde{\omega}_{\widetilde{z}_{0},t}(1-s))$.

\item\label{it:zd} for all $t,s\in I$, $p_{2}(\widetilde{\omega}_{\widetilde{z}_{0},t}(s))=-p_{2}(\widetilde{\omega}_{\widetilde{z}_{0},1-t}(s))$, and $p_{i}(\widetilde{\omega}_{\widetilde{z}_{0},t}(s))=p_{i}(\widetilde{\omega}_{\widetilde{z}_{0},1-t}(s))$ for $i=1,3$.

\item\label{it:ze} for all $t\in I$, $p_{3}(\widetilde{\omega}_{\widetilde{z}_{0},t}(s))>0$ if $s\in \left[0,\frac{1}{4}\right)$, and $p_{3}(\widetilde{\omega}_{\widetilde{z}_{0},\frac{1}{2}}(s))<0$ if $s\in \left(\frac{1}{4},\frac{1}{2}\right]$.
\end{enumerate}
In particular:
\begin{enumerate}[\textbullet]
\item by condition~(\ref{it:zb}), $p_{1}(\widetilde{\omega}_{\widetilde{z}_{0},t}(\frac{1}{2}))=0$, so the loop $(\widetilde{\omega}_{\widetilde{z}_{0},t}(\frac{1}{2}))_{t\in I}$ is the great circle lying in the $yz$-plane. 
\item by conditions~(\ref{it:zc}) and~(\ref{it:ze}), for all $t\in I$, $p_{3}(\widetilde{\omega}_{\widetilde{z}_{0},t}(s))>0$ if $s\in \left[0,\frac{1}{4}\right) \cup \left(\frac{3}{4}, 1\right]$, and $p_{3}(\widetilde{\omega}_{\widetilde{z}_{0},\frac{1}{2}}(s))<0$ if $s\in \left(\frac{1}{4},\frac{3}{4}\right)$.
\end{enumerate}
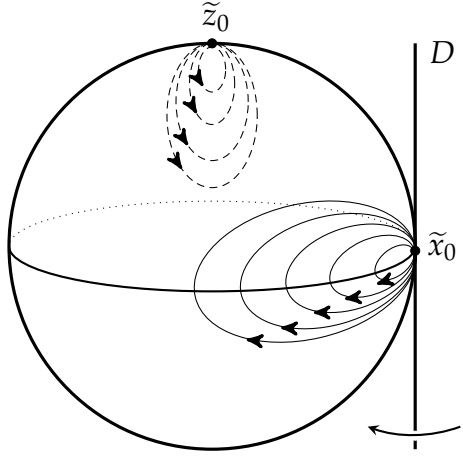
\begin{figure}[t]
\hfill 
\begin{tikzpicture}[scale=0.6, >=stealth]
\draw[very thick] (0,0) circle(4.5);
\draw[dotted] (4.5,0) arc (0:180:4.5 and 1);
\draw[thick] (-4.5,0) arc (180:360:4.5 and 1);

\draw[rotate around={38:(4.5,0)}, middlearrow=0.65] (3.92,0) ellipse [x radius = 0.5, y radius = 0.3];
\draw[rotate around={25:(4.5,0)}, middlearrow=0.65] (3.45,0) ellipse [x radius = 1, y radius = 0.6];
\draw[rotate around={19:(4.5,0)}, middlearrow=0.65] (3,0) ellipse [x radius = 1.5, y radius = 0.9];
\draw[rotate around={16:(4.5,0)}, middlearrow=0.65] (2.5,0) ellipse [x radius = 2, y radius = 1.2];
\draw[rotate around={13:(4.5,0)}, middlearrow=0.65] (2,0) ellipse [x radius = 2.5, y radius = 1.5];

\draw[densely dashed, middlearrowrev=0.65] (0,3.92) ellipse [x radius = 0.3, y radius = 0.5];
\draw[densely dashed, middlearrowrev=0.65] (0,3.6) ellipse [x radius = 0.5, y radius = 0.9];
\draw[densely dashed, middlearrowrev=0.65] (0,3.2) ellipse [x radius = 0.8, y radius = 1.3];
\draw[densely dashed, middlearrowrev=0.65] (0,2.9) ellipse [x radius = 1, y radius = 1.6];

\draw[fill] (4.5,-0.1) circle(0.1);
\draw[fill] (0,4.5) circle(0.1);
\node at (5.1,0){$\widetilde{x}_{0}$};
\node at (0.1,5.1){$\widetilde{z}_{0}$};
\node at (5.1,4.3){$D$};

\draw[very thick] (4.5,-4.5) -- (4.5,4.5);
\draw[fill, white] (4.5,-4.2) circle(0.1);
\draw [thick, ->] (5.5,-4) arc [radius=3, start angle=290, end angle= 250];
\end{tikzpicture}
\hspace*{\fill}
\caption{The loops $\widetilde{\omega}_{\widetilde{x}_{0},t}$ and $\widetilde{\omega}_{\widetilde{z}_{0},t}$ for small values of $t$. The dotted lines indicate that the loops are on the `hidden' side of the sphere.}
\label{fig:smallt}
\end{figure}
The loop $\widetilde{\omega}_{u,t}$ is illustrated in Figure~\ref{fig:smallt} for $u\in \brak{\widetilde{x}_{0},\widetilde{z}_{0}}$ and small values of $t$. We also set:
\begin{equation}\label{eq:minusomega}
\widetilde{\omega}_{-u,t}= -\widetilde{\omega}_{u,1-t}.
\end{equation}
If $u\in \brak{\widetilde{x}_{0},\widetilde{z}_{0}, -\widetilde{z}_{0}}$, then $(\widetilde{\omega}_{u,t})_{t\in I}$ is a geometric representative of a generator, which we shall denote by $\widetilde{\lambda}_{u}$, of $\pi_{1}(\Omega(\St),c_{u})$. The fact that the covering map $\pi$ induces an isomorphism on the $\pi_{2}$-level implies that $(\pi\circ \widetilde{\omega}_{u,t})_{t\in I}$ is a geometric representative of a generator of $\pi_{1}(\Omega(\rp), c_{\pi(u)})$ that we denote by $\lambda_{\pi(u)}$. Note that:
\begin{equation}\label{eq:minusz0}
\lambda_{\pi(-\widetilde{z}_{0})}= [(\pi\circ \widetilde{\omega}_{-\widetilde{z}_{0},t})_{t\in I}]= [(\pi\circ (-\widetilde{\omega}_{\widetilde{z}_{0},1-t}))_{t\in I}]= [(\pi\circ \widetilde{\omega}_{\widetilde{z}_{0},t}^{-1})_{t\in I}]= -\lambda_{\pi(\widetilde{z}_{0})}
\end{equation}
using~\reqref{minusomega}.
If $M=\St$ (resp.\ $M=\rp$), each direct factor of $\pi_1(\prod_1^{n_{0}}\, \Omega (M), W_{n_{0}})$ will be identified with the corresponding group 
$\pi_{1}(\Omega(\St),c_{u})$ (resp.\ $\pi_{1}(\Omega(\rp), c_{\pi(u)})$).
If $u\in \brak{\widetilde{x}_{0},\widetilde{z}_{0}, -\widetilde{z}_{0}}$ (resp.\ $u\in \brak{\widetilde{x}_{0},\widetilde{z}_{0}}$), let $\widetilde{\delta}_{u}= \partial_{n_{0}}(\widetilde{\lambda}_{u})$ (resp.\ $\delta_{\pi(u)}= \partial_{n_{0}}(\lambda_{\pi(u)})$ 
in $\pi_{1}(I_{n_{0}})$. 

\begin{lem}\label{lem:boundaryS2}
With the above notation:
\begin{enumerate}[(a)]
\item\label{it:boundaryS2a} if $M=\St$, the loop $\bigl(\widetilde{x}_0, \widetilde{z}_0, -\widetilde{z}_0, \widetilde{\omega}_{\widetilde{x}_{0}, t}, c_{\widetilde{z}_0}, c_{-\widetilde{z}_0}\bigr)_{t\in I}$  (resp.\ $\bigl(\widetilde{x}_0, \widetilde{z}_0, -\widetilde{z}_0, c_{\widetilde{x}_0}, \widetilde{\omega}_{\widetilde{z}_{0}, t}, c_{-\widetilde{z}_0}\bigr)_{t\in I}$, $\bigl(\widetilde{x}_0, \widetilde{z}_0, -\widetilde{z}_0, c_{\widetilde{x}_0}, c_{\widetilde{z}_0}, \widetilde{\omega}_{-\widetilde{z}_{0}, t}\bigr)_{t\in I}$) is a geometric representative in $I_{n_{0}}$ of $\widetilde{\delta}_{\widetilde{x}_{0}}$ (resp.\ of $\widetilde{\delta}_{\widetilde{z}_{0}}$, $\widetilde{\delta}_{-\widetilde{z}_{0}}$).
\item\label{it:boundaryS2b} if $M=\rp$, the loop $(x_{0}, z_{0}, \pi\circ \widetilde{\omega}_{\widetilde{x}_{0},t},c_{z_{0}})_{t\in I}$ (resp.\ $(x_{0}, z_{0}, c_{x_{0}}, \pi\circ \widetilde{\omega}_{\pm\widetilde{z}_{0},t})_{t\in I}$) is a geometric representative in $I_{n_{0}}$ of $\delta_{x_{0}}$ (resp.\ of $\pm\delta_{z_{0}}$). 
\end{enumerate}
\end{lem}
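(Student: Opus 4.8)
The plan is to unwind the definitions of the objects involved, the crucial input being the explicit formula for the boundary map $d_{n_0}$ supplied by \relem{boundary}. Recall that $\partial_{n_0}=\partial_{n_0,2}$ is by definition the homomorphism $(d_{n_0})_{\#2}$, which, under the identification of $\pi_{1}(\prod_{1}^{n_0}\Omega M)$ with $\pi_{2}(\prod_{1}^{n_0}M)$ fixed in \resec{generalities}, is the map induced on $\pi_{1}$ by $d_{n_0}\colon \prod_{1}^{n_0}\Omega M \to I_{n_0}$; and \relem{boundary} says that $d_{n_0}(\omega)=(w_{1},\ldots,w_{n_0},\omega)$. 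So the first step is to record the following principle: if a class $\xi\in\pi_{2}(\prod_{1}^{n_0}M)$ is represented, under the above identification, by a loop $t\mapsto\gamma(t)=(\gamma_{1}(t),\ldots,\gamma_{n_0}(t))$ of loops in $M$ based respectively at $w_{1},\ldots,w_{n_0}$, then $\partial_{n_0}(\xi)\in\pi_{1}(I_{n_0})$ is represented by the loop $t\mapsto(w_{1},\ldots,w_{n_0},\gamma(t))$ in $I_{n_0}$, where on the right $\gamma(t)$ is read as the path $s\mapsto(\gamma_{1}(t)(s),\ldots,\gamma_{n_0}(t)(s))$ in $\prod_{1}^{n_0}M$. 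One should check here that this loop is genuinely based at $(W_{n_0},c_{W_{n_0}})$, which is immediate since $\widetilde{\omega}_{u,0}=\widetilde{\omega}_{u,1}=c_{u}$ for each relevant $u$.

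Granting this principle, part~(\ref{it:boundaryS2a}) is immediate: by the constructions of \resec{geomrepsS2} and the chosen identification of the direct factors of $\pi_{1}(\prod_{1}^{3}\Omega\St)$, the generator $\widetilde{\lambda}_{\widetilde{x}_{0}}$ of $\pi_{2}(\prod_{1}^{3}\St)$ lying in the factor of the first coordinate $\widetilde{x}_{0}$ of $W_{n_0}$ is represented by the loop $t\mapsto(\widetilde{\omega}_{\widetilde{x}_{0},t},c_{\widetilde{z}_{0}},c_{-\widetilde{z}_{0}})$, so the principle yields precisely the loop claimed for $\widetilde{\delta}_{\widetilde{x}_{0}}=\partial_{n_0}(\widetilde{\lambda}_{\widetilde{x}_{0}})$; the cases of $\widetilde{\delta}_{\widetilde{z}_{0}}$ and $\widetilde{\delta}_{-\widetilde{z}_{0}}$ are handled identically using the loops $t\mapsto(c_{\widetilde{x}_{0}},\widetilde{\omega}_{\widetilde{z}_{0},t},c_{-\widetilde{z}_{0}})$ and $t\mapsto(c_{\widetilde{x}_{0}},c_{\widetilde{z}_{0}},\widetilde{\omega}_{-\widetilde{z}_{0},t})$. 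For part~(\ref{it:boundaryS2b}), one uses the fact (also recorded in \resec{geomrepsS2}) that $(\pi\circ\widetilde{\omega}_{u,t})_{t\in I}$ represents $\lambda_{\pi(u)}$, so that $\lambda_{x_{0}}$ and $\lambda_{z_{0}}$ are represented by $t\mapsto(\pi\circ\widetilde{\omega}_{\widetilde{x}_{0},t},c_{z_{0}})$ and $t\mapsto(c_{x_{0}},\pi\circ\widetilde{\omega}_{\widetilde{z}_{0},t})$ in $\prod_{1}^{2}\Omega\rp$; the principle then gives the stated representatives of $\delta_{x_{0}}$ and of $+\delta_{z_{0}}$. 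For the sign $-$, I would invoke \reqref{minusomega} to get $\widetilde{\omega}_{-\widetilde{z}_{0},t}=-\widetilde{\omega}_{\widetilde{z}_{0},1-t}$, hence $\pi\circ\widetilde{\omega}_{-\widetilde{z}_{0},t}=\pi\circ\widetilde{\omega}_{\widetilde{z}_{0},1-t}$, so that $t\mapsto(c_{x_{0}},\pi\circ\widetilde{\omega}_{-\widetilde{z}_{0},t})$ represents $\lambda_{\pi(-\widetilde{z}_{0})}=-\lambda_{z_{0}}$ by \reqref{minusz0}; applying the principle once more, the resulting loop in $I_{n_0}$ represents $\partial_{n_0}(-\lambda_{z_{0}})=-\delta_{z_{0}}$.

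I do not expect a serious obstacle here; the argument is essentially bookkeeping with basepoints together with the two identifications $\pi_{2}(\prod_{1}^{n_0}M)\cong\pi_{1}(\prod_{1}^{n_0}\Omega M)$ and $\pi_{1}(\Omega M)\cong\pi_{2}(M)$. The one point deserving care is to be certain that the boundary homomorphism $\partial_{n_0}$ occurring in \req{lespartialn} is computed on a loop $\gamma$ by the naive formula $t\mapsto(w_{1},\ldots,w_{n_0},\gamma(t))$, with no reparametrisation or sign twist; this is exactly what the proof of \relem{boundary} establishes, via the pullback description of the principal fibration induced by $\iota_{n_0}$ following~\cite{A}, so I would simply appeal to that lemma.
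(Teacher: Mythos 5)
Your proposal is correct and follows exactly the same route as the paper: the paper's proof is a single sentence invoking the construction from \relem{boundary} applied to the lower fibration of~\reqref{commdiagfib}, together with \req{minusz0} for the sign in the $\rp$ case, which is precisely what you spell out in more detail.
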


\begin{proof}
The result is a consequence of the application of the construction given in the proof of \relem{boundary} to the lower fibration of~\reqref{commdiagfib}, and \req{minusz0}.
\end{proof}

\begin{lem}\label{lem:omegaoneminust}
Let $u\in \brak{\widetilde{x}_{0},\widetilde{z}_{0}}$. Then in $\pi_{1}(\Omega(\St), c_{u})$, $[(\widetilde{\omega}_{u,t})_{t\in I}]=[(\widetilde{\omega}_{u,1-t}^{-1})_{t\in I}]$.
\end{lem}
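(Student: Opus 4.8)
The plan is to reduce the statement to an elementary degree computation via the standard isomorphism $\pi_{1}(\Omega(\St), c_{u})\cong \pi_{2}(\St, u)$. Under this isomorphism a loop $(\omega_{t})_{t\in I}$ in $\Omega(\St)$ based at $c_{u}$ corresponds to the map $\map{\Gamma}{I^{2}}[\St]$ defined by $\Gamma(t,s)=\omega_{t}(s)$, which sends $\partial I^{2}$ to $u$ since $\omega_{0}=\omega_{1}=c_{u}$ and each $\omega_{t}$ is a loop based at $u$; this is exactly the situation for the loops appearing in the statement, because $\widetilde{\omega}_{u,0}=\widetilde{\omega}_{u,1}=c_{u}$ and $\widetilde{\omega}_{u,t}$ is a loop based at $u$ for all $t$ (and likewise for their inverses). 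Using $l^{-1}(s)=l(1-s)$, the map associated to $(\widetilde{\omega}_{u,t})_{t\in I}$ is $\Gamma(t,s)=\widetilde{\omega}_{u,t}(s)$, while the map associated to $(\widetilde{\omega}_{u,1-t}^{-1})_{t\in I}$ is $\Gamma'(t,s)=\widetilde{\omega}_{u,1-t}(1-s)=\Gamma(1-t,1-s)$. Hence $\Gamma'=\Gamma\circ r$, where $\map{r}{I^{2}}[I^{2}]$ is the central symmetry $r(t,s)=(1-t,1-s)$ of the square; since $r$ restricts to a homeomorphism of $\partial I^{2}$, it induces a pointed homeomorphism $\overline{r}$ of the quotient $I^{2}/\partial I^{2}\cong \St$.

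The key point is then that $r$ is orientation preserving: its linear part is $-\id_{\R^{2}}$, of determinant $+1$, equivalently $r$ is the composite of the two reflections $(t,s)\mapsto(1-t,s)$ and $(t,s)\mapsto(t,1-s)$. Consequently $\overline{r}$ has degree $+1$, and since precomposition by a self-map of $\St$ of degree $d$ acts as multiplication by $d$ on $\pi_{2}$, we obtain $[\Gamma']=[\Gamma\circ r]=(\deg\overline{r})\,[\Gamma]=[\Gamma]$ in $\pi_{2}(\St,u)$, whence $[(\widetilde{\omega}_{u,t})_{t\in I}]=[(\widetilde{\omega}_{u,1-t}^{-1})_{t\in I}]$ in $\pi_{1}(\Omega(\St),c_{u})$, as required. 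Two alternative routes are available and may be worth recording: first, in the abelian group $\pi_{2}(\St,u)$ one has $[(\widetilde{\omega}_{u,1-t})_{t\in I}]=-[(\widetilde{\omega}_{u,t})_{t\in I}]$ (reversing the parameter $t$) and $[(\omega_{t}^{-1})_{t\in I}]=-[(\omega_{t})_{t\in I}]$ (taking pointwise path-inverses in the loop space), so the two sign changes cancel; second, using the symmetry conditions~$(\ref{it:xc})$, $(\ref{it:xd})$, $(\ref{it:zc})$ and $(\ref{it:zd})$, one can write $\Gamma'=A\circ\Gamma$ for a rotation $A$ of $\St$ by $\pi$ about the axis through $u$, and conclude by joining $A$ to $\id_{\St}$ through rotations about that same axis, all of which fix $u$.

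The computation is routine and no genuine obstacle is anticipated; the single point requiring care is the sign, i.e.\ verifying that $\overline{r}$ has degree $+1$ rather than $-1$, since a degree $-1$ map would instead give $[(\widetilde{\omega}_{u,t})_{t\in I}]=-[(\widetilde{\omega}_{u,1-t}^{-1})_{t\in I}]$. This is settled once and for all by the factorisation of $r$ into an even number of reflections.
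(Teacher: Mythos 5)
Your main argument is correct and takes a genuinely different, more elementary route than the paper. The paper proves this for $u=\widetilde{x}_{0}$ by constructing the explicit one-parameter family of rotations $R_{\theta}'$ about the $x$-axis, verifying (via the symmetry conditions~$(\ref{it:xc})$ and~$(\ref{it:xd})$) that $R_{\pi}'(\widetilde{\omega}_{\widetilde{x}_{0},t})=\widetilde{\omega}_{\widetilde{x}_{0},1-t}^{-1}$, and then using $(R_{\theta}')_{\theta\in[0,\pi]}$ as a homotopy fixing $\widetilde{x}_{0}$; the case $u=\widetilde{z}_{0}$ is deduced by conjugating with the rotation $R$. That is precisely your second ``alternative route''. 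Your primary argument instead passes to $\pi_{2}(\St,u)$ and observes that $\Gamma'(t,s)=\widetilde{\omega}_{u,1-t}(1-s)=\Gamma(1-t,1-s)=\Gamma\circ r(t,s)$, where $r$ is the point-reflection of the square, which descends to a pointed self-map of $I^{2}/\partial I^{2}\cong\St$ of degree $+1$ (being the composite of two reflections, or equivalently a rotation by $\pi$ about the centre of the square); precomposition then acts as multiplication by $+1$ on $\pi_{2}$. This buys you two things the paper's proof does not: it avoids the symmetry conditions~$(\ref{it:xc})$--$(\ref{it:xd})$ and~$(\ref{it:zc})$--$(\ref{it:zd})$ entirely, and it proves the statement for \emph{any} loop $(\omega_{t})_{t\in I}$ in $\Omega X$ based at a constant loop, for any pointed space $X$ — no geometry of $\St$ or of the particular family $\widetilde{\omega}_{u,t}$ enters. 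The paper's proof is more hands-on and self-contained within the geometric framework it has already set up, whereas yours isolates the general homotopy-theoretic fact underneath. Your first alternative (the cancellation of two signs, one from reversing $t$ and one from pointwise inversion of loops) is also correct and amounts to factoring $r$ as the two reflections $(t,s)\mapsto(1-t,s)$ and $(t,s)\mapsto(t,1-s)$, each of degree $-1$. You were right to flag the sign as the one genuinely delicate point: a degree $-1$ self-map of the source sphere would have given the negated identity, which is false here.
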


\begin{proof}
First suppose that $u=\widetilde{x}_{0}$, and for $\theta\in [0,\pi]$, let $R_{\theta}'=\left( \begin{smallmatrix}
1 & 0 & 0\\
0 & \cos \theta & -\sin \theta\\
0 & \sin \theta & \cos \theta
\end{smallmatrix}
\right)$. Then for all $t\in I$, $R_{0}'(\widetilde{\omega}_{\widetilde{x}_{0},t})=\widetilde{\omega}_{\widetilde{x}_{0},t}$, and using conditions~(\ref{it:xc}) and~(\ref{it:xd}), for all $t,s\in I$, we have:
\begin{align*}
R_{\pi}'(\widetilde{\omega}_{\widetilde{x}_{0},t}(s)) &=\begin{pmatrix}
p_{1}(\widetilde{\omega}_{\widetilde{x}_{0},t}(s))\\
-p_{2}(\widetilde{\omega}_{\widetilde{x}_{0},t}(s))\\
-p_{3}(\widetilde{\omega}_{\widetilde{x}_{0},t}(s))
\end{pmatrix}=\begin{pmatrix}
p_{1}(\widetilde{\omega}_{\widetilde{x}_{0},1-t}(s))\\
p_{2}(\widetilde{\omega}_{\widetilde{x}_{0},1-t}(s))\\
-p_{3}(\widetilde{\omega}_{\widetilde{x}_{0},1-t}(s))
\end{pmatrix}=\begin{pmatrix}
p_{1}(\widetilde{\omega}_{\widetilde{x}_{0},1-t}(1-s))\\
p_{2}(\widetilde{\omega}_{\widetilde{x}_{0},1-t}(1-s))\\
p_{3}(\widetilde{\omega}_{\widetilde{x}_{0},1-t}(1-s))
\end{pmatrix}\\
&= \widetilde{\omega}_{\widetilde{x}_{0},1-t}(1-s)= \widetilde{\omega}_{\widetilde{x}_{0},1-t}^{-1}(s).
\end{align*}
So $R_{\pi}'(\widetilde{\omega}_{\widetilde{x}_{0},t})=\widetilde{\omega}_{\widetilde{x}_{0},1-t}^{-1}$, from which we deduce the result for $\widetilde{x}_{0}$ using the homotopy $(R_{\theta}')_{\theta \in [0,\pi]}$. If $u=\widetilde{z}_{0}$, by the first part, $R\circ R_{0}'(\widetilde{\omega}_{\widetilde{x}_{0},t})=R(\widetilde{\omega}_{\widetilde{x}_{0},t})= \widetilde{\omega}_{\widetilde{z}_{0},t}$ and $R\circ R_{\pi}'(\widetilde{\omega}_{\widetilde{x}_{0},t})= R(\widetilde{\omega}_{\widetilde{x}_{0},1-t}^{-1})= \widetilde{\omega}_{\widetilde{z}_{0},1-t}^{-1}$ for all $t\in I$,  and the result then follows as in the case of $\widetilde{x}_{0}$.
\end{proof}

\subsection{Construction of intermediate homotopy fibres}\label{sec:notation}

Once more, the notation will be that defined in \resec{generalities}. We start by introducing some other notation that will be used in the rest of the paper. Let $n\geq n_{0}$, and let $j\in \brak{0,1,\ldots,n-n_{0}}$.   Let $\map{J_{j}}{\St}$ be the orientation-preserving homeomorphism defined by:
\begin{equation}\label{eq:defJj}
J_{j}(\theta, \phi)= \begin{cases}
\left(\theta, \phi+(\frac{\pi}{2}-\phi)\left(\frac{j}{j+1}\right)\right) & \text{if $\phi\geq 0$}\\
\left(\theta, \phi+(\frac{\pi}{2}+\phi)\left(\frac{j}{j+1}\right) \right) & \text{if $\phi< 0$.}
\end{cases}
\end{equation}
Note that $J_{0}=\id_{\St}$, $J_{j}(\widetilde{x}_{0})=\widetilde{x}_{j}$, $J_{j}$ sends the equator ($\phi=0$) to the circle of equation $\phi=\frac{\pi}{2}\left(\frac{j}{j+1}\right)$, leaves the longitudes invariant, and fixes the poles (see Figure~\ref{fig:mapJj0}).
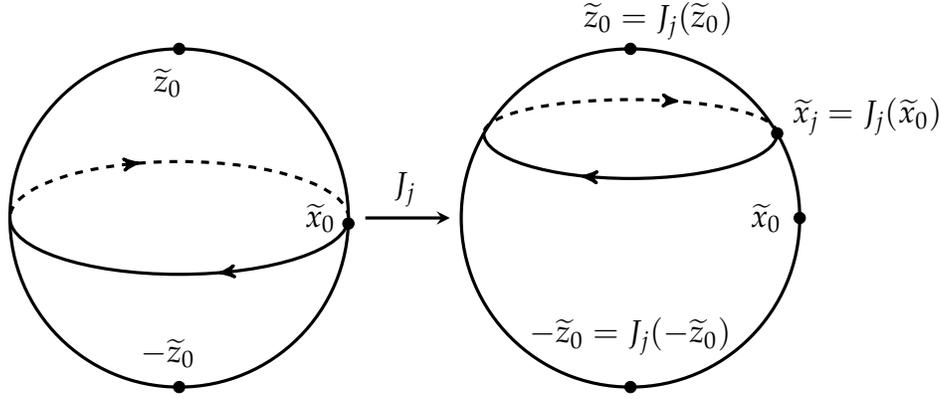
\begin{figure}[t]
\hspace*{\fill}
\begin{tikzpicture}[scale=0.75, >=stealth]

\draw[very thick] (0,0) circle(3);
\draw[dashed, very thick, middlearrow=0.65] (3,0) arc (0:180:3 and 1);
\draw[very thick, middlearrow=0.65] (-3,0) arc (180:360:3 and 1);

\draw[fill] (3,-0.1) circle(0.1);
\draw[fill] (0,3) circle(0.1);
\draw[fill] (0,-3) circle(0.1);
\node at (2.5,0){$\widetilde{x}_{0}$};
\node at (-0.2,2.4){$\widetilde{z}_{0}$};
\node at (-0.2,-2.4){$-\widetilde{z}_{0}$};


\draw[very thick] (8,0) circle(3);
\draw[dashed, very thick, middlearrow=0.4] (10.6,1.5) arc (0:180:2.6 and 0.6); 
\draw[very thick, middlearrow=0.4] (5.4,1.5) arc (180:360:2.6 and 0.8); 

\draw[fill] (11,0) circle(0.1);
\draw[fill] (8,3) circle(0.1);
\draw[fill] (8,-3) circle(0.1);
\draw[fill] (10.6,1.5) circle(0.1);

\node at (10.4,0){$\widetilde{x}_{0}$};
\node at (8.5,3.5){$\widetilde{z}_{0}=J_{j}(\widetilde{z}_{0})$};
\node at (8,-2.1){$-\widetilde{z}_{0}=J_{j}(-\widetilde{z}_{0})$};
\node at (12.2,1.8){$\widetilde{x}_{j}=J_{j}(\widetilde{x}_{0})$};

\draw[very thick, ->] (3.3,0) -- (4.8,0);
\node at (4,0.5){$J_{j}$};

%
%

\end{tikzpicture}
\hspace*{\fill}
\caption{The homeomorphism $J_{j}$.}
\label{fig:mapJj0}
\end{figure}
If $t\in I$,  let $\map{h_{j,t}}{I}[\St]$ be the path defined by:
\begin{equation}\label{eq:hxjt}
\text{$h_{j,t}(r)=\widetilde{\omega}_{\widetilde{x}_{j},t}\Bigl(\frac{1+r}{2}\Bigr)$ for all $r\in I$, where $\widetilde{\omega}_{\widetilde{x}_{j},t}= J_{j}\circ \widetilde{\omega}_{\widetilde{x}_{0},t}$.}
\end{equation}
The subspace $\mathbb{M}$ of $\St$ defined in \resec{generalities} is contractible. Let us define two explicit contractions
$\map{H_{1}}{\mathbb{M}\times I}[\St]$ and $\map{H_{2}}{\dd\times I}[\St]$ as follows. If $v\in \dd$, let $v=(\theta,\phi)$.
\begin{enumerate}[(a)]
\item To define $H_{1}$, we separate the cases $\phi\geq 0$ and $\phi<0$:
\begin{enumerate}[(i)]
\item suppose that $\phi\geq 0$. If $v=\widetilde{x}_{0}$ then set $H_{1}(v,r)=\widetilde{x}_{0}$ for all $r\in I$. If $v\neq \widetilde{x}_{0}$ then there exist $t\in (0,1)$ and $s\in [\frac{1}{2},1)$, both unique, such that $\widetilde{\omega}_{\widetilde{x}_{0},t}(s)=v$. We then set:
\begin{equation}\label{eq:defH1pos}
\text{$H_{1}(v,r)= \widetilde{\omega}_{\widetilde{x}_{0},t}(s(1-r)+r)$ for all $r\in I$.}
\end{equation}
\item suppose that $\phi<0$. Notice that $(\theta,0)=\widetilde{\omega}_{\widetilde{x}_{0},(2\pi-\theta)/2\pi}(\frac{1}{2})$, and that $\min\left(\frac{-2\phi}{\pi},\frac{1}{2}\right)\in (0,1)$. We set:
\begin{equation*}
H_{1}(v,r)=\begin{cases}\displaystyle
\biggl(\theta, \phi\biggl(1-\frac{r}{\min\bigl(\frac{-2\phi}{\pi},\frac{1}{2}\bigr)}\biggr)
\biggr) & \text{if $0\leq r\leq \min\left(\frac{-2\phi}{\pi},\frac{1}{2}\right)$}\\
\displaystyle\widetilde{\omega}_{\widetilde{x}_{0},(2\pi-\theta)/2\pi}\Biggl( \frac{1+r-2\min\bigl(\frac{-2\phi}{\pi},\frac{1}{2}\bigr)}{2\bigl( 1-\min\bigl(\frac{-2\phi}{\pi},\frac{1}{2}\bigr)\bigr)}\Biggr) & \text{if $\min\left(\frac{-2\phi}{\pi},\frac{1}{2}\right)<r\leq 1$.}
\end{cases}
\end{equation*}
\end{enumerate}
One may check that $H_{1}$ is continuous, and that $H_{1}(v,0)=v$ and $H_{1}(v,1)=\widetilde{x}_{0}$ for all $v\in \mathbb{M}$. 

Geometrically, $H_{1}$ contracts $\mathbb{M}$ onto $\widetilde{x}_{0}$, first along the longitudes in the lower hemisphere, then along the paths $(\widetilde{\omega}_{\widetilde{x}_{0}, t}(s))_{s\in [\frac{1}{2},1]}$ in the upper hemisphere.

\item $H_{2}$ is defined by $H_{2}(v,r)=H_{2}((\theta,\phi),r) = \left(\theta, (1-r)\phi +\frac{r\pi}{2} \right)$. Geometrically, $H_{2}$ contracts $\dd$ onto $\widetilde{z}_{0}$ along the longitudes.
\end{enumerate}
If $v\in \mathbb{M}$ (resp.\ $v\in \dd$), we define $\map{h_{v}^{(1)}}{I}[\St]$ (resp.\ $\map{h_{v}^{(2)}}{I}[\St]$) by:
\begin{equation}\label{eq:defh1}
\text{$h_{v}^{(1)}(r)=H_{1}(v,r)$ and $h_{v}^{(2)}(r)=H_{2}(v,r)$.}
\end{equation}
Observe that $h_{v}^{(1)}$ (resp.\ $h_{v}^{(2)}$) is a path that joins $v$ to $\widetilde{x}_{0}$ (resp.\ to $\widetilde{z}_{0}$) via the homotopy $H_{1}$ (resp.\ $H_{2}$). Let $\map{\Psi_{j}}{\mathbb{M}\times I}[\St]$ be the homotopy defined by $\Psi_{j}(\widetilde{x},t)= J_{j}(H_{1}(J_{j}^{-1}(\widetilde{x}),t))$ using~\reqref{defJj}, and for $\widetilde{x}\in \mathbb{M}$, let $\map{\psi_{\widetilde{x}}^{(j)}}{I}[\St]$ be the path defined by $\psi_{\widetilde{x}}^{(j)}(r)=\Psi_{j}(\widetilde{x},r)$ that joins $\widetilde{x}$ to $\widetilde{x}_{j}$. Note that $\psi_{\widetilde{x}}^{(0)}= h_{\widetilde{x}}^{(1)}$, and that for all $r\in I$:
\begin{align}\label{eq:psihone}
\psi^{(j)}_{\widetilde{\omega}_{\widetilde{x}_{j},t}(\frac{1}{2})}(r) &= \textstyle\Psi_{j}(\widetilde{\omega}_{\widetilde{x}_{j},t}(\frac{1}{2}),r)=  J_{j}(H_{1}(J_{j}^{-1}(\widetilde{\omega}_{\widetilde{x}_{j},t}(\frac{1}{2})),r))\notag\\
&= \textstyle J_{j}(H_{1}(\widetilde{\omega}_{\widetilde{x}_{0},t}(\frac{1}{2}),r))=\textstyle J_{j}(\widetilde{\omega}_{\widetilde{x}_{0},t}(\frac{1+r}{2}))\; \mbox{by equations~\reqref{hxjt} and~\reqref{defH1pos}}\notag\\
& =h_{j,t}(r)\; \mbox{by \req{hxjt}}.
\end{align}
Let $\map{\alpha_{c}}{I_{c}}[I_{n}'']$ be the pointed map defined by: 
\begin{equation}\label{eq:defalphac}
\alpha_{c}(y_{1},\ldots, y_{n-1}, \omega_{1},\ldots, \omega_{n-1})= \bigl(y_{1}, \ldots, y_{n-1}, \widehat{\psi}_{y_{1}}^{(1)}\ast \omega_{1}, \ldots, \widehat{\psi}_{y_{n-1}}^{(n-1)}\ast \omega_{n-1} \bigr),
\end{equation}
where:
\begin{equation}\label{eq:psihat}
\widehat{\psi}_{y_{i}}^{(i)} =\begin{cases}
h_{y_{n-1}}^{(2)} & \text{if $M=\St$ and $i=n-1$}\\
\psi_{y_{i}}^{(i-1)} & \text{otherwise.}
\end{cases}
\end{equation}
Note that $\alpha_{c}$ satisfies $h_{n}''\circ \alpha_{c}(u,\mu)=u$ for all $(u,\mu)\in I_{c}$, and so  diagram~\reqref{bigcommdiagfib} is commutative up to homotopy using \rerem{In2null}. 
From the definition of $\map{\alpha_{n}'}{I_{n}'}[I_{n}]$,~\reqref{defalphapi} and~\reqref{defalphac}, it follows that:
\begin{multline}\label{eq:compalphas}
\alpha_{n}'\circ \alpha_{\pi} \circ \alpha_{c}(y_{1},\ldots, y_{n-1}, \omega_{1},\ldots, \omega_{n-1})=\\
\bigl(\widehat{\pi}(y_{1}),\ldots, \widehat{\pi}(y_{n-1}), w_{n}, \widehat{\pi}\circ(\widehat{\psi}_{y_{1}}^{(1)} \ast \omega_{1}), \ldots, \widehat{\pi}\circ(\widehat{\psi}_{y_{n-1}}^{(n-1)}\ast \omega_{n-1}),c_{w_{n}}\bigr).
\end{multline}

The homomorphism $\map{J_{j}}{\St}$ defined by \req{defJj} and the covering map $\map{\pi}{\St}[\rp]$ induce isomorphisms on the $\pi_{2}$-level, so by \req{hxjt}, 
$(\widehat{\pi}\circ \widetilde{\omega}_{\widetilde{x}_{j},t})_{t\in I}$ is a geometric representative of a generator of $\pi_{1}(\Omega(\St),c_{\widetilde{x}_{j}})$ (resp.\ of $\pi_{1}(\Omega(\rp),c_{x_{j}})$) that we denote by $\widetilde{\lambda}_{\widetilde{x}_{j}}$ (resp.\ by $\lambda_{x_{j}}$). 
Identifying $\pi_{1}(\Omega(\St),c_{\widetilde{x}_{j}})$ (resp.\ $\pi_{1}(\Omega(\rp),c_{x_{j}})$) with $\pi_{2}(\St,\widetilde{x}_{j})$ (resp.\ $\pi_{2}(\rp,x_{j})$) as usual, we may thus take:
\begin{equation}\label{eq:basispi2}
\text{$\bigl(\widetilde{\lambda}_{\widetilde{x}_0}, \widetilde{\lambda}_{\widetilde{x}_1},\ldots, \widetilde{\lambda}_{\widetilde{x}_{n-3}}, \widetilde{\lambda}_{\widetilde{z}_0}, \widetilde{\lambda}_{-\widetilde{z}_0}\bigr)$ (resp.\ $\bigl(\lambda_{x_0}, \lambda_{x_1},\ldots, \lambda_{x_{n-2}}, \lambda_{z_0}\bigr)$)}
\end{equation}
to be a basis of the free Abelian group $\pi_{2}(\prod_{1}^{n}\, \St)$ (resp.\ of $\pi_{2}(\prod_{1}^{n}\, \rp)$). For each $v\in \brak{\widetilde{x}_{0},\widetilde{x}_{1}, \ldots, \widetilde{x}_{n-3}, \widetilde{z}_{0},-\widetilde{z}_{0}}$ (resp.\ $v\in \brak{x_{0},x_{1}, \ldots, x_{n-2}, z_{0}}$), let
\begin{equation}\label{eq:defdeltav}
\text{$\widetilde{\delta}_{v}=\partial_{n}(\widetilde{\lambda}_{v})$ (resp.\ $\delta_{v}=\partial_{n}(\lambda_{v})$).}
\end{equation}

By \rerem{In2null} and the proof of \reth{prop5}, the homotopy fibres $I_{c}$, $I_{n}''$ and $I_{n}'$ have the same homotopy type. We are now able to exhibit explicit homotopy equivalences between them, which will allow us to describe the images of certain elements of $\pi_{1}(I_{c})$ by the induced isomorphisms.

\begin{lem}\mbox{}\label{lem:fundhomoequiv}
\begin{enumerate}[(a)]
\item\label{it:fundhomoequiva} The map $\map{\alpha_{c}}{I_{c}}[I_{n}'']$ defined in~\reqref{defalphac} is a homotopy equivalence.
\item\label{it:fundhomoequivb} The map $\map{\alpha_{\pi}}{I_{n}''}[I_{n}']$ defined in~\reqref{defalphapi} is a homotopy equivalence.

\item\label{it:fundhomoequivc} With the notation of~\reqref{compalphas}, 
\begin{equation*}
g_{n}\circ j_{n}\circ \alpha_{n}'\circ \alpha_{\pi} \circ \alpha_{c}(y_{1},\ldots, y_{n-1}, \omega_{1},\ldots, \omega_{n-1})=\\
\bigl(\widehat{\pi}(y_{1}),\ldots, \widehat{\pi}(y_{n-1}), w_{n}).
\end{equation*}
\item\label{it:fundhomoequivd} For all $i=1,\ldots,n-1$, $(\alpha_{n}'\circ \alpha_{\pi} \circ \alpha_{c})_{\#}(\widetilde{\lambda}_{w_{i}''})= \begin{cases}
\widetilde{\delta}_{w_{i}}=\partial_{n}(\widetilde{\lambda}_{w_{i}})  & \text{if $M=\St$}\\
\delta_{w_{i}} =\partial_{n}(\lambda_{w_{i}})  & \text{if $M=\rp$.}\end{cases}$
\end{enumerate}
\end{lem}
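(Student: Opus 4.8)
The four assertions will be established in turn, the real work being in~(\ref{it:fundhomoequiva}).

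\textbf{Part~(\ref{it:fundhomoequiva}).} The plan is to realise $\alpha_{c}$ as a fibre-preserving map between two fibrations over $F_{n-1}^{G}(\mathbb{M})$ and to apply the five lemma, followed by Whitehead's theorem. By~\reqref{defIc}, the projection $h_{n}\colon\thinspace I_{c}=F_{n-1}^{G}(\mathbb{M})\times \Omega\bigl(\prod_{i=1}^{n-1}\St\bigr) \to F_{n-1}^{G}(\mathbb{M})$ onto the first factor is a trivial fibration with fibre $\Omega\bigl(\prod_{i=1}^{n-1}\St\bigr)$, while by the construction recalled in the proof of \relem{boundary} the map $h_{n}''\colon\thinspace I_{n}''\to F_{n-1}^{G}(\mathbb{M})$ is the principal fibration induced by $\iota_{n}''$, also with fibre $\Omega\bigl(\prod_{i=1}^{n-1}\St\bigr)$. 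Since $h_{n}''\circ \alpha_{c}=h_{n}$, which is the defining property of $\alpha_{c}$ in~\reqref{bigcommdiagfib}, the map $\alpha_{c}$ is fibre-preserving and covers the identity of $F_{n-1}^{G}(\mathbb{M})$, so it suffices to show that the restriction of $\alpha_{c}$ to the fibre over $W_{n}''$ is a homotopy equivalence. For this one uses that $\psi_{\widetilde{x}_{j}}^{(j)}=c_{\widetilde{x}_{j}}$ (immediate from~\reqref{defJj} and the definition of $H_{1}$, since $H_{1}(\widetilde{x}_{0},r)=\widetilde{x}_{0}$) and that $h_{\widetilde{z}_{0}}^{(2)}=c_{\widetilde{z}_{0}}$ (immediate from the definition of $H_{2}$), so that by~\reqref{defalphac} and~\reqref{psihat} each path $\widehat{\psi}_{w_{j}''}^{(j)}$ is constant; the fibre restriction is therefore the self-map of $\Omega\bigl(\prod_{i=1}^{n-1}\St\bigr)$ that pre-concatenates each coordinate of a loop with a constant path, which is homotopic to the identity by reparametrisation. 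As $F_{n-1}^{G}(\mathbb{M})$ is path-connected, being a $K(G_{n-1},1)$ by \relem{equifn}, the five lemma applied to the ladder of long exact homotopy sequences of these fibrations shows that $\alpha_{c}$ induces an isomorphism on $\pi_{k}$ for all $k\geq 1$, and since $I_{c}$ and $I_{n}''$ are path-connected, $\alpha_{c}$ is a weak homotopy equivalence. Finally $I_{c}$ has the homotopy type of a CW-complex, being a product of a manifold with the loop space of a CW-complex, and so does $I_{n}''$ by the proof of \reth{prop5} (via~\cite[Lemma~36]{M}), whence $\alpha_{c}$ is a homotopy equivalence by Whitehead's theorem.

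\textbf{Parts~(\ref{it:fundhomoequivb}) and~(\ref{it:fundhomoequivc}).} If $M=\St$, then $\alpha_{\pi}$ is the identity of $I_{n}''=I_{n}'$ by~\reqref{defalphapi} and~\reqref{defI1}. If $M=\rp$, then comparing~\reqref{defalphapi} with the definition of the map $\xi$ used in the proof of \reth{prop5}(\ref{it:prop5b}), and noting that $p$ and $p'$ are the coordinate-wise covering maps $\widehat{\pi}\times \cdots \times \widehat{\pi}$, one sees that $\alpha_{\pi}=\xi$, which was shown there to be a homotopy equivalence; this proves~(\ref{it:fundhomoequivb}). Part~(\ref{it:fundhomoequivc}) is immediate: by~\reqref{compalphas} the right-hand side is an element of $I_{n}$ whose $F_{n}(M)$-coordinate is $(\widehat{\pi}(y_{1}),\ldots,\widehat{\pi}(y_{n-1}),w_{n})$, and $g_{n}\circ j_{n}$ is exactly the map that reads off this coordinate.

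\textbf{Part~(\ref{it:fundhomoequivd}).} The plan is to evaluate $\alpha_{n}'\circ \alpha_{\pi}\circ \alpha_{c}$ on an explicit loop representing $\widetilde{\lambda}_{w_{i}''}$ and to recognise the outcome via \relem{boundary}. Regarded as an element of $\pi_{2}\bigl(\prod_{i=1}^{n-1}\St\bigr)\subseteq \pi_{1}(I_{c})$, the class $\widetilde{\lambda}_{w_{i}''}$ is represented by the loop $s\mapsto (W_{n}'',\gamma_{i}(s))$ in $I_{c}$, where $\gamma_{i}(s)$ is the loop in $\prod_{i=1}^{n-1}\St$ based at $W_{n}''$ whose $i$\up{th} coordinate is $\widetilde{\omega}_{w_{i}'',s}$ and whose other coordinates are constant. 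Substituting this loop into~\reqref{compalphas} and using that $\widehat{\pi}(w_{j}'')=w_{j}$ for $1\leq j\leq n-1$ together with the constancy of each $\widehat{\psi}_{w_{j}''}^{(j)}$ noted in part~(\ref{it:fundhomoequiva}), the image loop is $s\mapsto (W_{n},\nu_{i}(s))$, where $\nu_{i}(s)$ is the loop in $\prod_{1}^{n}\, M$ whose $i$\up{th} coordinate is $c_{w_{i}}\ast (\widehat{\pi}\circ \widetilde{\omega}_{w_{i}'',s})$ and whose other coordinates are constant. Since $c_{w_{i}}\ast (\widehat{\pi}\circ \widetilde{\omega}_{w_{i}'',s})$ is homotopic rel endpoints to $\widehat{\pi}\circ \widetilde{\omega}_{w_{i}'',s}$, and the latter represents $\widetilde{\lambda}_{w_{i}}$ if $M=\St$ and $\lambda_{w_{i}}$ if $M=\rp$, the description of the boundary map $d_{n}$ in \relem{boundary} identifies the class of the image loop in $\pi_{1}(I_{n})$ with $\partial_{n}(\widetilde{\lambda}_{w_{i}})$ (resp.\ $\partial_{n}(\lambda_{w_{i}})$), which is $\widetilde{\delta}_{w_{i}}$ (resp.\ $\delta_{w_{i}}$) by~\reqref{defdeltav}.

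The principal obstacle is part~(\ref{it:fundhomoequiva}): everything there hinges on pinning down the fibrewise behaviour of $\alpha_{c}$, the crucial point being that the auxiliary paths $\widehat{\psi}_{w_{j}''}^{(j)}$ assembled from the contractions $H_{1}$, $H_{2}$ and the homeomorphisms $J_{j}$ collapse to constant paths at the basepoint; this same fact then drives the explicit computation in part~(\ref{it:fundhomoequivd}).
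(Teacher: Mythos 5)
Your proof is correct, and the overall shape of parts~(\ref{it:fundhomoequivc}) and~(\ref{it:fundhomoequivd}) matches the paper's; but parts~(\ref{it:fundhomoequiva}) and~(\ref{it:fundhomoequivb}) follow a genuinely different route, so the comparison is worth spelling out. For~(\ref{it:fundhomoequiva}), the paper simply writes down an explicit homotopy inverse $\map{\beta_{c}}{I_{n}''}[I_{c}]$, namely $\beta_{c}(v_{1},\ldots,v_{n-1},\gamma_{1},\ldots,\gamma_{n-1})=\bigl(v_{1},\ldots,v_{n-1},\bigl(\widehat{\psi}_{v_{1}}^{(1)}\bigr)^{-1}\ast\gamma_{1},\ldots,\bigl(\widehat{\psi}_{v_{n-1}}^{(n-1)}\bigr)^{-1}\ast\gamma_{n-1}\bigr)$, for which the homotopies $\alpha_{c}\circ\beta_{c}\simeq\id$ and $\beta_{c}\circ\alpha_{c}\simeq\id$ are the obvious cancellations of a path with its inverse; this avoids your appeal to the five lemma, Whitehead's theorem and the CW hypotheses on $I_{c}$ and $I_{n}''$ (which you imported from Mather's result via the proof of \reth{prop5}). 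Your fibrewise argument is valid --- you correctly identify $h_{n}''$ as a fibration by the principal-fibration construction recalled in \relem{boundary}, observe $h_{n}''\circ\alpha_{c}=h_{n}$, and check that the fibre restriction is pre-concatenation by constant paths, hence a homotopy equivalence --- but it requires more machinery and delivers less: the paper's $\beta_{c}$ is explicit and is used nowhere else, yet gives a self-contained two-line proof. For~(\ref{it:fundhomoequivb}), the paper again constructs an explicit $\beta_{\pi}$ using unique path lifting for the covering $\pi$, and in fact gets the \emph{strict} equalities $\alpha_{\pi}\circ\beta_{\pi}=\id$, $\beta_{\pi}\circ\alpha_{\pi}=\id$, so $\alpha_{\pi}$ is a homeomorphism; your approach of recognising $\alpha_{\pi}$ as the map $\xi$ from the proof of \reth{prop5}(\ref{it:prop5b}) is legitimate (the identification is correct since $p$ and $p'$ are coordinatewise $\widehat{\pi}$), but it only delivers a homotopy equivalence and is not self-contained. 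For parts~(\ref{it:fundhomoequivc}) and~(\ref{it:fundhomoequivd}) your argument coincides with the paper's up to phrasing: in~(\ref{it:fundhomoequivd}) you keep the extra constant path $c_{w_{i}}$ in front of $\widehat{\pi}\circ\widetilde{\omega}_{w_{i}'',t}$ and discard it by a reparametrisation, whereas the paper drops it immediately because $\widehat{\psi}_{w_{j}''}^{(j)}$ is constant; this is the same observation.
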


\begin{proof}\mbox{}
\begin{enumerate}[(a)]
\item Let $\map{\beta_{c}}{I_{n}''}[I_{c}]$ be defined by:
\begin{equation*}
\beta_{c}(v_{1},\ldots,v_{n-1}, \gamma_{1},\ldots,\gamma_{n-1}) =\bigl(v_{1},\ldots,v_{n-1}, \bigl(\widehat{\psi}_{v_{1}}^{(1)}\bigr)^{-1}\ast \gamma_{1}, \ldots,  \bigl(\widehat{\psi}_{v_{n-1}}^{(n-1)}\bigr)^{-1}\ast \gamma_{n-1}\bigr).
\end{equation*}
Then $\alpha_{c}\circ \beta_{c}\simeq \id_{I_{n}''}$ and $\beta_{c}\circ \alpha_{c} \simeq \id_{I_{c}}$, so $\alpha_{c}$ and $\beta_{c}$ are mutual homotopy inverses between $I_{c}$ and $I_{n}''$. 

\item If $M=\St$ then $\alpha_{\pi}$ is the identity, and the result is clear. So suppose that $M=\rp$. Define $\map{\beta_{\pi}}{I_{n}'}[I_{n}'']$ by:
\begin{equation*}
\beta_{\pi}(v_{1},\ldots,v_{n-1},l_{1},\ldots,l_{n-1})=(\widetilde{v}_{1},\ldots,\widetilde{v}_{n-1}, \widetilde{l}_{1}, \ldots, \widetilde{l}_{n-1}),
\end{equation*}
where for $i=1,\ldots, n-1$, $\map{\widetilde{l}_{i}}{I}[\St]$ is the lift of the path $\map{l_{i}}{I}[\rp]$ by the covering map $\pi$ such that $\widetilde{l}_{i}(1)= w_{i}''$, and $\widetilde{v}_{i}$ is given by $\widetilde{v_{i}}=\widetilde{l}_{i}(0)$. The map $\beta_{\pi}$ is well defined, since if $(v_{1},\ldots,v_{n-1})\in F_{n-1}(\rp\setminus \brak{z_{0}})$ then $(\widetilde{v}_{1},\ldots,\widetilde{v}_{n-1})\in \orbconf[n-1]$. The fact that $\pi$ is a covering map implies that $\alpha_{\pi}\circ \beta_{\pi}=\id_{I_{n}'}$ and $\beta_{\pi}\circ\alpha_{\pi} =\id_{I_{n}''}$, so $\alpha_{\pi}$ and $\beta_{\pi}$ are mutual homotopy inverses between $I_{n}''$ and $I_{n}'$.

\item This follows directly from~\reqref{compalphas} and the definition of the maps $g_{n}$ and $j_{n}$ given in \resec{generalities}.

\item Let $i\in \brak{1,\ldots, n-1}$. Then:
\begin{equation}\label{eq:imlambdaprime2}
\widetilde{\lambda}_{w_{i}''}=\bigl[\bigl(w_{1}'',\ldots, w_{n-1}'', c_{w_{1}''}, \ldots, c_{w_{i-1}''}, \widetilde{\omega}_{w_{i}'',t}, c_{w_{i+1}''}, \ldots, c_{w_{n-1}''}\bigr)_{t\in I}\bigr]
\end{equation}
in $\pi_{1}(I_{c})$. For all $j=1,\ldots,n-1$, $\widehat{\pi}(w_{j}'')=w_{j}$, and $\widehat{\psi}_{w_{j}''}^{j-1}=c_{w_{j}}$ using~\reqref{psihat} and the definition of $\Psi_{j-1}$. It follows from~\reqref{defdeltav},~\reqref{imlambdaprime2} and \relem{boundary} that:
\begin{align*}
(\alpha_{n}'\circ \alpha_{\pi} \circ \alpha_{c})_{\#}(\widetilde{\lambda}_{w_{i}''})&=\bigl[\bigl(w_{1},\ldots, w_{n-1}, c_{w_{1}}, \ldots, c_{w_{i-1}}, \widehat{\pi} \circ \widetilde{\omega}_{w_{i}'',t}, c_{w_{i+1}}, \ldots, c_{w_{n-1}}\bigr)_{t\in I}\bigr]\\
&= \begin{cases}
\partial_{n}(\widetilde{\lambda}_{w_{i}})=\widetilde{\delta}_{w_{i}}  & \text{if $M=\St$}\\
\partial_{n}(\lambda_{w_{i}})=\delta_{w_{i}}  & \text{if $M=\rp$,}
\end{cases}
\end{align*}
as required.\qedhere
\end{enumerate}
\end{proof}

The following proposition relates the elements that appear in~\reqref{basispi2} to an element $\widehat{\tau}_{n}$ of $\pi_{1}(I_{n})$ that projects to the full twist braid under the homomorphism $(g_{n}\circ j_{n})_{\#}$.

\begin{prop}\label{prop:deftaunhat}
Let $M=\St$ or $\rp$, let $n\geq n_{0}$,
let $\map{\tau_{n}}{I}[I_{c}]$ be the loop in $I_{c}$ based at $W_{c}$ defined by:
\begin{equation}\label{eq:deftaun}
\!\!\!\tau_{n}(t) \!=\!\begin{cases}
\textstyle\bigl(\widetilde{\omega}_{\widetilde{x}_{0},t}(\frac{1}{2}), \widetilde{\omega}_{\widetilde{x}_{1},t}(\frac{1}{2}), \ldots, \widetilde{\omega}_{\widetilde{x}_{n-3},t}(\frac{1}{2}), \widetilde{z}_{0}, c_{\widetilde{x}_{0}},c_{\widetilde{x}_{1}},\ldots, c_{\widetilde{x}_{n-3}}, c_{\widetilde{z}_{0}}\bigr) & \!\text{if $M=\St$}\\
\textstyle\bigl(\widetilde{\omega}_{\widetilde{x}_{0},t}(\frac{1}{2}), \widetilde{\omega}_{\widetilde{x}_{1},t}(\frac{1}{2}), \ldots, \widetilde{\omega}_{\widetilde{x}_{n-2},t}(\frac{1}{2}), c_{\widetilde{x}_{0}}, c_{\widetilde{x}_{1}},\ldots, c_{\widetilde{x}_{n-2}}\bigr) & \!\text{if $M=\rp$,}
\end{cases}
\end{equation}
and let $\widehat{\tau}_{n}$ be the element of $\pi_{1}(I_{n})$ defined by $\widehat{\tau}_{n}=[\alpha_{n}'\circ \alpha_{\pi}\circ \alpha_{c}(\tau_{n})]$. Then 
$(g_{n}\circ j_{n})_{\#}(\widehat{\tau}_{n})=\ft$, the full-twist braid of $P_{n}(M)$, and there exist unique integers $m_{0},m_{1},\ldots,m_{n-1}\in \Z$ such that:
\begin{equation}\label{eq:commdeltanS2}
\widehat{\tau}_{n}^{2}=\begin{cases}
\partial_{n}(m_{0}\widetilde{\lambda}_{\widetilde{x}_0}+\cdots+m_{n-3} \widetilde{\lambda}_{\widetilde{x}_{n-3}} +m_{n-2} \widetilde{\lambda}_{\widetilde{z}_{0}}+m_{n-1} \widetilde{\lambda}_{-\widetilde{z}_{0}}) &\text{if $M=\St$}\\
\partial_{n}(m_{0}\lambda_{x_0}+\cdots+m_{n-2} \lambda_{x_{n-2}} +m_{n-1} \lambda_{z_{0}}) & \text{if $M=\rp$.}
\end{cases}
\end{equation}
\end{prop}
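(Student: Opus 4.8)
The plan is to split the statement into three parts and to treat them in turn. \emph{First}, to see that $(g_{n}\circ j_{n})_{\#}(\widehat{\tau}_{n})=\ft$, I would chase $\tau_{n}$ through the explicit maps of \relem{fundhomoequiv}(\ref{it:fundhomoequivc}): applying $g_{n}\circ j_{n}\circ \alpha_{n}'\circ \alpha_{\pi}\circ \alpha_{c}$ to the loop $\tau_{n}$ of \reqref{deftaun}, the formula of \relem{fundhomoequiv}(\ref{it:fundhomoequivc}) shows that the resulting loop in $F_{n}(M)$ (or rather in $E_{n}$, then projected) is $(\widehat{\pi}\circ\widetilde{\omega}_{\widetilde{x}_{0},t}(\tfrac12),\ldots,\widehat{\pi}\circ\widetilde{\omega}_{\widetilde{x}_{n-n_{0}},t}(\tfrac12), \text{constant coordinates})_{t\in I}$. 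By the remarks of \resec{geomrepsS2}, the loop $(\widetilde{\omega}_{\widetilde{x}_{j},t}(\tfrac12))_{t\in I}=J_{j}((\widetilde{\omega}_{\widetilde{x}_{0},t}(\tfrac12))_{t\in I})$ is the image under $J_{j}$ of the great circle in the $xy$-plane, so each of the first $n-n_{0}+1$ strings traverses a great circle once while the remaining strings stay fixed at the poles (or at $z_{0}$). One then identifies this geometric braid with the full twist $\ft$ of $P_{n}(M)$ — this is the standard description of $\garside^{2}$ as all strings rotating once around a common axis, and it is where the specific choices of basepoints $\widetilde{x}_{0},\ldots,\widetilde{x}_{n-n_{0}},\widetilde{z}_{0},-\widetilde{z}_{0}$ on a common meridian configuration pay off.

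\emph{Second}, for the existence of integers $m_{0},\ldots,m_{n-1}$ with \reqref{commdeltanS2}, I would argue as follows. From the exact sequence \reqref{lespartialn}, the kernel of $(g_{n}\circ j_{n})_{\#}\colon\pi_{1}(I_{n})\to P_{n}(M)$ equals $\im{\partial_{n}}=\partial_{n}(\pi_{2}(\prod_{1}^{n}M))$, which by the basis \reqref{basispi2} is the free Abelian group generated by the $\widetilde{\delta}_{v}$ (resp.\ $\delta_{v}$) of \reqref{defdeltav}. Now $(g_{n}\circ j_{n})_{\#}(\widehat{\tau}_{n}^{2})=\ft^{2}$, and $\ft^{2}$ is trivial in $P_{n}(M)$: indeed $\ft$ generates the centre of $P_{n}(M)$ and, by the well-known torsion/centre results for $P_{n}(\St)$ and $P_{n}(\rp)$ (used elsewhere in the paper, e.g.\ \repr{sesnngen}), $\ft$ has order $2$. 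Hence $\widehat{\tau}_{n}^{2}\in\ker{(g_{n}\circ j_{n})_{\#}}=\im{\partial_{n}}$, so it is an integer combination of the $\partial_{n}$-images of the basis \reqref{basispi2}, giving the required $m_{i}$. Uniqueness is immediate because $\partial_{n}$ is injective (the left-hand map of \reqref{lespartialn} is injective, $\pi_{2}(F_{n}(M))$ being trivial by \reth{homotopytypefns}) and \reqref{basispi2} is a genuine basis of the free Abelian group $\pi_{2}(\prod_{1}^{n}M)$.

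\emph{The main obstacle} I expect is the first part: making the identification $(g_{n}\circ j_{n})_{\#}(\widehat{\tau}_{n})=\ft$ rigorous rather than merely plausible. One must carefully track the contracting homotopies $\widehat{\psi}_{y_{i}}^{(i)}$ hidden inside $\alpha_{c}$ (cf.\ \reqref{defalphac}, \reqref{psihat}) — although these enter only the loop-coordinates of $I_{c}$ and become constant paths when evaluated on the constant-space coordinates of $\tau_{n}$, one should check that the path-component of the output really is the geometric full-twist loop and not a conjugate or a power thereof. The cleanest way to do this is to observe that $\tau_{n}$ is, up to reparametrisation, precisely the loop in $F_{n}^{G}(\mathbb{M})\subset I_{c}$ obtained by letting $t$ run over the family of great circles $(\widetilde{\omega}_{\widetilde{x}_{j},t}(\tfrac12))_{t}$, i.e.\ a rigid rotation of the whole configuration $W_{n}''$ by $2\pi$ about the polar axis, and then to invoke the standard fact that such a global rotation represents $\garside^{2}$ in the surface pure braid group. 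The verification that the orientations and the base configuration are compatible with the conventions \reqref{defJj}, (\ref{it:xa})--(\ref{it:xe}) is routine but must be stated; I would relegate it to the geometric picture (Figures~\ref{fig:smallt} and~\ref{fig:mapJj0}) and a sentence of justification rather than a full computation.
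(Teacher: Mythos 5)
Your proposal follows the same route as the paper's proof: trace $\tau_{n}$ through the composite $g_{n}\circ j_{n}\circ\alpha_{n}'\circ\alpha_{\pi}\circ\alpha_{c}$ using \relem{fundhomoequiv}(\ref{it:fundhomoequivc}), identify the resulting loop of configurations rotating rigidly about the polar axis as a geometric representative of $\ft$ (the paper simply points to Figure~\ref{fig:fulltwist}), and then deduce $\widehat{\tau}_{n}^{2}\in\im{\partial_{n}}$ from exactness of \reqref{lespartialn}, with existence and uniqueness of the $m_{i}$ following from $\partial_{n}$ being injective and \reqref{basispi2} being a basis. You are slightly more explicit than the paper in spelling out why $\widehat{\tau}_{n}^{2}$ lies in $\ker{(g_{n}\circ j_{n})_{\#}}$ — namely that $\ft$ has order $2$ in both $P_{n}(\St)$ and $P_{n}(\rp)$ — and in flagging that the $\widehat{\psi}$-paths become constant on the base-point configuration; both are sound and make the argument more self-contained, but the substance is the same.
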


\begin{proof}
Let $\tau_{n}$ be as defined in \req{deftaun}, and for $t\in I$, let:
\begin{equation}\label{eq:defyns}
(y_{1}(t),\ldots, y_{n-1}(t))=\begin{cases}
\bigl(\widetilde{\omega}_{\widetilde{x}_{0},t}(\frac{1}{2}), \widetilde{\omega}_{\widetilde{x}_{1},t}(\frac{1}{2}), \ldots, \widetilde{\omega}_{\widetilde{x}_{n-3},t}(\frac{1}{2}), \widetilde{z}_{0}\bigr) & \text{if $M=\St$}\\
\bigl(\widetilde{\omega}_{\widetilde{x}_{0},t}(\frac{1}{2}), \widetilde{\omega}_{\widetilde{x}_{1},t}(\frac{1}{2}), \ldots, \widetilde{\omega}_{\widetilde{x}_{n-2},t}(\frac{1}{2})\bigr) & \text{if $M=\rp$.}
\end{cases}
\end{equation}
Then $\tau_{n}(t)=(y_{1}(t),\ldots, y_{n-1}(t), c_{y_{1}(0)}, \ldots, c_{y_{n-1}(0)})$, and using~\reqref{compalphas}, we have: 
\begin{align}
\widehat{\tau}_{n}&= [\alpha_{n}'\circ \alpha_{\pi}\circ \alpha_{c}\circ \tau_{n}]\notag\\
&= 
\bigl[\bigl(\widehat{\pi}(y_{1}(t)),\ldots, \widehat{\pi}(y_{n-1}(t)), w_{n}, \widehat{\pi}\circ\widehat{\psi}_{y_{1}(t)}^{(1)}, \ldots, \widehat{\pi}\circ\widehat{\psi}_{y_{n-1}(t)}^{(n-1)},c_{w_{n}}\bigr)_{t\in I}\bigr].\label{eq:tauhat}
\end{align}
Using~\reqref{psihat},~\reqref{tauhat}, \relem{fundhomoequiv}(\ref{it:fundhomoequivc}) and the fact that $h_{\widetilde{z}_{0}}^{(2)}=c_{\widetilde{z}_{0}}$, we see that:
\begin{equation*}
\!\!\!(g_{n}\circ j_{n})_{\#} (\widehat{\tau}_{n})
\!=\!
\begin{cases}
\textstyle\left[\left(\widetilde{\omega}_{\widetilde{x}_{0},t}(\frac{1}{2}), \widetilde{\omega}_{\widetilde{x}_{1},t}(\frac{1}{2}), \ldots, \widetilde{\omega}_{\widetilde{x}_{n-3},t}(\frac{1}{2}), \widetilde{z}_{0}, -\widetilde{z}_{0}\right)_{t\in I} \right] & \hspace*{-3.5mm}\text{if $M\!=\!\St$}\\
\left[\left(\pi(\widetilde{\omega}_{\widetilde{x}_{0},t}(\textstyle\frac{1}{2})), \pi(\widetilde{\omega}_{\widetilde{x}_{1},t}(\frac{1}{2})), \ldots, \pi(\widetilde{\omega}_{\widetilde{x}_{n-2},t}(\frac{1}{2})), z_{0}\right)_{t\in I}\right] & \hspace*{-3.5mm}\text{if $M\!=\!\rp$.}
\end{cases} 
\end{equation*}
Using the constructions of \resec{geomrepsS2} and~\req{hxjt}, this element may be seen to be a geometric representative of $\ft$ in $P_{n}(M)$ (see Figure~\ref{fig:fulltwist} for the case $M=\St$). 
\begin{figure}[t]
\hfill
\begin{tikzpicture}[scale=0.6, >=stealth]
\draw[very thick] (0,0) circle(4.5);
\draw[dotted,very thick, middlearrow=0.35] (4.5,0) arc (0:180:4.5 and 1);
\draw[very thick, middlearrow=0.35] (-4.5,0) arc (180:360:4.5 and 1);
\draw[fill] (4.5,-0.1) circle(0.1);
\node at (5.1,0){$\widetilde{x}_{0}$};

\begin{scope}[shift={(0,4.5*sin(15))},scale=cos(15)]
\draw[dotted,very thick, middlearrow=0.35] (4.5,0) arc (0:180:4.5 and 1);
\draw[very thick, middlearrow=0.35] (-4.5,0) arc (180:360:4.5 and 1);
\node at (5.2,0){$\widetilde{x}_{1}$};
\draw[fill] (4.5,-0.1) circle(0.1);
\end{scope}

\begin{scope}[shift={(0,4.5*sin(27))},scale=cos(27)]
\draw[dotted,very thick, middlearrow=0.35] (4.5,0) arc (0:180:4.5 and 1);
\draw[very thick, middlearrow=0.35] (-4.5,0) arc (180:360:4.5 and 1);
\node at (5.2,0){$\widetilde{x}_{2}$};
\draw[fill] (4.52,-0.1) circle(0.11);
\end{scope}

\begin{scope}[shift={(0,4.5*sin(55))},scale=cos(55)]
\draw[dotted,very thick, middlearrow=0.35] (4.5,0) arc (0:180:4.5 and 0.6);
\draw[very thick, middlearrow=0.35] (-4.5,0) arc (180:360:4.5 and 1);
\node at (6.4,0){$\widetilde{x}_{n-3}$};
\draw[fill] (4.55,-0.1) circle(0.17);
\end{scope}

\draw[fill] (0,4.5) circle(0.1);
\draw[fill] (0,-4.5) circle(0.1);
\node at (0.6,5){$\widetilde{z}_{0}$};
\node at (0.6,-5){$-\widetilde{z}_{0}$};
\node[rotate=-20] at (4.2,3){$\ddots$};
\node at (1.3,-1.5){$t$};
\end{tikzpicture}
\hspace*{\fill}
\caption{The full twist braid $\ft$ of $P_{n}(\St)$, which is equal to $(g_{n}\circ j_{n})_{\#}(\widehat{\tau}_{n})$.} 
\label{fig:fulltwist}
\end{figure}
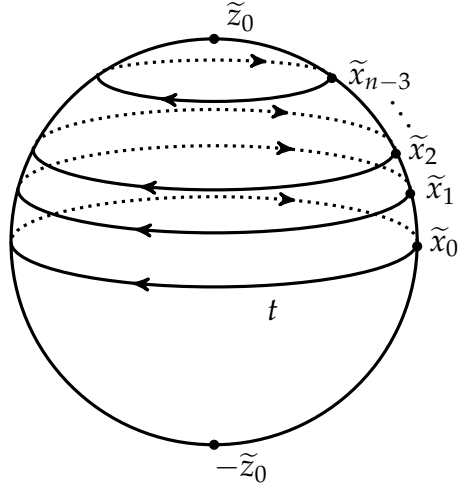
In particular, $\widehat{\tau}_{n}$ is non trivial, and $\widehat{\tau}_{n}^{2}\in \im{\partial_{n}}$ by exactness of~\reqref{lespartialn}. With respect to the basis of $\pi_{2}(\prod_{1}^{n}\, M)$ given by~\reqref{basispi2},
the existence and uniqueness of the integers $m_{0},m_{1},\ldots, m_{n-1}$ given in~\reqref{commdeltanS2} also follows from the exactness of~\reqref{lespartialn}.
\end{proof}

\subsection{The boundary homomorphism in the case $n=n_{0}$}\label{sec:n3S2}

In this section, we assume that $n=n_{0}$, and we use of the notation of Sections~\ref{sec:generalities} and~\ref{sec:notation}. The aim is to describe the boundary homomorphism $\map{\partial_{n_{0}}}{\pi_{2}(\prod_{i=1}^{n_{0}}\, M)}[\pi_{1}(I_{n_{0}})]$ that arises in the exact sequence~\reqref{lespartialn}. 
It follows from \reth{prop5} and the fact that $\orbconf[1]=F_{1}(C)$ that $\pi_{1}(I_{n_{0}})\cong \Z^{n_{0}}$. In particular, $\pi_{1}(I_{n_{0}})$ is Abelian, which justifies the use of additive notation for this group in what follows.
As we mentioned just after~\reqref{defInprime2}, if $M=\St$, $\iota_{n_{0}}''=\iota_{n_{0}}'$ and $I_{n_{0}}''=I_{n_{0}}'$. If $M=\rp$, since $n_{0}-1=1$, we have $\orbconf[n_{0}-1]=F_{n_{0}-1}(C)$. So in both cases, $F_{n_{0}-1}^{G}(\mathbb{M})=F_{n_{0}-1}(\mathbb{M})$. If $M=\St$ (resp.\ $M=\rp$), in \resec{geomreprS2}, we use the explicit homotopy equivalences of \relem{fundhomoequiv} and \reco{alphanprime} between $I_{c},I_{n_{0}}''$, $I_{n_{0}}'$ and $I_{n_{0}}$ to determine a generating set of 
$\pi_{1}(I_{n_{0}})$. The main result of this section is \reth{brrelnS2} that describes the relation between the images under the boundary operator $\partial_{n_{0}}$ of the elements $\widetilde{\lambda}_{u}$ (resp.\ $\lambda_{u}$), where $u\in \brak{\widetilde{x}_{0}, \widetilde{z}_{0},-\widetilde{z}_{0}}$ (resp.\ $u\in \brak{x_{0}, z_{0}}$), and the element $\widehat{\tau}_{n_{0}}$ defined in \repr{deftaunhat}. The proof of \reth{brrelnS2} 
is given in \resec{n3S2a} (resp.~\resec{rp2nequals2}), and the proof for $\rp$ will make use of that for $\St$.




\subsubsection{A basis of $\pi_{1}(I_{n_{0}})$}\label{sec:geomreprS2}




In this section, we start by exhibiting a basis for $\pi_{1}(I_{n_{0}})$ in \repr{genspi1S2} using the structure of $I_{c}$ and the homotopy equivalences of \relem{fundhomoequiv} and \reco{alphanprime}. We then state \reth{brrelnS2} that describes the relations between the elements of $\pi_{1}(I_{n_{0}})$ given by~\reqref{defdeltav}. If $M=\St$ (resp.\ $\rp$), it will suffice to determine $\widetilde{\delta}_{-\widetilde{z}_{0}}$ (resp.\ $\delta_{z_{0}}$) in terms of the elements of this basis. This will enable us to understand more fully the exact sequence~\reqref{lespartialn}.

\begin{prop}\label{prop:genspi1S2} 
If $M=\St$ (resp.\ $\rp$), a basis of $\pi_{1}(I_{n_{0}})$ is given by $\bigl\langle\widetilde{\delta}_{\widetilde{x}_{0}},\widetilde{\delta}_{\widetilde{z}_{0}}, \br{\widetilde{x}_{0}, \widetilde{z}_{0}, -\widetilde{z}_{0}} \bigr\rangle$ (resp.\ by $\ang{\delta_{x_{0}}, \br{x_{0},{z}_{0}}}$).
\end{prop}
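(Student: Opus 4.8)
The plan is to leverage the product decomposition $I_{c}=F_{n_{0}-1}^{G}(\mathbb{M}) \times \Omega(\prod_1^{n_{0}-1}\St)$ from~\reqref{defIc}, and transport a basis of $\pi_1(I_c)$ across the explicit homotopy equivalences $\alpha_c$, $\alpha_\pi$ and $\alpha_n'$ of \relem{fundhomoequiv} and \reco{alphanprime}. First I would recall that $n_0-1=2$ (resp.\ $1$) when $M=\St$ (resp.\ $\rp$), so that $F_{n_0-1}^G(\mathbb{M})$ is $F_2(\dd)$ (resp.\ $F_1(C)$), whose fundamental group is $P_2\cong\Z$ (resp.\ $\Z$); meanwhile $\pi_1(\Omega(\prod_1^{n_0-1}\St))\cong\pi_2(\prod_1^{n_0-1}\St)\cong\Z^{n_0-1}$, which is $\Z^2$ (resp.\ $\Z$). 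Hence $\pi_1(I_c)\cong\Z^{n_0}$, which is consistent with \reth{prop5} and the remark at the start of \resec{n3S2} that $\pi_1(I_{n_0})\cong\Z^{n_0}$.

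The core of the argument is to identify generators. For the $\Omega$-factor, $\pi_1(\Omega(\prod_1^{n_0-1}\St))$ has the standard basis coming from the generators $\widetilde{\lambda}_{w_i''}$ of each $\pi_2(\St)$-summand (for $i=1,\dots,n_0-1$), and by \relem{fundhomoequiv}(\ref{it:fundhomoequivd}) the composite $(\alpha_n'\circ\alpha_\pi\circ\alpha_c)_{\#}$ sends these to $\widetilde{\delta}_{w_i}=\partial_{n_0}(\widetilde{\lambda}_{w_i})$ (resp.\ $\delta_{w_i}=\partial_{n_0}(\lambda_{w_i})$). For $M=\St$, the basepoints $w_1'',w_2''$ are $\widetilde{x}_0,\widetilde{z}_0$, giving $\widetilde{\delta}_{\widetilde{x}_0},\widetilde{\delta}_{\widetilde{z}_0}$; for $M=\rp$, $w_1''=x_0$, giving $\delta_{x_0}$. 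For the configuration-space factor, $\pi_1(F_{n_0-1}^G(\mathbb{M}))$ is infinite cyclic generated by the loop in which the last point makes a small circuit; under the homotopy equivalences this is carried to a loop in $\pi_1(I_{n_0})$ that I would denote $\br{\widetilde{x}_0,\widetilde{z}_0,-\widetilde{z}_0}$ (resp.\ $\br{x_0,z_0}$) — using the notation $\widehat{\tau}_{n_0}$-style hats already in play — and which projects under $(g_{n_0}\circ j_{n_0})_{\#}$ to a generator of the image of $P_{n_0-1}$ inside $P_{n_0}(M)$. The point is that the product splitting of $I_c$ is respected, up to isomorphism on $\pi_1$, by the chain of homotopy equivalences, so a product basis on the left goes to the claimed basis on the right.

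Concretely I would argue as follows: by \rerem{In2null} and \reth{prop5}, $\alpha_c$, $\alpha_\pi$, $\alpha_n'$ are homotopy equivalences, hence induce isomorphisms on $\pi_1$; composing them gives an isomorphism $\Phi=(\alpha_n'\circ\alpha_\pi\circ\alpha_c)_{\#}\colon\pi_1(I_c)\xrightarrow{\ \cong\ }\pi_1(I_{n_0})$. Since $I_c$ is a product, $\pi_1(I_c)=\pi_1(F_{n_0-1}^G(\mathbb{M}))\times\pi_1(\Omega(\prod_1^{n_0-1}\St))$, and a basis of the right-hand side is the union of a generator of the cyclic group $\pi_1(F_{n_0-1}^G(\mathbb{M}))$ with the standard basis $(\widetilde{\lambda}_{w_1''},\dots,\widetilde{\lambda}_{w_{n_0-1}''})$ of the second factor. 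Applying $\Phi$ and invoking \relem{fundhomoequiv}(\ref{it:fundhomoequivd}) for the second factor, and naming $\Phi$ of the generator of the first factor $\br{\widetilde{x}_0,\widetilde{z}_0,-\widetilde{z}_0}$ (resp.\ $\br{x_0,z_0}$), yields exactly the stated basis. I would then double-check consistency by noting that under $(g_{n_0}\circ j_{n_0})_{\#}$, the $\widetilde{\delta}$'s (resp.\ $\delta$'s) map to $1$ (they lie in $\im\partial_{n_0}=\ker(g_{n_0}\circ j_{n_0})_{\#}$ by exactness of~\reqref{lespartialn}), whereas the extra generator maps onto a generator of $\ker(\iota_{n_0})_{\#}$, so the two parts of the basis sit in complementary pieces of the exact sequence~\reqref{lespartialn}, confirming there is no hidden relation.

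The main obstacle I expect is purely bookkeeping: making the identification of $\Phi$ of the generator of $\pi_1(F_{n_0-1}^G(\mathbb{M}))$ with the named element $\br{\widetilde{x}_0,\widetilde{z}_0,-\widetilde{z}_0}$ (resp.\ $\br{x_0,z_0}$) honest, i.e.\ unwinding the explicit formulas~\reqref{defalphac},~\reqref{defalphapi} for $\alpha_c$, $\alpha_\pi$ together with the map $\alpha_n'$ of \resec{generalities} to see what loop in $I_{n_0}$ is actually produced, and checking that together with the $\widetilde{\delta}$'s (resp.\ $\delta$'s) it forms a \emph{basis} (not merely a generating set) — which follows formally since $\Phi$ is an isomorphism and the preimages form a basis of $\pi_1(I_c)\cong\Z^{n_0}$. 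There is no genuine difficulty here beyond care with basepoints and the convention that $F_{n_0-1}^G(\mathbb{M})=F_{n_0-1}(\mathbb{M})$ in this low-dimensional range, already noted at the start of \resec{n3S2}.
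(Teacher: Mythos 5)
Your proposal is correct and follows essentially the same route as the paper: identify the basis $\{[\tau_{n_0}]\}\cup\{\widetilde{\lambda}_{w_i''}\}_{i=1}^{n_0-1}$ of $\pi_1(I_c)$ coming from the product decomposition, and transport it through the isomorphism $(\alpha_{n_0}'\circ\alpha_\pi\circ\alpha_c)_{\#}$ using \relem{fundhomoequiv}(\ref{it:fundhomoequivd}) and (for the configuration-space factor) the definition $\widehat{\tau}_{n_0}=[\alpha_{n_0}'\circ\alpha_\pi\circ\alpha_c(\tau_{n_0})]$ from \repr{deftaunhat}. The only thing you leave slightly implicit is that you should cite \repr{deftaunhat} rather than ``name'' the image of the generator, since $\widehat{\tau}_{n_0}$ is already defined there as precisely the image of the specific loop $\tau_{n_0}$ generating $\pi_1(F_{n_0-1}^G(\mathbb{M}))$.
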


\begin{proof}
As we saw above, $F_{n_{0}-1}^{G}(\mathbb{M})=F_{n_{0}-1}(\mathbb{M})$, so $I_{c}=F_{n_{0}-1}(\mathbb{M}) \times \prod_{i=1}^{n_{0}-1}\, \Omega(\St)$ by~\rerems{homfibre}(\ref{it:homfibreb}), and a basis of $\pi_{1}(I_{c})\cong \Z^{n_{0}}$ is given by $\brak{[\tau_{n_{0}}]} \cup \bigl\{\widetilde{\lambda}_{w_{i}''}\bigr\}_{i=1}^{n_{0}-1}$. The result follows by applying \relem{fundhomoequiv}(\ref{it:fundhomoequivd}) and \repr{deftaunhat} to this basis, 
and using the fact that the homomorphism $\map{(\alpha_{n_{0}}'\circ \alpha_{\pi}\circ \alpha_{c})_{\#}}{\pi_{1}(I_{c})}[\pi_{1}(I_{n_{0}})]$ is an isomorphism by \relem{fundhomoequiv} and \reco{alphanprime}. 
\end{proof}

Before stating \reth{brrelnS2}, we introduce some notation that will be useful in what follows. For $u\in \brak{\pm\widetilde{x}_{0},\pm\widetilde{z}_{0}}$ and for all $r,s,t\in I$, let $\map{\Omega_{u,t,s}, \Omega_{u,t,s}'}{I}[\St]$ be the families of arcs defined by:
\begin{equation}\label{eq:defOmega}
\Omega_{u,t,s}(r)= \widetilde{\omega}_{u,t}(s+r(1-s))\quad\text{and}\quad \Omega_{u,t,s}'(r)= \widetilde{\omega}_{u,t}(s(1-r)).
\end{equation}
The arcs $\Omega_{\widetilde{x}_{0},t,s}$ and $\Omega_{\widetilde{x}_{0},t,s}'$ are illustrated in Figure~\ref{fig:omega}.
\begin{figure}[t]
\hfill
\begin{tikzpicture}[scale=0.5, >=stealth]
\draw (0,0) circle(4.5);
\draw[dotted] (4.5,0) arc (0:180:4.5 and 1);
\draw (-4.5,0) arc (180:360:4.5 and 1);
\draw[very thick, middlearrow=0.35] (-2.5,-0.8) .. controls (-2.25,-3.75) and (4.2,-3.6) .. (4.45,0);
\draw[very thick, middlearrowrev=0.65] (-1.5,1.6) .. controls (-1,2.5) and (4.2,3.6) .. (4.45,0);
\draw[very thick] (-2.5,-0.8) .. controls (-2.5,0.5) and (-2,1.2) .. (-1.5,1.6);

\draw[fill] (4.5,-0.1) circle(0.15);
\draw[fill] (0,4.5) circle(0.15);
\draw[fill] (0,-4.5) circle(0.15);
\draw[fill] (-1.5,1.6) circle(0.15);
\node at (5.2,0){$\widetilde{x}_{0}$};
\node at (0.6,5){$\widetilde{z}_{0}$};
\node at (0.6,-5){$-\widetilde{z}_{0}$};
\node at (-1.2,-3.3){$\widetilde{\omega}_{\widetilde{x}_{0},t}$};
\node at (0.8,1.6){$\widetilde{\omega}_{\widetilde{x}_{0},t}(s)$};
\end{tikzpicture}
\hspace*{\fill}
\begin{tikzpicture}[scale=0.5, >=stealth]
\draw (0,0) circle(4.5);
\draw[dotted] (4.5,0) arc (0:180:4.5 and 1);
\draw (-4.5,0) arc (180:360:4.5 and 1);
\draw (-2.5,-0.8) .. controls (-2.25,-3.75) and (4.2,-3.6) .. (4.45,0);
\draw[very thick, middlearrowrev=0.65] (-1.5,1.6) .. controls (-1,2.5) and (4.2,3.6) .. (4.45,0);
\draw (-2.5,-0.8) .. controls (-2.5,0.5) and (-2,1.2) .. (-1.5,1.6);

\draw[fill] (4.5,-0.1) circle(0.15);
\draw[fill] (0,4.5) circle(0.15);
\draw[fill] (0,-4.5) circle(0.15);
\draw[fill] (-1.5,1.6) circle(0.15);
\node at (5.2,0){$\widetilde{x}_{0}$};
\node at (0.6,5){$\widetilde{z}_{0}$};
\node at (0.6,-5){$-\widetilde{z}_{0}$};
\node at (-0.5,3){$\Omega_{\widetilde{x}_{0},t,s}$};
\end{tikzpicture}
\hspace*{\fill}
\begin{tikzpicture}[scale=0.5, >=stealth]
\draw (0,0) circle(4.5);
\draw[dotted] (4.5,0) arc (0:180:4.5 and 1);
\draw (-4.5,0) arc (180:360:4.5 and 1);
\draw[very thick, middlearrowrev=0.35] (-2.5,-0.8) .. controls (-2.25,-3.75) and (4.2,-3.6) .. (4.45,0);

\draw (-1.5,1.6) .. controls (-1,2.5) and (4.2,3.6) .. (4.45,0);
\draw[very thick] (-2.5,-0.8) .. controls (-2.5,0.5) and (-2,1.2) .. (-1.5,1.6);

\draw[fill] (4.5,-0.1) circle(0.15);
\draw[fill] (0,4.5) circle(0.15);
\draw[fill] (0,-4.5) circle(0.15);
\draw[fill] (-1.5,1.6) circle(0.15);
\node at (5.2,0){$\widetilde{x}_{0}$};
\node at (0.6,5){$\widetilde{z}_{0}$};
\node at (0.6,-5){$-\widetilde{z}_{0}$};
\node at (1,-2){$\Omega_{\widetilde{x}_{0},t,s}'$};
\end{tikzpicture}
\hspace*{\fill}
\caption{The arcs $\widetilde{\omega}_{\widetilde{x}_{0},t}$, $\Omega_{\widetilde{x}_{0},t,s}$ and $\Omega_{\widetilde{x}_{0},t,s}'$.}
\label{fig:omega}
\end{figure}
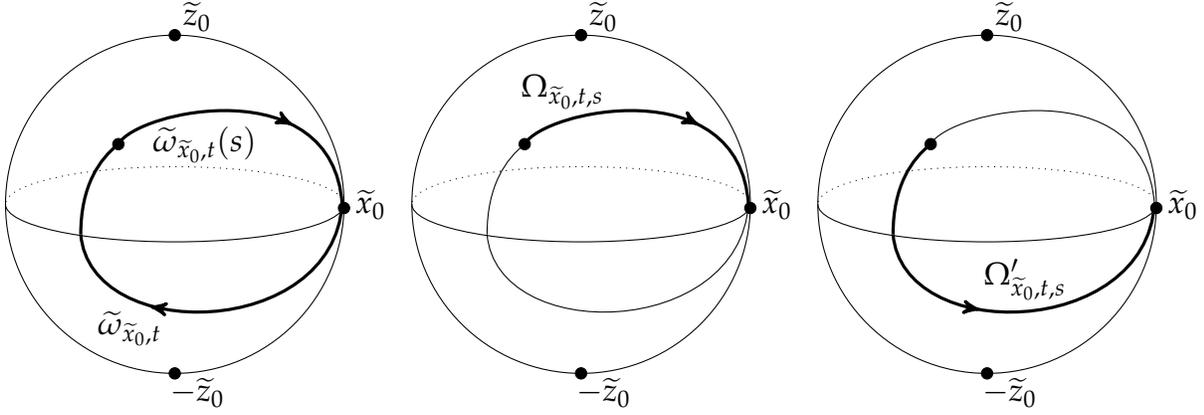
Then $\Omega_{u,t,0}=\widetilde{\omega}_{u,t}$, $\Omega'_{u,t,1}=\widetilde{\omega}_{u,t}^{-1}$, $\Omega_{u,t,1}= \Omega_{u,t,0}'=c_{u}$, and for all $s,t\in I$, $\Omega_{u,t,s}$ and $\Omega_{u,t,s}'$ are subarcs of $\widetilde{\omega}_{u,t}$ that join $\widetilde{\omega}_{u,t}(s)$ to $u$. Furthermore, 
equations~\reqref{minusomega},~\reqref{hxjt},~\reqref{defH1pos}, and~\reqref{defh1} imply that:
\begin{align}
\Omega_{\widetilde{x}_{0},t,\frac{1}{2}}&= h_{\widetilde{\omega}_{\widetilde{x}_{0},t}\left(\frac{1}{2}\right)}^{(1)}= h_{0,t}\label{eq:Omegah}\\
\Omega_{u,t,s}&=-\Omega_{-u,1-t,s} \quad\text{and}\quad \Omega_{u,t,s}'=-\Omega_{-u,1-t,s}'.\label{eq:minusOmega}
\end{align}
Using~\reqref{defyns},~\reqref{tauhat} and the fact that $h_{\widetilde{z}_{0}}^{(2)}=c_{\widetilde{z}_{0}}$,
we have:
\begin{equation}
\widehat{\tau}_{n_{0}}
= \begin{cases}
\bigl[\bigl(\widetilde{\omega}_{\widetilde{x}_{0},t}({\textstyle\frac{1}{2}}), \widetilde{z}_{0}, -\widetilde{z}_{0}, \Omega_{\widetilde{x}_{0},t,\frac{1}{2}}, c_{\widetilde{z}_{0}}, c_{-\widetilde{z}_{0}}\bigr)_{t\in I}\bigr]& \text{if $M=\St$, and}\\
\bigl[\bigl(\pi\circ \widetilde{\omega}_{\widetilde{x}_{0},t}\left({\textstyle\frac{1}{2}}\right), {z}_{0}, \pi\circ \Omega_{\widetilde{x}_{0},t,\frac{1}{2}}, c_{{z}_{0}}\bigr)_{t\in I}\bigr]& \text{if $M=\rp$.}
\end{cases}\label{eq:omegah}
\end{equation}
By \relem{boundaryS2}(\ref{it:boundaryS2a}) and \repr{genspi1S2}, if $M=\St$, the loop $\alpha_{n_{0}}'\circ \alpha_{\pi} \circ \alpha_{c}\circ \tau_{n_{0}}$, illustrated in Figure~\ref{fig:looptauS2}, and the images by $\alpha_{n_{0}}'\circ \alpha_{\pi} \circ \alpha_{c}$ of the loops $\bigl(\widetilde{x}_0, \widetilde{z}_0, \widetilde{\omega}_{\widetilde{x}_{0}, t}, c_{\widetilde{z}_0}\bigr)_{t\in I}$ and $\bigl(\widetilde{x}_0, \widetilde{z}_0, c_{\widetilde{x}_0}, \widetilde{\omega}_{\widetilde{z}_{0}, t}\bigr)_{t\in I}$
are geometric representatives of $\widetilde{\tau}_{n}$, $\widetilde{\delta}_{\widetilde{x}_{0}}$ and $\widetilde{\delta}_{\widetilde{z}_{0}}$ respectively. If $M=\rp$, geometric representatives of the basis elements $(\alpha_{c})_{\#}(\widetilde{\lambda}_{\widetilde{x}_{0}})$ and $ [\alpha_{c} \circ \tau_{n_{0}}]$ of $\pi_{1}(I_{n_{0}}'')$ are similar in nature to those 
for the case $M=\St$,
except that the markings on $\widetilde{z}_{0}$ should be forgotten. We now state the main result of \resec{boundhomo}.
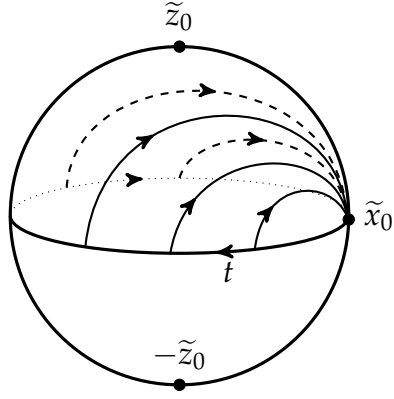
\begin{figure}[t]
\hfill
%
%
\begin{tikzpicture}[scale=0.5, >=stealth]
\draw[very thick] (0,0) circle(4.5);
\draw[dotted,middlearrow=0.65] (4.5,0) arc (0:180:4.5 and 1);
\draw[very thick, middlearrow=0.65] (-4.5,0) arc (180:360:4.5 and 1);
\draw[thick, middlearrowrev=0.35] (2,-0.9) .. controls (2.25,1) and (4,1) .. (4.45,0);
\draw[thick, middlearrowrev=0.25] (-0.25,-1.03) .. controls (0.25,2) and (4.15,2) .. (4.45,0);
\draw[thick, middlearrowrev=0.35] (-2.5,-0.8) .. controls (-2,3.75) and (4.2,3.6) .. (4.45,0);
\draw[thick, dashed, middlearrowrev=0.5] (-3,0.75) .. controls (-2.75,4.3) and (4.2,4.3) .. (4.45,0);
\draw[thick, dashed, middlearrowrev=0.45] (0,1) .. controls (0.4,2.5) and (4.2,2.5) .. (4.45,0);

\draw[fill] (4.5,-0.1) circle(0.15);
\draw[fill] (0,4.5) circle(0.15);
\draw[fill] (0,-4.5) circle(0.15);
\node at (5.3,0){$\widetilde{x}_{0}$};
\node at (0,5.3){$\widetilde{z}_{0}$};
\node at (0,-3.7){$-\widetilde{z}_{0}$};
\node at (1.3,-1.5){$t$};
\end{tikzpicture}
\hspace*{\fill}
\caption{The loop $\alpha_{n_{0}}'\circ \alpha_{\pi} \circ \alpha_{c}\circ \tau_{n_{0}}$ in $I_{n_{0}}$ in the case $M=\St$}
\label{fig:looptauS2}
\end{figure}

\begin{thm}\label{th:brrelnS2}
In $\pi_{1}(I_{n_{0}})$, we have
$2 \widehat{\tau}_{n_{0}}=
\begin{cases}
\widetilde{\delta}_{\widetilde{x}_{0}}+\widetilde{\delta}_{\widetilde{z}_{0}} - \widetilde{\delta}_{-\widetilde{z}_{0}} & \text{if $M=\St$}\\
\delta_{x_{0}}+\delta_{z_{0}} & \text{if $M=\rp$.}
\end{cases}$
\end{thm}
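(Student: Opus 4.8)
\textbf{Proof strategy for \reth{brrelnS2}.}
The plan is to work entirely inside $I_{n_0}$ with the explicit geometric representatives furnished by \relem{boundaryS2}, \repr{genspi1S2} and \req{omegah}, and to exhibit an explicit homotopy in $I_{n_0}$ realising the claimed relation. Since $\pi_1(I_{n_0})\cong\Z^{n_0}$ is abelian, it suffices to prove the identity $2\widehat{\tau}_{n_0}=\widetilde\delta_{\widetilde x_0}+\widetilde\delta_{\widetilde z_0}-\widetilde\delta_{-\widetilde z_0}$ for $M=\St$; the case $M=\rp$ will then follow by pushing the $\St$-homotopy down via $\pi$, using that $\pi_{\#2}$ is an isomorphism, that $\lambda_{\pi(-\widetilde z_0)}=-\lambda_{\pi(\widetilde z_0)}$ by \req{minusz0} (so $\delta_{x_0}+\delta_{z_0}$ on $\rp$ corresponds to $\widetilde\delta_{\widetilde x_0}+\widetilde\delta_{\widetilde z_0}+\widetilde\delta_{-\widetilde z_0}$ rewritten with the sign absorbed), and the fact that on $\rp$ the $\widetilde z_0$-coordinate is simply absent, matching the collapse noted just before the theorem statement.

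The core computation will be carried out in the loop space $\Omega(\prod_1^{n_0}\St)$, i.e.\ on the second homotopy group of $\prod_1^{n_0}\St$. First I would record that $\widehat{\tau}_{n_0}^{\,2}$ is represented (via \req{omegah} and the concatenation law in $\Omega$) by the square of the loop $\bigl(\widetilde\omega_{\widetilde x_0,t}(\tfrac12),\widetilde z_0,-\widetilde z_0,\Omega_{\widetilde x_0,t,\frac12},c_{\widetilde z_0},c_{-\widetilde z_0}\bigr)_{t\in I}$. The key geometric observation is that the first coordinate of $\widehat{\tau}_{n_0}$ traces the equatorial great circle once, which is null-homotopic in $\St$; deforming it to the constant loop drags the path-coordinate $\Omega_{\widetilde x_0,t,\frac12}=h_{0,t}$ across the upper hemisphere, and in the course of this deformation the first coordinate sweeps out the generator $\widetilde\lambda_{\widetilde x_0}$ of $\pi_2(\St)$ exactly once (this is where the $\Omega_{u,t,s}$/$\Omega'_{u,t,s}$ families and the symmetry relations \req{minusOmega}, together with \relem{omegaoneminust}, do the bookkeeping). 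Meanwhile, tracking what happens on the $\widetilde z_0$- and $-\widetilde z_0$-coordinates: the homeomorphism $R$ (the rotation of \resec{geomrepsS2} sending $\widetilde\omega_{\widetilde x_0,t}$ to $\widetilde\omega_{\widetilde z_0,t}$), or rather its effect on the sphere about $\widetilde z_0$, forces the $\widetilde z_0$-coordinate to pick up a $+\widetilde\lambda_{\widetilde z_0}$ and the $-\widetilde z_0$-coordinate a $-\widetilde\lambda_{-\widetilde z_0}$ (the sign discrepancy coming precisely from \req{minusomega}, $\widetilde\omega_{-u,t}=-\widetilde\omega_{u,1-t}$, and the orientation reversal it encodes). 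Assembling these coordinate-by-coordinate contributions and comparing with the geometric representatives of $\widetilde\delta_{\widetilde x_0}$, $\widetilde\delta_{\widetilde z_0}$, $\widetilde\delta_{-\widetilde z_0}$ from \relem{boundaryS2}(\ref{it:boundaryS2a}) yields the relation.

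Concretely, I would build one explicit homotopy $H\colon I\times I\to I_{n_0}$ (parameters $t$ for the loop, $u$ for the deformation) with $H(\cdot,0)=\widehat{\tau}_{n_0}\ast\widehat{\tau}_{n_0}$ and $H(\cdot,1)$ equal to the concatenation of the representative loops for $\widetilde\delta_{\widetilde x_0}$, $\widetilde\delta_{\widetilde z_0}$ and $\widetilde\delta_{-\widetilde z_0}^{\,-1}$; the recipe is to write each of the five (or six) coordinate functions of $H$ using the $\widetilde\omega_{u,\cdot}(\cdot)$, $\Omega_{u,\cdot,\cdot}$, $\Omega'_{u,\cdot,\cdot}$ and $J_j$ maps so that continuity and the endpoint conditions $\lambda(1)=W_{n_0}$ are manifest at each $u$. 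I expect the main obstacle to be exactly this: producing a homotopy that simultaneously keeps us inside $I_{n_0}$ — i.e.\ keeps the first coordinate in the configuration space complement of $\{\widetilde z_0,-\widetilde z_0\}$ while the equatorial loop is being contracted, so that the paths never cross the punctures and the endpoint stays at $W_{n_0}$ — and correctly accounts for the three separate $\pi_2$-degree contributions with the right signs. The delicate points are the orientation signs (one genuine minus sign, from \req{minusomega}), and checking that contracting the equatorial circle in the $\widetilde x_0$-slot really contributes $+\widetilde\lambda_{\widetilde x_0}$ and not $-\widetilde\lambda_{\widetilde x_0}$ or $0$; \relem{deformwS2} (announced in the introduction as the technical engine for this theorem) is presumably designed precisely to handle the latter, and I would invoke it here. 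Figure~\ref{fig:looptauS2} is the picture that this homotopy makes rigorous.
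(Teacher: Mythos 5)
Your overall strategy — realise the relation by an explicit homotopy of loops inside $I_{n_{0}}$ using the geometric representatives from \relem{boundaryS2}, and then obtain the $\rp$ case by projecting the spherical homotopy — is the right general idea, but the proposal stops precisely at the place where the proof actually has to happen, and the specific plan you sketch is harder to carry out than what works.

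First, the concrete gap. You announce a single homotopy $H$ from a representative of $2\widehat{\tau}_{n_{0}}$ directly to a concatenation representing $\widetilde{\delta}_{\widetilde{x}_{0}}+\widetilde{\delta}_{\widetilde{z}_{0}}-\widetilde{\delta}_{-\widetilde{z}_{0}}$, and then say that producing such an $H$ while keeping the configuration condition (no coordinate equal to another, nor to a puncture) is the main obstacle. It is — and nothing in the proposal surmounts it. The paper never constructs one global $H$. Instead it introduces an auxiliary class $\xi_{n_{0}}=\dl{\widetilde{z}_{0},-\widetilde{z}_{0}}$ and proves two separate identities, $\widehat{\tau}_{n_{0}}=\widetilde{\delta}_{\widetilde{x}_{0}}+\xi_{n_{0}}$ and $\xi_{n_{0}}=-\widehat{\tau}_{n_{0}}+\widetilde{\delta}_{\widetilde{z}_{0}}-\widetilde{\delta}_{-\widetilde{z}_{0}}$, which then sum to the theorem. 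Each is realised by a genuinely one–parameter family of loops ($w_{t,\alpha}$, respectively $v_{t,\alpha}$), built coordinate-by-coordinate from the subarcs $\Omega_{u,t,s}$ and $\Omega'_{u,t,s}$ so that at $\alpha=0$ one reads off one side of the equation and at $\alpha=1$ the other. The price paid — and the genuine content of the proof — is a long case analysis, using the sign conditions (Xa)–(Xe) and (Za)–(Ze), verifying that the three moving points never collide and never land on a puncture for any $t,\alpha\in I$. Your proposal contains none of that verification, so it is not yet a proof; and because you do not split the relation in two, your $H$ would have to keep all three sphere points distinct while simultaneously contracting the equatorial circle in the $\widetilde{x}_{0}$-slot and creating a full $\widetilde{\omega}_{\widetilde{z}_{0},\cdot}$ and $\widetilde{\omega}_{-\widetilde{z}_{0},\cdot}$ sweep in the other two slots, which is a strictly harder bookkeeping problem than either of the two pieces the paper handles.

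Second, a misattribution. You lean on \relem{deformwS2} as "presumably designed precisely to handle" the sign of the $\widetilde{x}_{0}$-contribution. It is not invoked in the paper's proof of \reth{brrelnS2}; the sign there comes from \relem{omegaoneminust} and from \req{minusomega}/\req{minusOmega}. \relem{deformwS2} is used later, in the proof of \reth{tauhatsquareS2}, to compare loops after a change of basepoint via the homeomorphisms $\widetilde{K}_{j}$ and $\widetilde{f}_{j}$; it does not compute degrees. Relying on it here would leave the sign of the $\widetilde{\delta}_{\widetilde{x}_{0}}$ term unjustified. Finally, the $\rp$ passage as you phrase it is also not quite right: you cannot identify $\widetilde{\delta}_{\widetilde{z}_{0}}$ with $\pm\widetilde{\delta}_{-\widetilde{z}_{0}}$ inside $\pi_{1}(I_{3}(\St))$ — the relation $\lambda_{\pi(-\widetilde{z}_{0})}=-\lambda_{\pi(\widetilde{z}_{0})}$ lives in $\pi_{2}(\rp)$ only after projecting. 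What the paper does is apply a coordinate-forgetting map ($\rho_{2,5}$ to $w_{t,\alpha}$, $\rho_{3,6}$ to $v_{t,\alpha}$) followed by $\widetilde{\pi}$, which converts the two three-coordinate homotopies into the two two-coordinate homotopies needed for $I_{2}(\rp)$; the single superposed class $\brrp$ then replaces $\xi_{n_{0}}$. That intermediate step is what "absorbs the sign," and it should be made explicit rather than gestured at.
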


The proof of \reth{brrelnS2} will occupy Sections~\ref{sec:n3S2a} and~\ref{sec:rp2nequals2}. We first state and prove a corollary as well as a lemma that we will require for that proof.

\begin{cor}\label{cor:sesnn0}
The boundary homomorphism $\map{\partial_{n_{0}}}{\pi_{2}(\prod_{i=1}^{n_{0}}\, M)}[\pi_{1}(I_{n_{0}})]$ of the exact sequence~\reqref{lespartialn} is given by: 
\begin{align*}
\partial_{n_{0}}(\widetilde{\lambda}_{u})& =
\left\{
\begin{aligned}
&\widetilde{\delta}_{u}\\
& \widetilde{\delta}_{\widetilde{x}_{0}}+\widetilde{\delta}_{\widetilde{z}_{0}}-2\br{\widetilde{x}_{0}, \widetilde{z}_{0}, -\widetilde{z}_{0}}
\end{aligned}\right.
&& \begin{aligned}
&\text{if $u\in \brak{\widetilde{x}_{0},\widetilde{z}_{0}}$}\\
&\text{if $u=-\widetilde{z}_{0}$}
\end{aligned}
&& \text{and if $M=\St$, and}\\
\partial_{n_{0}}(\lambda_{u}) &=
\left\{
\begin{aligned}
&\delta_{u}\\
&2\br{x_{0},z_{0}}-\delta_{x_{0}}
\end{aligned}\right.
&& \begin{aligned}
&\text{if $u=x_{0}$}\\
&\text{if $u=z_{0}$}
\end{aligned}
&& \text{and if $M=\rp$.}
\end{align*}
\end{cor}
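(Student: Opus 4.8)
The plan is to deduce \reco{sesnn0} directly from \reth{brrelnS2}, the defining equalities for the elements $\widetilde{\delta}_{u}$ and $\delta_{u}$, and \repr{genspi1S2}. Since $\bigl(\widetilde{\lambda}_{\widetilde{x}_0}, \widetilde{\lambda}_{\widetilde{z}_0}, \widetilde{\lambda}_{-\widetilde{z}_0}\bigr)$ (resp.\ $\bigl(\lambda_{x_0},\lambda_{z_0}\bigr)$) is a basis of the free abelian group $\pi_{2}\bigl(\prod_{i=1}^{n_{0}}\, M\bigr)$ by~\reqref{basispi2}, and $\partial_{n_{0}}$ is a homomorphism, it will suffice to evaluate $\partial_{n_{0}}$ on each of these basis elements; the asserted formulas record precisely these values.

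First I would observe that, for $u\in\brak{\widetilde{x}_{0},\widetilde{z}_{0}}$ (resp.\ $u=x_{0}$), the equality $\partial_{n_{0}}(\widetilde{\lambda}_{u})=\widetilde{\delta}_{u}$ (resp.\ $\partial_{n_{0}}(\lambda_{u})=\delta_{u}$) is immediate from the definition of $\widetilde{\delta}_{u}$ (resp.\ $\delta_{u}$) given in \resec{geomrepsS2} (see also~\reqref{defdeltav}). This accounts for the first lines of the two displayed formulas, so the only cases that remain are $u=-\widetilde{z}_{0}$ when $M=\St$ and $u=z_{0}$ when $M=\rp$.

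For $M=\St$, I would apply \reth{brrelnS2} to rewrite $\partial_{n_{0}}(\widetilde{\lambda}_{-\widetilde{z}_{0}})=\widetilde{\delta}_{-\widetilde{z}_{0}}$ as $\widetilde{\delta}_{\widetilde{x}_{0}}+\widetilde{\delta}_{\widetilde{z}_{0}}-2\widehat{\tau}_{n_{0}}$; recalling that $\widehat{\tau}_{n_{0}}=\br{\widetilde{x}_{0}, \widetilde{z}_{0}, -\widetilde{z}_{0}}$, this is exactly the stated value. For $M=\rp$, \reth{brrelnS2} gives $\partial_{n_{0}}(\lambda_{z_{0}})=\delta_{z_{0}}=2\widehat{\tau}_{n_{0}}-\delta_{x_{0}}=2\br{x_{0},z_{0}}-\delta_{x_{0}}$, which is the remaining formula, and the argument is complete.

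There is no genuine obstacle here: all the content sits in \reth{brrelnS2}, whose proof occupies \resec{n3S2a} and \resec{rp2nequals2}, and the deduction above is merely a rearrangement of a relation in $\pi_{1}(I_{n_{0}})\cong\Z^{n_{0}}$. The only point worth keeping in mind is that these computations pin down $\partial_{n_{0}}$ completely precisely because the $\widetilde{\lambda}_{u}$ (resp.\ $\lambda_{u}$) occurring in them form a basis of $\pi_{2}\bigl(\prod_{i=1}^{n_{0}}\, M\bigr)$.
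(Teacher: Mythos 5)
Your proof is correct and follows the same route as the paper: with the basis from~\reqref{basispi2}, the first lines are immediate from the definition of $\widetilde{\delta}_u$ (resp.\ $\delta_u$) in~\reqref{defdeltav}, and the remaining cases are a one-line rearrangement of \reth{brrelnS2}. (The citation of \repr{genspi1S2} is not actually needed in the deduction, but this is harmless.)
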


\begin{proof}[Proof of \reco{sesnn0}]
With the basis of $\pi_{2}(\prod_{i=1}^{n_{0}}\, M)$ given by~\reqref{basispi2}, the result is a consequence of~\reqref{defdeltav} and \reth{brrelnS2}.
\end{proof}

To state the lemma that follows, we first give two definitions.
\begin{enumerate}[(a)]
\item Let $M=\St$, let $\widetilde{u}, \widetilde{v}\in \St$ be such that $(\widetilde{u}, \widetilde{v},-\widetilde{v})\in F_{3}(\St)$, and define:
\begin{equation}\label{eq:defI0uv}
I_{n_{0}}(\widetilde{u}, \widetilde{v})=\setl{(\widetilde{z}, \lambda)\in F_{3}(\St)\times \left(\prod_{1}^{3} \St \right)^{I}}{\text{$\lambda(0)=\widetilde{z}$ and $\lambda(1)=(\widetilde{u}, \widetilde{v}, -\widetilde{v})$}},
\end{equation}
where we equip $I_{n_{0}}(\widetilde{u}, \widetilde{v})$ with the basepoint $(\widetilde{u}, \widetilde{v},-\widetilde{v},c_{\widetilde{u}},c_{\widetilde{v}},c_{-\widetilde{v}})$. 

\item Let $M=\rp$, let $(u,v)\in F_{2}(\rp)$, and define:
\begin{equation}\label{eq:defI0uvrp2}
I_{n_{0}}(u,v)=\setl{(z, \mu)\in F_{2}(\rp)\times \left(\prod_{1}^{2} \rp \right)^{I}}{\text{$\mu(0)=z$ and $\mu(1)=(u,v)$}},
\end{equation}
where we equip $I_{n_{0}}(u,v)$ with the basepoint $(u,v,c_{u},c_{v})$.
\end{enumerate}
With the notation and choice of basepoints given in \resec{generalities}, if $M=\St$ (resp.\ $M=\rp$) then $I_{n_{0}}(\widetilde{x}_{0},\widetilde{z}_{0})$ (resp.\ $I_{n_{0}}(x_{0},z_{0})$) coincides with $I_{n_{0}}$.

\begin{lem}\label{lem:deformwS2}
Let $\widetilde{w}\in \St$ be such that $p_{3}(\widetilde{w})>0$, let $\map{\Gamma}{I\times I}[\St]$ be a map such that for all $s,t\in I$, $p_{3}(\Gamma(s,t))\geq 0$, $\Gamma(0,t)= \widetilde{\omega}_{\widetilde{x}_{0},t}\left(\textstyle\frac{1}{2}\right)$ and $\Gamma(1,t)=\Gamma(s,0)=\Gamma(s,1) =\widetilde{x}_{0}$, and let $\map{\gamma_{t}}{I}[\St]$ be the map defined by $\gamma_{t}(s)=\Gamma(s,t)$. 
\begin{enumerate}[(a)]
\item\label{it:deformwS2a} Let $M=\St$. Then: 
\begin{equation*}
\left[ \left( \gamma_{t}(0), \widetilde{w}, -\widetilde{w}, \gamma_{t}, c_{\widetilde{w}}, c_{-\widetilde{w}}\right)_{t\in I}\right]= \bigl[ \bigl( \widetilde{\omega}_{\widetilde{x}_{0},t}\left(\textstyle\frac{1}{2}\right), \widetilde{w}, -\widetilde{w}, \Omega_{\widetilde{x}_{0},t,\frac{1}{2}}, c_{\widetilde{w}}, c_{-\widetilde{w}}\bigr)_{t\in I}\bigr]
\end{equation*}
in $\pi_{1}(I_{n_{0}}(\widetilde{x}_{0}, \widetilde{w}))$.

\item\label{it:deformwS2b} Let $M=\rp$, and let $w=\pi(\widetilde{w})$. Then:
\begin{equation*}
\bigl[ \left( \pi(\gamma_{t}(0)), w, \pi\circ \gamma_{t}, c_{w}\right)_{t\in I}\bigl]= \bigl[ \bigl( \pi\circ\widetilde{\omega}_{\widetilde{x}_{0},t}\left(\textstyle\frac{1}{2}\right), w, \pi\circ \Omega_{\widetilde{x}_{0},t,\frac{1}{2}}, c_{w}\bigr)_{t\in I}\bigr]
\end{equation*}
in $\pi_{1}(I_{n_{0}}(x_{0},w))$.
\end{enumerate}
\end{lem}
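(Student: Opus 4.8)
The plan is to realise both loops appearing in \relem{deformwS2} as the output of one and the same recipe applied to two different maps $I\times I\to\St$. Alongside the given map $\Gamma$, I would introduce the canonical map $\map{\Gamma_{0}}{I\times I}[\St]$ defined by $\Gamma_{0}(s,t)=\Omega_{\widetilde{x}_{0},t,\frac{1}{2}}(s)=\widetilde{\omega}_{\widetilde{x}_{0},t}\bigl(\frac{1+s}{2}\bigr)$, using~\reqref{defOmega}. First I would verify that $\Gamma_{0}$ takes its values in the closed upper hemisphere $N=\set{\widetilde{x}\in\St}{p_{3}(\widetilde{x})\geq 0}$ and satisfies exactly the same boundary conditions as $\Gamma$ on $\partial(I\times I)$, namely $\Gamma_{0}(0,t)=\widetilde{\omega}_{\widetilde{x}_{0},t}(\frac{1}{2})$ and $\Gamma_{0}(1,t)=\Gamma_{0}(s,0)=\Gamma_{0}(s,1)=\widetilde{x}_{0}$ for all $s,t\in I$; this is immediate from condition~(\ref{it:xb}), the equality $p_{3}(\widetilde{\omega}_{\widetilde{x}_{0},t}(\frac{1}{2}))=0$, and the fact that $\widetilde{\omega}_{\widetilde{x}_{0},0}=\widetilde{\omega}_{\widetilde{x}_{0},1}=c_{\widetilde{x}_{0}}$. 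Thus $\Gamma$ and $\Gamma_{0}$ agree on $\partial(I\times I)$ and both land in $N$.

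Since $N$ is homeomorphic to a $2$-disc it is contractible, so $\pi_{2}(N)$ is trivial and any two maps $I\times I\to N$ that agree on $\partial(I\times I)$ are homotopic relative to $\partial(I\times I)$ inside $N$. Applying this to $\Gamma$ and $\Gamma_{0}$ produces a homotopy $\map{G}{I\times I\times I}[N]$ with $G(\cdot,\cdot,0)=\Gamma$, $G(\cdot,\cdot,1)=\Gamma_{0}$, and $G(s,t,u)$ independent of $u$ whenever $(s,t)\in\partial(I\times I)$; in particular $G(0,t,u)=\widetilde{\omega}_{\widetilde{x}_{0},t}(\frac{1}{2})$, $G(1,t,u)=\widetilde{x}_{0}$ and $G(s,0,u)=G(s,1,u)=\widetilde{x}_{0}$ for all $s,t,u\in I$. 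For part~(\ref{it:deformwS2a}) I would then set, for each $u\in I$,
\begin{equation*}
L_{u}(t)=\bigl(G(0,t,u),\,\widetilde{w},\,-\widetilde{w},\ \bigl(r\mapsto G(r,t,u)\bigr),\ c_{\widetilde{w}},\ c_{-\widetilde{w}}\bigr),
\end{equation*}
and check that $(u,t)\mapsto L_{u}(t)$ is a based homotopy in $I_{n_{0}}(\widetilde{x}_{0},\widetilde{w})$: the path in $\prod_{1}^{3}\St$ runs from $(G(0,t,u),\widetilde{w},-\widetilde{w})$ to $(G(1,t,u),\widetilde{w},-\widetilde{w})=(\widetilde{x}_{0},\widetilde{w},-\widetilde{w})$, the endpoint required by~\reqref{defI0uv}; the endpoints $L_{u}(0)=L_{u}(1)=(\widetilde{x}_{0},\widetilde{w},-\widetilde{w},c_{\widetilde{x}_{0}},c_{\widetilde{w}},c_{-\widetilde{w}})$ coincide with the basepoint because $G(s,0,u)=G(s,1,u)=\widetilde{x}_{0}$; and by construction $L_{0}$ is the left-hand loop of the statement, since $G(s,t,0)=\Gamma(s,t)=\gamma_{t}(s)$, while $L_{1}$ is the right-hand loop, since $G(s,t,1)=\Gamma_{0}(s,t)=\Omega_{\widetilde{x}_{0},t,\frac{1}{2}}(s)$.

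The step I expect to be the main obstacle is confirming that the configuration-space constraint is preserved throughout, i.e.\ that $(G(0,t,u),\widetilde{w},-\widetilde{w})$ always lies in $F_{3}(\St)$. This is exactly where the hypotheses $p_{3}(\widetilde{w})>0$ and $p_{3}(\Gamma(s,t))\geq 0$ enter: although $G$ may move freely inside $N$, its restriction to the edge $s=0$ is pinned to the equatorial loop $t\mapsto\widetilde{\omega}_{\widetilde{x}_{0},t}(\frac{1}{2})$, which has $p_{3}=0$ and is therefore disjoint from both $\widetilde{w}$ (with $p_{3}>0$) and $-\widetilde{w}$ (with $p_{3}<0$); the remaining two coordinates of the path are held constant, and since the paths $\lambda$ in~\reqref{defI0uv} are only required to lie in $\prod_{1}^{3}\St$ rather than in $F_{3}(\St)$, no collision along the interior of $\lambda$ needs to be excluded. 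Continuity of $(u,t)\mapsto L_{u}(t)$ into $I_{n_{0}}(\widetilde{x}_{0},\widetilde{w})$ then follows routinely from the exponential law. Finally, part~(\ref{it:deformwS2b}) will follow by composing everything with the covering map $\pi$: the loops $\widetilde{L}_{u}(t)=\bigl(\pi(G(0,t,u)),\,w,\ \pi\circ\bigl(r\mapsto G(r,t,u)\bigr),\ c_{w}\bigr)$ lie in $I_{n_{0}}(x_{0},w)$ because $\pi(G(0,t,u))\neq w$ (again since $p_{3}(G(0,t,u))=0\neq p_{3}(\pm\widetilde{w})$) and the path ends at $(\pi(\widetilde{x}_{0}),w)=(x_{0},w)$ as required by~\reqref{defI0uvrp2}, and the very same homotopy $G$ shows $\widetilde{L}_{0}\simeq\widetilde{L}_{1}$ in $\pi_{1}(I_{n_{0}}(x_{0},w))$.
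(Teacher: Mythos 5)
Your proof is correct, but it takes a genuinely different route from the paper's. The paper constructs an explicit homotopy from $\Gamma$ to $\Gamma_{0}(s,t)=\Omega_{\widetilde{x}_{0},t,\frac{1}{2}}(s)$ by normalised straight-line averaging in $\R^{3}$,
\begin{equation*}
\Theta_{r}(s,t)=\frac{r\,\Omega_{\widetilde{x}_{0},t,\frac{1}{2}}(s)+(1-r)\,\Gamma(s,t)}{\bigl\lVert r\,\Omega_{\widetilde{x}_{0},t,\frac{1}{2}}(s)+(1-r)\,\Gamma(s,t)\bigr\rVert},
\end{equation*}
and then must verify that the denominator never vanishes; this is exactly where the hypotheses $p_{3}(\Gamma(s,t))\geq 0$ and condition~(\ref{it:xb}) are used, to show the two averaged points are never antipodal. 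You instead observe that both $\Gamma$ and $\Gamma_{0}$ map into the closed upper hemisphere $N$, agree on $\partial(I\times I)$, and since $\pi_{2}(N)=0$ (as $N$ is a disc) they are homotopic rel $\partial(I\times I)$ inside $N$; the downstream verification that the resulting family of loops stays in $I_{n_{0}}(\widetilde{x}_{0},\widetilde{w})$ is then the same in both arguments, reducing to the fact that the edge $s=0$ is pinned to the equator and so avoids $\pm\widetilde{w}$. Your version is shorter and conceptually cleaner, trading the paper's pointwise antipodality check for a standard obstruction-theoretic fact; the paper's version has the virtue of being entirely explicit, which is consonant with the paper's overall style of exhibiting concrete geometric representatives (e.g.\ in \resec{notation} and Figures~\ref{fig:wtalpha}--\ref{fig:vtalpha}). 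Both are valid, and your treatment of part~(\ref{it:deformwS2b}) by projecting the same homotopy $G$ under $\pi$ mirrors what the paper does.
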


\begin{proof}\mbox{}
\begin{enumerate}[(a)]
\item From the conditions on $\widetilde{w}$ and $\Gamma$, $\left( \gamma_{t}(0), \widetilde{w}, -\widetilde{w}, \gamma_{t}, c_{\widetilde{w}}, c_{-\widetilde{w}}\right)\in I_{n_{0}}(\widetilde{x}_{0},\widetilde{w})$ for all $t\in I$.
For all $r,s,t\in I$, define $\map{\Theta_{r}}{I\times I}[\St]$ by:
\begin{equation*}
\Theta_{r}(s,t)= \frac{r\Omega_{\widetilde{x}_{0},t,\frac{1}{2}}(s)+ (1-r)\Gamma(s,t)}{\bigl\lVert r\Omega_{\widetilde{x}_{0},t,\frac{1}{2}}(s)+ (1-r)\Gamma(s,t)\bigr\rVert},
\end{equation*}
and let $\map{\theta_{r,t}}{I}[\St]$ be defined by $\theta_{r,t}(s)=\Theta_{r}(s,t)$. We claim that the map $\Theta_{r}$ is well defined: using \req{defOmega}, if $s=0$ then $\Theta_{r}(0,t) =\widetilde{\omega}_{\widetilde{x}_{0},t}\left(\frac{1}{2}\right)$, and if $s=1$ or $t\in \brak{0,1}$ then $\Theta_{r}(s,t)=\widetilde{x}_{0}$. So assume that $(s,t)\in (0,1)\times (0,1)$. Then $p_{3}(\Gamma(s,t))\geq 0$ by hypothesis and $p_{3}(\Omega_{\widetilde{x}_{0},t,\frac{1}{2}}(s))>0$ from condition~(\ref{it:xb}) given in \resec{geomrepsS2}, so $\Gamma(s,t)$ and $\Omega_{\widetilde{x}_{0},t,\frac{1}{2}}(s)$ are not antipodal, which proves the claim. In particular, $\theta_{r,t}(0)= \Theta_{r}(0,t)\notin \brak{\widetilde{w},-\widetilde{w}}$ and $p_{3}(\Theta_{r}(s,t))\geq 0$ for all $r,s,t\in I$. For $r\in I$, let:
\begin{equation*}
\text{$\xi(r)=\left( \theta_{r,t}(0), \widetilde{w}, -\widetilde{w}, \theta_{r,t}, c_{\widetilde{w}}, c_{-\widetilde{w}}\right)_{t\in I}$ in $I_{n_{0}}(\widetilde{x}_{0}, \widetilde{w})$.}
\end{equation*}
Since $\left( \theta_{r,t}(0), \widetilde{w}, -\widetilde{w}, \theta_{r,t}, c_{\widetilde{w}}, c_{-\widetilde{w}}\right)=(\widetilde{x}_{0}, \widetilde{w},-\widetilde{w}, c_{\widetilde{x}_{0}},c_{\widetilde{w}}, c_{-\widetilde{w}})$ for $t\in \brak{0,1}$, $\xi(r)$ is a loop in $I_{n_{0}}(\widetilde{x}_{0}, \widetilde{w})$ for all $r\in I$, and $\xi$ defines a based homotopy  in $I_{n_{0}}(\widetilde{x}_{0}, \widetilde{w})$ between the loops $\xi(0)=\bigl( \gamma_{t}(0), \widetilde{w}, -\widetilde{w},  \gamma_{t}, c_{\widetilde{w}}, c_{-\widetilde{w}}\bigr)_{t\in I}$ and $\xi(1)=\bigl( \widetilde{\omega}_{\widetilde{x}_{0},t}\left(\textstyle\frac{1}{2}\right), \widetilde{w}, -\widetilde{w}, \Omega_{\widetilde{x}_{0},t,\frac{1}{2}}, c_{\widetilde{w}}, c_{-\widetilde{w}}\bigr)_{t\in I}$ as required.

\item For $r\in I$, let $\xi'(r)=\left( \pi(\theta_{r,t}(0)), w, \pi\circ \theta_{r,t}, c_{w}\right)_{t\in I}$. From the proof of part~(\ref{it:deformwS2a}), $\xi'$ defines a based homotopy in $I_{n_{0}}(x_{0},w)$ between the loops $\xi'(0)=\bigl( \pi(\gamma_{t}(0)), w, \pi\circ \gamma_{t}, c_{w}\bigr)_{t\in I}$ and $\xi'(1)=\bigl( \pi\circ\widetilde{\omega}_{\widetilde{x}_{0},t}\left(\textstyle\frac{1}{2}\right), w, \pi\circ \Omega_{\widetilde{x}_{0},t,\frac{1}{2}}, c_{w}\bigr)_{t\in I}$, and the result follows.\qedhere
\end{enumerate}
\end{proof}

\subsubsection{The proof of \reth{brrelnS2} in the case $M=\St$}\label{sec:n3S2a}

Let $M=\St$. This section is devoted to proving \reth{brrelnS2} that describes the relation in $I_{n_{0}}$ between $\br{\widetilde{x}_{0}, \widetilde{z}_{0}, -\widetilde{z}_{0}}$ and the $\widetilde{\delta}_{u}$, where $u\in \brak{\widetilde{x}_{0}, \widetilde{z}_{0}, -\widetilde{z}_{0}}$.

\begin{proof}[Proof of \reth{brrelnS2} in the case $M=\St$]
By taking their sum, it suffices to show that the following two equalities in $I_{n_{0}}$ hold: 
\begin{align}
\br{\widetilde{x}_{0}, \widetilde{z}_{0}, -\widetilde{z}_{0}}&= \widetilde{\delta}_{\widetilde{x}_{0}}+ \dl{\widetilde{z}_{0}, -\widetilde{z}_{0}}\label{eq:breq1}\\
\dl{\widetilde{z}_{0}, -\widetilde{z}_{0}}&=-\br{\widetilde{x}_{0}, \widetilde{z}_{0}, -\widetilde{z}_{0}}+\widetilde{\delta}_{\widetilde{z}_{0}} - \widetilde{\delta}_{-\widetilde{z}_{0}},\label{eq:breq2}
\end{align}
where:
\begin{equation}\label{eq:defdlS2}
\dl{\widetilde{z}_{0}, -\widetilde{z}_{0}}=\bigl[\bigl(\widetilde{x}_{0}, \widetilde{\omega}_{\widetilde{z}_{0},1-t}({\textstyle\frac{1}{2}}), \widetilde{\omega}_{-\widetilde{z}_{0},t}({\textstyle\frac{1}{2}}), c_{\widetilde{x}_{0}}, \Omega'_{\widetilde{z}_{0},1-t, \frac{1}{2}}, \Omega'_{-\widetilde{z}_{0},t, \frac{1}{2}}\bigr)_{t\in I}\bigr].
\end{equation}
We start by proving~\reqref{breq1}. For all $\alpha,t\in I$, let:
\begin{equation}\label{eq:defwtalpha}
w_{t,\alpha}=\bigl(\Omega_{\widetilde{x}_{0},t, \frac{1-\alpha}{2}}(0), \Omega'_{\widetilde{z}_{0},1-t, \frac{\alpha}{2}}(0), \Omega'_{-\widetilde{z}_{0}, t, \frac{\alpha}{2}}(0), \Omega_{\widetilde{x}_{0},t, \frac{1-\alpha}{2}}, \Omega'_{\widetilde{z}_{0},1-t, \frac{\alpha}{2}}, \Omega'_{-\widetilde{z}_{0},t, \frac{\alpha}{2}}\bigr).
\end{equation}
The three components of $w_{t,\alpha}$, namely the first and fourth coordinates, the second and fifth coordinates, and the third and sixth coordinates, as $t$ increases, are illustrated in Figure~\ref{fig:wtalpha}.
\begin{figure}[!h]
\hspace*{\fill}
\begin{tikzpicture}[scale=0.45, >=stealth]
\draw[very thick,middlearrow=0.3] (0,0) circle(4.5);
\draw[dotted,middlearrow=0.65] (4.5,0) arc (0:180:4.5 and 1);
\draw[very thick, middlearrow=0.65] (-4.5,0) arc (180:360:4.5 and 1);

\draw[thick, middlearrowrev=0.55] (2,-0.9) .. controls (2,1) and (4,1) .. (4.45,0);
\draw[thick, middlearrowrev=0.45] (-0.25,-1.03) .. controls (-0.25,2) and (4.15,2) .. (4.45,0);
\draw[thick, middlearrowrev=0.5] (-2.5,-0.8) .. controls (-2,3.75) and (4.2,3.6) .. (4.45,0);
\draw[thick, dashed, middlearrowrev=0.5] (-3,0.75) .. controls (-2.75,4.3) and (4.2,4.3) .. (4.45,0);
\draw[thick, dashed, middlearrowrev=0.45] (0,1) .. controls (0.4,2.5) and (4.2,2.5) .. (4.45,0);

%

\draw[fill] (4.5,-0.1) circle(0.15);
\node at (5.3,-0.9){$\widetilde{x}_{0}$};
\node at (6.3,0){$\to$};
\end{tikzpicture}\hfill
%
\begin{tikzpicture}[scale=0.45, >=stealth]
\draw[very thick,middlearrow=0.3] (0,0) circle(4.5);
\begin{scope}[rotate around={25:(4.5,0)}]
\draw[dotted,middlearrow=0.7] (4.5,0) arc (0:180:4 and 1);
\draw[very thick, middlearrowrev=0.7] (4.5,0) arc (360:180:4.05 and 1);

\draw[thick, middlearrowrev=0.7] (2,-0.9) .. controls (2.25,1) and (4,1) .. (4.45,0);
\draw[thick, middlearrowrev=0.25] (-0.25,-1.03) .. controls (0.25,2.5) and (4.15,2.5) .. (4.45,0);
\draw[thick, middlearrowrev=0.35] (-2.5,-0.74) .. controls (-2,5) and (4.2,5) .. (4.45,0);
\draw[thick, dashed, middlearrowrev=0.5] (-3,0.75) .. controls (-2.75,6.3) and (4.2,6.3) .. (4.45,0);
\draw[thick, dashed, middlearrowrev=0.45] (0,1) .. controls (0,3.5) and (4,4) .. (4.45,0);
\end{scope}

\draw[fill] (4.5,-0.1) circle(0.15);
\node at (5.3,-0.9){$\widetilde{x}_{0}$};
\node at (6.3,0){$\to$};
\end{tikzpicture}
\hspace*{\fill}
\begin{tikzpicture}[scale=0.45, >=stealth]
\draw[very thick,middlearrow=0.3] (0,0) circle(4.5);

\draw[thick, middlearrowrev=0.7] (2,-0.9) .. controls (2,1) and (4,1) .. (4.45,0);
\draw[thick, middlearrowrev=0.45] (-0.25,-1.03) .. controls (-0.25,2) and (4.15,2) .. (4.45,0);
\draw[thick, middlearrowrev=0.5] (-2.5,-0.8) .. controls (-2,3.75) and (4.2,3.6) .. (4.45,0);
\draw[thick, dashed, middlearrowrev=0.5] (-3,0.75) .. controls (-2.75,4.3) and (4.2,4.3) .. (4.45,0);
\draw[thick, dashed, middlearrowrev=0.45] (0,1) .. controls (0.4,2.5) and (4.2,2.5) .. (4.45,0);

\draw[thick] (2,-0.9) .. controls (2.,-1.8) and (4,-1.8) .. (4.45,0);
\draw[thick] (-0.25,-1.03) .. controls (-0.25,-3) and (4.15,-3) .. (4.45,0);
\draw[thick] (-2.5,-0.8) .. controls (-2.5,-4.5) and (4.2,-4.5) .. (4.45,0);
\draw[thick, dashed] (-3,0.75) .. controls (-3.2,-4.3) and (4.2,-4) .. (4.45,0);
\draw[thick, dashed] (0,1) .. controls (-0.3,-2.5) and (4.2,-2.5) .. (4.45,0);

\draw[fill] (4.5,-0.1) circle(0.15);
\node at (5.3,-0.9){$\widetilde{x}_{0}$};
\end{tikzpicture}
\hspace*{\fill}\linebreak
\noindent\hspace*{\fill}
\begin{tikzpicture}[scale=0.45, >=stealth]
\draw[very thick] (0,0) circle(4.5);
\draw[dotted] (4.5,0) arc (0:180:4.5 and 1);
\draw[very thick] (-4.5,0) arc (180:360:4.5 and 1);
%

\draw[fill] (0,4.5) circle(0.15);
\node at (5.3,-0.9){\phantom{$\widetilde{x}_{0}$}};
\node at (6.3,0){$\to$};
\node at (0,5.3){$\widetilde{z}_{0}$};
\end{tikzpicture}\hfill
\begin{tikzpicture}[scale=0.45, >=stealth]
\draw[very thick,middlearrow=0.5] (0,0) circle(4.5);

\begin{scope}[rotate around={-90:(0,0)}]
\begin{scope}[rotate around={-15:(-4.5,0)}]
\draw[dotted,middlearrow=0.7] (-4.5,0) arc (0:180:-4.33 and 1);
\draw[very thick, middlearrowrev=0.7] (-4.5,0) arc (360:180:-4.34 and 1);
\end{scope}


\draw[thick, middlearrowrev=0.5] (-2,-1.63) .. controls (-2,-1.8) and (-4.1,-1.8) .. (-4.5,0);
\draw[thick, middlearrowrev=0.5] (0.25,-2.27) .. controls (0.1,-3) and (-4.15,-3) .. (-4.5,0);
\draw[thick, middlearrowrev=0.5] (3,-2.58) .. controls (2.2,-4.5) and (-4.2,-4.5) .. (-4.5,0);
\draw[thick, dashed, middlearrowrev=0.4] (3,-1.35) .. controls (3.2,-4.3) and (-4.2,-4) .. (-4.5,0);
\draw[thick, dashed, middlearrowrev=0.2] (0,-0.18) .. controls (0.3,-2.5) and (-4.2,-2.7) .. (-4.5,0);
\end{scope}

\draw[fill] (0,4.5) circle(0.15);
\node at (5.3,-0.9){\phantom{$\widetilde{x}_{0}$}};
\node at (6.3,0){$\to$};
\node at (0,5.3){$\widetilde{z}_{0}$};
\end{tikzpicture}
\hspace*{\fill}
\begin{tikzpicture}[scale=0.45, >=stealth]
\begin{scope}[rotate around={90:(0,0)}]
\draw[very thick,middlearrow=0.25] (0,0) circle(4.5);
\draw[dotted,middlearrow=0.65] (4.5,0) arc (0:180:4.5 and 1);
\draw[very thick, middlearrow=0.65] (-4.5,0) arc (180:360:4.5 and 1);

\draw[thick, middlearrowrev=0.55] (2,-0.9) .. controls (2,1) and (4,1) .. (4.45,0);
\draw[thick, middlearrowrev=0.45] (-0.25,-1.03) .. controls (-0.25,2) and (4.15,2) .. (4.45,0);
\draw[thick, middlearrowrev=0.5] (-2.5,-0.8) .. controls (-2,3.75) and (4.2,3.6) .. (4.45,0);
\draw[thick, dashed, middlearrowrev=0.5] (-3,0.75) .. controls (-2.75,4.3) and (4.2,4.3) .. (4.45,0);
\draw[thick, dashed, middlearrowrev=0.45] (0,1) .. controls (0.4,2.5) and (4.2,2.5) .. (4.45,0);
\end{scope}

\draw[fill] (0,4.5) circle(0.15);

\node at (5.3,-0.9){\phantom{$\widetilde{x}_{0}$}};
\node at (0,5.3){$\widetilde{z}_{0}$};
\end{tikzpicture}\hspace*{\fill}
\linebreak
\noindent\hspace*{\fill}
\begin{tikzpicture}[scale=0.45, >=stealth]
\draw[very thick] (0,0) circle(4.5);
\draw[dotted] (4.5,0) arc (0:180:4.5 and 1);
\draw[very thick] (-4.5,0) arc (180:360:4.5 and 1);
%

\draw[fill] (0,-4.5) circle(0.15);
\node at (5.3,-0.9){\phantom{$\widetilde{x}_{0}$}};
\node at (6.3,0){$\to$};
\node at (0,5.3){\phantom{$\widetilde{z}_{0}$}};
\node at (0,-5.4){$-\widetilde{z}_{0}$};
\end{tikzpicture}\hfill
\begin{tikzpicture}[scale=0.45, >=stealth]
\draw[very thick,middlearrow=0] (0,0) circle(4.5);

\begin{scope}[rotate around={90:(0,0)}]
\begin{scope}[rotate around={-15:(-4.5,0)}]
\draw[dotted,middlearrowrev=0.7] (-4.5,0) arc (0:180:-4.33 and 1);
\draw[very thick, middlearrow=0.7] (-4.5,0) arc (360:180:-4.34 and 1);
\end{scope}


\draw[thick, middlearrowrev=0.5] (-2,-1.63) .. controls (-2,-1.8) and (-4.1,-1.8) .. (-4.5,0);
\draw[thick, middlearrowrev=0.5] (0.25,-2.27) .. controls (0.1,-3) and (-4.15,-3) .. (-4.5,0);
\draw[thick, middlearrowrev=0.5] (3,-2.58) .. controls (2.2,-4.5) and (-4.2,-4.5) .. (-4.5,0);
\draw[thick, dashed, middlearrowrev=0.4] (3,-1.35) .. controls (3.2,-4.3) and (-4.2,-4) .. (-4.5,0);
\draw[thick, dashed, middlearrowrev=0.2] (0,-0.18) .. controls (0.3,-2.5) and (-4.2,-2.7) .. (-4.5,0);
\end{scope}

\draw[fill] (0,-4.5) circle(0.15);
\node at (5.3,-0.9){\phantom{$\widetilde{x}_{0}$}};
\node at (6.3,0){$\to$};
\node at (0,5.3){\phantom{$\widetilde{z}_{0}$}};
\node at (0,-5.4){$-\widetilde{z}_{0}$};
\end{tikzpicture}
\hspace*{\fill}
\begin{tikzpicture}[scale=0.45, >=stealth]
\begin{scope}[rotate around={-90:(0,0)}]
\draw[very thick,middlearrow=0.25] (0,0) circle(4.5);
\draw[dotted,middlearrowrev=0.65] (4.5,0) arc (0:180:4.5 and 1);
\draw[very thick, middlearrowrev=0.65] (-4.5,0) arc (180:360:4.5 and 1);

\draw[thick, middlearrowrev=0.55] (2,-0.9) .. controls (2,1) and (4,1) .. (4.45,0);
\draw[thick, middlearrowrev=0.45] (-0.25,-1.03) .. controls (-0.25,2) and (4.15,2) .. (4.45,0);
\draw[thick, middlearrowrev=0.5] (-2.5,-0.8) .. controls (-2,3.75) and (4.2,3.6) .. (4.45,0);
\draw[thick, dashed, middlearrowrev=0.5] (-3,0.75) .. controls (-2.75,4.3) and (4.2,4.3) .. (4.45,0);
\draw[thick, dashed, middlearrowrev=0.45] (0,1) .. controls (0.4,2.5) and (4.2,2.5) .. (4.45,0);
\end{scope}

\draw[fill] (0,-4.5) circle(0.15);

\node at (5.3,-0.9){\phantom{$\widetilde{x}_{0}$}};
\node at (0,5.3){\phantom{$\widetilde{z}_{0}$}};
\node at (0,-5.4){$-\widetilde{z}_{0}$};
\end{tikzpicture}
\hspace*{\fill}
\caption{The three components of $w_{t,\alpha}$ based at $\widetilde{x}_{0}$ (the first row), $\widetilde{z}_{0}$ (the second row), and $-\widetilde{z}_{0}$ (the third row), as $t$ increases from $0$ to $1$. To obtain $w_{t,\alpha}$, the figures in each column should be superimposed. The left-hand column represents $\widehat{\tau}_{n_{0}}$, and the right-hand column represents $\widetilde{\delta}_{n_{0}}+\xi_{n_{0}}$.}
\label{fig:wtalpha}
\end{figure}
We claim that $w_{t,\alpha}\in I_{n_{0}}$. To see this, by~\reqref{defOmega}, the arc $(\Omega_{\widetilde{x}_{0},t, \frac{1-\alpha}{2}})_{t\in I}$ (resp.\ $(\Omega'_{\widetilde{z}_{0},1-t, \frac{\alpha}{2}})_{t\in I}$, $(\Omega'_{-\widetilde{z}_{0},t, \frac{\alpha}{2}})_{t\in I}$) joins $\Omega_{\widetilde{x}_{0},t, \frac{1-\alpha}{2}}(0)$ (resp.\ $\Omega'_{\widetilde{z}_{0},1-t, \frac{\alpha}{2}}(0)$, $\Omega'_{-\widetilde{z}_{0},t, \frac{\alpha}{2}}(0)$) to $\widetilde{x}_{0}$ (resp.\ to $\widetilde{z}_{0}$, $-\widetilde{z}_{0}$). So prove the claim, it remains to show that $(\Omega_{\widetilde{x}_{0},t, \frac{1-\alpha}{2}}(0), \Omega'_{\widetilde{z}_{0},1-t, \frac{\alpha}{2}}(0), \Omega'_{-\widetilde{z}_{0}, t, \frac{\alpha}{2}}(0))$ belongs to $F_{3}(\St)$. First, using \req{minusOmega}, $\Omega'_{\widetilde{z}_{0},1-t, \frac{\alpha}{2}}(0)=-\Omega'_{-\widetilde{z}_{0}, t, \frac{\alpha}{2}}(0)$ for all $t,\alpha\in I$, so $\Omega'_{\widetilde{z}_{0},1-t, \frac{\alpha}{2}}(0)\neq \Omega'_{-\widetilde{z}_{0}, t, \frac{\alpha}{2}}(0)$. We now prove that for all $t, \alpha \in I$, $\Omega_{\widetilde{x}_{0},t, \frac{1-\alpha}{2}}(0)\neq \Omega'_{-\widetilde{z}_{0}, t, \frac{\alpha}{2}}(0)$, or equivalently $\widetilde{\omega}_{\widetilde{x}_{0},t}({\textstyle\frac{1-\alpha}{2}}) \neq -\widetilde{\omega}_{\widetilde{z}_{0},1-t}({\textstyle\frac{\alpha}{2}})$, by considering the following cases: 
\begin{enumerate}[(a)]
\item if $t\in \brak{0,1}$ then $\widetilde{\omega}_{\widetilde{x}_{0},t}({\textstyle\frac{1-\alpha}{2}})=\widetilde{x}_{0}$ and $-\widetilde{\omega}_{\widetilde{z}_{0},1-t}({\textstyle\frac{\alpha}{2}})=-\widetilde{z}_{0}$ by conditions~(\ref{it:xa}) and~(\ref{it:za}).

\item Suppose that $t\in (0,1)$. If $\alpha=0$ then $\widetilde{\omega}_{\widetilde{x}_{0},t}({\textstyle\frac{1-\alpha}{2}})=\widetilde{\omega}_{\widetilde{x}_{0},t}({\textstyle\frac{1}{2}})$ and $-\widetilde{\omega}_{\widetilde{z}_{0},1-t}({\textstyle\frac{\alpha}{2}})=-\widetilde{z}_{0}$, but $\widetilde{\omega}_{\widetilde{x}_{0},t}({\textstyle\frac{1}{2}})\neq -\widetilde{z}_{0}$ by condition~(\ref{it:xb}). If $\alpha=1$ then $\widetilde{\omega}_{\widetilde{x}_{0},t}({\textstyle\frac{1-\alpha}{2}})=\widetilde{x}_{0}$ and $-\widetilde{\omega}_{\widetilde{z}_{0},1-t}({\textstyle\frac{\alpha}{2}})=-\widetilde{\omega}_{\widetilde{z}_{0},1-t}({\textstyle\frac{1}{2}})$, but $-\widetilde{\omega}_{\widetilde{z}_{0},1-t}({\textstyle\frac{1}{2}}) \neq \widetilde{x}_{0}$ by condition~(\ref{it:zb}).

\item suppose that $t\in (0,1)\setminus \brak{\frac{1}{2}}$ and $\alpha\in (0,1)$. By conditions~(\ref{it:xa}) and~(\ref{it:za}), we see that $p_{2}(\widetilde{\omega}_{\widetilde{x}_{0},t}({\textstyle\frac{1-\alpha}{2}})) \ldotp p_{2}(-\widetilde{\omega}_{\widetilde{z}_{0},1-t}({\textstyle\frac{\alpha}{2}}))<0$, in particular $\widetilde{\omega}_{\widetilde{x}_{0},t}({\textstyle\frac{1-\alpha}{2}}) \neq -\widetilde{\omega}_{\widetilde{z}_{0},1-t}({\textstyle\frac{\alpha}{2}})$.  

\item suppose that $t=\frac{1}{2}$  and $\alpha\in (0,1)$. 
\begin{enumerate}[(i)]
\item If $\alpha\in (0, \frac{1}{2})$ (resp.\ $\alpha\in (\frac{1}{2},1)$) then $p_{1}(\widetilde{\omega}_{\widetilde{x}_{0},\frac{1}{2}}({\textstyle\frac{1-\alpha}{2}}))<0$ and $p_{1}(-\widetilde{\omega}_{\widetilde{z}_{0},\frac{1}{2}}({\textstyle\frac{\alpha}{2}}))> 0$ by conditions~(\ref{it:xe}) and~(\ref{it:zb}) (resp.\ $p_{3}(\widetilde{\omega}_{\widetilde{x}_{0},\frac{1}{2}}({\textstyle\frac{1-\alpha}{2}})) < 0$ and $p_{3}(-\widetilde{\omega}_{\widetilde{z}_{0},\frac{1}{2}}({\textstyle\frac{\alpha}{2}}))>0$ by conditions~(\ref{it:xb}) and~(\ref{it:ze})), so $\widetilde{\omega}_{\widetilde{x}_{0},\frac{1}{2}}({\textstyle\frac{1-\alpha}{2}}) \neq -\widetilde{\omega}_{\widetilde{z}_{0},\frac{1}{2}}({\textstyle\frac{\alpha}{2}})$.


\item If $\alpha=\frac{1}{2}$ then $p_{1}(\widetilde{\omega}_{\widetilde{x}_{0},\frac{1}{2}}({\textstyle\frac{1}{4}})) = 0$ and $p_{1}(-\widetilde{\omega}_{\widetilde{z}_{0},\frac{1}{2}}({\textstyle\frac{1}{4}}))>0$ by conditions~(\ref{it:xe}) and~(\ref{it:zb}), so $\widetilde{\omega}_{\widetilde{x}_{0},\frac{1}{2}}({\textstyle\frac{1}{4}}) \neq -\widetilde{\omega}_{\widetilde{z}_{0},\frac{1}{2}}({\textstyle\frac{1}{4}})$.
\end{enumerate}
\end{enumerate}
Finally, let us show that $\Omega_{\widetilde{x}_{0},t, \frac{1-\alpha}{2}}(0)\neq \Omega'_{\widetilde{z}_{0}, 1-t, \frac{\alpha}{2}}(0)$ for all $t, \alpha \in I$, or equivalently that $\widetilde{\omega}_{\widetilde{x}_{0},t}({\textstyle\frac{1-\alpha}{2}}) \neq \widetilde{\omega}_{\widetilde{z}_{0},1-t}({\textstyle\frac{\alpha}{2}})$, by considering the following cases:
\begin{enumerate}[(a)]
\item if $t\in \brak{0,1}$ then $\widetilde{\omega}_{\widetilde{x}_{0},t}({\textstyle\frac{1-\alpha}{2}})=\widetilde{x}_{0}$ and $\widetilde{\omega}_{\widetilde{z}_{0},1-t}({\textstyle\frac{\alpha}{2}})=\widetilde{z}_{0}$.

\item if $t\in (0,1)$ and $\alpha=0$ then $\widetilde{z}_{0}=\widetilde{\omega}_{\widetilde{z}_{0},1-t}(0)\neq \widetilde{\omega}_{\widetilde{x}_{0},t}({\textstyle\frac{1}{2}})$ by condition~(\ref{it:xb}).

\item suppose that $t\in (0,1)$ and $\alpha\in (0,\frac{1}{2}]$ (resp.\ $\alpha\in (\frac{1}{2},1]$). Then $p_{3}(\widetilde{\omega}_{\widetilde{x}_{0},t}({\textstyle\frac{1-\alpha}{2}}))<0$ and $p_{3}(\widetilde{\omega}_{\widetilde{z}_{0},1-t}({\textstyle\frac{\alpha}{2}}))\geq 0$ by conditions~(\ref{it:xb}) and~(\ref{it:ze}) (resp.\ $p_{1}(\widetilde{\omega}_{\widetilde{x}_{0},t}({\textstyle\frac{1-\alpha}{2}}))>0$ and $p_{1}(\widetilde{\omega}_{\widetilde{z}_{0},1-t}({\textstyle\frac{\alpha}{2}}))\leq 0$ by conditions~(\ref{it:xe}) and~(\ref{it:zb})), so $\widetilde{\omega}_{\widetilde{x}_{0},t}({\textstyle\frac{1-\alpha}{2}}) \neq \widetilde{\omega}_{\widetilde{z}_{0},1-t}({\textstyle\frac{\alpha}{2}})$.

\end{enumerate}
This proves the claim that $w_{t,\alpha}$ lies in $I_{n_{0}}$. 

For all $\alpha,t\in I$, we define:
\begin{equation}\label{eq:defvtalpha}
\!\!v_{t,\alpha}\!=\!\bigl( \Omega_{\widetilde{x}_{0},1-t,\! \frac{2-\alpha}{2}}(0), \Omega'_{\widetilde{z}_{0},1-t, \frac{1+\alpha}{2}}(0), \Omega'_{-\widetilde{z}_{0},t, \frac{1+\alpha}{2}}(0), \Omega_{\widetilde{x}_{0},1-t, \frac{2-\alpha}{2}}, \Omega'_{\widetilde{z}_{0},1-t, \frac{1+\alpha}{2}}, \Omega'_{-\widetilde{z}_{0},t, \frac{1+\alpha}{2}} \bigr).
\end{equation}
The three components of $v_{t,\alpha}$, namely the first and fourth coordinates, the second and fifth coordinates, and the third and sixth coordinates, as $t$ increases, are illustrated in Figure~\ref{fig:vtalpha}.
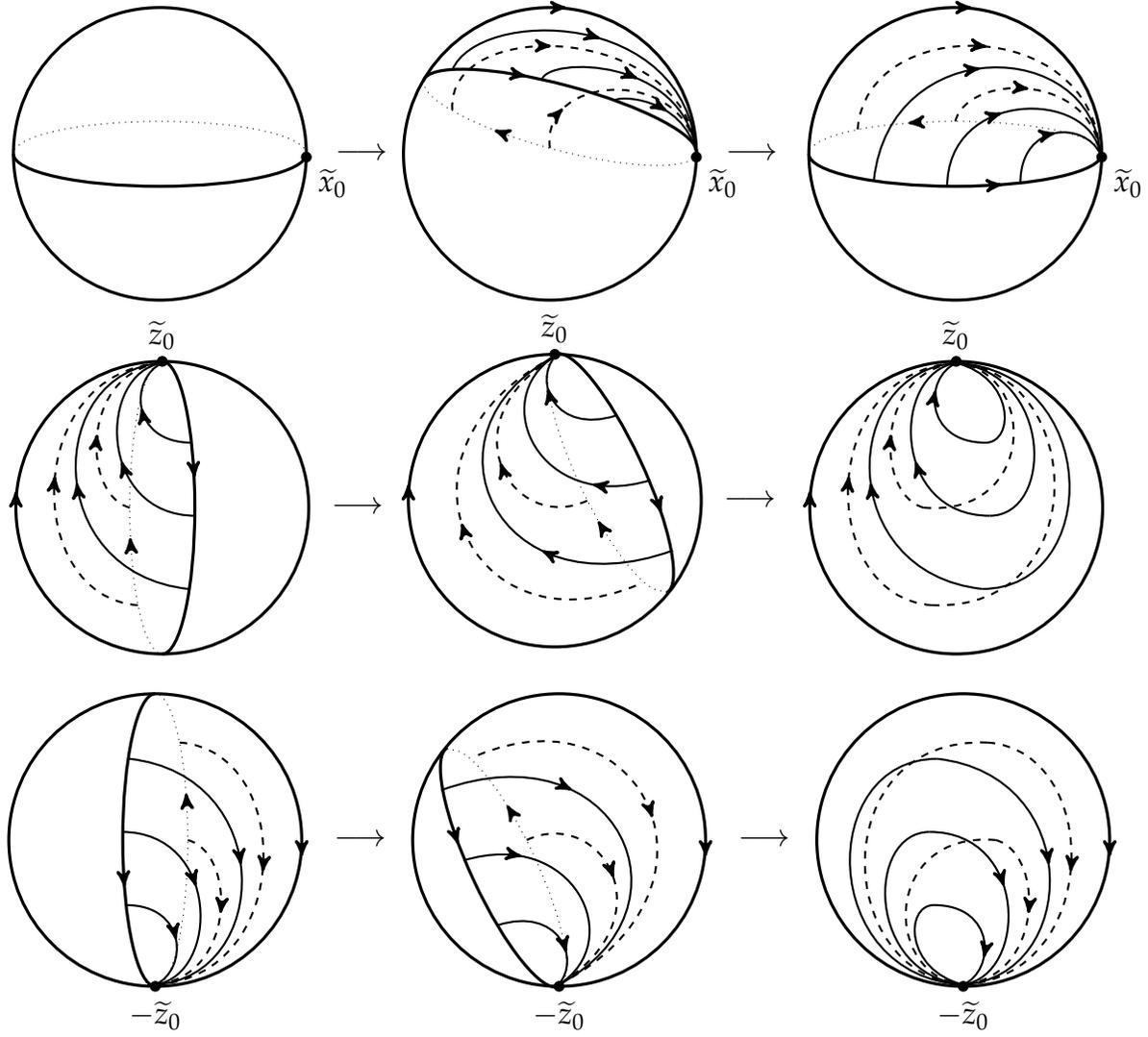
\begin{figure}[!h]
\hspace*{\fill}
\begin{tikzpicture}[scale=0.45, >=stealth]
\draw[very thick] (0,0) circle(4.5);
\draw[dotted] (4.5,0) arc (0:180:4.5 and 1);
\draw[very thick] (-4.5,0) arc (180:360:4.5 and 1);
%

\draw[fill] (4.5,-0.1) circle(0.15);
\node at (5.3,-0.9){$\widetilde{x}_{0}$};
\node at (6.2,0){$\to$};
\end{tikzpicture}\hfill
\begin{tikzpicture}[scale=0.45, >=stealth]
\draw[very thick,middlearrow=0.25] (0,0) circle(4.5);

\begin{scope}[rotate around={180:(0,0)}]
\begin{scope}[rotate around={-15:(-4.5,0)}]
\draw[dotted,middlearrowrev=0.7] (-4.5,0) arc (0:180:-4.33 and 1);
\draw[very thick, middlearrow=0.7] (-4.5,0) arc (360:180:-4.34 and 1);
\end{scope}


\draw[thick, middlearrowrev=0.5] (-2,-1.63) .. controls (-2,-1.8) and (-4.1,-1.8) .. (-4.5,0);
\draw[thick, middlearrowrev=0.5] (0.25,-2.27) .. controls (0.1,-3) and (-4.15,-3) .. (-4.5,0);
\draw[thick, middlearrowrev=0.5] (3,-2.58) .. controls (2.2,-4.5) and (-4.2,-4.5) .. (-4.5,0);
\draw[thick, dashed, middlearrowrev=0.4] (3,-1.35) .. controls (3.2,-4.3) and (-4.2,-4) .. (-4.5,0);
\draw[thick, dashed, middlearrowrev=0.2] (0,-0.18) .. controls (0.3,-2.5) and (-4.2,-2.7) .. (-4.5,0);
\end{scope}

\draw[fill] (4.5,-0.1) circle(0.15);
\node at (5.3,-0.9){$\widetilde{x}_{0}$};
\node at (6.2,0){$\to$};
\end{tikzpicture}
\hspace*{\fill}
\begin{tikzpicture}[scale=0.45, >=stealth]
\draw[very thick,middlearrow=0.25] (0,0) circle(4.5);
\draw[dotted,middlearrowrev=0.65] (4.5,0) arc (0:180:4.5 and 1);
\draw[very thick, middlearrowrev=0.65] (-4.5,0) arc (180:360:4.5 and 1);

\draw[thick, middlearrowrev=0.55] (2,-0.9) .. controls (2,1) and (4,1) .. (4.45,0);
\draw[thick, middlearrowrev=0.45] (-0.25,-1.03) .. controls (-0.25,2) and (4.15,2) .. (4.45,0);
\draw[thick, middlearrowrev=0.5] (-2.5,-0.8) .. controls (-2,3.75) and (4.2,3.6) .. (4.45,0);
\draw[thick, dashed, middlearrowrev=0.5] (-3,0.75) .. controls (-2.75,4.3) and (4.2,4.3) .. (4.45,0);
\draw[thick, dashed, middlearrowrev=0.45] (0,1) .. controls (0.4,2.5) and (4.2,2.5) .. (4.45,0);

%

\draw[fill] (4.5,-0.1) circle(0.15);
\node at (5.3,-0.9){$\widetilde{x}_{0}$};
\end{tikzpicture}
\hspace*{\fill}\linebreak
\noindent\hspace*{\fill}
\begin{tikzpicture}[scale=0.45, >=stealth]
\draw[very thick,middlearrow=0.5] (0,0) circle(4.5);
\begin{scope}[rotate around={90:(0,0)}]
\draw[dotted,middlearrow=0.65] (4.5,0) arc (0:180:4.5 and 1);
\draw[very thick, middlearrow=0.65] (-4.5,0) arc (180:360:4.5 and 1);

\draw[thick, middlearrowrev=0.55] (2,-0.9) .. controls (2,1) and (4,1) .. (4.45,0);
\draw[thick, middlearrowrev=0.45] (-0.25,-1.03) .. controls (-0.25,2) and (4.15,2) .. (4.45,0);
\draw[thick, middlearrowrev=0.5] (-2.5,-0.8) .. controls (-2,3.75) and (4.2,3.6) .. (4.45,0);
\draw[thick, dashed, middlearrowrev=0.5] (-3,0.75) .. controls (-2.75,4.3) and (4.2,4.3) .. (4.45,0);
\draw[thick, dashed, middlearrowrev=0.45] (0,1) .. controls (0.4,2.5) and (4.2,2.5) .. (4.45,0);
\end{scope}

\draw[fill] (0,4.5) circle(0.15);

\node at (5.3,-0.9){\phantom{$\widetilde{x}_{0}$}};
\node at (6,0){$\to$};
\node at (0,5.3){$\widetilde{z}_{0}$};
\end{tikzpicture}\hspace*{\fill}\hspace*{-5mm}
\begin{tikzpicture}[scale=0.45, >=stealth]
\draw[very thick,middlearrow=0.5] (0,0) circle(4.5);
\begin{scope}[rotate around={90:(0,0)}]
\begin{scope}[rotate around={25:(4.5,0)}]
\draw[dotted,middlearrow=0.7] (4.5,0) arc (0:180:4 and 1);
\draw[very thick, middlearrowrev=0.7] (4.5,0) arc (360:180:4.05 and 1);

\draw[thick, middlearrowrev=0.7] (2,-0.9) .. controls (2.25,1) and (4,1) .. (4.45,0);
\draw[thick, middlearrowrev=0.25] (-0.25,-1.03) .. controls (0.25,2.5) and (4.15,2.5) .. (4.45,0);
\draw[thick, middlearrowrev=0.35] (-2.5,-0.74) .. controls (-2,5) and (4.2,5) .. (4.45,0);
\draw[thick, dashed, middlearrowrev=0.5] (-3,0.75) .. controls (-2.75,6.3) and (4.2,6.3) .. (4.45,0);
\draw[thick, dashed, middlearrowrev=0.45] (0,1) .. controls (0,3.5) and (4,4) .. (4.45,0);
\end{scope}
\end{scope}

\draw[fill] (0,4.5) circle(0.15);

\node at (5.3,-0.9){\phantom{$\widetilde{x}_{0}$}};
\node at (6,0){$\to$};
\node at (0,5.3){$\widetilde{z}_{0}$};
\end{tikzpicture}
\hspace*{\fill}
\begin{tikzpicture}[scale=0.45, >=stealth]
\draw[very thick,middlearrow=0.5] (0,0) circle(4.5);

\begin{scope}[rotate around={90:(0,0)}]
\draw[thick, middlearrowrev=0.7] (2,-0.9) .. controls (2,1) and (4,1) .. (4.45,0);
\draw[thick, middlearrowrev=0.45] (-0.25,-1.03) .. controls (-0.25,2) and (4.15,2) .. (4.45,0);
\draw[thick, middlearrowrev=0.5] (-2.5,-0.8) .. controls (-2,3.75) and (4.2,3.6) .. (4.45,0);
\draw[thick, dashed, middlearrowrev=0.5] (-3,0.75) .. controls (-2.75,4.3) and (4.2,4.3) .. (4.45,0);
\draw[thick, dashed, middlearrowrev=0.45] (0,1) .. controls (0.4,2.5) and (4.2,2.5) .. (4.45,0);

\draw[thick] (2,-0.9) .. controls (2.,-1.8) and (4,-1.8) .. (4.45,0);
\draw[thick] (-0.25,-1.03) .. controls (-0.25,-3) and (4.15,-3) .. (4.45,0);
\draw[thick] (-2.5,-0.8) .. controls (-2.5,-4.5) and (4.2,-4.5) .. (4.45,0);
\draw[thick, dashed] (-3,0.75) .. controls (-3.2,-4.3) and (4.2,-4) .. (4.45,0);
\draw[thick, dashed] (0,1) .. controls (-0.3,-2.5) and (4.2,-2.5) .. (4.45,0);
\end{scope}

\draw[fill] (0,4.5) circle(0.15);

\node at (5.2,-0.9){\phantom{$\widetilde{x}_{0}$}};
\node at (0,5.3){$\widetilde{z}_{0}$};
\end{tikzpicture}
\hspace*{\fill}\vspace*{-2mm}\linebreak
\noindent\hspace*{\fill}
\begin{tikzpicture}[scale=0.45, >=stealth]
\draw[very thick,middlearrow=0] (0,0) circle(4.5);
\begin{scope}[rotate around={-90:(0,0)}]
\draw[dotted,middlearrowrev=0.65] (4.5,0) arc (0:180:4.5 and 1);
\draw[very thick, middlearrowrev=0.65] (-4.5,0) arc (180:360:4.5 and 1);

\draw[thick, middlearrowrev=0.55] (2,-0.9) .. controls (2,1) and (4,1) .. (4.45,0);
\draw[thick, middlearrowrev=0.45] (-0.25,-1.03) .. controls (-0.25,2) and (4.15,2) .. (4.45,0);
\draw[thick, middlearrowrev=0.5] (-2.5,-0.8) .. controls (-2,3.75) and (4.2,3.6) .. (4.45,0);
\draw[thick, dashed, middlearrowrev=0.5] (-3,0.75) .. controls (-2.75,4.3) and (4.2,4.3) .. (4.45,0);
\draw[thick, dashed, middlearrowrev=0.45] (0,1) .. controls (0.4,2.5) and (4.2,2.5) .. (4.45,0);
\end{scope}

\draw[fill] (0,-4.5) circle(0.15);

\node at (5.2,-0.9){\phantom{$\widetilde{x}_{0}$}};
\node at (6.3,0){$\to$};
\node at (0,5.3){\phantom{$\widetilde{z}_{0}$}};
\node at (0,-5.4){$-\widetilde{z}_{0}$};
\end{tikzpicture}
\hspace*{\fill}
\begin{tikzpicture}[scale=0.45, >=stealth]
\draw[very thick,middlearrow=0] (0,0) circle(4.5);
\begin{scope}[rotate around={-90:(0,0)}]
\begin{scope}[rotate around={25:(4.5,0)}]
\draw[dotted,middlearrowrev=0.7] (4.5,0) arc (0:180:4 and 1);
\draw[very thick, middlearrow=0.7] (4.5,0) arc (360:180:4.05 and 1);

\draw[thick, middlearrowrev=0.7] (2,-0.9) .. controls (2.25,1) and (4,1) .. (4.45,0);
\draw[thick, middlearrowrev=0.25] (-0.25,-1.03) .. controls (0.25,2.5) and (4.15,2.5) .. (4.45,0);
\draw[thick, middlearrowrev=0.35] (-2.5,-0.74) .. controls (-2,5) and (4.2,5) .. (4.45,0);
\draw[thick, dashed, middlearrowrev=0.5] (-3,0.75) .. controls (-2.75,6.3) and (4.2,6.3) .. (4.45,0);
\draw[thick, dashed, middlearrowrev=0.45] (0,1) .. controls (0,3.5) and (4,4) .. (4.45,0);
\end{scope}
\end{scope}

\draw[fill] (0,-4.5) circle(0.15);

\node at (5.2,-0.9){\phantom{$\widetilde{x}_{0}$}};
\node at (6.3,0){$\to$};
\node at (0,5.3){\phantom{$\widetilde{z}_{0}$}};
\node at (0,-5.4){$-\widetilde{z}_{0}$};
\end{tikzpicture}
\hspace*{\fill}
\begin{tikzpicture}[scale=0.45, >=stealth]
\draw[very thick,middlearrow=0] (0,0) circle(4.5);
\begin{scope}[rotate around={-90:(0,0)}]
\draw[thick, middlearrowrev=0.7] (2,-0.9) .. controls (2,1) and (4,1) .. (4.45,0);
\draw[thick, middlearrowrev=0.45] (-0.25,-1.03) .. controls (-0.25,2) and (4.15,2) .. (4.45,0);
\draw[thick, middlearrowrev=0.5] (-2.5,-0.8) .. controls (-2,3.75) and (4.2,3.6) .. (4.45,0);
\draw[thick, dashed, middlearrowrev=0.5] (-3,0.75) .. controls (-2.75,4.3) and (4.2,4.3) .. (4.45,0);
\draw[thick, dashed, middlearrowrev=0.45] (0,1) .. controls (0.4,2.5) and (4.2,2.5) .. (4.45,0);

\draw[thick] (2,-0.9) .. controls (2.,-1.8) and (4,-1.8) .. (4.45,0);
\draw[thick] (-0.25,-1.03) .. controls (-0.25,-3) and (4.15,-3) .. (4.45,0);
\draw[thick] (-2.5,-0.8) .. controls (-2.5,-4.5) and (4.2,-4.5) .. (4.45,0);
\draw[thick, dashed] (-3,0.75) .. controls (-3.2,-4.3) and (4.2,-4) .. (4.45,0);
\draw[thick, dashed] (0,1) .. controls (-0.3,-2.5) and (4.2,-2.5) .. (4.45,0);
\end{scope}

\draw[fill] (0,-4.5) circle(0.15);

\node at (5.2,0){\phantom{$\widetilde{x}_{0}$}};
\node at (0,5.3){\phantom{$\widetilde{z}_{0}$}};
\node at (0,-5.4){$-\widetilde{z}_{0}$};
\end{tikzpicture}
\hspace*{\fill}
\caption{The three components of $v_{t,\alpha}$ based at $\widetilde{x}_{0}$ (the first row), $\widetilde{z}_{0}$ (the second row), and $-\widetilde{z}_{0}$ (the third row), as $t$ increases from $0$ to $1$. To obtain $v_{t,\alpha}$, the figures in each column should be superimposed. The left-hand column represents $\xi_{n_{0}}$, and the right-hand column represents $-\widehat{\tau}_{n_{0}}+\widetilde{\delta}_{z_{0}}-\widetilde{\delta}_{-z_{0}}$.}
\label{fig:vtalpha}
\end{figure}
We claim that $v_{t,\alpha}\in I_{n_{0}}$. By~\reqref{defOmega}, the arc $\Omega_{\widetilde{x}_{0},1-t, \frac{2-\alpha}{2}}$ (resp.\ $\Omega'_{\widetilde{z}_{0},1-t, \frac{1+\alpha}{2}}$, $\Omega'_{-\widetilde{z}_{0},t, \frac{1+\alpha}{2}}$) joins $\Omega_{\widetilde{x}_{0},1-t, \frac{2-\alpha}{2}}(0)$ (resp.\ $\Omega'_{\widetilde{z}_{0},1-t, \frac{1+\alpha}{2}}(0)$, $\Omega'_{-\widetilde{z}_{0},t, \frac{1+\alpha}{2}}(0)$) to $\widetilde{x}_{0}$ (resp.\ $\widetilde{z}_{0}$, $-\widetilde{z}_{0}$). It thus remains to show that $(\Omega_{\widetilde{x}_{0},1-t, \frac{2-\alpha}{2}}(0), \Omega'_{\widetilde{z}_{0},1-t, \frac{1+\alpha}{2}}(0), \Omega'_{-\widetilde{z}_{0},t, \frac{1+\alpha}{2}}(0))$ belongs to $F_{3}(\St)$. First, for all $t,\alpha\in I$, $\Omega'_{\widetilde{z}_{0},1-t, \frac{1+\alpha}{2}}(0)=-\Omega'_{-\widetilde{z}_{0}, t, \frac{1+\alpha}{2}}(0)$ by \req{minusOmega}, so $\Omega'_{\widetilde{z}_{0},1-t, \frac{1+\alpha}{2}}(0)\neq \Omega'_{-\widetilde{z}_{0}, t, \frac{1+\alpha}{2}}(0)$. We now show that $\Omega_{\widetilde{x}_{0},1-t, \frac{2-\alpha}{2}}(0)\neq \Omega'_{\widetilde{z}_{0},1-t, \frac{1+\alpha}{2}}(0)$ for all $t, \alpha \in I$, or equivalently, that $\widetilde{\omega}_{\widetilde{x}_{0},1-t}({\textstyle\frac{2-\alpha}{2}}) \neq \widetilde{\omega}_{\widetilde{z}_{0},1-t}({\textstyle\frac{1+\alpha}{2}})$, by considering the following cases.
\begin{enumerate}[(a)]
\item If $t\in \brak{0,1}$ then $\widetilde{\omega}_{\widetilde{x}_{0},1-t}({\textstyle\frac{2-\alpha}{2}})=\widetilde{x}_{0}$ and $\widetilde{\omega}_{\widetilde{z}_{0},1-t}({\textstyle\frac{1+\alpha}{2}})=\widetilde{z}_{0}$.

\item If $t\in (0,1)$ and $\alpha=0$ (resp.\ $\alpha=1$) then $\widetilde{\omega}_{\widetilde{x}_{0},1-t}({\textstyle\frac{2-\alpha}{2}})=\widetilde{x}_{0}\neq \widetilde{\omega}_{\widetilde{z}_{0},1-t}({\textstyle\frac{1+\alpha}{2}})$ by~condition~(\ref{it:zb}) (resp.\ $\widetilde{\omega}_{\widetilde{z}_{0},1-t}({\textstyle\frac{1+\alpha}{2}})=\widetilde{z}_{0}\neq \widetilde{\omega}_{\widetilde{x}_{0},1-t}({\textstyle\frac{2-\alpha}{2}})$ by~condition~(\ref{it:xb})).

\item If $\alpha\in (0,1)$ and $t\in (0,1) \setminus \brak{\frac{1}{2}}$ then $p_{2}(\widetilde{\omega}_{\widetilde{x}_{0},1-t}({\textstyle\frac{2-\alpha}{2}})) \ldotp p_{2}(\widetilde{\omega}_{\widetilde{z}_{0},1-t}({\textstyle\frac{1+\alpha}{2}}))<0$ by~conditions~(\ref{it:xa}) and~(\ref{it:za}), in particular $\widetilde{\omega}_{\widetilde{x}_{0},1-t}({\textstyle\frac{2-\alpha}{2}}) \neq \widetilde{\omega}_{\widetilde{z}_{0},1-t}({\textstyle\frac{1+\alpha}{2}})$.
\item suppose that $t=\frac{1}{2}$ and $\alpha\in (0,1)$.
\begin{enumerate}[(i)]
\item if $\alpha\in (0, \frac{1}{2})$ (resp.\ $\alpha\in (\frac{1}{2},1)$) then $p_{3}(\widetilde{\omega}_{\widetilde{x}_{0},\frac{1}{2}}({\textstyle\frac{2-\alpha}{2}}))> 0$ and $p_{3}(\widetilde{\omega}_{\widetilde{z}_{0},\frac{1}{2}}({\textstyle\frac{1+\alpha}{2}}))< 0$ by~conditions~(\ref{it:xb}),~(\ref{it:zc}) and~(\ref{it:ze}) (resp.\ $p_{1}(\widetilde{\omega}_{\widetilde{x}_{0},\frac{1}{2}}({\textstyle\frac{2-\alpha}{2}}))<0$ and $p_{1}(\widetilde{\omega}_{\widetilde{z}_{0},\frac{1}{2}}({\textstyle\frac{1+\alpha}{2}}))> 0$ by~conditions~(\ref{it:xc}),~(\ref{it:xe}) and~(\ref{it:zb})), so $\widetilde{\omega}_{\widetilde{x}_{0},\frac{1}{2}}({\textstyle\frac{2-\alpha}{2}}) \neq \widetilde{\omega}_{\widetilde{z}_{0},\frac{1}{2}}({\textstyle\frac{1+\alpha}{2}})$.

\item if $\alpha=\frac{1}{2}$ then $p_{3}(\widetilde{\omega}_{\widetilde{x}_{0},\frac{1}{2}}({\textstyle\frac{3}{4}}))>0$ and $p_{3}(\widetilde{\omega}_{\widetilde{z}_{0},\frac{1}{2}}({\textstyle\frac{3}{4}}))=0$ by ~conditions~(\ref{it:xb}) and~(\ref{it:ze}), so $\widetilde{\omega}_{\widetilde{x}_{0},\frac{1}{2}}({\textstyle\frac{3}{4}}) \neq \widetilde{\omega}_{\widetilde{z}_{0},\frac{1}{2}}({\textstyle\frac{3}{4}})$.
\end{enumerate}
\end{enumerate}
Finally, we show that $\Omega_{\widetilde{x}_{0},1-t, \frac{2-\alpha}{2}}(0)\neq \Omega'_{-\widetilde{z}_{0},t, \frac{1+\alpha}{2}}(0)$ for all $t, \alpha \in I$, or equivalently, that $\widetilde{\omega}_{\widetilde{x}_{0},1-t}({\textstyle\frac{2-\alpha}{2}}) \neq -\widetilde{\omega}_{\widetilde{z}_{0},1-t}({\textstyle\frac{1+\alpha}{2}})$, by considering the following cases:
\begin{enumerate}[(a)]
\item if $\alpha\in [0,\frac{1}{2})$ (resp.\ $\alpha\in (\frac{1}{2},1]$) then $p_{1}(\widetilde{\omega}_{\widetilde{x}_{0},1-t}({\textstyle\frac{2-\alpha}{2}}))>0$ and $p_{1}(-\widetilde{\omega}_{\widetilde{z}_{0},1-t}({\textstyle\frac{1+\alpha}{2}}))\leq 0$ by~conditions~(\ref{it:xc}),~(\ref{it:xe}) and~(\ref{it:zb}) (resp.\ $p_{3}(\widetilde{\omega}_{\widetilde{x}_{0},1-t}({\textstyle\frac{2-\alpha}{2}}))\geq 0$ and $p_{3}(-\widetilde{\omega}_{\widetilde{z}_{0},1-t}({\textstyle\frac{1+\alpha}{2}}))< 0$ by~conditions~(\ref{it:xb}),~(\ref{it:zc}) and~(\ref{it:ze})), so $\widetilde{\omega}_{\widetilde{x}_{0},1-t}({\textstyle\frac{2-\alpha}{2}})\neq -\widetilde{\omega}_{\widetilde{z}_{0},1-t}({\textstyle\frac{1+\alpha}{2}})$.

\item suppose that $\alpha=\frac{1}{2}$. If $t\in \brak{0,1}$ then $\widetilde{\omega}_{\widetilde{x}_{0},1-t}({\textstyle\frac{3}{4}}) =\widetilde{x}_{0}\neq -\widetilde{z}_{0}=-\widetilde{\omega}_{\widetilde{z}_{0},1-t}({\textstyle\frac{3}{4}})$, and if $t\in (0,1)$ then $p_{3}(\widetilde{\omega}_{\widetilde{x}_{0},1-t}({\textstyle\frac{3}{4}}))>0$ and $p_{3}(-\widetilde{\omega}_{\widetilde{z}_{0},1-t}({\textstyle\frac{3}{4}}))\leq 0$ by~conditions~(\ref{it:xb}),~(\ref{it:zc}) and~(\ref{it:ze}), so $\widetilde{\omega}_{\widetilde{x}_{0},1-t}({\textstyle\frac{3}{4}})\neq -\widetilde{\omega}_{\widetilde{z}_{0},1-t}({\textstyle\frac{3}{4}})$.
\end{enumerate}
This completes the proof of the claim that $v_{t,\alpha}\in I_{n_{0}}$. 

For all $\alpha\in I$ and $t\in \brak{0,1}$, we have $w_{t,\alpha}= v_{t,\alpha}= (\widetilde{x}_{0}, \widetilde{z}_{0}, -\widetilde{z}_{0}, c_{\widetilde{x}_{0}}, c_{\widetilde{z}_{0}}, c_{-\widetilde{z}_{0}})$, hence $(w_{t,\alpha})_{t\in I}$ and $(v_{t,\alpha})_{t\in I}$ are families of based homotopic loops in $I_{n_{0}}$. In particular, $[(w_{t,0})_{t\in I}]= [(w_{t,1})_{t\in I}]$ in $\pi_{1}(I_{n_{0}})$, so by~\relem{boundaryS2}(\ref{it:boundaryS2a}), 
and~\reqref{defOmega},~\reqref{omegah},~\reqref{defdlS2} and~\reqref{defwtalpha}, we have: 
\begin{align}
\br{\widetilde{x}_{0}, \widetilde{z}_{0}, -\widetilde{z}_{0}}=& \bigl[\bigl(\widetilde{\omega}_{\widetilde{x}_{0},t}({\textstyle\frac{1}{2}}), \widetilde{z}_{0}, -\widetilde{z}_{0}, \Omega_{\widetilde{x}_{0},t, \frac{1}{2}}, c_{\widetilde{z}_{0}}, c_{-\widetilde{z}_{0}}\bigr)_{t\in I}\bigr]=\bigl[(w_{t,0})_{t\in I}\bigr] =\bigl[(w_{t,1})_{t\in I}\bigr]\label{eq:wt0}\\ 
=&\bigl[\bigl(\widetilde{x}_0, \widetilde{z}_0, -\widetilde{z}_0, \widetilde{\omega}_{\widetilde{x}_{0}, t}, c_{\widetilde{z}_0}, c_{-\widetilde{z}_0}\bigr)_{t\in I}\bigr]+\notag\\
& \bigl[\bigl(\widetilde{x}_{0}, \widetilde{\omega}_{\widetilde{z}_{0},1-t}({\textstyle\frac{1}{2}}), \widetilde{\omega}_{-\widetilde{z}_{0},t}({\textstyle\frac{1}{2}}), c_{\widetilde{x}_{0}}, \Omega'_{\widetilde{z}_{0},1-t, \frac{1}{2}}, \Omega'_{-\widetilde{z}_{0},t, \frac{1}{2}}\bigr)_{t\in I}\bigr]= \widetilde{\delta}_{\widetilde{x}_{0}}+ \dl{\widetilde{z}_{0}, -\widetilde{z}_{0}},\label{eq:wt1}
\end{align}
which yields~\reqref{breq1}. This equality is illustrated by the first and third columns of Figure~\ref{fig:wtalpha}. Similarly, $[(v_{t,0})_{t\in I}]= [(v_{t,1})_{t\in I}]$ in $\pi_{1}(I_{n_{0}})$. Further, by~\reqref{omegah},  
we see that:
\begin{equation}\label{eq:minustaunhat}
-\br{\widetilde{x}_{0}, \widetilde{z}_{0}, -\widetilde{z}_{0}}=\bigl[\bigl(\widetilde{\omega}_{\widetilde{x}_{0},1-t}({\textstyle\frac{1}{2}}), \widetilde{z}_{0}, -\widetilde{z}_{0}, \Omega_{\widetilde{x}_{0},1-t, \frac{1}{2}}, c_{\widetilde{z}_{0}}, c_{-\widetilde{z}_{0}}\bigr)_{t\in I}\bigr],
\end{equation}
so: 
\begin{align}
\dl{\widetilde{z}_{0}, -\widetilde{z}_{0}} =&\bigl[\bigl( \widetilde{x}_{0}, \widetilde{\omega}_{\widetilde{z}_{0},1-t}({\textstyle\frac{1}{2}}), \widetilde{\omega}_{-\widetilde{z}_{0},t}({\textstyle\frac{1}{2}}), c_{\widetilde{x}_{0}}, \Omega'_{\widetilde{z}_{0},1-t, \frac{1}{2}}, \Omega'_{-\widetilde{z}_{0},t, \frac{1}{2}}\bigr)_{t\in I} \bigr]= \bigl[(v_{t,0})_{t\in I}\bigr]\label{eq:vt0}\\
=& \bigl[(v_{t,1})_{t\in I}\bigr] =\bigl[\bigl( \widetilde{\omega}_{\widetilde{x}_{0},1-t}({\textstyle\frac{1}{2}}), \widetilde{z}_{0}, -\widetilde{z}_{0}, \Omega_{\widetilde{x}_{0},1-t, \frac{1}{2}}, \widetilde{\omega}_{\widetilde{z}_{0},1-t}^{-1}, \widetilde{\omega}_{-\widetilde{z}_{0},t}^{-1}\bigr)_{t\in I} \bigr]\notag\\
=& \bigl[\bigl(\widetilde{\omega}_{\widetilde{x}_{0},1-t}({\textstyle\frac{1}{2}}), \widetilde{z}_{0}, -\widetilde{z}_{0}, \Omega_{\widetilde{x}_{0},1-t, \frac{1}{2}}, c_{\widetilde{z}_{0}}, c_{-\widetilde{z}_{0}}\bigr)_{t\in I}\bigr]+\notag\\
& \bigl[\bigl(\widetilde{x}_{0}, \widetilde{z}_{0}, -\widetilde{z}_{0}, c_{\widetilde{x}_{0}}, \widetilde{\omega}_{\widetilde{z}_{0},1-t}^{-1}, c_{-\widetilde{z}_{0}}\bigr)_{t\in I}\bigr]+ \bigl[\bigl(\widetilde{x}_{0}, \widetilde{z}_{0}, -\widetilde{z}_{0}, c_{\widetilde{x}_{0}}, c_{\widetilde{z}_{0}}, \widetilde{\omega}_{-\widetilde{z}_{0},t}^{-1}\bigr)_{t\in I}\bigr]\notag\\
=& -\br{\widetilde{x}_{0}, \widetilde{z}_{0}, -\widetilde{z}_{0}}+ \bigl[\bigl(\widetilde{x}_{0}, \widetilde{z}_{0}, -\widetilde{z}_{0}, c_{\widetilde{x}_{0}}, \widetilde{\omega}_{\widetilde{z}_{0},t}, c_{-\widetilde{z}_{0}}\bigr)_{t\in I}\bigr]+\bigl[\bigl(\widetilde{x}_{0}, \widetilde{z}_{0}, -\widetilde{z}_{0}, c_{\widetilde{x}_{0}}, c_{\widetilde{z}_{0}}, \widetilde{\omega}_{-\widetilde{z}_{0},t}^{-1}\bigr)_{t\in I}\bigr]\label{eq:vt1}\\
=& -\br{\widetilde{x}_{0}, \widetilde{z}_{0}, -\widetilde{z}_{0}}+\widetilde{\delta}_{\widetilde{z}_{0}}-\widetilde{\delta}_{-\widetilde{z}_{0}}\notag
\end{align}
by~\reqref{defOmega},~\reqref{defdlS2},~\reqref{defvtalpha} and~\reqref{minustaunhat}, and Lemmas~\ref{lem:boundaryS2}(\ref{it:boundaryS2a}) and~\ref{lem:omegaoneminust}, which yields~\reqref{breq2}. This equality is illustrated by the first and third columns of Figure~\ref{fig:vtalpha}. This completes the proof of \reth{brrelnS2} in the case $M=\St$.
\end{proof}

\subsubsection{The proof of \reth{brrelnS2} in the case $M=\rp$}\label{sec:rp2nequals2}

In this section, we use the constructions of the proof of \reth{brrelnS2} in the case $M=\St$ to prove the result in the case $M=\rp$.


\begin{proof}[Proof of \reth{brrelnS2} in the case $M=\rp$] 
Let $I_{2}$ 
denote the homotopy fibre of the map $\map{\iota_{2}}{F_{2}(\rp)}[\prod_{i=1}^{2}\, \rp]$.
Let $\widetilde{I}_{2}$ be the set of elements $(\widetilde{x}, \widetilde{y},\lambda_{1},\lambda_{2})$ of $F_{2}^{\ang{\tau}}(\St)\times \left(\prod_{1}^{2} \St \right)^{I}$ that satisfy the following conditions:
\begin{equation}\label{eq:defI0w}
\text{$(\lambda_{1}(0),\lambda_{2}(0))=(\widetilde{x}, \widetilde{y})$, $\lambda_{1}(1)\in \brak{\widetilde{x}_{0}, -\widetilde{x}_{0}}$ and $\lambda_{2}(1)\in \brak{\widetilde{z}_{0}, -\widetilde{z}_{0}}$,}
\end{equation}
and let $\widetilde{I}_{3}$ be the subset of $I_{3}(\widetilde{x}_{0},\widetilde{z}_{0})$ (defined by~\reqref{defI0uv}) consisting of elements of the form $(\widetilde{x}, \widetilde{y},-\widetilde{y},\lambda_{1},\lambda_{2},-\lambda_{2})$. Then for $(i,j)\in \brak{(2,5),(3,6)}$, the map $\map{\rho_{i,j}}{\widetilde{I}_{3}}[\widetilde{I}_{2}]$ given by forgetting the $i$\textsu{th} and $j$\textsu{th} coordinates is well defined. Also, the following map:
\begin{equation*}
\left\{
\begin{aligned}
\widetilde{\pi} \colon\thinspace F_{2}^{\ang{\tau}}(\St)\times \Biggl(\prod_{1}^{2} \St \Biggr)^{I} &\to F_{2}(\rp)\times \Biggl(\prod_{1}^{2} \rp \Biggr)^{I}\\
(\widetilde{x} ,\widetilde{y},\lambda_{1},\lambda_{2})& \mapsto (\pi(\widetilde{x}), \pi(\widetilde{y}), \pi\circ \lambda_{1}, \pi\circ \lambda_{2})
\end{aligned}\right.
\end{equation*}
induced by the projection $\map{\pi}{\St}[\rp]$, restricts to a map from $\widetilde{I}_{2}$ to $I_{2}$, that we also denote by $\widetilde{\pi}$. For all $t, \alpha\in I$, let: 
\begin{align}
w'_{t,\alpha} &=\bigl( \Omega_{\widetilde{x}_{0},t,\frac{1-\alpha}{2}}(0), \Omega_{-\widetilde{z}_{0},t,\frac{\alpha}{2}}'(0), \Omega_{\widetilde{x}_{0},t,\frac{1-\alpha}{2}}, \Omega_{-\widetilde{z}_{0},t,\frac{\alpha}{2}}' \bigr),\notag\\
v'_{t,\alpha} &=\bigl( \Omega_{-\widetilde{x}_{0},t,\frac{2-\alpha}{2}}(0), \Omega_{-\widetilde{z}_{0},t,\frac{1+\alpha}{2}}'(0), \Omega_{-\widetilde{x}_{0},t,\frac{2-\alpha}{2}}, \Omega_{-\widetilde{z}_{0},t,\frac{1+\alpha}{2}}' \bigr) \; \text{and}\notag\\
\brrp &= \bigl[\bigl( x_{0},
\pi\circ \widetilde{\omega}_{-\widetilde{z}_{0},t}\left( {\textstyle\frac{1}{2}} \right),
c_{x_{0}}, \pi\circ \Omega_{-\widetilde{z}_{0},t,\frac{1}{2}}' 
\bigr)_{t\in I}\bigr]\; \text{in $\pi_{1}(I_{2})$.}\label{eq:defbrx0z0}
\end{align}
By equations~\reqref{minusOmega},~\reqref{defwtalpha} and~\reqref{defvtalpha}, for all $t, \alpha\in I$, we have $w_{t,\alpha}, v_{t,\alpha}\in \widetilde{I}_{3}$, $w'_{t,\alpha}=\rho_{2,5}(w_{t,\alpha})\in \widetilde{I}_{2}$ and $v'_{t,\alpha}=-\rho_{3,6}(v_{t,\alpha})\in \widetilde{I}_{2}$, so $\widetilde{\pi}(w'_{t,\alpha})$ and $\widetilde{\pi}(v'_{t,\alpha})$ belong to $I_{2}$. In particular, composing the homotopy $(w_{t,\alpha})_{t\in I}$ (resp.\ $(v_{t,\alpha})_{t\in I}$) in $\widetilde{I}_{3}$ (and so in $I_{3}(\St)$) with the map $\widetilde{\pi}\circ \rho_{2,5}$ (resp.\ $\widetilde{\pi}\circ (-\rho_{3,6})=\widetilde{\pi}\circ \rho_{3,6}$) yields the homotopy $(\widetilde{\pi}\circ w'_{t,\alpha})_{t\in I}$ (resp.\ $(\widetilde{\pi}\circ v'_{t,\alpha})_{t\in I}$) in $I_{2}$.
Now by~\reqref{minusomega} and~\reqref{defOmega}, we have:
\begin{equation}\label{eq:omegaz0}
\bigl[\bigl(\Omega_{-\widetilde{z}_{0},t,1}'\bigr)_{t\in I}\bigr]= \bigl[\bigl(\widetilde{\omega}_{-\widetilde{z}_{0},t}^{-1}\bigr)_{t\in I}\bigr]=  \bigl[\bigl(-\widetilde{\omega}_{\widetilde{z}_{0},1-t}^{-1}\bigr)_{t\in I}\bigr] = \bigl[\bigl(-\widetilde{\omega}_{\widetilde{z}_{0},t}\bigr)_{t\in I}\bigr]
\end{equation}
in $\pi_{2}(\St,c_{-\widetilde{z}_{0}})$. Applying $\widetilde{\pi}\circ \rho_{2,5}$ (resp.\ $\widetilde{\pi}\circ \rho_{3,6}$) to equations~\reqref{wt0} and~\reqref{wt1} (resp.~\reqref{vt0} and~\reqref{vt1}) and using equations~\reqref{minusOmega},~\reqref{omegah},~\reqref{defbrx0z0} and~\reqref{omegaz0} as well as \relem{boundaryS2}(\ref{it:boundaryS2b}), in $\pi_{1}(I_{2})$ we obtain respectively: 
\begin{align}
\widehat{\tau}_{2} &= \bigl[\bigl(\pi\circ \widetilde{\omega}_{\widetilde{x}_{0},t}({\textstyle\frac{1}{2}}),  z_{0}, \pi\circ \Omega_{\widetilde{x}_{0},t, \frac{1}{2}}, c_{z_{0}}\bigr)_{t\in I}\bigr]=\bigl[(\widetilde{\pi}\circ w'_{t,0})_{t\in I}\bigr] 
=\bigl[(\widetilde{\pi}\circ w'_{t,1})_{t\in I}\bigr]\notag\\
& =\bigl[\bigl(x_0,  z_0, \pi\circ \widetilde{\omega}_{\widetilde{x}_{0}, t},  c_{z_0}\bigr)_{t\in I}\bigr]+ \bigl[\bigl(x_{0}, \pi\circ \widetilde{\omega}_{-\widetilde{z}_{0},t}({\textstyle\frac{1}{2}}), c_{x_{0}},  \pi\circ \Omega'_{-\widetilde{z}_{0},t, \frac{1}{2}}\bigr)_{t\in I}\bigr]\notag\\
&= \delta_{x_{0}}+ \brrp, \;\text{and}\label{eq:br1}\\
\brrp &=\bigl[\bigl( x_{0}, \pi\circ \widetilde{\omega}_{-\widetilde{z}_{0},t}\left( {\textstyle\frac{1}{2}} \right), c_{x_{0}}, \pi\circ \Omega_{-\widetilde{z}_{0},t,\frac{1}{2}}' \bigr)_{t\in I}\bigr]= \bigl[(\widetilde{\pi}\circ v'_{t,0})_{t\in I}\bigr]= \bigl[(\widetilde{\pi}\circ v'_{t,1})_{t\in I}\bigr]\notag\\
&= \bigl[\bigl( \pi\circ\widetilde{\omega}_{\widetilde{x}_{0},1-t}({\textstyle\frac{1}{2}}), z_{0},  \pi\circ\Omega_{\widetilde{x}_{0},1-t, \frac{1}{2}}, c_{z_{0}}\bigr)_{t\in I}\bigr]+ \bigl[\bigl(x_{0}, z_{0},  c_{x_{0}}, \pi\circ\widetilde{\omega}_{\widetilde{z}_{0},t} \bigr)_{t\in I}\bigr]\notag\\
&= -\widehat{\tau}_{2}+ \delta_{z_{0}}.\label{eq:br2}
\end{align}
Summing equations~\reqref{br1} and~\reqref{br2} yields the result in the case $M=\rp$.
\end{proof}

\begin{rem}
Within the framework of this section ($M=\rp$ and $n_{0}=2$), it is important for us to know that the homomorphism $\map{(\alpha_{2}')_{\#}}{\pi_{1}(I_{2}')}[\pi_{1}(I_{2})]$ is an isomorphism. This is a consequence of \reco{alphanprime}. However, we may give an alternative proof of this fact without using \reco{alphanprime} as follows.
First note that $\pi_{1}(I_{2}') \cong \pi_{1}(I_{c})\cong \Z^{2}$ by \relem{fundhomoequiv}(\ref{it:fundhomoequiva}) and~(\ref{it:fundhomoequivb}), 
and that $\pi_{1}(I_{2})\cong \Z^{2}$ by \reth{prop5}(\ref{it:prop5b}) and \repr{presgn}. Arguing as in the proof of \repr{genspi1S2}, one sees that $\im{(\alpha_{2}')_{\#}}=\ang{\delta_{x_{0}}, \br{x_{0},{z}_{0}}}$. Since $\Z^{2}$ is Hopfian, it thus suffices to prove that $(\alpha_{2}')_{\#}$ is surjective. To do so, consider the long exact sequence~\reqref{lespartialn} for $M=\rp$ and $n=2$. Now $P_{2}(\rp)$ is isomorphic to the quaternion group of order~$8$, and the full twist $\ft[2]$ is its unique element of order~$2$~\cite{vB}.
By exactness, it follows that $\ker{(\widehat{\iota}_{2})_{\#}}=\ang{\ft[2]}$, and that the following sequence: 
\begin{equation}\label{eq:shortespartial}
\begin{tikzcd}[cramped]
1 \arrow{r} & \pi_{2}(\rp\times \rp) \arrow[r, "\partial_{2}"] & \pi_{1}(I_{2}) \arrow[rr, "(g_{2}\circ j_{2})_{\#}"]
&& \ker{(\widehat{\iota}_{2})_{\#}} \arrow{r}& 1.
\end{tikzcd}
\end{equation}
is exact. By \repr{deftaunhat}, $(g_{2}\circ j_{2})_{\#}(\widehat{\tau}_{2})=\ft[2]$, and
using standard properties of short exact sequences applied to~\reqref{shortespartial}, \reth{brrelnS2}, and the fact that $\partial_{2}(\lambda_{x_{0}})=\delta_{x_{0}}$ and $\partial_{2}(\lambda_{z_{0}})=\delta_{z_{0}}$, we see that $\pi_{1}(I_{2})=\ang{\delta_{x_{0}}, \delta_{z_{0}}, \br{x_{0},{z}_{0}}}=\ang{\delta_{x_{0}}, \br{x_{0},{z}_{0}}}$, and thus the homomorphism $\map{(\alpha_{2}')_{\#}}{\pi_{1}(I_{2}')}[\pi_{1}(I_{2})]$ is surjective as required. 
\end{rem}

\subsection{The boundary homomorphism in the general case}\label{sec:general}

In this section, we determine the boundary homomorphism in the general case using the conclusions of \reth{brrelnS2} and \reco{sesnn0} in the case $n=n_{0}$. Let $M=\St$ or $\rp$, suppose that $n>n_{0}$, and let $j\in \brak{0,1,\ldots, n-n_{0}}$. We take the basepoint of $F_{n}(M)$ and $\prod_{1}^{n}\, M$ to be $W_{n}$ as defined in \resec{generalities}. Let $\map{\nu_{j}}{F_{n}(M)}[F_{n_{0}}(M)]$ and $\map{\widetilde{\nu}_{j}}{\prod_{1}^{n}\, M}[\prod_{i=1}^{n_{0}}\, M]$ denote projection onto the $(j+1)$\textsu{st}, $(n-1)$\textsu{th} and $n$\textsu{th} coordinates (resp.\ $(j+1)$\textsu{st} and $n$\textsu{th} coordinates). 
We thus have a commutative diagram:
\begin{equation*}
\begin{tikzcd}[cramped]
F_{n}(M) \arrow{r}{\iota_{n}} \arrow[d, "\nu_{j}"]
& \prod_{1}^{n}\, M \arrow[d, "\widetilde{\nu}_{j}"]\\
F_{n_{0}}(M) \arrow{r}{\iota_{n_0}}
& \prod_{i=1}^{n_{0}}\, M.
\end{tikzcd}
\end{equation*}
In order that the maps $\nu_{j}$ and $\widetilde{\nu}_{j}$ be pointed, the basepoint of $F_{n_{0}}(M)$ and $\prod_{i=1}^{n_{0}}\, M$ is taken to be 
$\nu_{j}(W_{n})=\widetilde{\nu}_{j}(W_{n})$.
Applying~\cite[pages~91~and~108]{A}, we obtain the following commutative diagram of fibrations:
\begin{equation}\label{eq:commdiagfib0}
\begin{tikzcd}[cramped]
I_{n} \arrow[r, hook, "j_{n}"] \arrow[d, "\widehat{\nu}_{j}\left\lvert_{I_{n}}\right."]
& E_{n} \arrow{r}{\widehat{\iota}_{n}} \arrow[d, "\widehat{\nu}_{j}"]
& \prod_{1}^{n}\, M \arrow[d, "\widetilde{\nu}_{j}"]\\
I_{n_{0}} \arrow[r, hook, "j_{n_{0}}"]
& E_{n_{0}} \arrow{r}{\widehat{\iota}_{n_{0}}}
& \prod_{i=1}^{n_{0}}\, M,
\end{tikzcd}
\end{equation}
where $\map{\widehat{\nu}_{j}}{E_{n}}[E_{n_{0}}]$ is the projection induced by $\nu_{j}$. The basepoints of $E_{n}$ and $I_{n}$, (resp.\ $E_{n_{0}}$ and $I_{n_{0}}$) are those obtained from $\prod_{1}^{n}\, M$ (resp.\ $\prod_{i=1}^{n_{0}}\, M$) using the convention given in the first paragraph of  \resec{generalities}. 
Taking the long exact sequence in homotopy of~\reqref{commdiagfib0} as in \req{lespartialn}, 
we obtain
the following commutative diagram of exact sequences:
\begin{equation}\label{eq:longessgen}
\begin{tikzcd}[cramped, sep=normal]
1 \arrow[r] & \pi_{2}(\prod_{1}^{n}\, M) \arrow[r, "\partial_{n}"] \arrow{d}{(\widetilde{\nu}_{j})_{\#2}} & \pi_{1}(I_{n}) \arrow{r}{(j_{n})_{\#}}\arrow[d, "(\widehat{\nu}_{j}\left\lvert_{I_{n}}\!\right.)_{\#}"]
& \pi_{1}(E_{n}) \arrow{r}{(\widehat{\iota}_{n})_{\#}} \arrow[d, "(\widehat{\nu}_{j})_{\#}"]
& \pi_{1}(\prod_{1}^{n}\, M) \arrow[d, "(\widetilde{\nu}_{j})_{\#1}"] \arrow{r} & 1\\
1 \arrow{r} & \pi_{2}(\prod_{i=1}^{n_{0}}\, M) \arrow[r, "\partial_{n_{0}}"]  & \pi_{1}(I_{n_{0}}) \arrow[r, "(j_{n_{0}})_{\#}"]
& \pi_{1}(E_{n_{0}}) \arrow[r, "(\widehat{\iota}_{n_{0}})_{\#}"]
& \pi_{1}(\prod_{i=1}^{n_{0}}\, M) \arrow{r}& 1.
\end{tikzcd}
\end{equation}
The rows of~\reqref{longessgen} are short exact if $M=\St$.
With the notation of equations~\reqref{defI0uv} and~\reqref{defI0uvrp2}, $I_{n_{0}}$ here denotes $I_{n_{0}}(\widetilde{x}_{j}, \widetilde{z}_{0})$ (resp.\ $I_{n_{0}}(x_{j}, z_{0})$). We now prove \reth{tauhatsquareS2}, which expresses $\widehat{\tau}_{n}$ in terms of the elements that appear in~\reqref{basispi2}, and generalises \reth{brrelnS2}. Since $\pi_1(I_{n})$ is non Abelian if $n\geq 4$ (resp.\ $n\geq 3$), we write $\widehat{\tau}_{n}^{2}$ rather than $2\widehat{\tau}_{n}$.

\begin{proof}[Proof of \reth{tauhatsquareS2}]
Let $j\in \brak{0,1,\ldots,n-n_{0}}$. By \req{commdeltanS2} and the commutativity of diagram~\reqref{longessgen}, in $\pi_{1}(I_{n_{0}})$ we have: 
\begin{align}
(\widehat{\nu}_{j}\left\lvert_{I_{n}}\right.\!)_{\#}(\widehat{\tau}_{n}^{2})&= \partial_{n_{0}}\circ (\widehat{\nu}_{j})_{\#2}(m_{0}\widetilde{\lambda}_{\widetilde{x}_0}+\cdots+m_{n-3} \widetilde{\lambda}_{\widetilde{x}_{n-3}} +m_{n-2} \widetilde{\lambda}_{\widetilde{z}_{0}}+m_{n-1} \widetilde{\lambda}_{-\widetilde{z}_{0}})\notag\\
&= m_{j} \partial_{n_{0}}(\widetilde{\lambda}_{\widetilde{x}_j})+ m_{n-2} \partial_{n_{0}}(\widetilde{\lambda}_{\widetilde{z}_{0}})+m_{n-1} \partial_{n_{0}}(\widetilde{\lambda}_{-\widetilde{z}_{0}})\; \text{if $M=\St$, and}\label{eq:sumpartialS2}\\
(\widehat{\nu}_{j}\left\lvert_{I_{n}}\right.\!)_{\#}(\widehat{\tau}_{n}^{2})&= \partial_{n_{0}}\circ (\widehat{\nu}_{j})_{\# 2}(m_{0}\lambda_{x_0}+\cdots+m_{n-2} \lambda_{x_{n-2}} +m_{n-1} \lambda_{z_{0}})\notag\\
&= m_{j} \partial_{n_{0}}(\lambda_{x_j})+ m_{n-1} \partial_{n_{0}}(\lambda_{z_{0}})\; \text{if $M=\rp$.}\label{eq:sumpartial}
\end{align}
From~\reqref{psihone},~\reqref{psihat},~\reqref{defyns},~\reqref{tauhat} and the fact that $h_{\widetilde{z}_{0}}^{(2)}=c_{\widetilde{z}_{0}}$, we obtain:
\begin{equation}\label{eq:brxiz0S2}
\textstyle(\widehat{\nu}_{j}\left\lvert_{I_{n}}\right.\!)_{\#}(\widehat{\tau}_{n})=
\begin{cases}
\textstyle\bigl[\bigl(\widetilde{\omega}_{\widetilde{x}_{j},t}(\frac{1}{2}), \widetilde{z}_{0}, -\widetilde{z}_{0}, 
h_{j,t}, c_{\widetilde{z}_{0}}, c_{-\widetilde{z}_{0}}\bigr)_{t\in I}\bigr] & \text{if $M=\St$}\\
\bigl[\bigl(\pi(\widetilde{\omega}_{\widetilde{x}_{j},t}(\textstyle\frac{1}{2})), z_{0}, \pi\circ h_{j,t}, c_{z_{0}}\bigr)_{t\in I}\bigr] & \text{if $M=\rp$.}
\end{cases}
\end{equation}

Let $\map{\widetilde{K}_{j}}{\St}$ denote the rotation of angle $\frac{\pi j}{2(j+1)}$ about the $y$-axis, illustrated in  Figure~\ref{fig:mapKj}, that sends $\widetilde{x}_{j}$ to $\widetilde{x}_{0}$, let $\map{{K}_{j}}{\rp}$ be the homeomorphism of $\rp$ induced by $\widetilde{K}_{j}$, let $\widetilde{z}_{j}=\widetilde{K}_{j}(\widetilde{z}_{0})$, and let $z_{j}=\pi(\widetilde{z}_{j})$.
\begin{figure}[t]
\hspace*{\fill}
\begin{tikzpicture}[scale=0.75, >=stealth]
\draw[thick, dotted] (0,8) -- (3,10.5); 
\draw[fill, white] (2.4,9.7) circle(0.32); 

%
%

%
%
%
%

\draw[very thick] (0,8) circle(3);
\draw[dashed, very thick, middlearrow=0.4] (2.6,9.5) arc (0:180:2.6 and 0.6); 
\draw[fill, white] (0.81,8.81) circle(0.3); 
\draw[very thick, middlearrow=0.4] (-2.6,9.5) arc (180:360:2.6 and 0.8); 

\draw[fill] (3,8) circle(0.1);
\draw[fill] (0,11) circle(0.1);
\draw[fill] (2.6,9.5) circle(0.1);
\draw[fill] (0,8) circle(0.05);

\node at (2.4,8){$\widetilde{x}_{0}$};
\node at (0.5,11.5){$\widetilde{z}_{0}$};
\node at (4.2,9.8){$\widetilde{x}_{j}$};

\draw[fill, white] (-2.29,6.09) circle(0.1); 
\draw[thick] (0,8) -- (-3.5,5.08); 

\draw[fill, white] (-2.83,5.58) circle(0.13); 
\draw [very thick, ->, rotate around={130:(-3.65,6.25)}] (-3.65,6.25) arc [radius=3, start angle=290, end angle= 250]; 
\node at (-3.1,4.5){$\frac{\pi j}{2(j+1)}$};

\draw[thick, ->] (3.4,8) -- (5.6,8);
\node at (4.5,8.6){$\widetilde{K}_{j}$};

\draw[very thick] (9,8) circle(3); 
\draw[dotted,middlearrow=0.87] (12,8) arc (0:180:3 and 1); 
\draw[very thick, middlearrow=0.8] (6,8) arc (180:360:3 and 1); 

\draw[dashed, very thick, middlearrow=0.85, rotate around={-29:(12,8)}] (12,8) arc (0:180:2.6 and 0.6);
\draw[very thick, middlearrow=0.65, rotate around={-29:(12,8)}] (6.8,8) arc (180:360:2.6 and 0.8); 

\draw[middlearrowrev=0.7] (10.4,7.1) .. controls (10.25,7.3) and (10.5,7.9) .. (10.6,8);
\draw[middlearrowrev=0.7] (9.4,7) .. controls (9.25,7.3) and (9.2,7.9) .. (9.65,8.4);
\draw[middlearrowrev=0.7] (8.4,7) .. controls (8.25,7.3) and (8,8.3) .. (8.8,8.9);
\draw[middlearrowrev=0.55] (7.4,7.2) .. controls (7.25,7.3) and (7,8.7) .. (8,9.5);

\draw[middlearrowrev=0.7] (9.9,8.95) .. controls (9.75,9.1) and (10,9.7) .. (10.1,9.8);

\draw[middlearrowrev=0.7] (9,9) .. controls (8.85,9.1) and (8.75,9.75) .. (9.18,10.18);

\draw[fill] (12,8) circle(0.1);
\draw[fill] (9,11) circle(0.1);
\draw[fill] (10.5,10.6) circle(0.1);
\node at (13.8,8){$\widetilde{x}_{0}=\widetilde{K}_{j}(\widetilde{x}_{j})$};
\node at (9,11.5){$\widetilde{z}_{0}$};
\node at (12.1,11.1){$\widetilde{z}_{j}=\widetilde{K}_{j}(\widetilde{z}_{0})$};
\node at (8.4,6.55){$(\zeta_{t}(r))_{r\in [0,\frac{1}{2}]}$};

\end{tikzpicture}
\hspace*{\fill}
\caption{The homeomorphism $\widetilde{K}_{j}$ and the paths $\zeta_{t}$.}
\label{fig:mapKj}
\end{figure}
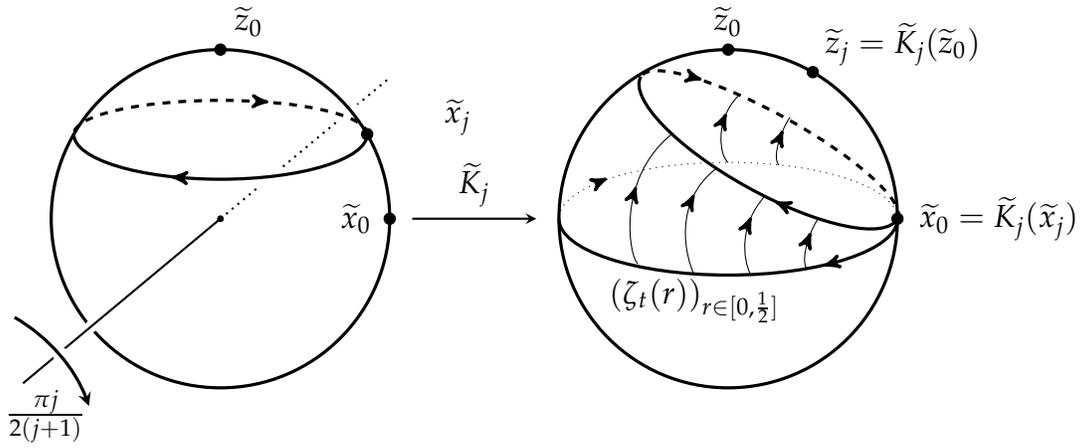
The homeomorphisms $\widetilde{K}_{j}$ and $K_{j}$ induce isomorphisms:
\begin{equation*}
\text{$\map{(\widetilde{K}_{j})_{\#}}{\pi_{1}(I_{n_{0}})}[\pi_{1}(I_{n_{0}}(\widetilde{x}_{0}, \widetilde{z}_{j}))]$ and $\map{(K_{j})_{\#}}{\pi_{1}(I_{n_{0}})}[\pi_{1}(I_{n_{0}}(x_{0}, z_{j}))]$}
\end{equation*}
respectively, that satisfy:
\begin{align}
\hspace*{-3mm}
(\widetilde{K}_{j})_{\#}\left(\textstyle(\widehat{\nu}_{j}\left\lvert_{I_{n}}\right.\!)_{\#}(\widehat{\tau}_{n})\right)\!&=\!
\textstyle \bigl[\bigl( \widetilde{K}_{j}(\widetilde{\omega}_{\widetilde{x}_{j},t}(\frac{1}{2})), \widetilde{z}_{j}, -\widetilde{z}_{j}, \widetilde{K}_{j} \circ h_{j,t}, c_{\widetilde{z}_{j}}, c_{-\widetilde{z}_{j}}\bigr)_{t\in I}\bigr] && \text{if $M=\St$}\label{eq:kjtaunS2}\\
(K_{j})_{\#}\left(\textstyle(\widehat{\nu}_{j}\left\lvert_{I_{n}}\right.\!)_{\#}(\widehat{\tau}_{n})\right)&= \bigl[\bigl(K_{j}\bigl(\pi(\widetilde{\omega}_{\widetilde{x}_{j},t}(\textstyle\frac{1}{2}))\bigr), z_{j}, K_{j}\circ \pi\circ h_{j,t}, c_{z_{j}}\bigr)_{t\in I}\bigr] &&\text{if $M=\rp$}\label{eq:kjtaun}
\end{align}
by \req{brxiz0S2}. For $t\in I$, consider the path $\map{\zeta_{t}}{I}[\St]$ defined by:
\begin{equation}\label{eq:defzeta}
\zeta_{t}(r)=
\frac{\max\left(0,(\frac{1}{2}-r)\right)\widetilde{\omega}_{\widetilde{x}_{0},t}(\frac{1}{2})+r \widetilde{K}_{j}\bigl(\widetilde{\omega}_{\widetilde{x}_{j},t}\left(\max(\frac{1}{2},r)\right)\bigr)}{\bigl\lVert \max\left(0,(\frac{1}{2}-r)\right)\widetilde{\omega}_{\widetilde{x}_{0},t}(\frac{1}{2})+r \widetilde{K}_{j}\bigl(\widetilde{\omega}_{\widetilde{x}_{j},t}\left(\max(\frac{1}{2},r)\right)\bigr)\bigr\rVert}\quad\text{for all $r\in I$.}
\end{equation}
We claim that $\zeta_{t}$ is well defined. To see this, first note that if $r\in [\frac{1}{2},1]$ then $\zeta_{\frac{1}{2}}(r)=\widetilde{K}_{j}(\widetilde{\omega}_{\widetilde{x}_{j},\frac{1}{2}}(r))$. We may thus  assume that $r\in [0,\frac{1}{2})$, so that $\widetilde{K}_{j}\bigl(\widetilde{\omega}_{\widetilde{x}_{j},t}\left(\max(\frac{1}{2},r)\right)\bigr)=\widetilde{K}_{j}\circ J_{j}\bigl(\widetilde{\omega}_{\widetilde{x}_{0},t}\left(\frac{1}{2}\right)\bigr)$ using~\reqref{hxjt}. If $t\in \brak{0,1}$ then $\zeta_{t}=c_{\widetilde{x}_{0}}$. If $t=\frac{1}{2}$, conditions~(\ref{it:xa}),~(\ref{it:xb}) and~(\ref{it:xe}) imply that $\widetilde{\omega}_{\widetilde{x}_{0},\frac{1}{2}}(\frac{1}{2})=-\widetilde{x}_{0}$. Hence $\widetilde{\omega}_{\widetilde{x}_{0},\frac{1}{2}}(\frac{1}{2})$ and $\widetilde{K}_{j}\circ J_{j}(\widetilde{\omega}_{\widetilde{x}_{0},\frac{1}{2}}(\frac{1}{2}))$ are non antipodal, for otherwise we would have $\widetilde{K}_{j}\circ J_{j}(-\widetilde{x}_{0})=\widetilde{x}_{0}$, which is impossible since $\widetilde{x}_{0}$ is the only preimage of itself by the homeomorphism $\widetilde{K}_{j}\circ J_{j}$. So suppose that $t\notin \brak{0,\frac{1}{2},1}$, and let $v=\widetilde{\omega}_{\widetilde{x}_{0},t}(\frac{1}{2})$. Now $p_{2}(v)$, $p_{2}(J_{j}(v))$, and $p_{2}(\widetilde{K}_{j}\circ J_{j}(v))$ are all non zero and of the same sign, so $v$ and $\widetilde{K}_{j}\circ J_{j}(v)$ are non antipodal also, and this proves the claim.
Each path $\zeta_{t}$ joins $\widetilde{\omega}_{\widetilde{x}_{0},t}(\frac{1}{2})$ to $\widetilde{x}_{0}$ via $\widetilde{K}_{j}(\widetilde{\omega}_{\widetilde{x}_{j},t}(\frac{1}{2}))$ by arcs that lie in the upper hemisphere. The subpaths $(\zeta_{t}(r))_{r\in [0,\frac{1}{2}]}$ are illustrated in the right-hand part of Figure~\ref{fig:mapKj}. From~\reqref{defzeta}, one may check that $\zeta_{t}$ satisfies the hypotheses of \relem{deformwS2}, where we take $\Gamma(r,t)=\zeta_{t}(r)$ for all $t,r\in I$, and $\widetilde{w}=\widetilde{z}_{j}$, from which we conclude that:
\begin{align}
\textstyle \bigl[ \bigl( \zeta_{t}(0), \widetilde{z}_{j}, -\widetilde{z}_{j}, \zeta_{t}, c_{\widetilde{z}_{j}}, c_{-\widetilde{z}_{j}}\bigr)_{t\in I}\bigr]&= \bigl[ \bigl( \widetilde{\omega}_{\widetilde{x}_{0},t}\left(\textstyle\frac{1}{2}\right), \widetilde{z}_{j}, -\widetilde{z}_{j}, \Omega_{\widetilde{x}_{0},t,\frac{1}{2}}, c_{\widetilde{z}_{j}}, c_{-\widetilde{z}_{j}}\bigr)_{t\in I}\bigr]\; \text{and}\label{eq:equalclassS2}\\
\textstyle \bigl[ \bigl( \pi(\zeta_{t}(0)), z_{j}, \pi\circ \zeta_{t}, c_{z_{j}}\bigr)_{t\in I}\bigr]&= \bigl[ \bigl( \pi\left(\widetilde{\omega}_{\widetilde{x}_{0},t}\left(\textstyle\frac{1}{2}\right)\right), z_{j}, \pi\circ \Omega_{\widetilde{x}_{0},t,\frac{1}{2}}, c_{z_{j}}\bigr)_{t\in I}\bigr]\label{eq:equalclass}
\end{align}
in $\pi_{1}(I_{n_{0}}(\widetilde{x}_{0}, \widetilde{z}_{j}))$ and in $\pi_{1}(I_{n_{0}}(x_{0}, z_{j}))$ respectively.
Setting $\eta_{t,r}(s)=\zeta_{t}((1-s)r+s)$ for all $t,r,s\in I$, we have $\eta_{t,0}=\zeta_{t}$, $\eta_{t,r}(0)=\zeta_{t}(r)$, $\eta_{t,r}(1)=\zeta_{t}(1)=\widetilde{x}_{0}$, and using~\reqref{hxjt} and~\reqref{defzeta}, we obtain:
\begin{equation}\label{eq:etahalf}
\text{$\eta_{t,\frac{1}{2}}(s)= \zeta_{t}\bigl(\textstyle\frac{1+s}{2}\bigr)= \widetilde{K}_{j}(\widetilde{\omega}_{\widetilde{x}_{j},t}(\textstyle\frac{1+s}{2}))=\widetilde{K}_{j} \circ h_{j,t}(s)$ for all $s\in I$, so $\eta_{t,\frac{1}{2}}=\widetilde{K}_{j} \circ h_{j,t}$.}
\end{equation}
We claim that $\eta_{t,r}(0)\notin \brak{\widetilde{z}_{j}, -\widetilde{z}_{j}}$ for all $r\in [0,\frac{1}{2}]$. To see this, first recall that $\eta_{t,r}(0)= \zeta_{t}(r)$ 
lies in the upper hemisphere, so $\eta_{t,r}(0)\neq -\widetilde{z}_{j}$. If $t\notin\brak{0,\frac{1}{2},1}$ then since $\eta_{t,0}(0)=\widetilde{\omega}_{\widetilde{x}_{0},t}(\textstyle\frac{1}{2})$ and $\eta_{t,\frac{1}{2}}(0)=\widetilde{K}_{j}\circ J_{j}(\widetilde{\omega}_{\widetilde{x}_{0},t}(\frac{1}{2}))$,
we deduce from above that $p_{2}(\eta_{t,0}(0))$ and $p_{2}(\eta_{t,\frac{1}{2}}(0))$ are non zero and have the same sign, which proves the claim in this case since $p_{2}(\widetilde{z}_{j})=0$. If $t\in \brak{0,1}$ then $\eta_{t,r}(0)=\widetilde{x}_{0}$ for all $r\in [0,\frac{1}{2}]$, so the claim also holds. Finally, let $t=\frac{1}{2}$. If $j=0$ then $\eta_{t,r}(0)= \widetilde{\omega}_{\widetilde{x}_{0},\frac{1}{2}}(\frac{1}{2})= -\widetilde{x}_{0}\neq \widetilde{z}_{0}$ for all $r\in [0,\frac{1}{2}]$. If $j\geq 1$, then with respect to the spherical coordinates of \resec{generalities}, the arc $(\eta_{\frac{1}{2},r}(0))_{r\in I}$ joins $-\widetilde{x}_{0}$ to $\widetilde{K}_{j}\circ J_{j}(\widetilde{\omega}_{\widetilde{x}_{0},\frac{1}{2}}\left(\textstyle\frac{1}{2}\right))=(0, \frac{\pi}{j+1})$ along the geodesic arc that avoids $\widetilde{z}_{j}=(0,\frac{\pi}{2(j+1)})$. This completes the proof of the claim. So in $\pi_{1}(I_{n_{0}}(\widetilde{x}_{0},\widetilde{z}_{j}))$, we obtain:
\begin{align}
\hspace*{-3mm}\bigl[\bigl( \widetilde{K}_{j}(\widetilde{\omega}_{\widetilde{x}_{j},t}(\textstyle\frac{1}{2})), \widetilde{z}_{j}, -\widetilde{z}_{j}, \widetilde{K}_{j} \!\circ\! h_{j,t}, c_{\widetilde{z}_{j}}, c_{-\widetilde{z}_{j}}\bigr)_{t\in I}\bigr] & \!\!=\!\!  \textstyle\bigl[ \!\bigl( \eta_{t,\frac{1}{2}}(0), \widetilde{z}_{j}, -\widetilde{z}_{j}, \eta_{t,\frac{1}{2}},c_{\widetilde{z}_{j}}, c_{-\widetilde{z}_{j}}  \bigr)_{t\in I} \bigr]\notag\\
& \!\!=\!\!  \textstyle\bigl[\! \bigl( \eta_{t,0}(0), \widetilde{z}_{j}, -\widetilde{z}_{j}, \eta_{t,0},c_{\widetilde{z}_{j}}, c_{-\widetilde{z}_{j}}  \bigr)_{t\in I} \bigr]\notag\\
& \!\!=\!\!\bigl[\! \bigl( \widetilde{\omega}_{\widetilde{x}_{0},t}\!\left(\textstyle\frac{1}{2}\right)\!, \widetilde{z}_{j}, -\widetilde{z}_{j}, \Omega_{\widetilde{x}_{0},t,\frac{1}{2}}, c_{\widetilde{z}_{j}}, c_{-\widetilde{z}_{j}}\bigr)_{t\in I}\bigr]\label{eq:kjomegaS2}
\end{align}
using equations~\reqref{hxjt},~\reqref{defzeta},~\reqref{equalclassS2} and~\reqref{etahalf}, and in $\pi_{1}(I_{n_{0}}(x_{0},z_{j}))$, we see that:
\begin{align}
\bigl[\bigl(K_{j}\bigl(\pi(\widetilde{\omega}_{\widetilde{x}_{j},t}(\textstyle\frac{1}{2}))\bigr), z_{j}, K_{j}\!\circ\! \pi\!\circ\! h_{j,t}, c_{z_{j}}\bigr)_{t\in I}\bigr] &\!=\! \textstyle\bigl[ \bigl( \pi(\eta_{t,\frac{1}{2}}(0)), z_{j}, \pi\circ\eta_{t,\frac{1}{2}},c_{z_{j}}  \bigr)_{t\in I} \bigr]\notag\\
&\!=\! \textstyle\bigl[ \bigl( \pi(\eta_{t,0}(0)), z_{j}, \pi\circ\eta_{t,0},c_{z_{j}}  \bigr)_{t\in I} \bigr]\notag\\
&\!=\! \bigl[ \bigl( \pi\!\left(\widetilde{\omega}_{\widetilde{x}_{0},t}\left(\textstyle\frac{1}{2}\right)\right), z_{j}, \pi\circ \Omega_{\widetilde{x}_{0},t,\frac{1}{2}}, c_{z_{j}}\bigr)_{t\in I}\bigr]\label{eq:kjomega}
\end{align}
using also \req{equalclass}. Consequently,
\begin{align}
(\widetilde{K}_{j})_{\#}\left(\textstyle(\widehat{\nu}_{j}\left\lvert_{I_{n}}\right.\!)_{\#}(\widehat{\tau}_{n})\right)&= \bigl[ \bigl( \widetilde{\omega}_{\widetilde{x}_{0},t}\left(\textstyle\frac{1}{2}\right), \widetilde{z}_{j}, -\widetilde{z}_{j}, \Omega_{\widetilde{x}_{0},t,\frac{1}{2}}, c_{\widetilde{z}_{j}}, c_{-\widetilde{z}_{j}}\bigr)_{t\in I}\bigr] &&\text{if $M=\St$, and}\label{eq:kjzjS2}\\
(K_{j})_{\#}\left(\textstyle(\widehat{\nu}_{j}\left\lvert_{I_{n}}\right.\!)_{\#}(\widehat{\tau}_{n})\right)&= \bigl[ \bigl( \pi\left(\widetilde{\omega}_{\widetilde{x}_{0},t}\left(\textstyle\frac{1}{2}\right)\right), z_{j}, \pi\circ \Omega_{\widetilde{x}_{0},t,\frac{1}{2}}, c_{z_{j}}\bigr)_{t\in I}\bigr] && \text{if $M=\rp$}\label{eq:kjzj}
\end{align}
by equations~\reqref{kjtaunS2},~\reqref{kjtaun},~\reqref{kjomegaS2} and~\reqref{kjomega} in $\pi_{1}(I_{n_{0}}(\widetilde{x}_{0},\widetilde{z}_{j}))$ and $\pi_{1}(I_{n_{0}}(x_{0},z_{j}))$ respectively. 

Let $H_{u}=\setl{(x,y,z)\in \St}{z\geq 0}$ denote the upper hemisphere of $\St$, let $\dt$ denote the closed unit $2$-disc in $\R^{2}$ whose centre is the origin, and let $\map{P}{H_{u}}[\dt]$ be the homeomorphism defined by $P(x,y,z)=(x,y)$ whose inverse is given by $P^{-1}(x,y)=(x,y,\sqrt{1-x^{2}-y^{2}})$ for all $(x,y)\in \dt$. For $w\in [0,1)$, let $\map{\phi_{w}}{\dt}$ be the homeomorphism defined by:
\begin{equation*}
\phi_{w}(x,y)= \begin{cases}
\Bigl( \frac{w}{w-1}\bigl(1-\bigl\lvert x\bigr\rvert-\bigl\lvert y\bigr\rvert\bigr)+x,y\Bigr) & \text{if $\bigl\lvert x\bigr\rvert+\bigl\lvert y\bigr\rvert\leq 1$}\\
(x,y) & \text{otherwise.}
\end{cases}
\end{equation*}
Note that $\phi_{0}=\id_{\dt}$ and $\phi_{w}(w,0)=(0,0)$. 
If $u\in [-1,1]$, the effect of $\phi_{w}$ is to map linearly the segment joining $(0,1)$ (resp.\ $(0,-1)$) to $(u,0)$ to the segment joining $(0,1)$ (resp.\ $(0,-1)$) to $\bigl(\frac{w}{w-1}(1-\lvert u\rvert)+u,0\bigr)$ as in Figure~\ref{fig:defphiw}. 
\begin{figure}[t]
\hspace*{\fill}
\begin{tikzpicture}[scale=0.6, >=stealth]
\draw[thick] (0,0) circle(5);
\draw[thick] (-5,0) -- (5,0);
\draw[line width=1mm] (0,5) -- (2,0) -- (0,-5);
\foreach \x in {-5,-4,...,1,3,4,5}{\draw[thick] (0,5) -- (\x,0) -- (0,-5);}
\draw[fill] (2,0) circle(0.12);
\draw[fill] (4,0) circle(0.1);
\node at (2.3,-0.3){$\scriptstyle w$};
\node at (4.1,-0.3){$\scriptstyle u$};
\draw[very thick, ->] (5.7,0) -- (7.3,0);
\node at (6.5,0.6){$\varphi_{w}$};
\begin{scope}[shift={(13,0)}]
\draw[thick] (0,0) circle(5);
\draw[thick] (-5,0) -- (5,0);
\draw[line width=1mm] (0,5) -- (0,-5);
\foreach \x in {0,1,...,6}{\draw[thick] (0,5) -- (-5+5*\x/7,0) -- (0,-5);}
\foreach \x in {1,2,3}{\draw[thick] (0,5) -- (5*\x/3,0) -- (0,-5);}
\draw[fill] (0,0) circle(0.12);
\draw[fill] (10/3,0) circle(0.1);
\node at (0.75,-0.4){$\scriptstyle \varphi(w)$};
\node at (3.75,-0.4){$\scriptstyle\varphi(u)$};
\end{scope}
\end{tikzpicture}
\hspace*{\fill}
\caption{The homeomorphism $\protect\map{\phi_{w}}{\dt}$.}
\label{fig:defphiw}
\end{figure}
We define $\map{\Phi_{w}}{\St}$ by:
\begin{equation*}
\Phi_{w}(x,y,z)= \begin{cases}
P^{-1}\circ \phi_{w}\circ P(x,y,z) & \text{if $(x,y,z)\in H_{u}$}\\
-\Phi_{w}(-(x,y,z)) & \text{if $(x,y,z)\in \St \setminus H_{u}$.}
\end{cases}
\end{equation*}
Observe that $\Phi_{0}=\id_{\St}$, $\Phi_{w}$ is a homeomorphism that fixes $(\widetilde{\omega}_{\widetilde{x}_{0},t}\left(\textstyle\frac{1}{2}\right))_{t\in I}$ pointwise, and that satisfies $\Phi_{w}(w,0,\sqrt{1-w^{2}})=\widetilde{z}_{0}$. Now $\widetilde{z}_{j}=\bigl(\sin\bigl(\frac{\pi j}{2(j+1)}\bigr), 0, \cos\bigl(\frac{\pi j}{2(j+1)})\bigr)\bigr)$ in Cartesian coordinates, and
so $p_{1}(\widetilde{z}_{j})=\sin\bigl(\frac{\pi}{2}(\frac{j}{j+1})\bigr)$ belongs to $[0,1)$. We define the homeomorphism $\map{\widetilde{f}_{j}}{\St}$ by $\widetilde{f}_{j}=\Phi_{p_{1}(\widetilde{z}_{j})}$ (see Figure~\ref{fig:mapfj}).
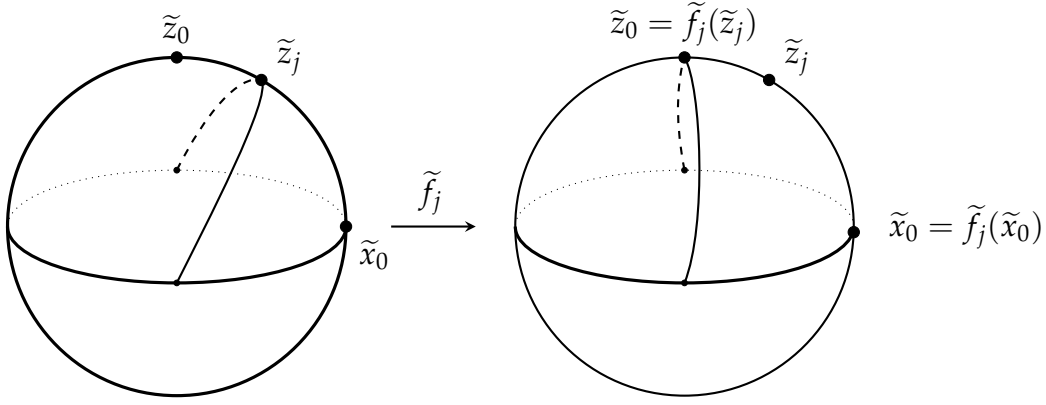
\begin{figure}[t]
\hspace*{\fill}
\begin{tikzpicture}[scale=0.75, >=stealth]
%
%
%
%

\draw[thick] (18,0) circle(3);
\draw[dotted] (21,0) arc (0:180:3 and 1);
\draw[very thick] (15,0) arc (180:360:3 and 1);

\draw[fill] (21,-0.1) circle(0.1);
\draw[fill] (18,3) circle(0.1);
\node at (23,0){$\widetilde{x}_{0}=\widetilde{f}_{j}(\widetilde{x}_{0})$};
\node at (18,3.6){$\widetilde{z}_{0}=\widetilde{f}_{j}(\widetilde{z}_{j})$};
\draw[fill] (19.5,2.6) circle(0.1);
\node at (20,3){$\widetilde{z}_{j}$};

\draw[fill] (18,-1) circle(0.05);
\draw[fill] (18,1) circle(0.05);
\draw[fill] (9,-1) circle(0.05);
\draw[fill] (9,1) circle(0.05);

\draw[thick] (9,-1) .. controls (9.5,0) and (10.7,2.3) .. (10.5,2.6); 
\draw[thick,dashed] (9,1) .. controls (9.5,2) and (10.1,2.7) .. (10.5,2.6);

\draw[thick] (18,-1) .. controls (18.4,0) and (18.3,2.5) .. (18,3);
\draw[thick,dashed] (18,1) .. controls (17.8,2) and (17.9,2.7) .. (18,3);

\draw[thick, ->] (12.8,0) -- (14.3,0);
\node at (13.5,0.6){$\widetilde{f}_{j}$};

\draw[very thick] (9,0) circle(3); 
\draw[dotted] (12,0) arc (0:180:3 and 1); 
\draw[very thick] (6,0) arc (180:360:3 and 1); 


%
%

\draw[fill] (12,0) circle(0.1);
\draw[fill] (9,3) circle(0.1);
\draw[fill] (10.5,2.6) circle(0.1);
\node at (12.5,-0.5){$\widetilde{x}_{0}$};
\node at (9,3.5){$\widetilde{z}_{0}$};
\node at (11,3){$\widetilde{z}_{j}$};

%
%
%
%
%
%
\end{tikzpicture}
\hspace*{\fill}
\caption{The homeomorphism $\widetilde{f}_{j}$.}
\label{fig:mapfj}
\end{figure}
Then $\widetilde{f}_{j}$ satisfies $\widetilde{f}_{j}(\widetilde{z}_{j})= P^{-1}\circ\phi_{p_{1}(\widetilde{z}_{j})}(p_{1}(\widetilde{z}_{j}),0)= \widetilde{z}_{0}$. By definition, $\widetilde{f}_{j}$ is $\Z_{2}$-equivariant with respect to $\tau$, so induces a homeomorphism $\map{f_{j}}{\rp}$ that fixes $\pi\bigl(\widetilde{\omega}_{\widetilde{x}_{0},t}\bigl(\textstyle\frac{1}{2}\bigr)\bigr)$ pointwise for each $t\in I$, and that satisfies $f_{j}(z_{j})=z_{0}$. Let $\map{\gamma_{t}}{I}[\St]$ be the arc defined by $\gamma_{t}= \widetilde{f}_{j}\circ \Omega_{\widetilde{x}_{0},t,\frac{1}{2}}$.
By condition~(\ref{it:xb}) and \req{defOmega}, $p_{3}(\Omega_{\widetilde{x}_{0},t,\frac{1}{2}}(s))\geq 0$ for all $s\in I$. Since $\Phi_{w}$ leaves $H_{u}$ invariant, we see that $p_{3}(\gamma_{t}(s))\geq 0$ for all $s,t\in I$. Further, $\gamma_{t}(0)=\widetilde{\omega}_{\widetilde{x}_{0},t}\left(\textstyle\frac{1}{2}\right)$, and $\gamma_{t}(1)=\gamma_{0}(s)=\gamma_{1}(s)=\widetilde{x}_{0}$ for all $t,s\in I$, so \relem{deformwS2} applies to $\gamma_{t}$, where $\widetilde{w}$ is taken to be $\widetilde{z}_{0}$.  
The homeomorphism $\widetilde{f}_{j}$ (resp.\ $f_{j}$) induces an isomorphism $\map{(\widetilde{f}_{j})_{\#}}{\pi_{1}(I_{n_{0}}(\widetilde{x}_{0},\widetilde{z}_{j}))\!}[\!\pi_{1}(I_{n_{0}}(\widetilde{x}_{0},\widetilde{z}_{0}))]$ (resp.\ $\map{(f_{j})_{\#}}{\pi_{1}(I_{n_{0}}(x_{0},z_{j}))\!}[\!\pi_{1}(I_{n_{0}}(x_{0},z_{0}))]$) for which:
\begin{align}
(\widetilde{f}_j)_{\#} \circ (\widetilde{K}_{j})_{\#}\left(\textstyle(\widehat{\nu}_{j}\left\lvert_{I_{n}}\right.\!)_{\#}(\widehat{\tau}_{n})\right) &= (\widetilde{f}_j)_{\#}\bigl( \bigl[ \bigl( \widetilde{\omega}_{\widetilde{x}_{0},t}\left(\textstyle\frac{1}{2}\right), \widetilde{z}_{j}, -\widetilde{z}_{j}, \Omega_{\widetilde{x}_{0},t,\frac{1}{2}}, c_{\widetilde{z}_{j}}, c_{-\widetilde{z}_{j}}\bigr)_{t\in I}\bigr]\bigr)\notag\\
&=\bigl[ \bigl( \widetilde{\omega}_{\widetilde{x}_{0},t}\left(\textstyle\frac{1}{2}\right), \widetilde{z}_{0}, -\widetilde{z}_{0}, \widetilde{f}_{j}\circ \Omega_{\widetilde{x}_{0},t,\frac{1}{2}}, c_{\widetilde{z}_{0}}, c_{-\widetilde{z}_{0}}\bigr)_{t\in I}\bigr]\notag\\
&=\bigl[ \bigl( \widetilde{\omega}_{\widetilde{x}_{0},t}\left(\textstyle\frac{1}{2}\right), \widetilde{z}_{0}, -\widetilde{z}_{0}, \Omega_{\widetilde{x}_{0},t,\frac{1}{2}}, c_{\widetilde{z}_{0}}, c_{-\widetilde{z}_{0}}\bigr)_{t\in I}\bigr]= \br{\widetilde{x}_{0}, \widetilde{z}_{0}, -\widetilde{z}_{0}},\label{eq:fkjbrS2}
\end{align}
using equations~\reqref{omegah} and~\reqref{kjzjS2} and \relem{deformwS2}(\ref{it:deformwS2a}) if $M=\St$, and:
\begin{align}
(f_{j})_{\#} \circ (K_{j})_{\#}\left(\textstyle(\widehat{\nu}_{j}\left\lvert_{I_{n}}\right.\!)_{\#}(\widehat{\tau}_{n})\right) &= (f_{j})_{\#}\bigl( \bigl[ \bigl( \pi\left(\widetilde{\omega}_{\widetilde{x}_{0},t}\left(\textstyle\frac{1}{2}\right)\right), z_{j}, \pi\circ \Omega_{\widetilde{x}_{0},t,\frac{1}{2}}, c_{z_{j}}\bigr)_{t\in I}\bigr]\bigr)\notag\\
&=\bigl[ \bigl( \pi\left(\widetilde{\omega}_{\widetilde{x}_{0},t}\left(\textstyle\frac{1}{2}\right)\right), z_{0}, f_{j}\circ \pi\circ \Omega_{\widetilde{x}_{0},t,\frac{1}{2}}, c_{z_{0}}\bigr)_{t\in I}\bigr]\notag\\
&=\bigl[ \bigl( \pi\left(\widetilde{\omega}_{\widetilde{x}_{0},t}\left(\textstyle\frac{1}{2}\right)\right), z_{0}, \pi\circ \widetilde{f}_{j}\circ \Omega_{\widetilde{x}_{0},t,\frac{1}{2}}, c_{z_{0}}\bigr)_{t\in I}\bigr]\notag\\
&=\bigl[ \bigl( \pi\left(\widetilde{\omega}_{\widetilde{x}_{0},t}\left(\textstyle\frac{1}{2}\right)\right), z_{0}, \pi\circ \Omega_{\widetilde{x}_{0},t,\frac{1}{2}}, c_{z_{0}}\bigr)_{t\in I}\bigr] = \br{x_{0},{z}_{0}},\label{eq:fkjbr}
\end{align}
using equations~\reqref{omegah} and~\reqref{kjzj} and \relem{deformwS2}(\ref{it:deformwS2b}) if $M=\rp$. 

We now compute the image of the terms on the right-hand side of \req{sumpartialS2} (resp.\ \req{sumpartial}) by the isomorphism $\map{(\widetilde{f}_j)_{\#} \circ (\widetilde{K}_{j})_{\#}}{I_{n_{0}}}[\pi_{1}(I_{n_{0}}(\widetilde{x}_0, \widetilde{z}_0))]$ (resp.\ by the isomorphism $\map{(f_{j})_{\#} \circ (K_{j})_{\#}}{I_{n_{0}}}[\pi_{1}(I_{n_{0}}(x_0, z_0))]$). Applying \relem{boundaryS2}, we have:
\begin{align}
(\widetilde{f}_j)_{\#} \circ (\widetilde{K}_{j})_{\#}(\partial_{n_{0}}(\widetilde{\lambda}_{\widetilde{z}_{0}})) &= (\widetilde{f}_j)_{\#} \circ (\widetilde{K}_{j})_{\#}(\widetilde{\delta}_{\widetilde{z}_{0}})\notag\\
&= (\widetilde{f}_j)_{\#} \circ (\widetilde{K}_{j})_{\#} \bigl(\bigl[ \bigl(\widetilde{x}_j, \widetilde{z}_0, -\widetilde{z}_0, c_{\widetilde{x}_j}, \widetilde{\omega}_{\widetilde{z}_{0}, t}, c_{-\widetilde{z}_0}\bigr)_{t\in I}\bigr]\bigr)\notag\\
&= \bigl[ \bigl(\widetilde{x}_0, \widetilde{z}_0, -\widetilde{z}_0, c_{\widetilde{x}_0}, \widetilde{f}_{j}\circ \widetilde{K}_{j}\circ \widetilde{\omega}_{\widetilde{z}_{0}, t}, c_{-\widetilde{z}_0}\bigr)_{t\in I}\bigr]
\label{eq:fkjz0S2}
\end{align}
if $M=\St$, and:
\begin{align}
(f_{j})_{\#} \circ (K_{j})_{\#}(\partial_{n_{0}}(\lambda_{z_{0}}))&= (f_{j})_{\#} \circ (K_{j})_{\#}(\delta_{z_{0}})\notag\\
&=(f_{j})_{\#} \circ (K_{j})_{\#} \bigl(\bigl[ \bigl( x_{j}, z_{0}, c_{x_{j}}, \pi\circ \widetilde{\omega}_{\widetilde{z}_{0},t} \bigr)_{t\in I}\bigr]\bigr)\notag\\
&= \bigl[ \bigl( x_{0}, z_{0}, c_{x_{0}},  \pi \circ\widetilde{f}_{j}\circ \widetilde{K}_{j} \circ \widetilde{\omega}_{\widetilde{z}_{0},t} \bigr)_{t\in I}\bigr]\label{eq:fkjz0}
\end{align}
if $M=\rp$. Now the two homeomorphisms $\map{\widetilde{f}_{j}\circ \widetilde{K}_{j}\circ J_{j}}{(\St,\widetilde{x}_{0})}$ and $\map{\widetilde{f}_{j}\circ \widetilde{K}_{j}}{(\St,\widetilde{z}_{0})}$ preserve orientation, so:
\begin{equation}\label{eq:fjKjomega}
\text{$\bigl[\bigl(\widetilde{f}_{j}\circ \widetilde{K}_{j}\circ J_{j}\circ\widetilde{\omega}_{\widetilde{x}_{0},t}\bigr)_{t\in I}\bigr] =\bigl[\bigl(\widetilde{\omega}_{\widetilde{x}_{0},t}\bigr)_{t\in I}\bigr]$ and $\bigl[\bigl(\widetilde{f}_{j}\circ \widetilde{K}_{j}\circ\widetilde{\omega}_{\widetilde{z}_{0},t}\bigr)_{t\in I}\bigr]=\bigl[\bigl(\widetilde{\omega}_{\widetilde{z}_{0},t}\bigr)_{t\in I}\bigr]$}
\end{equation}
in $\pi_{1}(\Omega(\St),c_{\widetilde{x}_{0}})$ and $\pi_{1}(\Omega(\St),c_{\widetilde{z}_{0}})$ respectively.
It follows from equations~\reqref{fkjz0S2} and~\reqref{fjKjomega}, and from \relem{boundaryS2}(\ref{it:boundaryS2a}) that:
\begin{equation}\label{eq:fkjdeltaz0S2}
(\widetilde{f}_j)_{\#} \circ (\widetilde{K}_{j})_{\#}(\partial_{n_{0}}(\widetilde{\lambda}_{\widetilde{z}_{0}}))=\left[ \bigl(\widetilde{x}_0, \widetilde{z}_0, -\widetilde{z}_0, c_{\widetilde{x}_0}, \widetilde{\omega}_{\widetilde{z}_{0}, t}, c_{-\widetilde{z}_0}\bigr)_{t\in I}\right]=\widetilde{\delta}_{\widetilde{z}_0}
\end{equation}
in $\pi_{1}(I_{n_{0}}(\widetilde{x}_0, \widetilde{z}_0))$. Exchanging the rôles of $\widetilde{z}_0$ and $-\widetilde{z}_0$ in this argument yields:
\begin{equation}\label{eq:fkjdeltaminusz0S2}
(\widetilde{f}_j)_{\#} \circ (\widetilde{K}_{j})_{\#}(\partial_{n_{0}}(\widetilde{\lambda}_{-\widetilde{z}_{0}}))=\widetilde{\delta}_{-\widetilde{z}_0}.
\end{equation}
Applying $\map{\pi_{\#}}{\pi_{1}(\St,\widetilde{z}_{0})}[\pi_{1}(\rp,z_{0})]$  to \reqref{fjKjomega}, we see that $([(\pi \circ \widetilde{f}_j \circ \widetilde{K}_j\circ\widetilde{\omega}_{\widetilde{z}_{0},t})_{t\in I}])=[(\pi\circ\widetilde{\omega}_{\widetilde{z}_{0},t})_{t\in I}]$, and it follows from \reqref{fkjz0} and \relem{boundaryS2}(\ref{it:boundaryS2b}) that:
\begin{equation}\label{eq:fkjdeltaz0}
(f_{j})_{\#} \circ (K_{j})_{\#}(\partial_{n_{0}}(\lambda_{z_{0}}))=\left[ \left( x_{0}, z_{0}, c_{x_{0}},  \pi \circ \widetilde{\omega}_{\widetilde{z}_{0},t} \right)_{t\in I}\right]=\delta_{z_{0}}.
\end{equation}
Recall from \resec{notation} that $\widetilde{\lambda}_{\widetilde{x}_{j}}=[(\widetilde{\omega}_{\widetilde{x}_{j},t})_{t\in I}]$. So in a similar manner, using \relem{boundary}, we obtain: 
\begin{align}
(\widetilde{f}_j)_{\#} \circ (\widetilde{K}_{j})_{\#}(\partial_{n_{0}}(\widetilde{\lambda}_{\widetilde{x}_{j}})) &= (\widetilde{f}_j)_{\#} \circ (\widetilde{K}_{j})_{\#} \bigl(\bigl[ \bigl(\widetilde{x}_j, \widetilde{z}_0, -\widetilde{z}_0, \widetilde{\omega}_{\widetilde{x}_{j}, t}, c_{\widetilde{z}_0}, c_{-\widetilde{z}_0}\bigr)_{t\in I}\bigr]\bigr)\notag\\
&= \bigl[ \bigl(\widetilde{x}_0, \widetilde{z}_0, -\widetilde{z}_0, \widetilde{f}_{j}\circ \widetilde{K}_{j}\circ J_{j}\circ \widetilde{\omega}_{\widetilde{x}_{0}, t}, c_{\widetilde{z}_0}, c_{-\widetilde{z}_0}\bigr)_{t\in I}\bigr]\label{eq:fkjz1S2}
\end{align}
using \relem{boundaryS2}(\ref{it:boundaryS2a}) in $\pi_{1}(I_{n_{0}}(\widetilde{x}_j, \widetilde{z}_0))$ and \req{hxjt}, and
\begin{align}
(f_{j})_{\#} \circ (K_{j})_{\#}(\partial_{n_{0}}(\lambda_{x_{j}}))&= (f_{j})_{\#} \circ (K_{j})_{\#}\bigl(\bigl[ \bigl( x_{j}, z_{0}, \pi\circ \widetilde{\omega}_{\widetilde{x}_{j},t}, c_{z_{0}} \bigr)_{t\in I}\bigr]\bigr)\notag\\
&= \bigl[ \bigl( x_{0}, z_{0}, \pi\circ \widetilde{f}_{j}\circ \widetilde{K}_{j} \circ J_{j} \circ \widetilde{\omega}_{\widetilde{x}_{0},t}, c_{z_{0}} \bigr)_{t\in I}\bigr]\label{eq:fkjz1}
\end{align}
using \relem{boundaryS2}(\ref{it:boundaryS2b}) in $\pi_{1}(I_{n_{0}}(x_j, z_0))$. It follows from equations~\reqref{fjKjomega},~\reqref{fkjz1S2} and~\reqref{fkjz1} and \relem{boundaryS2} that:
\begin{align}
(\widetilde{f}_j)_{\#} \circ (\widetilde{K}_{j})_{\#}(\partial_{n_{0}}(\widetilde{\lambda}_{\widetilde{x}_{j}}))&=\left[ \bigl(\widetilde{x}_0, \widetilde{z}_0, -\widetilde{z}_0, \widetilde{\omega}_{\widetilde{x}_{0}, t}, c_{\widetilde{z}_0}, c_{-\widetilde{z}_0}\bigr)_{t\in I}\right]=\widetilde{\delta}_{\widetilde{x}_{0}}\label{eq:fkjdeltaz1S2}\\
(f_{j})_{\#} \circ (K_{j})_{\#}(\partial_{n_{0}}(\lambda_{x_{j}}))&=\left[ \left( x_{0}, z_{0}, \pi\circ \widetilde{\omega}_{\widetilde{x}_{0},t}, c_{z_{0}} \right)_{t\in I}\right]=\delta_{x_{0}}\label{eq:fkjdeltaz1}
\end{align}
in $\pi_{1}(I_{n_{0}}(\widetilde{x}_0, \widetilde{z}_0))$, and in $\pi_{1}(I_{n_{0}}(x_0, z_0))$ respectively. Taking the image of both sides of~\req{sumpartialS2} (resp.\ \req{sumpartial}) by $(\widetilde{f}_j)_{\#} \circ (\widetilde{K}_{j})_{\#}$ (resp.\ by $(f_{j})_{\#} \circ (K_{j})_{\#}$), we conclude using equations~\reqref{fkjbrS2},~\reqref{fkjdeltaz0S2},~\reqref{fkjdeltaminusz0S2} and~\reqref{fkjdeltaz1S2} (resp.\ equations~\reqref{fkjbr},~\reqref{fkjdeltaz0} and~\reqref{fkjdeltaz1}) that:
\begin{align}
2 \br{\widetilde{x}_{0}, \widetilde{z}_{0}, -\widetilde{z}_{0}}&=m_{j} \widetilde{\delta}_{\widetilde{x}_{0}}+m_{n-2}\widetilde{\delta}_{\widetilde{z}_0} +m_{n-1} \widetilde{\delta}_{-\widetilde{z}_0}\;\text{in $\pi_{1}(I_{n_{0}}(\widetilde{x}_0, \widetilde{z}_0))$}\label{eq:brmjS2}\\
2 \br{x_{0},{z}_{0}}&=m_{j} \delta_{x_{0}}+m_{n-1}\delta_{z_{0}}\;\text{in $\pi_{1}(I_{n_{0}}(x_0, z_0))$.}\label{eq:brmj}
\end{align}
Comparing \req{brmjS2} (resp.\ \req{brmj}) with \reth{brrelnS2}, 
we see that $m_{j}=m_{n-2}=1$ and $m_{n-1}=-1$ (resp.\ $m_{j}=m_{n-1}=1$). The statement of the proposition for $M=\St$ (resp.\ for $M=\rp$) then follows from \req{commdeltanS2}.
\end{proof}

The following result generalises \reco{sesnn0}.

\begin{cor}\label{cor:sesnngen}
Let $n\geq n_{0}$. 
The boundary homomorphism $\map{\partial_{n}}{\pi_{2}(\prod_{1}^{n}\, M)}[\pi_{1}(I_{n})]$ of the exact sequence~\reqref{lespartialn} satisfies: 
\begin{align*}
\partial_{n}(\widetilde{\lambda}_{u}) &= 
\begin{cases}
\widetilde{\delta}_{u} & \text{if $u\in \brak{\widetilde{x}_{0},\widetilde{x}_{1},\ldots, \widetilde{x}_{n-3}, \widetilde{z}_{0}}$}\\
\widetilde{\delta}_{\widetilde{x}_{0}}+\widetilde{\delta}_{\widetilde{x}_{1}}+\cdots +\widetilde{\delta}_{\widetilde{x}_{n-3}}
-\widehat{\tau}_{n}^{2}
& \text{if $u=-\widetilde{z}_{0}$}
\end{cases}
&& \text{and if $M=\St$,}\\
\partial_{n}(\lambda_{u})&=
\begin{cases}
\delta_{u} & \text{if $u\in \brak{x_{0},x_{1},\ldots, x_{n-2}}$}\\
\widehat{\tau}_{n}^{2}-(\delta_{x_{0}}+\delta_{x_{1}}+\cdots +\delta_{x_{n-2}})
& \text{if $u=z_{0}$}
\end{cases}
&& \text{and if $M=\rp$,}
\end{align*}
the notation being that of equations~\reqref{basispi2} and~\reqref{defdeltav}.
\end{cor}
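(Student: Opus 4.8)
The plan is to derive \reco{sesnngen} from \reth{tauhatsquareS2} together with the relations between the basis elements $\widetilde{\lambda}_{u}$ (resp.\ $\lambda_{u}$) of $\pi_{2}(\prod_{1}^{n}\, M)$ and the integers $m_{0},\ldots,m_{n-1}$ introduced in \repr{deftaunhat}. First recall that by \repr{deftaunhat} there exist unique integers $m_{0},m_{1},\ldots,m_{n-1}$ with $\widehat{\tau}_{n}^{2}=\partial_{n}(m_{0}\widetilde{\lambda}_{\widetilde{x}_0}+\cdots+m_{n-3}\widetilde{\lambda}_{\widetilde{x}_{n-3}}+m_{n-2}\widetilde{\lambda}_{\widetilde{z}_{0}}+m_{n-1}\widetilde{\lambda}_{-\widetilde{z}_{0}})$ if $M=\St$, and the analogous formula if $M=\rp$. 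The proof of \reth{tauhatsquareS2} establishes that $m_{j}=m_{n-2}=1$ and $m_{n-1}=-1$ for all $j\in\brak{0,1,\ldots,n-n_{0}}$ if $M=\St$, and that $m_{j}=m_{n-1}=1$ if $M=\rp$. In the case $M=\St$, the indices $j$ range over $\brak{0,\ldots,n-3}$, which together with $n-2$ accounts for all of $m_{0},\ldots,m_{n-2}$, and $m_{n-1}=-1$; in the case $M=\rp$, the indices $j$ range over $\brak{0,\ldots,n-2}$, accounting for $m_{0},\ldots,m_{n-2}$, and $m_{n-1}=1$.

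Substituting these values into \req{commdeltanS2}, we get, if $M=\St$,
\begin{equation*}
\widehat{\tau}_{n}^{2}=\partial_{n}(\widetilde{\lambda}_{\widetilde{x}_0}+\cdots+\widetilde{\lambda}_{\widetilde{x}_{n-3}}+\widetilde{\lambda}_{\widetilde{z}_{0}}-\widetilde{\lambda}_{-\widetilde{z}_{0}})=\widetilde{\delta}_{\widetilde{x}_0}+\cdots+\widetilde{\delta}_{\widetilde{x}_{n-3}}+\widetilde{\delta}_{\widetilde{z}_{0}}-\widetilde{\delta}_{-\widetilde{z}_{0}},
\end{equation*}
where we used the definition $\widetilde{\delta}_{v}=\partial_{n}(\widetilde{\lambda}_{v})$ of \req{defdeltav} and the fact that $\pi_{2}(\prod_{1}^{n}\, M)$ is Abelian and $\partial_{n}$ is a homomorphism (note that the target $\pi_{1}(I_{n})$ need not be Abelian, but the image elements $\widetilde{\delta}_{v}$ all lie in the image of $\partial_{n}$, so this identity makes sense as written — one should be slightly careful to present it as an equality in $\pi_{1}(I_{n})$ using additive notation only for the subgroup $\im{\partial_{n}}$). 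Solving for $\widetilde{\delta}_{-\widetilde{z}_{0}}$ yields $\partial_{n}(\widetilde{\lambda}_{-\widetilde{z}_{0}})=\widetilde{\delta}_{-\widetilde{z}_{0}}=\widetilde{\delta}_{\widetilde{x}_0}+\cdots+\widetilde{\delta}_{\widetilde{x}_{n-3}}+\widetilde{\delta}_{\widetilde{z}_{0}}-\widehat{\tau}_{n}^{2}$, which is precisely the claimed formula once one also records the trivial identities $\partial_{n}(\widetilde{\lambda}_{u})=\widetilde{\delta}_{u}$ for $u\in\brak{\widetilde{x}_{0},\ldots,\widetilde{x}_{n-3},\widetilde{z}_{0}}$ coming directly from \req{defdeltav}. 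Hmm, wait — the stated corollary actually writes the $-\widetilde{z}_{0}$ case as $\widetilde{\delta}_{\widetilde{x}_{0}}+\cdots+\widetilde{\delta}_{\widetilde{x}_{n-3}}-\widehat{\tau}_{n}^{2}$ (without the $\widetilde{\delta}_{\widetilde{z}_{0}}$ term); one must double-check the bookkeeping of which coordinate of $\prod_{1}^{n}\St$ carries $\widetilde{z}_{0}$ versus $-\widetilde{z}_{0}$, since by the conventions of \resec{generalities} the $(n-1)$st coordinate of $W_{n}$ is $\widetilde{z}_{0}$ and the $n$th is $-\widetilde{z}_{0}$, so $\widetilde{\lambda}_{\widetilde{z}_{0}}$ and $\widetilde{\delta}_{\widetilde{z}_{0}}$ refer to the $(n-1)$st slot. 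The resolution is that in \reth{tauhatsquareS2} the exponent of $\widehat{\tau}_{n}^{2}$ is expressed as $\partial_{n}(\widetilde{\lambda}_{\widetilde{x}_0}+\cdots+\widetilde{\lambda}_{\widetilde{x}_{n-3}}+\widetilde{\lambda}_{\widetilde{z}_{0}}-\widetilde{\lambda}_{-\widetilde{z}_{0}})$, and one simply transposes the $-\widetilde{z}_{0}$ term to the other side; the $\widetilde{\delta}_{\widetilde{z}_{0}}$ term does appear, and I would verify carefully against the displayed corollary statement before finalising — if there is a genuine discrepancy it is a typo in one of the two statements, and the computation above is the correct one. The $\rp$ case is entirely parallel: \reth{tauhatsquareS2} gives $\widehat{\tau}_{n}^{2}=\partial_{n}(\lambda_{x_0}+\cdots+\lambda_{x_{n-2}}+\lambda_{z_{0}})$, whence $\partial_{n}(\lambda_{z_{0}})=\delta_{z_{0}}=\widehat{\tau}_{n}^{2}-(\delta_{x_{0}}+\cdots+\delta_{x_{n-2}})$, together with $\partial_{n}(\lambda_{u})=\delta_{u}$ for $u\in\brak{x_{0},\ldots,x_{n-2}}$.

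In short, the proof is a one-line rearrangement of \reth{tauhatsquareS2} combined with the defining equation \reqref{defdeltav}, so there is essentially no obstacle; the only point requiring genuine care is the indexing/sign bookkeeping between the basis \reqref{basispi2}, the integers $m_{i}$ of \repr{deftaunhat}, and the displayed formula in \reco{sesnngen}, and making sure that all manipulations of $\widetilde{\delta}_{v}$'s are legitimately performed inside the Abelian subgroup $\im{\partial_{n}}\subseteq\pi_{1}(I_{n})$ (which is justified since $\partial_{n}$ factors through the Abelian group $\pi_{2}(\prod_{1}^{n}\, M)$). I would write the proof as: ``By \reqref{defdeltav}, $\partial_{n}(\widetilde{\lambda}_{u})=\widetilde{\delta}_{u}$ for each basis element $u$; it remains to treat $u=-\widetilde{z}_{0}$ (resp.\ $u=z_{0}$). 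This follows immediately from \reth{tauhatsquareS2} by transposing the relevant term, using that $\partial_{n}$ is a homomorphism and its image is Abelian.''
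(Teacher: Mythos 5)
Your proof is correct and is essentially identical to the paper's own proof, which is also a one-line rearrangement of \reth{tauhatsquareS2} together with \reqref{defdeltav}. You are also right that the displayed formula for $\partial_{n}(\widetilde{\lambda}_{-\widetilde{z}_{0}})$ in the $M=\St$ case is missing a $+\,\widetilde{\delta}_{\widetilde{z}_{0}}$ term: rearranging \reth{tauhatsquareS2} gives $\widetilde{\delta}_{-\widetilde{z}_{0}}=\widetilde{\delta}_{\widetilde{x}_{0}}+\cdots+\widetilde{\delta}_{\widetilde{x}_{n-3}}+\widetilde{\delta}_{\widetilde{z}_{0}}-\widehat{\tau}_{n}^{2}$, which also agrees with \reco{sesnn0} specialised to $n=n_{0}=3$; this typo does not propagate, since the only downstream use (in the proof of \repr{sesnngen}) is the resulting generating set $\bigl\langle\widetilde{\delta}_{\widetilde{x}_{0}},\ldots,\widetilde{\delta}_{\widetilde{x}_{n-3}},\widetilde{\delta}_{\widetilde{z}_{0}},\widehat{\tau}_{n}^{2}\bigr\rangle$ for $\im{\partial_{n}}$, which already contains $\widetilde{\delta}_{\widetilde{z}_{0}}$.
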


\begin{proof}
If $M=\St$ (resp.\ $M=\rp$), a basis of $\pi_{2}(\prod_{1}^{n}\, M)$ is as given in~\reqref{basispi2},
and the equalities involving $\partial_{n}(\widetilde{\lambda}_{u})$ (resp.\ $\partial_{n}(\lambda_{u})$) then follow from~\reqref{defdeltav} and \reth{tauhatsquareS2}.
\end{proof}

To finish this section, we prove \repr{sesnngen}.

\begin{proof}[Proof of \repr{sesnngen}]
%
%
Let $M=\St$ (resp.\ $M=\rp$), and let $n\geq n_{0}$. 
By~\cite[equation~(2) and~Proposition~1(a)(i)]{GGgold} and exactness of~\reqref{lespartialn},
\begin{equation}\label{eq:isoGamma2}
\ker{(\iota_{n})_{\#}}= \im{(g_{n}\circ j_{n})_{\#}} \cong \pi_{1}(I_{n})/\im{\partial_{n}},
\end{equation}
where $\ker{(\iota_{n})_{\#}}=P_{n}(\St)$ (resp.\ $\ker{(\iota_{n})_{\#}}=\Gamma_{2}(P_{n}(\rp))$), and the given isomorphism is induced by the homomorphism $(g_{n}\circ j_{n})_{\#}$. From \reco{sesnngen}, we have $\im{\partial_{n}}=\bigl\langle\widetilde{\delta}_{\widetilde{x}_{0}},\widetilde{\delta}_{\widetilde{x}_{0}}, \ldots, \widetilde{\delta}_{\widetilde{x}_{n-3}}, \widetilde{\delta}_{\widetilde{z}_{0}}, \widehat{\tau}_{n}^{2}\bigr\rangle$ (resp.\ $\im{\partial_{n}}=\ang{\delta_{x_{0}},\delta_{x_{1}}, \ldots, \delta_{x_{n-2}}, \widehat{\tau}_{n}^{2}}$). Now the homomorphism 
$\map{(\alpha_{n}'\circ \alpha_{\pi} \circ \alpha_{c})_{\#}}{\pi_{1}(I_{c})}[\pi_{1}(I_{n})]$ 
is an isomorphism by \relem{fundhomoequiv} and \reco{alphanprime}, and since $(\alpha_{n}'\circ \alpha_{\pi} \circ \alpha_{c})_{\#}(\widetilde{\lambda}_{\widetilde{x}_{i}})=\widetilde{\delta}_{\widetilde{x}_{i}}$ for all $0\leq i\leq n-3$ and $(\alpha_{n}'\circ \alpha_{\pi} \circ \alpha_{c})_{\#}(\widetilde{\lambda}_{\widetilde{z}_{0}})=\widetilde{\delta}_{\widetilde{z}_{0}}$ (resp.\ $(\alpha_{n}'\circ \alpha_{\pi} \circ \alpha_{c})_{\#}(\widetilde{\lambda}_{\widetilde{x}_{i}})=\delta_{x_{i}}$ for all $0\leq i\leq n-2$) and $(\alpha_{n}'\circ \alpha_{\pi} \circ \alpha_{c})_{\#}([\tau_{n}])=\widehat{\tau}_{n}$
by \relem{fundhomoequiv}(\ref{it:fundhomoequivd}) and \repr{deftaunhat}, we conclude that:
\begin{equation}\label{eq:isopic}
\pi_{1}(I_{n})/\im{\partial_{n}}\cong 
\begin{cases}
\pi_{1}(I_{c})\bigl/\bigl\langle \widetilde{\lambda}_{\widetilde{x}_{0}}, \widetilde{\lambda}_{\widetilde{x}_{1}},\ldots, \widetilde{\lambda}_{\widetilde{x}_{n-3}}, \widetilde{\lambda}_{\widetilde{z}_{0}}, [\tau_{n}^{2}] \bigr\rangle\bigr. & \text{if $M=\St$}\\
\pi_{1}(I_{c})\bigl/\bigl\langle \widetilde{\lambda}_{\widetilde{x}_{0}}, \widetilde{\lambda}_{\widetilde{x}_{1}},\ldots, \widetilde{\lambda}_{\widetilde{x}_{n-2}}, [\tau_{n}^{2}] \bigr\rangle\bigr. & \text{if $M=\rp$,}
\end{cases}
\end{equation}
the isomorphism being induced by $(\alpha_{n}'\circ \alpha_{\pi} \circ \alpha_{c})_{\#}$. 
It follows from~\reqref{defIc} and~\reqref{isopic} that:
\begin{equation}\label{eq:isoGn1}
\pi_{1}(I_{n})/\im{\partial_{n}}\cong
\begin{cases}
P_{n-1}/\langle [\tau_{n}^{2}] \rangle & \text{if $M=\St$}\\
G_{n-1}/\langle [\tau_{n}^{2}] \rangle & \text{if $M=\rp$.}
\end{cases}
\end{equation}
If $M=\St$ then from \req{deftaun} and~Figure~\ref{fig:fulltwist}, $[\tau_{n}]$ may be interpreted as the full-twist braid $\ft[n-1]$ of $P_{n-1}$, and so $\pi_{1}(I_{n})/\im{\partial_{n}}\cong P_{n-1}/\ang{\Delta_{n-1}^{4}}\cong \F[n-2]\rtimes (\F[n-3] \rtimes (\cdots\rtimes(\F[3]\rtimes \F[2])\cdots)) \times \Z_{2}$ using~\cite[Proposition~8]{GG2} and the Artin combing operation. We thus obtain the isomorphisms of~\reqref{isoPnS2}. Now suppose that $M=\rp$. 
To interpret $[\tau_{n}]\in \pi_{1}(I_{c})$, which is given by \req{deftaun} in the case $M=\rp$, as an element of $G_{n-1}$, we modify slightly Figure~\ref{fig:fulltwist} by replacing $n-3$ by $n-2$ and by removing the markings on $\widetilde{z}_{0}$ and $-\widetilde{z}_{0}$ in that figure. In order to obtain a geometric representation of $[\tau_{n}]$, we also suppose that each loop $(\widetilde{\omega}_{\widetilde{x}_{i},t}(\frac{1}{2}))_{t\in I}$ is  equipped with the associated constant path $c_{\widetilde{x}_{i}}$ for $i=0,1,\ldots,n-2$ as in~\reqref{deftaun}. To obtain Figure~\ref{fig:gensorb}, but taking $n-1$ in place of $n$, we first rotate the geometric representative of $[\tau_{n}]$ by $\pi$ about the vertical axis, then we remove the points $\widetilde{z}_{0}$ and $-\widetilde{z}_{0}$, and finally we flatten down the resulting open cylinder so that $\widetilde{z}_{0}$ is the centre, as in Figure~\ref{fig:gensorb}. We thus identify the element $v_{i}$ of Figure~\ref{fig:gensorb} with $\widetilde{x}_{i-1}$ for all $i=1,\ldots, n-1$. Via this construction, $[\tau_{n}]$ may be identified with the element $\Theta_{n-1}^{-1}=\rho_{n-1,0}^{-1}\cdots \rho_{1,0}^{-1}$  of $G_{n-1}$, and up to this identification, we have a surjective homomorphism $\map{\upsilon_{n}}{G_{n-1}}[K_{n}]$ induced by the map $\map{g_{n}\circ j_{n}\circ\alpha_{n}'\circ \alpha_{\pi} \circ \alpha_{c}}{I_{c}}[F_{n}(\rp)]$, and an isomorphism $\map{\overline{\upsilon_{n}}}{G_{n-1}/\langle \Theta_{n-1}^{2} \rangle}[K_{n}]$ that is induced by $\upsilon_{n}$. This yields the first isomorphism of~\reqref{isoKnGn}.

Let $\map{p}{P_{n+1}(\rp)}[P_{n}(\rp)]$ be the Fadell-Neuwirth projection given geometrically by removing the penultimate string (this corresponds to forgetting the basepoint $v_{n}$, so that $z_{0}$ remains as the final basepoint). Using~\cite[Proposition~8]{GGgold}, we obtain the following commutative diagram of short exact sequences:
\begin{equation}\label{eq:commdiagKn}
\begin{tikzcd}[cramped]
& 1\arrow[d]& 1\arrow[d]& 1\arrow[d]&\\
1\arrow[r] & \operatorname{\text{Ker}}(p\bigl\lvert_{K_{n+1}}\bigr.) \arrow[d]\arrow[r] & K_{n+1} \arrow[d]\arrow[r, "p\bigl\lvert_{K_{n+1}}\bigr."] & K_{n} \arrow[d]\arrow[r] & 1\\
1\arrow[r] & \ker{p} \arrow[d, "(\iota_{n+1})_{\#}\bigl\lvert_{\ker{p}}\bigr."]\arrow[r] & P_{n+1}(\rp) \arrow[d, "(\iota_{n+1})_{\#}"]\arrow[r, "p"] & P_{n}(\rp) \arrow[d, "(\iota_{n})_{\#}"]\arrow[r] & 1\\
1\arrow[r] & \Z_{2} \arrow[d]\arrow[r] & \Z_{2}^{n+1} \arrow[d]\arrow[r, "\widehat{p}"] & \Z_{2}^{n} \arrow[d]\arrow[r] & 1,\\
& 1& 1& 1&
\end{tikzcd}
\end{equation}
where $\map{\widehat{p}}{\Z_{2}^{n+1}}[\Z_{2}^{n}]$ is the homomorphism that forgets the $n$\up{th} coordinate, from which it follows that $\operatorname{\text{Ker}}(p\bigl\lvert_{K_{n+1}}\bigr.)=\operatorname{\text{Ker}}((\iota_{n+1})_{\#}\bigl\lvert_{\ker{p}}\bigr.)$. On the other hand, $\ker{p}$ is a free group of rank $n$ for which a basis is given by $\brak{A_{1,n},\ldots, A_{n-1,n}, \rho_{n}}$, so by the first column of~\reqref{commdiagKn} and~\cite[Proposition~8]{GGgold}, $\operatorname{\text{Ker}}(p\bigl\lvert_{K_{n+1}}\bigr.)$ is a free group of rank $2n-1$ for which a basis $\mathcal{B}$ is given by $\setl{A_{i,n},\rho_{n}A_{i,n}\rho_{n}^{-1}, \rho_{n}^{2}}{1\leq i\leq n-1}$.

We now study the image under $\upsilon_{n+1}$ of the generating set $(\rho_{j,i})_{1\leq j\leq n,\, 0\leq i\leq 2j-2}$ of $G_{n}$ as given in \repr{presgn}. We take the left-hand half of Figure~\ref{fig:gensorb} to be a fundamental domain for the projection $\map{\widehat{\pi}}{C}[\rp\setminus\brak{z_{0}}]$ defined in \resec{generalities}, and when we compose by $\alpha_{n+1}'$, we push $z_{0}$ into the interior of the disc to obtain a model of $\rp$ similar to that of~\cite[Figure~1]{GGgold}, where the points $x_{1},\ldots,x_{n+1}$ of that figure should be identified with $\pi(v_{1}),\ldots,\pi(v_{n}), z_{0}$ respectively. Making use of the presentation of the pure braid groups of $\rp$ given in~\cite[Theorem~4]{GG3} and also described in~\cite[Theorem~7]{GGgold}, as well as the element $C_{i,j}$ defined by~\cite[equation~(8)]{GGgold} and~\cite[relations~(III) and~(V), Proposition~11]{GGgold}, we see that: 
\begin{equation}\label{eq:upsilonn}
\upsilon_{n+1}(\rho_{j,i})=\begin{cases}
A_{i,j} & \text{if $1\leq i<j$}\\
\rho_{j} A_{1,j} \cdots A_{j-1,j} \rho_{j} =A_{j,j+1}\cdots A_{j,n+1} & \text{if $i=0$}\\
\rho_{j}^{-1} C_{i-j+1,j}^{-1} \rho_{j} =\rho_{j} A_{i-j+1,j} \rho_{j}^{-1} & \text{if $j\leq i\leq 2j-2$.}
\end{cases}
\end{equation}
Recall from \resec{presorb} that the kernel of the homomorphism $\map{(q_{n})_{\#}}{G_{n}}[G_{n-1}]$ is a free group of rank $2n-1$ for which $(\rho_{n,i})_{0\leq i\leq 2n-2}$ is a basis.   
Using~\reqref{upsilonn}, we thus obtain:
\begin{equation}\label{eq:upsqn}
\upsilon_{n+1}\bigl(\ker{(q_{n})_{\#}}\bigr)=\setangl{A_{i,n},\rho_{n} A_{i,n}\rho_{n}^{-1},A_{n-1,n}}{1\leq i\leq n-1},
\end{equation} 
and since
\begin{equation*}
\rho_{n}^{-2}= A_{n,n+1}^{-1}\rho_{n}\ldotp \rho_{n}^{-1} A_{n,n+1}\rho_{n}^{-1} \ldotp \rho_{n}^{-1}= A_{n,n+1}^{-1}\prod_{i=1}^{n-1} \rho_{n} A_{i,n}\rho_{n}^{-1}
\end{equation*}
by the surface relation~(V) of~\cite[Proposition~11]{GGgold}, it follows that $\rho_{n}^{2}\in \upsilon_{n+1}(\ker{(q_{n})_{\#}})$ by~\reqref{upsqn}. We conclude from~\reqref{upsqn} and the form of the basis $\mathcal{B}$ of $\operatorname{\text{Ker}}(p\bigl\lvert_{K_{n+1}}\bigr.)$ that the restriction $\upsilon_{n+1}\bigl\lvert_{\operatorname{\text{Ker}}((q_{n})_{\#})}\bigr.$ is a surjective homomorphism between two free groups of the same rank, so is an isomorphism. We thus obtain the following commutative diagram of short exact sequences:
\begin{equation}\label{eq:commdiagGnbig}
\begin{tikzcd}[cramped]
& & 1\arrow[d]& 1\arrow[d]&\\
& & \langle \Theta_{n}^{2} \rangle \arrow[d] 
\arrow{r}[yshift=0.6ex]{(q_{n})_{\#}\bigl\lvert_{\langle \Theta_{n}^{2} \rangle}\bigr.}[swap]{\cong}
& \langle \Theta_{n-1}^{2} \rangle\arrow[d]&\\
1 \arrow[r] & \ker{(q_{n})_{\#}} \arrow[r]  \arrow[d, "\cong", "\upsilon_{n+1}\bigl\lvert_{\operatorname{\text{Ker}}((q_{n})_{\#})}\bigr."'] & G_{n}\arrow[r, "(q_{n})_{\#}"] \arrow[d, "\upsilon_{n+1}"'] & G_{n-1} \arrow[r] \arrow[d, "\upsilon_{n}"'] & 1\\
1 \arrow[r] & \operatorname{\text{Ker}}\bigl( p\bigl\lvert_{K_{n+1}}\bigr. \bigr) \arrow[r]  & K_{n+1} \arrow[r, "p\bigl\lvert_{K_{n+1}}\bigr."] \arrow[d] & K_{n} \arrow[r] \arrow[d] & 1,\\
& & 1& 1&
\end{tikzcd}
\end{equation}
where the commutativity of the bottom right-hand square is a consequence of~\reqref{upsilonn} and the definitions of $(q_{n})_{\#}$ and $p$. Furthermore, since $\langle \Theta_{n}^{2} \rangle \cap \ker{(q_{n})_{\#}}=\brak{e}$, $(q_{n})_{\#}$ induces a homomorphism $\map{Q_{n}}{G_{n}/\langle \Theta_{n}^{2} \rangle}[G_{n-1}/\langle \Theta_{n-1}^{2} \rangle]$, which gives rise to the following commutative diagram of short exact sequences:
\begin{equation}\label{eq:commdiagGnquot}
\begin{tikzcd}[cramped]
1 \arrow[r] & \ker{(q_{n})_{\#}} \arrow[r]  \arrow[d, "\cong", "\upsilon_{n+1}\bigl\lvert_{\operatorname{\text{Ker}}((q_{n})_{\#})}\bigr."'] & G_{n}/\langle \Theta_{n}^{2} \rangle \arrow[r, "Q_{n}"] \arrow[d, "\overline{\upsilon_{n+1}}"', "\cong"] & G_{n-1}/\langle \Theta_{n-1}^{2} \rangle \arrow[r] \arrow[d, "\overline{\upsilon_{n}}"', "\cong"] & 1\\
1 \arrow[r] & \operatorname{\text{Ker}}\bigl( p\bigl\lvert_{K_{n+1}}\bigr. \bigr) \arrow[r]  & K_{n+1} \arrow[r, "p\bigl\lvert_{K_{n+1}}\bigr.", yshift=1ex] & K_{n} \arrow[r] & 1.
\end{tikzcd}
\end{equation}
It follows from~\reqref{commdiagGnquot} that $p\bigl\lvert_{K_{n+1}}\bigr.$ admits a section if and only if $Q_{n}$ does, and from~\reqref{commdiagGnbig} that $Q_{n}$ admits a section if and only if $(q_{n})_{\#}$ admits a section that sends $\ang{\Theta_{n-1}^{2}}$ to $\ang{\Theta_{n}^{2}}$. Such a section for $(q_{n})_{\#}$ is provided by the homomorphism $(s_{n}')_{\#}$ of \rerem{secondsec}, and we deduce the second isomorphism of~\reqref{isoKnGn}.
\end{proof}

\begin{rem} 
In~\cite[Proposition~(1)(a)(ii) and Theorem~3]{GGgold}, we showed that $K_{n}\cong L_{n}\times \Z_{2}$, where $L_{n}\cong \F[2n-3]\rtimes (\F[2n-5] \rtimes (\cdots\rtimes(\F[5]\rtimes \F[3])\cdots))$. Due to the fact that the basepoints involved are not the same, we cannot compare $L_{n}$ directly with the corresponding factor of~\reqref{isoKnGn} arising from the isomorphism $\overline{\upsilon_{n}}$. Using~\reqref{upsilonn} and the section of \rerem{secondsec}, it should however be possible to make explicit the factors $\F[2i+1]$, where $i=1,\ldots, n-2$, as subgroups of $K_{n}$. 
Independently of \repr{sesnngen}, using the results of~\cite{GGgold}, one may show that the homomorphism $\map{p\bigl\lvert_{K_{n+1}}\bigr.}{K_{n+1}}[K_{n}]$ splits by making use of the fact that the restriction $\map{p\bigl\lvert_{L_{n+1}}\bigr.}{L_{n+1}}[L_{n}]$ splits.
\end{rem}

\appendix
\setcounter{secnumdepth}{0}
\section{Appendix}


As we mentioned in \rerems{homfibre}, the map $\map{\alpha_{n}'}{I_{n}'}[I_{n}]$ defined in \resec{generalities} is a homotopy equivalence. This follows from \repr{homofibre} given below relating the homotopy fibres of fibre spaces and certain subspaces, and is a consequence of \relem{crabb1}. This lemma seems to be folklore, and we were not able to find a proof in the literature, although the statement is given as an exercise in~\cite[Section~3.2, Exercises(2), page~99]{N}. We are grateful to Michael~Crabb for proposing proofs of \relem{crabb1} and \repr{homofibre}. In this section, we will denote the homotopy fibre of a map $\map{f}{X}[Y]$ by $I(f)$ rather than $I_{f}$.

We first fix some notation and recall the homotopy fibres that we will analyse. Let $\map{f}{X}[Y]$ and $\map{g}{Y}[Z]$ be pointed maps. By~\reqref{defhomfibre}, we have: 
\begin{align*}
I(f) &=\setl{(x,\rho)\in X\times Y^{I}}{\rho(0)=f(x),\, \rho(1)=\ast_{Y}},\\
I(g)&=\setl{(y,\gamma)\in Y\times Z^{I}}{\gamma(0)=g(y),\, \gamma(1)=\ast_{Z}}\;\text{and}\\
I(g\circ f) &=\setl{(x,\gamma)\in X\times Z^{I}}{\gamma(0)=g(f(x)),\, \gamma(1)=\ast_{Z}}.
\end{align*}
Let $\map{f_{\ast}}{I(g\circ f)}[I(g)]$ be the map defined by $f_{\ast}(x,\gamma)=(f(x),\gamma)$. Then:
\begin{equation*}
I(f_{\ast}) =\setl{(x,\gamma,\Phi)\in X\times Z^{I}\times I(g)^{I}}{\Phi(0)=f_{\ast}(x,\gamma),\, \Phi(1)=\ast_{I(g)}}.
\end{equation*}
If $\map{\Phi}{I}[I(g)]$ belongs to $I(g)^{I}$ then there exists $(\rho,\Gamma)\in (Y\times Z^{I})^{I}$ such that
for all $s\in I$, $\Phi(s)=(\rho(s), \Gamma(s,\cdot))\in I(g)$, $\Gamma(s,0)=g(\rho(s))$ and $\Gamma(s,1)=\ast_{Z}$ for all $s\in I$.
Hence:
\begin{align*}
I(f_{\ast}) =&\Bigl\{(x,\gamma, \rho,\Gamma)\in X\times Z^{I}\times (Y\times Z^{I})^{I} \,\Bigl\vert\,\gamma(0)=g(f(x)),\, \gamma(1)=\ast_{Z},\\
& \text{$\Gamma(s,0)=g(\rho(s))$ and $\Gamma(s,1)=\ast_{Z}$ for all $s\in I$},\\
& (f(x),\gamma)=(\rho(0),\Gamma(0,\cdot)), (\rho(1), \Gamma(1,\cdot))= (\ast_{Y},c_{\ast_{Z}}) \Bigr\}.
\end{align*}
Therefore $\gamma(t)=\Gamma(0,t)$ for all $t\in I$, and thus $\gamma(1)=\Gamma(0,1)=\ast_{Z}$ and $\gamma(0)=\Gamma(0,0)=g(\rho(0))=g(f(x))$, which implies that two of the defining conditions of $I(f_{\ast})$ may be removed, and $\gamma$ is redundant. Hence we may identify $I(f_{\ast})$ with the set:
\begin{align}
& \Bigl\{(x, \rho,\Gamma)\in X\times (Y\times Z^{I})^{I} \,\Bigl\vert\, f(x)=\rho(0),\, \rho(1)=\ast_{Y},\, \text{$\Gamma(1,t)=\ast_{Z}$ for all $t\in I$,}\notag\\
&\text{$\Gamma(s,0)=g(\rho(s))$ and $\Gamma(s,1)=\ast_{Z}$ for all $s\in I$}\Bigr\}.\label{eq:ifstarreduced}
\end{align}


\begin{lem}\label{lem:crabb1}
Let $\map{f}{X}[Y]$ and $\map{g}{Y}[Z]$ be pointed maps, and let $\map{f_{\ast}}{I(g\circ f)}[I(g)]$ be the map induced by $f$ on the level of homotopy fibres. Then $I(f_{\ast})$ is homotopy equivalent to $I(f)$.
\end{lem}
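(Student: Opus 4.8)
The plan is to construct an explicit deformation retraction, following the description of $I(f_\ast)$ given in~\reqref{ifstarreduced}. The key observation is that in the reduced model~\reqref{ifstarreduced}, a point of $I(f_\ast)$ consists of $x\in X$, a path $\rho$ in $Y$ from $f(x)$ to $\ast_Y$, and a \emph{nullhomotopy} $\Gamma$ of the loop $g\circ\rho$ in $Z$ (based correctly), with the extra constraint that $\Gamma$ is the constant path $c_{\ast_Z}$ on the end $s=1$. By contrast, $I(f)=\setl{(x,\rho)\in X\times Y^I}{\rho(0)=f(x),\, \rho(1)=\ast_Y}$, so there is an obvious forgetful map $\map{q}{I(f_\ast)}[I(f)]$ sending $(x,\rho,\Gamma)$ to $(x,\rho)$, and a section $\map{\sigma}{I(f)}[I(f_\ast)]$ sending $(x,\rho)$ to $(x,\rho,\Gamma_0)$, where $\Gamma_0(s,t)=g(\rho(s(1-t)+t))$ is the ``canonical'' nullhomotopy of $g\circ\rho$ obtained by sliding the basepoint along $\rho$. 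One checks directly that $\Gamma_0$ satisfies all the defining conditions: $\Gamma_0(1,t)=g(\rho(1))=\ast_Z$, $\Gamma_0(s,0)=g(\rho(s))$, and $\Gamma_0(s,1)=g(\rho(1))=\ast_Z$. Clearly $q\circ\sigma=\id_{I(f)}$.

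The remaining point is that $\sigma\circ q\simeq \id_{I(f_\ast)}$, i.e. that any admissible nullhomotopy $\Gamma$ of $g\circ\rho$ can be canonically deformed, through admissible nullhomotopies, to $\Gamma_0$. For this I would use the standard fact that the space of nullhomotopies of a fixed loop $\omega$ in $Z$ (with fixed endpoint behaviour) is either empty or contractible, applied fibrewise over $I(f)$; concretely, one writes down an explicit homotopy. Parametrising $r\in I$, set
\begin{equation*}
\Gamma_r(s,t)=\begin{cases}
g(\rho(s(1-t')+t')) & \text{suitably reparametrised for } t\le \phi_r(s,t)\\
\Gamma\bigl(\psi_r(s,t)\bigr) & \text{otherwise,}
\end{cases}
\end{equation*}
interpolating between $\Gamma_1=\Gamma$ and $\Gamma_0$ by progressively replacing $\Gamma$ by the canonical sliding nullhomotopy, using that $Z^I$ (or rather the relevant path space) is contractible via concatenation with reversed initial segments. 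The cleanest implementation is to view $\Gamma$ and $\Gamma_0$ as maps from the square $I\times I$ to $Z$ agreeing on three sides (the sides $s=1$, $t=1$, and carrying the same value $g\circ\rho$ on $t=0$), so their ``difference'' is a map from the square to $Z$ vanishing on the boundary, which can be contracted linearly in the square-coordinate; the only care needed is to preserve the constraint $\Gamma_r(1,t)=\ast_Z$ throughout, which holds because both $\Gamma$ and $\Gamma_0$ already satisfy it and the contraction is coordinate-wise in the domain square.

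I expect the main obstacle to be purely bookkeeping: writing the homotopy $(\Gamma_r)_{r\in I}$ so that it is visibly continuous and visibly lands in $I(f_\ast)$ at every stage, i.e. so that for each $r$ the triple $(x,\rho,\Gamma_r)$ still satisfies all four conditions of~\reqref{ifstarreduced}. There is no conceptual difficulty — it is the contractibility of a space of nullhomotopies with prescribed boundary — but the nested path-space notation makes it easy to slip. An alternative, perhaps tidier, route is to avoid explicit formulas entirely: observe that $q$ is a fibration (being a pullback/restriction of a path fibration) whose fibre over $(x,\rho)$ is the space of admissible nullhomotopies of $g\circ\rho$, show this fibre is contractible, and conclude via the long exact sequence in homotopy together with the fact that all spaces in sight have the homotopy type of CW complexes (as invoked in the proof of \reth{prop5}) and Whitehead's theorem. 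I would present the explicit-section-plus-homotopy argument as the main proof since it is self-contained and gives the homotopy equivalence on the nose, and mention the fibration argument as a remark.
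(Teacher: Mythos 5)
Your overall strategy is the same as the paper's: use the reduced model of $I(f_\ast)$ in~\reqref{ifstarreduced}, take the forgetful map $q$ (the paper's $\psi$) and a section $\sigma$ given by a canonical sliding nullhomotopy, check $q\circ\sigma=\id_{I(f)}$ on the nose, and then deform an arbitrary admissible $\Gamma$ to the canonical one by reparametrising the domain square. Your choice $\Gamma_0(s,t)=g(\rho(s(1-t)+t))$ differs cosmetically from the paper's $\Gamma'(s,t)=g(\rho(\max(s,t)))$; either works as a section.

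The gap is in the step $\sigma\circ q\simeq\id_{I(f_\ast)}$, which is exactly the non-formal part. First, in a general space $Z$ there is no ``difference'' of two maps, so the reduction to ``a map vanishing on the boundary, contracted linearly'' does not parse as stated. Second, and more seriously, $\Gamma$ and $\Gamma_0$ agree on only three sides of the square ($s=1$, $t=0$, $t=1$), not on all of $\partial(I\times I)$: nothing pins down $\Gamma(0,\cdot)$ beyond its endpoints. Third, if one instead interprets ``linear contraction'' as the straight-line homotopy of domain reparametrisations $H_u=(1-u)\,\id+u\,H_1$ with $H_1(s,t)=(s(1-t)+t,0)$, then at intermediate $u$ one has $H_u(s,1)=((1-u)s+u,\,1-u)$, which lies in neither $I\times\{1\}$ nor $\{1\}\times I$, so $\Gamma_u(s,1)=\Gamma(H_u(s,1))$ has no reason to equal $\ast_Z$: the intermediate triples $(x,\rho,\Gamma_u)$ leave $I(f_\ast)$. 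The paper's proof sidesteps this precisely by not using a one-step linear homotopy: its $H_u$ is built in two stages (first sliding the $s$-coordinate to $\max(s,t)$, then collapsing $t$), arranged so that at every $u$ the edges $\{1\}\times I$ and $I\times\{1\}$ are sent into $\{1\}\times I\cup I\times\{1\}$, where $\Gamma$ is already constantly $\ast_Z$, while $I\times\{0\}$ is fixed pointwise. So your ``bookkeeping'' is genuinely where the content lies, and the formula you would need is not the linear one.

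Your alternative sketch (show $q$ is a fibration with contractible fibres, invoke CW homotopy type and Whitehead) is a legitimate different route, and would yield a weak equivalence, hence a homotopy equivalence given CW models; but it is also only a sketch, and it would require verifying that $q$ is a fibration and that the fibre of admissible nullhomotopies is contractible, neither of which is free. As written, the proposal identifies the right maps and the right reduction but does not supply the homotopy that the lemma actually turns on.
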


\begin{proof}
With the identification of $I(f_{\ast})$ with the set given in~\reqref{ifstarreduced}, let $\map{\psi}{I(f_{\ast})}[I(f)]$ be defined by $\psi(x, \rho,\Gamma)=(x,\rho)$ and $\map{\phi}{I(f)}[I(f_{\ast})]$ by $\phi(x,\rho)=(x,\rho,\Gamma')$, where $\Gamma'(s,t)=g(\rho(\max(s,t)))$ for all $s,t\in I$. Note that $\phi$ is well defined since $f(x)=\rho(0)$, $\rho(1)=\ast_{Y}$, $\Gamma'(s,0)=g(\rho(s))$ and $\Gamma'(s,1)=\Gamma'(1,t)=g(\rho(1))=\ast_{Z}$ for all $s,t\in I$. Clearly $\psi\circ \phi=\id_{I(f)}$. It thus remains to prove that $\phi\circ \psi \simeq\id_{I(f_{\ast})}$, or equivalently, that there exists a homotopy in $I(f_{\ast})$ that takes $(x, \rho,\Gamma)$ to $(x, \rho,\Gamma')$, where $\Gamma'$ is as defined above. It is clear that there exists a homotopy $\map{H_{u}}{I\times I}[I\times I]$, where $u\in I$, for which $H_{0}$ is the identity, $H_{1}(s,t)\in I \times \brak{0}$, $H_{u}(s,0)=(s,0)$, and $H_{u}(A)\subset A$ for all $u,s,t\in I$, where $A=I\times \brak{0} \cup\brak{0} \times  I$. For example, we may take:
\begin{equation*}
H_{u}(s,t)=\begin{cases}
((1-2u)s+2u \max(s,t),t) & \text{if $0\leq u\leq \frac{1}{2}$}\\
(\max(s,t), 2(1-u)t) & \text{if $\frac{1}{2} < u\leq 1$.}
\end{cases}
\end{equation*}
Let $u,s,t\in I$. Since $H_{\frac{1}{2}}(s,t)=(\max(s,t),t)$ and $H_{\frac{1}{2}}(0,t)=(t,t)$, we see that $H_{u}$ is continuous. For $u, s, t \in I$, let $\Gamma_u(s,t) =\Gamma (H_u(s,t))$. Then $\Gamma_{0}=\Gamma$, $\Gamma(t,0)= g(\rho(t))$, and $\Gamma_{1}(s,t)=\Gamma(\max(s,t),0)= g(\rho(\max(s,t)))=\Gamma'(s,t)$, so $\Gamma_{1}=\Gamma'$. It remains to show that $(x,\rho,\Gamma_{u})\in I(f_{\ast})$ for all $u\in I$. This is the case, since $f(x)=\rho(0)$, $\rho(1)=\ast_{Y}$, $\Gamma_{u}(s,0)=\Gamma(H_{u}(s,0))=\Gamma(s,0)=g(\rho(s))$, $\Gamma_{u}(s,1)=\Gamma(H_{u}(s,1))= \ast_{Z}$ and $\Gamma_{u}(1,t)=\Gamma(H_{u}(1,t))=\ast_{Z}$ using the properties of $H_{u}$ given above.
It follows that $\phi\circ \psi \simeq\id_{I(f_{\ast})}$, which proves the result.
\end{proof}

\begin{prop}\label{prop:homofibre}
Let $\map{p}{E}[B]$ be a fibration, and let $E_{0}$ be a subspace of $E$ such that the restriction $\map{p_{0}=p\left\lvert_{E_{0}}\right.}{E_{0}}[B]$ is also a fibration. Let $F$ (resp.\ $F_{0}$) denote the fibre of $p$ (resp.\ $p_{0}$) over a base point in $B$, let $\map{\iota}{E_{0}}[E]$, $\map{\iota_{0}}{F_{0}}[F]$, $\map{j}{F}[E]$, and $\map{j_{0}}{F_{0}}[E_{0}]$ denote the respective inclusions, where $\iota_{0}=\iota\left\lvert_{F_{0}}\right.$, and let $\map{\alpha'}{I(\iota_{0})}[I(\iota)]$ be the map defined by $\alpha'(x,\gamma)=(j_{0}(x), j\circ \gamma)$ for all $(x,\gamma)\in I(\iota_{0})$. Then $\alpha'$ is a homotopy equivalence between $I(\iota_{0})$ and $I(\iota)$.
\end{prop}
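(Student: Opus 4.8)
\textbf{Proof proposal for \repr{homofibre}.}

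The plan is to factor the inclusion $\map{\iota}{E_{0}}[E]$ through its mapping cylinder-type construction using the two fibrations, and then reduce to \relem{crabb1}. More precisely, consider the fibration $\map{p}{E}[B]$ and the sub-fibration $\map{p_{0}}{E_{0}}[B]$. The composition $p\circ \iota = p_{0}$ holds, so on the level of homotopy fibres over the chosen basepoint $\ast_{B}\in B$, the map $\iota$ induces a map $\map{\iota_{\ast}}{I(p_{0})}[I(p)]$. Since $p$ and $p_{0}$ are fibrations, the canonical maps $F\to I(p)$ and $F_{0}\to I(p_{0})$ given by $x\mapsto (x, c_{\ast_{B}})$ are homotopy equivalences, and they are compatible with $\iota_{0}$ and $\iota_{\ast}$ in the obvious sense; hence $I(\iota_{0})$ and $I(\iota_{\ast})$ are homotopy equivalent (the homotopy fibre of a map is, up to homotopy equivalence, invariant under replacing the map by a homotopic map between homotopy-equivalent spaces, via a standard diagram chase). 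So it suffices to show that $I(\iota_{\ast})$ is homotopy equivalent to $I(\iota)$.

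To do this, I would apply \relem{crabb1} with the roles $f=\iota\colon\thinspace E_{0}\to E$ and $g=p\colon\thinspace E\to B$: the lemma gives that $I(f_{\ast})$ is homotopy equivalent to $I(f)=I(\iota)$, where $\map{f_{\ast}}{I(g\circ f)}[I(g)]=\map{\iota_{\ast}}{I(p\circ \iota)}[I(p)]$ is precisely the map induced by $\iota$ on homotopy fibres of $p_{0}=p\circ\iota$ and $p$. Combining this with the previous paragraph, we get
\[
I(\iota)\simeq I(\iota_{\ast})=I(f_{\ast})\simeq I(\iota_{0}),
\]
which is the desired conclusion. The remaining point is to check that the particular homotopy equivalence obtained in this way agrees (up to homotopy) with the explicitly defined map $\alpha'(x,\gamma)=(j_{0}(x), j\circ\gamma)$; I would verify this by unwinding the definitions of the comparison maps $F_{0}\to I(p_{0})$, $F\to I(p)$, and the map $\psi$ of \relem{crabb1}, and tracking a point $(x,\gamma)\in I(\iota_{0})$ through the chain of equivalences — each step is either inclusion of a fibre into its homotopy fibre as a constant path, or the projection $\psi(x,\rho,\Gamma)=(x,\rho)$, so the composite sends $(x,\gamma)$ to $(j_{0}(x), j\circ\gamma)$ up to a reparametrisation homotopy.

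The main obstacle I anticipate is the bookkeeping in this last compatibility check: the identification of $I(f_{\ast})$ with the reduced set in~\reqref{ifstarreduced} involves discarding redundant path coordinates, and one must be careful that the homotopy equivalence $I(\iota)\simeq I(\iota_{0})$ extracted from \relem{crabb1} is genuinely realised (up to homotopy) by $\alpha'$ rather than merely by some abstractly equivalent map; establishing this requires writing down an explicit homotopy, using the fibration property of $p$ (to lift the contracting homotopies of the path coordinates in $B$) to move between the "honest" fibres $F_{0}$, $F$ and their homotopy-fibre replacements $I(p_{0})$, $I(p)$. Once the diagram
\[
\begin{tikzcd}[cramped]
I(\iota_{0}) \arrow{r}{\alpha'} \arrow{d} & I(\iota) \arrow{d}\\
I(f_{\ast}) \arrow{r}{\simeq} & I(f)
\end{tikzcd}
\]
is seen to commute up to homotopy with vertical homotopy equivalences, the result follows. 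All the individual verifications are routine but somewhat lengthy; the conceptual content is entirely contained in \relem{crabb1}.
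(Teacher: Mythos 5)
Your proposal follows essentially the same route as the paper's proof: apply \relem{crabb1} with $f=\iota$, $g=p$ so that $p_{0}=p\circ\iota$ and $f_{\ast}=\iota_{\ast}$, use the standard homotopy equivalences $F\to I(p)$ and $F_{0}\to I(p_{0})$ (valid because $p$ and $p_{0}$ are fibrations) to get $I(\iota_{0})\simeq I(\iota_{\ast})$, and then check compatibility with $\alpha'$. One small simplification you could make: with the paper's explicit choices, the composite $\psi\circ h$ (where $h$ is induced by the fibre-to-homotopy-fibre equivalences) is \emph{equal} to $\alpha'$ on the nose, so no reparametrisation homotopy is actually needed in the final compatibility check.
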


\begin{proof}
First note that we have the following commutative diagram:
\begin{equation}\label{eq:commdiaghomfib}
\begin{tikzcd}[cramped]
I(\iota_{0}) \arrow[dotted]{d}{\alpha'} \arrow{r}
& F_{0} \arrow{r}{\iota_{0}} \arrow[d, "j_{0}"]
& F \arrow[d, "j"]\\
I(\iota) \arrow{r}
& E_{0} \arrow{r}{\iota} \arrow{d}{p_{0}}
& E \arrow{d}{p}\\
& B \arrow[equal]{r}  & B.
\end{tikzcd}
\end{equation}
Let $x_{0} \in F_{0}$ be a basepoint that we propagate to $F,E_{0},E$ and $B$ using this diagram. In particular, let $b_{0}=p_{0}\circ j_{0}(x_{0})$ be the basepoint of $B$. The map $\map{\alpha'}{I(\iota_{0})}[I(\iota)]$ defined in the statement is well defined, since if $(x,\gamma)\in I(\iota_{0})$ then $\iota_{0}(x)=\gamma_{0}$ and $\gamma(1)=\iota_{0}(x_{0})$, so $\alpha'(x,\gamma)=(j_{0}(x), j\circ \gamma)\in E_{0}\times E^{I}$, and we have $j\circ \gamma(0)=j\circ \iota_{0}(x)=\iota(j_{0}(x))$, and $j\circ \gamma(1)=j\circ \iota_{0}(x_{0})=\iota(j_{0}(x_{0}))$, which is the basepoint of $E$. 

Taking $X=E_{0}$, $Y=E$, $Z=B$, $f=\iota$ and $g=p$ in the statement and proof of \relem{crabb1}, and noting that $p_{0}=p\circ \iota$, we obtain a map $\map{\iota_{\ast}}{I(p_{0})}[I(p)]$, and the map $\map{\psi}{I(\iota_{\ast})}[I(\iota)]$ defined in the first line of the proof of \relem{crabb1} is a homotopy equivalence. We will exhibit a homotopy equivalence $\map{h}{I(\iota_{0})}[I(\iota_{\ast})]$ for which $\alpha'=\psi\circ h$, which will prove the result. Since $p$ and $p_{0}$ are fibrations, the maps $\map{g}{F}[I(p)]$ and $\map{g_{0}}{F_{0}}[I(p_{0})]$ defined by $g(x)=(j(x),c_{b_{0}})$ for all $x\in F$ and $g_{0}(y)=(j_{0}(y),c_{b_{0}})$ for all $y\in F_{0}$ are homotopy equivalences~\cite[Proposition~3.5.10 and Remark~3.5.11]{A}. Consider the following diagram:
\begin{equation*}
\begin{tikzcd}[cramped]
I(\iota_{0}) \arrow[dotted]{d}{h} \arrow{r} & F_{0} \arrow{r}{\iota_{0}} \arrow{d}{g_{0}} & F \arrow{d}{g}\\
I(\iota_{\ast}) \arrow{r} & I(p_{0}) \arrow{r}{\iota_{\ast}} & I(p).
\end{tikzcd}
\end{equation*}
Using the commutativity of~\reqref{commdiaghomfib}, it is straightforward to check that the right-hand square is commutative, and we thus obtain an induced map $\map{h}{I(\iota_{0})}[I(\iota_{\ast})]$ defined by $h(x,\gamma)=(g_{0}(x), g\circ \gamma)$ for all $(x,\gamma)\in I(\iota_{0})$ that is a homotopy equivalence because $g$ and $g_{0}$ are homotopy equivalences. If $\map{c'}{I}[B^{I}]$ is the path defined by $c'(t)=c_{b_{0}}$ for all $t\in I$, then using the definition of $\psi$, for all $(x,\gamma)\in I(\iota_{0})$, we have:
\begin{equation*}
\psi\circ h(x,\gamma)=\psi(g_{0}(x), g\circ \gamma)=\psi((j_{0}(x),c_{b_{0}}), (j\circ \gamma, c'))=(j_{0}(x), j\circ \gamma)=\alpha'(x,\gamma).
\end{equation*}
So $\alpha'=\psi\circ h$, and hence $\alpha'$ is a homotopy equivalence as required.
\end{proof}

We end this paper with two corollaries of \repr{homofibre}.

\begin{cor}\label{cor:alphanprime}
The map $\map{\alpha_{n}'}{I_{n}'}[I_{n}]$ defined in \resec{generalities} is a homotopy equivalence.
\end{cor}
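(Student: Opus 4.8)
The plan is to derive this from \repr{homofibre} by a suitable choice of the two fibrations. Take the base to be $M$, the total space to be $E = \prod_{1}^{n}\, M$, and let $\map{p}{\prod_{1}^{n}\, M}[M]$ be the coordinate projection $\widetilde{p}_{n}$ of \resec{mapiota} onto the $n$\up{th} factor, which is a (trivial) fibration. Let $E_{0} = F_{n}(M)$, regarded as the subspace of $\prod_{1}^{n}\, M$ consisting of $n$-tuples of pairwise-distinct points, so that the inclusion $\iota_{n}$ becomes the map $\map{\iota}{E_{0}}[E]$ of \repr{homofibre}. The restriction $\widetilde{p}_{n}\left\lvert_{F_{n}(M)}\right.$ is exactly the Fadell-Neuwirth fibration $\map{p_{n}}{F_{n}(M)}[M]$, which is a fibration by~\cite{FaN}, so the hypotheses of \repr{homofibre} are satisfied.

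First I would identify the fibres over the basepoint $w_{n}\in M$ together with the induced maps. The fibre of $\widetilde{p}_{n}$ over $w_{n}$ is $\prod_{1}^{n-1}\, M\times \brak{w_{n}}$, which we identify with $\prod_{1}^{n-1}\, M$, while the fibre of $p_{n}$ over $w_{n}$ is $F_{n-1}(M\setminus\brak{w_{n}})\times \brak{w_{n}}$, identified with $F_{n-1}(M\setminus\brak{w_{n}})$. Under these identifications the fibre inclusions $\map{j}{F}[E]$ and $\map{j_{0}}{F_{0}}[E_{0}]$ of \repr{homofibre} become $\widehat{\alpha}_{n}$ and $\widehat{\alpha}_{n}\left\lvert_{F_{n-1}(M\setminus\brak{w_{n}})}\right.$ respectively, and the restriction of $\iota_{n}$ to these fibres is precisely the inclusion $\altmap{\iota_{n}'}{F_{n-1}(M\setminus\brak{w_{n}})}{\prod_{1}^{n-1}\, M}$. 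Hence, in the notation of the Appendix, $I(\iota_{n}) = I_{n}$ and $I(\iota_{n}') = I_{n}'$, and the map $\alpha'$ of \repr{homofibre}, given there by $\alpha'(y,\mu) = (j_{0}(y), j\circ \mu)$, carries a point $(y,\mu)\in I_{n}'$ to $(y, w_{n}, \mu, c_{w_{n}})$ once the path $j\circ \mu$ in $\prod_{1}^{n}\, M$ is written coordinate by coordinate. Comparing this with \resec{generalities}, where $\alpha_{n}(y,\mu) = (y,w_{n},\mu,c_{w_{n}})$ and $\alpha_{n}'$ is the restriction of $\alpha_{n}$ to $I_{n}'$, we conclude that $\alpha' = \alpha_{n}'$; since $\alpha'$ is a homotopy equivalence by \repr{homofibre}, so is $\alpha_{n}'$.

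I expect the only point requiring care to be this last identification: one must check, at the level of the underlying sets and maps rather than merely up to homotopy, that $p_{n}$ is indeed the restriction of the coordinate projection $\widetilde{p}_{n}$ to $F_{n}(M)$, that the fibre inclusions are $\widehat{\alpha}_{n}$ and its restriction, and that the basepoint conventions of \resec{generalities} are compatible, so that $W_{n}'$ serves as the basepoint of the fibre $F_{0}$, propagating to $W_{n}$ on $F_{n}(M)$ and on $\prod_{1}^{n}\, M$, to $W_{n}'$ on $\prod_{1}^{n-1}\, M$, and to $w_{n}$ on $M$. Everything else is an immediate application of \repr{homofibre}.
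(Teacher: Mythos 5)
Your proposal is correct and follows exactly the same route as the paper: apply \repr{homofibre} with $p=\widetilde{p}_{n}$, $p_{0}=p_{n}$, $E=\prod_{1}^{n}\,M$, $E_{0}=F_{n}(M)$, $B=M$, so that the fibre inclusion is $\iota_{n}'$ and the induced map $\alpha'$ is $\alpha_{n}'$. The paper just states this in one line by pointing to the bottom two rows of~\reqref{bigcommdiagfib} and diagram~\reqref{commdiaghomfib}; your write-up is the same argument with the identifications made explicit.
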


\begin{proof}
It suffices to take the first two rows (resp.\ the fibrations $p_{0}$ and $p$) of~\reqref{commdiaghomfib} to be the bottom two rows (the fibrations $p_{n}$ and $\widetilde{p}_{n}$ of \resec{mapiota}) of~\reqref{bigcommdiagfib}, and to apply \repr{homofibre}.  
\end{proof}

\begin{cor}
For all $k\geq 1$, the inclusion $\map{j}{F}[E]$ and its restriction $\map{j\left\lvert_{{F_{0}}}\right.}{F_{0}}[E_{0}]$ induce an isomorphism $\map{j_{\#k}}{\pi_{k}(F,F_{0})}[\pi_{k}(E,E_{0})]$.
\end{cor}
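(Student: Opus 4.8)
The plan is to derive the isomorphism $\map{j_{\#k}}{\pi_{k}(F,F_{0})}[\pi_{k}(E,E_{0})]$ directly from \repr{homofibre}, together with the standard identification of the homotopy groups of a pair with the homotopy groups of the homotopy fibre of the corresponding inclusion. The key point is that for any pointed map $\map{\varphi}{A}[B]$ with homotopy fibre $I(\varphi)$, there is a natural long exact sequence $\cdots \to \pi_{k}(I(\varphi)) \to \pi_{k}(A) \to \pi_{k}(B) \to \pi_{k-1}(I(\varphi)) \to \cdots$ that, when $\varphi$ is an inclusion of a subspace, may be compared with the long exact sequence of the pair $(B,A)$; concretely, there is a natural isomorphism $\pi_{k-1}(I(\varphi)) \cong \pi_{k}(B,A)$ for all $k\geq 1$, compatible with the maps induced by $\varphi$. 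Applying this to the inclusions $\map{\iota_{0}}{F_{0}}[F]$ and $\map{\iota}{E_{0}}[E]$ identifies $\pi_{k}(F,F_{0})$ with $\pi_{k-1}(I(\iota_{0}))$ and $\pi_{k}(E,E_{0})$ with $\pi_{k-1}(I(\iota))$.

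First I would set up this identification carefully, noting that it is natural with respect to commutative squares: the square
\begin{equation*}
\begin{tikzcd}[cramped]
F_{0} \arrow{r}{\iota_{0}} \arrow{d}{j\left\lvert_{F_{0}}\right.} & F \arrow{d}{j}\\
E_{0} \arrow{r}{\iota} & E
\end{tikzcd}
\end{equation*}
induces the map $\map{\alpha'}{I(\iota_{0})}[I(\iota)]$ of \repr{homofibre}, and under the identifications above the homomorphism $\map{\alpha'_{\#(k-1)}}{\pi_{k-1}(I(\iota_{0}))}[\pi_{k-1}(I(\iota))]$ corresponds precisely to $\map{j_{\#k}}{\pi_{k}(F,F_{0})}[\pi_{k}(E,E_{0})]$. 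This compatibility is a routine diagram chase with the relevant long exact sequences, using that the boundary homomorphisms and the connecting isomorphisms $\pi_{k}(B,A)\cong \pi_{k-1}(I(\varphi))$ are natural in $\varphi$.

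Next, by \repr{homofibre}, the map $\alpha'$ is a homotopy equivalence, so $\alpha'_{\#(k-1)}$ is an isomorphism for every $k\geq 1$. Transporting this conclusion through the identifications of the previous paragraph yields that $j_{\#k}$ is an isomorphism for all $k\geq 1$, which is the desired statement.

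The only genuinely delicate point is the compatibility in the second paragraph: one must check that the natural isomorphism $\pi_{k}(B,A) \cong \pi_{k-1}(I(\varphi))$ intertwines the map induced by $\varphi$ on relative homotopy groups with the map induced by $\alpha'$ on homotopy fibres. This is standard but requires being explicit about the definition of the isomorphism (sending a relative class represented by $(D^{k},S^{k-1},\ast)\to (B,A,\ast)$ to the class of the induced map $S^{k-1}\to I(\varphi)$) and verifying that it behaves well under the inclusion-induced maps; none of the steps involve more than elementary manipulation once the definitions are unwound. Everything else follows formally from \repr{homofibre} and the long exact sequences.
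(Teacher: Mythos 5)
Your proposal is correct and follows essentially the same route as the paper: identify $\pi_{k}(F,F_{0})$ with $\pi_{k-1}(I(\iota_{0}))$ and $\pi_{k}(E,E_{0})$ with $\pi_{k-1}(I(\iota))$ (the paper cites~\cite[Definition~4.5.3]{A} for this), observe that under these identifications $j_{\#k}$ coincides with $\alpha'_{\#(k-1)}$, and invoke \repr{homofibre}. The only small point you omit is the $k=1$ case, where $\pi_{1}(F,F_{0})$ and $\pi_{1}(E,E_{0})$ are merely pointed sets rather than groups, so one speaks of a bijection rather than a group isomorphism; the paper flags this explicitly, and your argument still applies once this is noted.
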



\begin{proof}
Recall from~\cite[Definition~4.5.3]{A} and the paragraph that precedes it that the relative homotopy group $\pi_{k}(E,E_{0})$ (resp.\ $\pi_{k}(F,F_{0})$) is the homotopy group $\pi_{k-1}(I(\iota))$ (resp.\ $\pi_{k-1}(I(\iota_{0}))$) of the homotopy fibre of $\iota$ (resp.\ $\iota_{0}$) for all $k\geq 1$. So for all $k\geq 2$, the map of pairs $\map{j}{(F,F_{0})}[(E,E_{0})]$ induces a homomorphism $\map{j_{\#k}}{\pi_{k}(F,F_{0})}[\pi_{k}(E,E_{0})]$ that coincides with the isomorphism $\map{\alpha_{\#(k-1)}'}{\pi_{k-1}(I(\iota_{0}))}[\pi_{k-1}(I(\iota))]$ given by~\repr{homofibre} (observe that if $k=1$ then as in~\cite[Definition~4.5.3]{A}, we refer to the map $\map{j_{\#1}}{\pi_{1}(F,F_{0})}[\pi_{1}(E,E_{0})]$ as a homomorphism, and we know that it is also a bijection). 
\end{proof}

\end{document}